\numberwithin{equation}{section}
\newtheorem{theorem}{Theorem}[section]
\newtheorem{lemma}{Lemma}[section]
\newtheorem{corollary}{Corollary}[section]
\newtheorem{proposition}{Proposition}[section]
\numberwithin{equation}{section}
\newtheorem{remark}{Remark}[section]
\newtheorem{definition}{Definition}[section]
\newtheorem{thm}{Theorem}
\newcommand{\N}{\mathbb{N}}
\newcommand{\R}{\mathbb{R}}
\newcommand{\Z}{\mathbb{Z}}
\newcommand{\E}{\mathbb{E}}
\newcommand{\Pb}{\mathbb{P}}
\begin{document}

	\title[ Mixing for random  nonlinear wave equations]
	{{\large E{\MakeLowercase{xponential mixing for random nonlinear wave equations:}}\\ 
 \vspace{1mm}
 {\MakeLowercase{weak dissipation and localized control}}}
 } 

 \author[Z. Liu, D. Wei, S. Xiang, Z. Zhang, J.-C. Zhao]{Z\MakeLowercase{iyu} L\MakeLowercase{iu}, D\MakeLowercase{ongyi} W\MakeLowercase{ei}, S\MakeLowercase{hengquan} X\MakeLowercase{iang}, Z\MakeLowercase{hifei} Z\MakeLowercase{hang}, J\MakeLowercase{ia-}C\MakeLowercase{heng} Z\MakeLowercase{hao}}
		
\address[Ziyu Liu]{School of Mathematical Sciences, Peking University, 100871, Beijing, China.}
\email{liuziyu@math.pku.edu.cn}

\address[Dongyi Wei]{School of Mathematical Sciences, Peking University, 100871, Beijing, China.}
\email{jnwdyi@pku.edu.cn}

\address[Shengquan  Xiang]{School of Mathematical Sciences, Peking University, 100871, Beijing, China.}
\email{shengquan.xiang@math.pku.edu.cn}

\address[Zhifei  Zhang]{School of Mathematical Sciences, Peking University, 100871, Beijing, China.}
\email{zfzhang@math.pku.edu.cn}

\address[Jia-Cheng Zhao]{School of Mathematical Sciences, Peking University, 100871, Beijing, China.}
\email{zhaojiacheng@math.pku.edu.cn}

\subjclass[2020]{
37A25,  % (2000–now)Ergodicity, mixing, rates of mixing 
37S15, % (2000–now)Random dynamical systems aspects of multiplicative ergodic theory, Lyapunov exponents
37L50, % (2000–now)Noncompact semigroups, dispersive equations, perturbations of infinite-dimensional dissipative dynamical systems
35L71,  %(2010–now)Second-order semilinear hyperbolic equations
93C20. %(1973–now)Control/observation systems governed by partial differential equations
    }
	
\keywords{Exponential mixing; Nonlinear wave equations; Global stability; Asymptotic compactness; Controllability}
	
\begin{abstract}

We establish a new criterion for exponential mixing of random dynamical systems. Our criterion is applicable to a wide range of systems,  including in particular dispersive equations. Its verification is in nature related to several topics, i.e., asymptotic compactness in dynamical systems, global stability of evolution equations, and localized control problems.

As an initial application, we exploit the exponential mixing of random nonlinear wave equations with degenerate damping,  critical nonlinearity, and physically localized  noise. The essential challenge lies in the fact that the weak dissipation and randomness interact in the evolution. 
\end{abstract}

\maketitle
\setcounter{tocdepth}{1}
\tableofcontents

\section{Introduction}
	
\renewcommand{\thethm}{\Alph{thm}}

The ergodic and mixing properties, crucial for the understanding of random systems, have been the focus of research yielding significant advancements in recent decades \cite{HM-06,HM-08,BBPS-22b,KNS-20,Shi-15}.
However, there have been few results achieved for dispersive equations. The analysis in this setting is usually delicate in the absence of smoothing effect; the existing criteria valid for parabolic-type equations are hardly applicable.

\begin{center}
{\it Does the mixing property hold for general dispersive equations?}
\end{center}

\noindent We provide a criterion of exponential mixing for random dynamical systems in general Polish space, i.e. Theorem~\ref{Thm-informal}. This result is an attempt to seek for sharp sufficient conditions for the exponential mixing of dispersive equations, as an affirmative answer to the above question. Especially, the criterion, composed by {\it asymptotic compactness, irreducibility} and {\it coupling condition}, is closely related to dynamical system, dispersive equations and control theory.

\noindent As an initial application of the criterion, we establish the exponential mixing for a general model of nonlinear wave equations in the form
\begin{equation*}
     \boxempty u+a(x)\partial_t u+u^3=\eta,
\end{equation*}
i.e. Theorem~\ref{Thm-informal-model}, where $a(x)$ induces the damping effect, and $\eta$ stands for the random noise.
The generality mentioned encompasses several aspects, including  {\it degenerate/localized damping, critical nonlinearity}\footnote{In the context of $n$-dimensional wave equations, the Sobolev-critical exponent of nonlinearity is $n/(n-2)$ for $n\geq 3$ (see, e.g., \cite{BV-92}), which differs from the energy-critical exponent $(n+2)/(n-2)$ (see, e.g., \cite{BSS-09}). This is justified by the Sobolev embedding $H^1\hookrightarrow L^{2n/(n-2)}$, implying that if a nonlinear function satisfies a polynomial growth with power not exceeding $n/(n-2)$, then its Nemytski\u{\i} operator maps $H^1$ into $L^2$. While we focus on the cubic nonlinearity that is Sobolev-critical, our results and their proofs should be adaptable to the case of super-cubic nonlinearity.}  and {\it  random noise localized in physical space}. In particular, the weak dissipation mechanism induced by the 
localized damping, mingled with the random perturbations, contributes to part of the main challenges  in the research; see Sections \ref{Section-1-2},\ref{Section-1-3} later. We believe that the approach is  general and adaptable to other types of dispersive equations.

\vspace{0.4em}

In the sequel, let us give a sketch of those topics involved in the criterion:

\begin{itemize}[leftmargin=3em]
\item[$(1)$] Asymptotic compactness is a fundamental object in the theory of global attractor for {\it dynamical systems}, motivated by the issues in turbulence \cite{FP-67,Lady-72}. In this topic the dispersive setting is fairly subtle due to the lack of  smoothing effect \cite{BV-92,Hale-88}. In addition, the
localization of damping and randomness lead to extra obstacles in our analysis.
\vspace{0.2em}
\item[$(2)$] The issue of irreducibility will be reduced to a  stability problem, where the latter is a significant topic in the dynamics of {\it dispersive  equations} \cite{Bardos-92,Grillakis-90,LS-23,KT-95,LT-12}.  

\vspace{0.2em}

\item[$(3)$]  The coupling condition corresponds to the stabilization which is one of the central problems in {\it control theory}  \cite{Lions-88,Coron-07}. Our analysis of coupling condition involves various objects, including unique continuation, Carleman estimates, Hilbert uniqueness method and the localized dissipation, constituting a long piece  of section in this paper. 
\end{itemize}

\vspace{0.3em}

Below in Section~\ref{Section-1-1} we give an overview of the abstract criterion (i.e. Theorem~\ref{Thm-informal}), including historical backgrounds and main contributions.
In Section~\ref{Section-1-2} we present the mixing result for the random wave equations (i.e. Theorem~\ref{Thm-informal-model}), and discuss its generality. 
Section~\ref{Section-1-3} outlines the proof of Theorem~\ref{Thm-informal-model}, highlighting the main challenges and our approaches. A brief outline of the rest of the paper is available in  Section~\ref{Section-1-4}.  

\vspace{0.2em}

\subsection{Probabilistic framework}\label{Section-1-1}

In this section we introduce a new criterion for exponential mixing of random dynamical systems. This criterion is a consequence of inspiration from the prior related frameworks and the observation on asymptotic compactness from the dynamical system point of view. It is applicable to a wide class of dispersive equations.

\subsubsection{Historical  backgrounds}

The study of ergodic and mixing properties for randomly forced equations has been a principal motivation  of ergodic theory for Markov processes.  In particular, it has led to significant results for the 2D Navier--Stokes systems; for the early
achievements; see, e.g., \cite{BKL-02,EMS-01,FM-95,MY-02,Hairer-02,Mattingly-02,KS-00,EM-01}. In recent years, Hairer and Mattingly \cite{HM-06,HM-08} introduce the asymptotic strong Feller property to provide a first result for the situation when the noise is white in time and is extremely degenerate in Fourier modes.  More recently, Kuksin--Nersesyan--Shirikyan \cite{KNS-20} propose a controllability property to establish a similar result when the degenerate random forces are coloured in time. The reader is referred to, e.g., \cite{HM-11b,Ner-24,FGRT-15,BGN-23,KNS-20-1} for other contributions in the context of extremely degenerate noise. In \cite{Shi-15,Shi-21}, Shirikyan invokes another  controllability approach to study the case in which the random perturbation is localized in the physical space. In the context of unbounded domains, the recent paper \cite{Ner-22} by Nersesyan derives exponential mixing by developing the controllability approaches
of the papers \cite{KNS-20,Shi-21}.

There have been several general approaches applied to the ergodic and mixing properties for various models.  For instance, Hairer--Mattingly--Scheutzow \cite{HMS-11} formulate a generalized form of Harris theorem \cite{Harris-56} (see also \cite{MT-09,HM-11} for a detailed account), providing a criterion for exponential mixing and applying it to stochastic delay equations. We refer the reader to \cite{BKS-20,HM-08,GM-05} for some applications for stochastic parabolic equations and modifications of the Harris-type results. Another intensively studied approach is the coupling method, developed in \cite{Hairer-02,MY-02,Mattingly-02,KPS-02,KS-01,KS-02}. Based on the coupling method, Kuksin and Shirikyan \cite{KS-12,Shi-08} propose general conditions, i.e., {\it recurrence} and {\it squeezing}, for mixing properties. Some applications and extensions for both ODE and PDE models of such framework can be found in, e.g., \cite{Martirosyan-14, Shi-15,Shi-21,Shi-17}.

\subsubsection{Obstructions for mixing of dispersive equations, an idea from dynamical systems}

In the context of dispersive equations, the main difficulty lies in the non-compactness of the resolving operator, which results from the lack of the smoothing effect. This leads to an aftermath that the aforementioned frameworks for mixing properties seem hardly applicable to the dispersive setting. For instance, the squeezing \cite{KS-12} usually requires  extra regularity of the target trajectory. Analogous obstacles appear to the discussion of the asymptotic strong Feller property \cite{HM-06}, approximate controllability  \cite{CXZ-2023, KNS-20},  etc. Accordingly, our research starts with a question,

\begin{center}
{\it How to compensate for the absent compactness?}
\end{center}

Our answering this question employs the notion of asymptotic compactness from the dynamical system theory. Recall that the mixing property
describes a certain type of limiting behavior that a physical system asymptotically converges  to a statistical equilibrium in the distribution sense. Accordingly, one may relax the  compactness requirement and provide an alternative of a limiting form.  At the same time, the theory of global attractor for infinite-dimensional dynamical system involves a viewpoint of asymptotic compactness, illustrating such limiting-type compactness \cite{BV-92,Hale-88}. These motivate us to build up an explicit relation between the asymptotic compactness for possibly non-compact semiflow and the mixing property.

\subsubsection{A general framework}

Let $\mathcal{X}$ and $\mathcal{Z}$ be Polish spaces, and denote by $d$ the metric on $\mathcal X$. Let $S\colon\mathcal{X} \times\mathcal{Z} \rightarrow \mathcal{X}$ be a continuous mapping, and $\{\xi_n;n\in\N\}$ a  sequence of $\mathcal{Z}$-valued independent identically distributed (i.i.d. for abbreviation) random variables with a common law $\ell$.  We consider a \textit{random dynamical system} defined by 
\begin{equation}\label{RDS}
x_{n+1}=S(x_{n},\xi_{n}),\quad n\in\N,
\end{equation}
 with initial condition
\begin{equation}\label{initial condition}
x_0=x.
\end{equation}
We proceed to describe our abstract result for system $\{x_n;n\in\N\}$, omitting some inessential technical details. Assume first that $\ell$ is compactly supported, and the mapping $S$ is Lipschitz on any bounded set. 
The essential hypotheses are roughly stated as follows:

\begin{enumerate}[leftmargin=2em]
\item[$\mathbf{(H)}$]  {\it  
\begin{enumerate}[leftmargin=1.5em]
\item[$(a)$]{\rm (}Asymptotic compactness{\rm )} There exists a compact subset $\mathcal Y$ of $\mathcal X$  such that $ \{x_n;n\in\N\}$  exponentially converges to $\mathcal Y$ in a pathwise manner. We further denote the attainable set from $\mathcal Y$ by $\mathcal Y_\infty$ {\rm(}see Definition {\rm\ref{Def-attainable})}. 
\medskip
\item[$(b)$]{\rm (}Irreducibility{\rm )} There exists $z\in\mathcal Y$ with the following property: for every $\varepsilon>0$, there is $m\in\N^+$ and $p>0$ such that for any $x\in\mathcal{Y}_\infty$, 
$$\Pb (d(x_m,z)<\varepsilon)\geq p.$$
\item[$(c)$] {\rm (}Coupling condition{\rm )} For every $x,x'\in\mathcal Y_\infty$, the pair $(x_1,x_1')$ admits a coupling $(\mathcal R,\mathcal R')$ satisfying 
$$
\Pb(d(\mathcal{R},\mathcal{R}')>\tfrac{1}{2}d(x,x'))\leq Cd(x,x'),
$$
where $x_1'$ is defined as in {\rm(\ref{RDS}),(\ref{initial condition})} with $x$ replaced by $x'$.
\end{enumerate} 
}
\end{enumerate}

\medskip

It is worth mentioning that the hypotheses of irreducibility and coupling condition are directly inspired by the previous works \cite{HM-11b,EM-01} and \cite{Shi-15,Shi-21}, respectively. See Section \ref{Section-Probalistic part} for more information.

The following result is a simplified version of our criterion for exponential mixing.  See Section~\ref{Section-generalresult} for a rigorous description of this criterion, where the hypotheses are more general to some extent.

\begin{thm}\label{Thm-informal} 
{\it Assume that hypothesis $\mathbf{(H)}$ holds. Then the Markov process $\{x_n;n\in\N\}$, defined by {\rm(\ref{RDS}),(\ref{initial condition})}, has a unique invariant measure $\mu_*$ on $\mathcal X$. Moreover, $\mu_*$ is exponential mixing, i.e., there exists a constant $\beta>0$ such that
\begin{equation*}
\|\mathscr D(x_n)-\mu_*\|_L^*\leq  C(x)e^{-\beta n}
\end{equation*}  
for any $x\in \mathcal{X}$ and $n\in\N$, where $\|\cdot\|^*_L$ denotes the dual-Lipschitz distance on $\mathcal X$ and $\mathscr{D}(x_n)$ stands for the law of $x_n$.}
\end{thm}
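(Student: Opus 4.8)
The plan is to prove the theorem by the coupling method in the spirit of Kuksin--Shirikyan, with asymptotic compactness playing the role of the substitute for the missing compactness of the resolving operator. I would first dispose of existence of an invariant measure: by $\mathbf{(H)}(a)$ the laws $\mathscr D(x_n)$ concentrate, up to an exponentially small defect, near the compact set $\mathcal Y$, so the Cesàro averages $\frac1N\sum_{n<N}\mathscr D(x_n)$ are tight; a Krylov--Bogolyubov argument then yields an invariant measure $\mu_*$, which by continuity of $S$ is supported in the closure of the attainable set $\mathcal Y_\infty$. Uniqueness and the mixing rate will both follow from a single contraction estimate for a suitably constructed coupling of two copies of the process.

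The quantity I would control is $\E[d(x_n,x_n')\wedge 1]$ for two trajectories, since $\|\mathscr D(x_n)-\mathscr D(x_n')\|_L^*\le \E[d(x_n,x_n')\wedge 1]$ under any coupling, and choosing $x_0'\sim\mu_*$ converts such a bound into the stated estimate. I would build the coupling in two regimes. In the \emph{recurrence} regime I use $\mathbf{(H)}(a)$ to drive both copies from arbitrary data into (a neighbourhood of) $\mathcal Y_\infty$ with an error decaying like $e^{-\beta n}$, and then invoke irreducibility $\mathbf{(H)}(b)$: running the two copies with independent noise and intersecting the events $\{d(x_m,z)<\varepsilon\}$ and $\{d(x_m',z)<\varepsilon\}$, each of probability at least $p$, forces the pair within distance $2\varepsilon=:\delta$ of each other with probability at least $p^2>0$, uniformly over $\mathcal Y_\infty\times\mathcal Y_\infty$.

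In the \emph{contraction} regime, once $d(x_k,x_k')\le\delta$ with $\delta$ small, I apply the coupling condition $\mathbf{(H)}(c)$ at $x_k,x_k'\in\mathcal Y_\infty$ (using that $\mathcal Y_\infty$ is forward invariant, so the iterated coupling stays admissible) and iterate it. Writing $d_j$ for the distance after $j$ further steps, $\mathbf{(H)}(c)$ gives $\Pb(d_{j+1}>\tfrac12 d_j\mid\mathcal F_j)\le C d_j$; on the event that contraction succeeds at every step one has $d_j\le 2^{-j}\delta$, so a union bound shows the total failure probability is at most $\sum_{j\ge 0}C\,2^{-j}\delta=2C\delta<1$ for $\delta$ small. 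Hence with probability at least $1-2C\delta$ the two copies stick together and $d_j\to 0$ geometrically; on the complementary event I restart the recurrence. Each attempt has fixed expected length and succeeds with a fixed positive probability, so the number of attempts is dominated by a geometric random variable, yielding $\E[d(x_n,x_n')\wedge 1]\le C e^{-\beta' n}$, hence exponential mixing; uniqueness of $\mu_*$ is immediate.

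The main obstacle I anticipate is the iteration of the local contraction. The coupling condition controls only one step, and only up to the defect probability $Cd$, while $S$ is merely locally Lipschitz, so off the good event the distance may transiently grow; the argument must guarantee that, from a sufficiently small initial $\delta$, the cumulative probability of ever leaving the contraction regime is absorbed by the convergent geometric series above, and that the pair genuinely stays in $\mathcal Y_\infty$ so that $\mathbf{(H)}(c)$ keeps applying. A secondary difficulty is making the recurrence uniform over the non-compact space: asymptotic compactness must be combined with irreducibility so that the probability of reaching a neighbourhood of $z$ is bounded below uniformly on $\mathcal Y_\infty$, and the passage from $\mathcal Y_\infty$ to a general initial datum $x\in\mathcal X$ must absorb the $x$-dependent constant $C(x)$ appearing in the final estimate.
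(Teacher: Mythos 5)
Your overall architecture --- an invariant measure supported on the attainable set, then a two-regime coupling (recurrence via irreducibility with independent noise, contraction by iterating the one-step coupling with a geometric series absorbing the failure probabilities), and finally a passage to arbitrary initial data --- is the same as the paper's, and the first two stages are essentially correct: they are what the paper formalizes through the Kuksin--Shirikyan extension-process theorem, with your union bound $1-\sum_j C2^{-j}\delta$ playing the role of the product bound $\prod_j(1-g(r^j\delta))$, and the exponential tails of the recurrence and squeezing times supplying the rate.

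The genuine gap is in the last stage. Your mechanism for a general initial datum $x\in\mathcal X$ is to drive both copies ``into (a neighbourhood of) $\mathcal Y_\infty$'' and then invoke $(\mathbf{H})(b)$ and $(\mathbf{H})(c)$. But asymptotic compactness only gives $\mathrm{dist}(x_n,\mathcal Y)\le V(x)e^{-\kappa n}$: the trajectory approaches $\mathcal Y_\infty$ and in general \emph{never enters it}. Both $(\mathbf{H})(b)$ and $(\mathbf{H})(c)$ are hypotheses only at points of $\mathcal Y_\infty$; nothing in $(\mathbf{H})$ provides a coupling $(\mathcal R,\mathcal R')$, or a lower bound on $P_m(\cdot,B(z,\varepsilon))$, at points that are merely close to $\mathcal Y_\infty$ (for irreducibility one could attempt a Feller/lower-semicontinuity extension to a neighbourhood, but for the coupling condition no such continuity is available). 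Consequently your coupling can never be started from a pair outside $\mathcal Y_\infty\times\mathcal Y_\infty$, so the requirement you yourself identify --- that ``the pair genuinely stays in $\mathcal Y_\infty$'' --- cannot be met for any initial datum not already in $\mathcal Y_\infty$. The paper circumvents this differently: it first proves mixing for initial laws supported on $\mathcal Y_\infty$ only, and then, for general $x$, never runs the coupling along the true trajectory at all. Instead it constructs a measurable map $\mathsf{P}\colon\mathcal X\rightarrow\mathcal Y$ with $d(x,\mathsf{P}x)\le 2\,\mathrm{dist}(x,\mathcal Y)$ and writes, for $n=k+j$ and $\|f\|_L\le 1$,
\begin{equation*}
|P_nf(x)-\langle f,\mu_*\rangle|\le |\E_x f(S_k(\mathsf{P}x_j;\boldsymbol{\xi}^j))-\langle f,\mu_*\rangle| + \E_x\, d\bigl(S_k(x_j;\boldsymbol{\xi}^j),S_k(\mathsf{P}x_j;\boldsymbol{\xi}^j)\bigr);
\end{equation*}
the first term is controlled by the $\mathcal Y_\infty$-restricted mixing (since $\mathsf{P}x_j\in\mathcal Y$), the second by the local Lipschitz constant $L^k$ of the $k$-fold iteration of $S$ times the attraction bound $2V(x)e^{-\kappa j}$, and one balances $k=\lfloor\varepsilon n\rfloor$, $j=\lceil(1-\varepsilon)n\rceil$ with $\varepsilon<\kappa/(\kappa+\ln L)$ so that $L^{k}e^{-\kappa j}$ still decays exponentially. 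This projection-and-balancing step is exactly what your sketch is missing; it is also where the $x$-dependent constant $C(x)=C(1+V(x))$ is absorbed, and the exponential growth $L^k$ of the Lipschitz constant is precisely why the argument must take this form rather than any argument that ``enters'' $\mathcal Y_\infty$.
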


The ergodic and mixing properties involved in Theorem \ref{Thm-informal} play a significant role in understanding its asymptotic behavior of random dynamical system, which have been applied to the K41 theory \cite{BBPS-22,GK-22}, stochastic quantization \cite{TW-18}, chaotic behavior \cite{BBPS-22d,BBPS-21}, and among others. Besides, exponential mixing is fundamental to a number of statistical consequences, including the law of large numbers, central limit theorems and large deviations \cite{KW-12,DZ-10}.

\begin{remark}
A main contribution of the present criterion is to reduce explicitly the issue of mixing property to a restricted system on a compact phase space. This reduction provides in particular a solution for the requirement of extra regularity in squeezing/stabilization problems, in the context of dispersive equations.
Another  contribution is to establish a connection between the mixing property and other topics in various research fields, so that the related methodologies are available for the ergodicity problems. 

To be more precise, the verification of asymptotic compactness can be accomplished by invoking the ideas in
the theories of global attractors  {\rm(}see, e.g., {\rm\cite{BV-92,Hale-88})}. Meanwhile, in many circumstances of PDEs, the irreducibility can be proved by means of either the global stability of free dynamics {\rm\cite{HM-06,KNS-20,Shi-15}} or the approximate controllability of   associated system {\rm\cite{KNS-20-1,GHM-18}}. Also, inspired by the parabolic case {\rm(}see, e.g., {\rm\cite{Shi-15,Shi-21})},  a possible approach for verifying the coupling hypothesis includes the arguments from control theory {\rm\cite{Coron-07}}.

Conceivably, the criterion presented here is applicable to a wide range of dissipative  equations, especially, while the aforementioned topics have been well developed for this type of models.
\end{remark}

\subsection{Random wave equations}\label{Section-1-2} 

Let $D$ be a bounded domain in $\R^3$ having smooth boundary $\partial D$. The model under consideration reads 
\begin{equation}\label{Problem-0}
\begin{cases}
\boxempty u+a(x)\partial_t u+u^3=\eta(t,x), \quad x\in D, \\ 
u|_{\partial D}=0, \\
u[0]=(u_0,u_1):=u^0,
\end{cases}
\end{equation}
where the notation $\boxempty:=\partial_{tt}^2-\Delta$ stands for the d'Alembert operator, and  $u[t]:=(u,\partial_t u)(t)$. Our settings for the damping coefficient $a(x)$ and random noise $\eta(t,x)$ are stated in $(\mathbf{S1})$ and $(\mathbf{S2})$ below, respectively.

Let $\{\lambda_j;j\in\N^+\}$ be the eigenvalues of $-\Delta$ with the Dirichlet condition, satisfying $\lambda_{j+1}\geq \lambda_{j}$. The eigenvectors corresponding to $\lambda_j$ are denoted by $e_j$, which form an orthonormal basis of $L^2(D)$. 
We denote by $H^s\ (s>0)$ the domain of fractional power $(-\Delta)^{s/2}$, and write $H=L^2(D)$. Setting $\mathcal H^s=H^{1+s}\times H^s$, the phase space of (\ref{Problem-0}) is specified as $\mathcal H:=\mathcal H^0$. 
We define the energy functional $E:\mathcal H\rightarrow\R^+$ as
\begin{equation}\label{energy-functional}
E(\psi)=
\frac{1}{2}\int_D\left[|\nabla \psi_0(x)|^2+
\psi^2_1(x)+\frac{1}{2} \psi^4_0(x)\right],\quad \psi=(\psi_0,\psi_1).
\end{equation}
The energy for a  solution $u$ is expressed as 
$E_u(t):=E(u[t])$.\\
Let $\{\alpha_k;k\in\N^+\}$ denote a smooth orthonormal basis of $L^2(0,1)$. It induces a sequence of functions $\alpha_k^{\scriptscriptstyle T}(t)=\frac{1}{\sqrt{T}}\alpha_k(\frac{t}{T})$, forming an orthonormal basis of $L^2(0,T)$.

\vspace{0.6em}

In Section~\ref{Section-121} below, we provide a brief statement of our setting and main result.
Further discussions of the result are then contained in Section \ref{Section-122}.

\subsubsection{Main result}\label{Section-121}

We introduce a notion of $\Gamma$-type domain which is initially used by Lions \cite{Lions-88}.  
Such a geometric setting will be  involved both in the degeneracy/localization of $a(x)$ and the structure of $\eta(t,x)$. 

\begin{definition}\label{Def-Gamma}
A $\Gamma$-type domain is a subdomain of $D$ in the form
$$
N_\delta(x_0):=\left\{
x\in D;|x-y|<\delta\ {\it for\ some\ }y\in\Gamma(x_0)
\right\},
$$
where $x_0\in\mathbb R^3\setminus \overline D,\,\delta>0$ and $\Gamma(x_0)=\{x\in\partial D;(x-x_0)\cdot n(x)>0\}.$ 
\end{definition}

\begin{enumerate}[leftmargin=2.5em]
\item[$\mathbf{(S1)}$] ({\bf Localized structure}) {\it The function $a(\cdot)\in C^\infty(\overline{D})$ is non-negative, and there exists a $\Gamma$-type domain $N_\delta(x_0)$ and a constant $a_0>0$ such that
\begin{equation}\label{Damping-localization}
a(x)\geq a_0,\quad \forall\,x\in N_\delta(x_0).
\end{equation}
Meanwhile, let $\chi(\cdot)\in C^\infty(\overline{D})$ satisfy that there exists a $\Gamma$-type domain $N_{\delta'}(x_1)$ and a constant $\chi_0>0$ such that
\begin{equation}\label{Gamma-condition}
\chi(x)\geq \chi_0,\quad\forall\,x\in  N_{\delta'}(x_1).
\end{equation}
}
\end{enumerate}

\vspace{0.6em}

Clearly, setting $(\mathbf{S1})$ covers the case where $a(x)\equiv a_0$ and $\chi(x)\equiv\chi_0$. Moreover, it would determine a quantity $\mathbf{T}= \mathbf{T}(D, a, \chi)>0$, which will be taken as a lower bound for time spread of the random noise $\eta(t,x)$; see Section {\rm\ref{Section-conclusion}} for more information.

\vspace{0.6em}
\begin{enumerate}[leftmargin=2.5em]
\item[$\mathbf{(S2)}$]  
{\it 
Let $\boldsymbol{\rho}=\{\rho_{jk};j,k\in\N^+\}$ be a sequence of probability density functions supported by $[-1,1]$, which is $C^1$ and satisfies  $\rho_{jk}(0)>0$. }
\end{enumerate}

\vspace{0.6em}

Given any $T>0$ and $\{b_{jk};j,k\in\N^+\}$, a sequence of nonnegative numbers,  the random noise $\eta(t,x)$ under consideration is of the form
\begin{equation}\label{structure-noise}
\begin{aligned}
&\eta(t,x)= \eta_n(t-nT,x),\quad t\in [nT,(n+1)T),\,n\in\N,\\ 
&\displaystyle\eta_n(t,x)=\chi(x)\sum_{j,k\in\N^+}b_{jk}\theta^{n}_{jk}\alpha^{\scriptscriptstyle T}_k(t)e_j(x),\quad t\in[0,T),
\end{aligned}
\end{equation}
where $\{\theta_{jk}^{n};n\in\N \}$ is a sequence of i.i.d.~random variables with density $\rho_{jk}$.

\vspace{0.6em}

Consider the deterministic version of (\ref{Problem-0}), reading 
\begin{equation}\label{semilinear-problem}
\begin{cases}
\boxempty u+a(x)\partial_t u+u^3=f(t,x),\quad x\in D,\\
u[0]=(u_0,u_1)=u^0,
\end{cases}
\end{equation}
equipped with  Dirichlet condition as in (\ref{Problem-0})\footnote{All of the wave equations arising in the remainder of this paper, which may be positioned in various settings of stochastic problems, non-autonomous dynamical systems and controlled systems, will be supplemented by the Dirichlet condition, without any explicit mention.},  where $f\colon[0,T]\rightarrow H$ (or $f\colon\R^+\rightarrow H$) denotes a deterministic force. We then define a continuous mapping by 
\begin{equation}\label{definition-maps}
S\colon\mathcal H\times L^2(D_T)\rightarrow \mathcal H,\quad S(u^0,f)=u[T],
\end{equation}
where $u\in C([0,T];H^1)\cap C^1([0,T];H)$ stands for the unique solution of (\ref{semilinear-problem}).
Then, (\ref{Problem-0}) defines a Markov process $\{u^n;n\in\N\}$  with random initial data\footnote{
The use of random  data aims at improving the level of generality for our result on (\ref{Problem-0}), which is more general than the setting involved 
in Theorem \ref{Thm-informal}. Recall that the initial data of $\{x_n;n\in\N\}$ in (\ref{RDS}),(\ref{initial condition}) are deterministic, which makes it more convenient for us to formulate the abstract hypothesis $(\mathbf{H})$.} by
\begin{equation}\label{setting-wave}
\begin{cases}
     u^{n+1}=S(u^n,\eta_n),\quad n\in\N,\\
    u^0 \text{ is an }\mathcal{H}\text{-valued random variable independent of }\{\eta_n;n\in\N\}.
\end{cases}   
\end{equation}

Our result of exponential mixing for (\ref{Problem-0}) is contained in the following. 
 
\begin{thm}\label{Thm-informal-model}
Assume that $a(x),\,\chi(x),\,\boldsymbol{\rho}$ satisfy settings  $(\mathbf{S1})$ and $(\mathbf{S2})$. For every $T>\mathbf{T}$ and ${B}_0>0$, there exists a constant  $N\in\N^+$ such that if the sequence $\{b_{jk};j,k\in\N^+\}$ in {\rm(\ref{structure-noise})} satisfies
\begin{equation}\label{structure-noise-2}
\sum_{j,k\in\N^+}b_{jk}\lambda_j^{2/7}\|\alpha_k\|_{_{L^\infty(0,1)}} \leq {B}_0T^{1/2}\quad\text{and}\quad  b_{jk}\neq0\ \text{ for } 1\leq j,k\leq N,
\end{equation}
then the Markov process $\{u^n;n\in\N\}$ has a unique invariant measure $\mu_*$ on $\mathcal H$ with compact support. Moreover, $\mu_*$ is exponential mixing, i.e., there exist  constants $C, \beta>0$ such that
\begin{equation*}
\|\mathscr{D}(u^n)-\mu_*\|_{L}^*\leq C e^{-\beta n} \Big(1+ \int_{\mathcal{H}}E(v)\nu(dv)\Big)
\end{equation*}
for any random initial data $u^0$ with law $\nu$ and $n\in \N$. 
\end{thm}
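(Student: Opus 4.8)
The plan is to deduce Theorem~\ref{Thm-informal-model} from the abstract criterion Theorem~\ref{Thm-informal} by verifying that the Markov process $\{u^n;n\in\N\}$ generated by (\ref{setting-wave}) fulfils hypothesis $\mathbf{(H)}$ on the phase space $\mathcal X=\mathcal H$. Before turning to $\mathbf{(H)}$ I would record the deterministic theory of (\ref{semilinear-problem}): global well-posedness in $\mathcal H$ for the cubic (Sobolev-critical) nonlinearity, together with the energy identity $\frac{d}{dt}E_u(t)=-\int_D a(x)(\partial_t u)^2+\langle\partial_t u,f\rangle$. Because $\boldsymbol\rho$ is supported in $[-1,1]$ and (\ref{structure-noise-2}) controls $\sum_{j,k}b_{jk}\lambda_j^{2/7}\|\alpha_k\|_{L^\infty(0,1)}$, the realized forcing $\eta_n$ is \emph{pathwise uniformly} bounded in a suitable weighted space; this produces a deterministic absorbing ball in $\mathcal H$ and is exactly the feature that makes the compact-support assumption of Theorem~\ref{Thm-informal} available. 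The condition $T>\mathbf T$ will be used repeatedly, since $\mathbf T=\mathbf T(D,a,\chi)$ is the geometric control time attached to the $\Gamma$-type domains in $(\mathbf{S1})$.

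For part $(a)$, asymptotic compactness, I would import the machinery from the theory of global attractors for weakly damped wave equations. The obstruction is the absence of a smoothing effect, so rather than seeking direct compactness I would split $u=v+w$, where $v$ solves the linear damped equation and decays exponentially by virtue of the geometric control condition encoded in (\ref{Damping-localization}), while $w$ carries the cubic term and the bounded forcing and is shown to be uniformly bounded in a more regular space $\mathcal H^s$ for some $s\in(0,1]$; here the spectral weight $\lambda_j^{2/7}$ in (\ref{structure-noise-2}) guarantees that the noise is compatible with the regularity gain. Since $\mathcal H^s\hookrightarrow\mathcal H$ is compact, a bounded ball of $\mathcal H^s$ intersected with the deterministic absorbing set furnishes the compact $\mathcal Y$, and the exponential decay of $v$ gives the pathwise exponential convergence of $\{u^n\}$ to $\mathcal Y$. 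The delicate point is propagating the regularity gain through the critical nonlinearity, for which I would invoke Strichartz-type and commutator estimates together with the localized dissipation.

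For part $(b)$, irreducibility, I would take $z$ to be the zero state, which lies in $\mathcal Y$, and exploit the global asymptotic stability of the unforced equation: for $\eta=0$ the localized damping under the $\Gamma$-condition drives every finite-energy solution to $0$. Concretely, since $\rho_{jk}(0)>0$, the event that the realized noise is arbitrarily small over a window of length $m$ has positive probability $p$; on this event the stabilization estimate for (\ref{semilinear-problem}) forces $u_m$ into an $\varepsilon$-neighbourhood of $0$, uniformly over $x\in\mathcal Y_\infty$ because $\mathcal Y_\infty$ is bounded. This reduces irreducibility to a deterministic global stability statement for the damped wave equation.

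The main obstacle is part $(c)$, the coupling condition, which I expect to occupy the bulk of the argument. Given $x,x'\in\mathcal Y_\infty$ I would construct a control, supported where $\chi\ge\chi_0$ and within the finitely many modes $1\le j,k\le N$ activated by (\ref{structure-noise-2}), that steers the difference of the two controlled trajectories so that $d(\mathcal R,\mathcal R')\le\tfrac12 d(x,x')$. The construction rests on the observability of the linearized wave operator with physically localized and spectrally truncated observation, valid since $T>\mathbf T$; this is where the Hilbert uniqueness method, Carleman estimates and a unique continuation argument enter, and one must absorb the linearization error produced by the cubic term. The control will have magnitude $\lesssim d(x,x')$, so realizing it by a shift of the noise costs, through the $C^1$ regularity of $\boldsymbol\rho$, a Radon--Nikodym factor deviating from $1$ by at most $Cd(x,x')$; a maximal coupling of the shifted and unshifted laws then yields $\Pb(d(\mathcal R,\mathcal R')>\tfrac12 d(x,x'))\le Cd(x,x')$. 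Once $\mathbf{(H)}$ is verified, Theorem~\ref{Thm-informal} delivers a unique invariant measure $\mu_*$ and the bound $\|\mathscr D(x_n)-\mu_*\|_L^*\le C(x)e^{-\beta n}$; tracking the dependence of $C(x)$ on the initial energy through the absorbing-set estimate of part $(a)$ and integrating against the law $\nu$ of the random data produces the stated factor $1+\int_{\mathcal H}E(v)\,\nu(dv)$.
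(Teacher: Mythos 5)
Your overall architecture coincides with the paper's: verify hypothesis $(\mathbf{H})$ --- asymptotic compactness via an attractor-type decomposition, irreducibility via the global stability of the unforced problem (Theorem \ref{Thm-informal-GS}) combined with $\rho_{jk}(0)>0$, and the coupling condition via a localized, spectrally truncated control built from observability/HUM, realized as a shift of the noise whose cost is a total-variation factor of order $d(x,x')$ --- and then apply Theorem \ref{Thm-informal}, tracking the dependence of the constant on the initial energy. Your parts $(b)$ and $(c)$ are faithful to what the paper does in Sections \ref{Section-6-2}, \ref{Section-6-3} and \ref{Section-control}.

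The genuine gap is in part $(a)$, at the step where you assert that the pathwise bounded forcing ``produces a deterministic absorbing ball in $\mathcal H$.'' With damping localized as in $(\mathbf{S1})$ and a genuinely time-dependent force, this is not a routine consequence of boundedness: the flux identity (\ref{flux-0}) shows the energy need not decrease, and the classical routes to dissipativity under localized damping (unique continuation plus monotonicity of the energy as in \cite{FZ-93}, or the gradient-structure argument) are available only in the autonomous case $f(t,x)\equiv f(x)$; the paper explicitly states that the non-autonomous problem was open. Moreover, your subsequent splitting $u=v+w$ and the Strichartz-based $\mathcal H^{s}$-bound on $w$ both presuppose a uniform $\mathcal H$-bound on $u$ (needed to control the cubic term), so the argument cannot start without the absorbing set. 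The paper fills exactly this hole with a dedicated device: a multiplier-type energy inequality (Lemma \ref{Lemma-estimate}) that extracts more information from the flux than (\ref{flux-0}), yielding the ``discrete monotonicity'' (\ref{Implication-mono}) --- if $E_u(\tau)\geq A_0$ then $E_u(\tau+T_0)\leq\varpi E_u(\tau)$ for some $\varpi\in(0,1)$ --- which in turn suffices for global dissipativity (Proposition \ref{prop-S1dissipativity}). Your proposal contains no substitute for this idea, and without it the verification of the asymptotic-compactness hypothesis, and hence the whole scheme, does not get off the ground.
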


See Section \ref{Section-conclusion} for the proof of Theorem \ref{Thm-informal-model}, which will be eventually accomplished after a long series of preparations constituting the bulk of the present paper (see Sections \ref{Section-Probalistic part}-\ref{Section-control}). 

We also mention that recent years have witnessed a considerable interest on random dispersive equations, which involves many topics, such as random data theory \cite{BT-08,BT-08b}, wave turbulence \cite{DH-23,DH-23a,BGHS-21}, Gibbs measure \cite{Bourgain-96,BDNY-24,DNY-appear}, etc. Our result, concerning the exponential mixing for random wave equations, falls into such a category.

To the best of our knowledge, there are few results concerning the ergodicity and mixing for wave equations (and even for other types of dispersive equations). The lack of the smoothing
effect for these equations can partly explain this situation. The existing literature concentrates on the case where the equations are damped-driven on the entire domain and white-forced in time, where the Foia\c{s}--Prodi estimates may be available. See, e.g., \cite{Martirosyan-14,Martirosyan-17} for wave equations and \cite{GHMR-23,DO-05,BFZ-23} for other dispersive equations. 
We also refer the reader to \cite{FT-24,Tolomeo-20,Tolomeo-23} for the results on wave equations in the context of stochastic quantisation.

\begin{remark}\label{remark-damping}
Notably, in Theorem {\rm\ref{Thm-informal-model}} the coefficient $a(x)$ is allowed to vanish outside a subdomain of $D$.
Such degeneracy/localization of damping contributes partly to the novelty of our framework. Roughly speaking,
\begin{itemize}[leftmargin=2em]
\item[$(1)$] the relevant mathematical theories have important application background; 

\item[$(2)$] the presence of localized damping here results from the exploration of sharp sufficient conditions for ergodicity and mixing of wave equations;

\item[$(3)$] the central problem involved is whether the localized dissipation induced by damping can spread to the whole system,  reflected in several essential issues related to dynamical system, global stability and controllability for {\rm(\ref{Problem-0}),(\ref{semilinear-problem})}.
\end{itemize}
Further explanations of these aspects will be found in the remainder of introduction.
\end{remark}

\begin{remark}\label{remark-noise}
More information of the random noise is in the following.
\begin{enumerate}[leftmargin=2em]
\item[$(1)$] The first identity in {\rm(\ref{structure-noise})} indicates that the law of $\eta(t,x)$ is $T$-statistically periodic in time, while
the second is in fact in accordance with 
\begin{equation*}
\eta_n(t,x)=\chi(x)\sum_{j\in\N^+}c_j\theta_j^n(t)e_j(x),\quad t\in [0,T),
\end{equation*}
where $c_j$ are nonnegative numbers, and $\{\theta_j^n;j\in\N^+\}$ stands for a sequence of independent bounded random processes that is not necessarily identically distributed. Moreover, the presence of $\chi(x)$ means that $\eta(t,x)$ possesses the localization feature similarly to $a(x)$.

\item[$(2)$] In view of  {\rm(\ref{structure-noise-2})}, our setting for $\eta(t,x)$ covers both of the cases where it is finite-/infinite-dimensional in time. The former means that  $\eta(t,x)$ is a smooth function of time variable, while the latter implies that it may be rough in time. Another consequence of {\rm(\ref{structure-noise-2})} is that the support of the law of $\eta_n$ is compact in $L^2(D_T)$ and bounded in $L^\infty(0,T;H^{\scriptscriptstyle4/7})$. 

\item[$(3)$] Different from the parabolic cases {\rm(}see, e.g., {\rm\cite{Shi-15,KNS-20,BGN-23,Ner-24})}, 
our result of exponential mixing can not be guaranteed for arbitrary time spread $T>0$. This is essentially because the spectral gap in the high frequency of hyperbolic equations is usually bounded.

\end{enumerate}
\end{remark}

 \subsubsection{Discussion of the result}\label{Section-122} 

The main content of this subsection is to illustrate the level of generality of Theorem \ref{Thm-informal-model}. To this end, we first provide some further comments on our settings for the damping coefficient, nonlinearity and random noise in (\ref{Problem-0}). Another thing involved is to demonstrate that our approach is adaptable to several other types of dispersive equations.

\vspace{0.6em}

\noindent  {\bf Localized damping, critical nonlinearity and multi-featured noise.}

\begin{itemize}[leftmargin=2em]
\item[$(1)$] Our attention on {\it localized damping} is motivated by its mathematical interest and practical significance. While the wave equation is a conservative system, many authors have introduced different types of dissipation mechanisms, especially, damping effect, to stabilize the oscillations. In particular,  the localized damping can be traced to the effort to find the minimal dissipation mechanism. This research field stays active in the recent decades; see, e.g., \cite{Bardos-92, Krieger-Xiang-2022, Coron-Krieger-Xiang-1, 
Lions-88,Zuazua-90,CLT-08,JL-13,DLZ-03, KT-95,LT-93} for some contributions along this line.  The related mathematical theories have also been invoked in physical applications such as thermoelasticity of  composed materials \cite{MMN-02}. See Figure \ref{Figure-Gamma} below for a rough picture of the effective domain of damping, involved in setting $(\mathbf{S1})$.

\begin{figure}[H]
\centering
\includegraphics[width=0.5\textwidth]{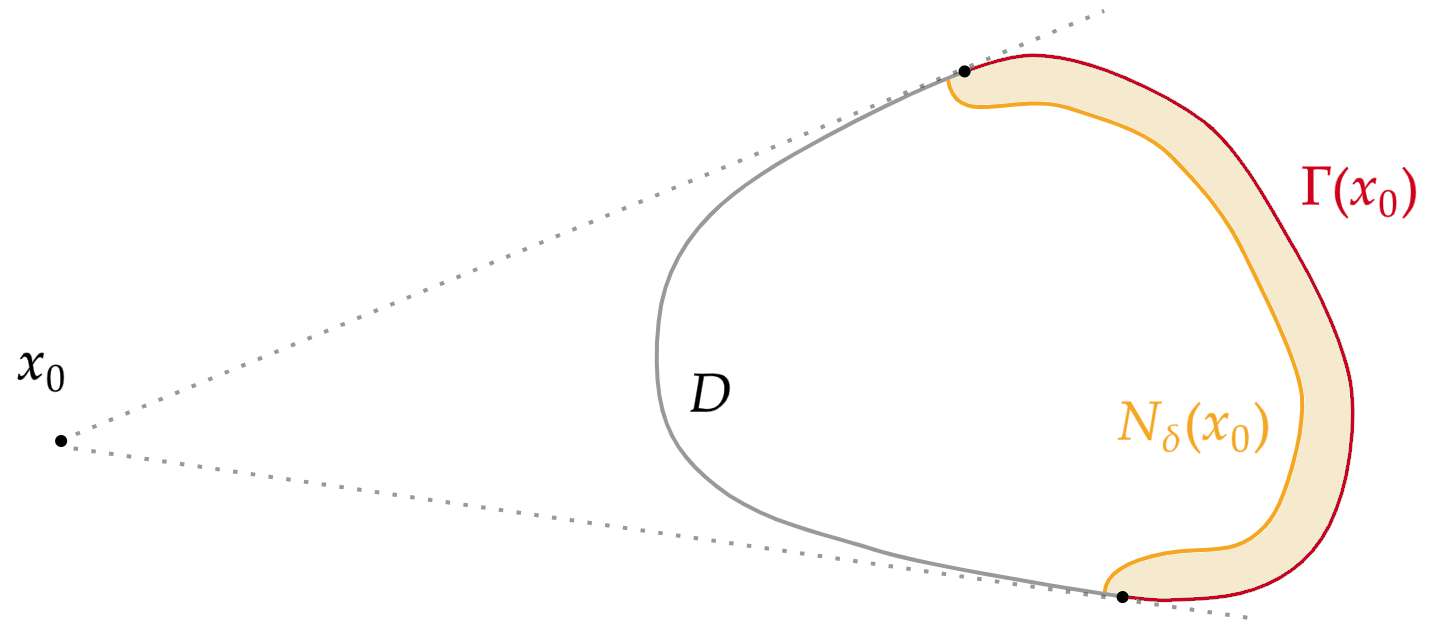}
\caption{$\Gamma$-type domain.}\label{Figure-Gamma}
\end{figure}

On the other hand,  {\it Theorem {\rm\ref{Thm-informal-model}}  is optimal in the sense that the case where the damping vanishes {\rm(}i.e. $a(x)\equiv 0${\rm)} is out of reach.} In fact, the mixing property means in general that the corresponding random dynamical system admits a statistical equilibrium having the global stability, which implies the dissipation of the system. Therefore, 
the damping effect induced by $a(x)$, assuring the dissipation mechanism of (\ref{Problem-0}), seems necessary for  mixing. As a circumstantial evidence, we refer the reader to \cite[Theorem 9.2.3]{DPZ-96} { for a negative result}, concerning a linear wave equation with constant damping and white noise.

\medskip

\item[$(2)$] Considering the {\it subcritical nonlinearity} for wave equations is a previously used approach for addressing the technical issues caused by the lack of the smoothing effect. Under this subcritical setting, the nonlinear term takes values being more regular than the phase space, and such regularity can be  useful in the arguments of ergodicity and mixing; see, e.g., \cite{Martirosyan-14,FT-24,Tolomeo-20,Martirosyan-17}.

In comparison, the availability of  {\it critical nonlinearity} in the present paper is mainly thanks to the general framework described in Section \ref{Section-1-1}, which enables us to employ the asymptotic compactness to reduce the exploration of mixing to the problem restricted on an invariant compactum.

\medskip

\item[$(3)$] 
{As described in Remark \ref{remark-noise}, the random noise $\eta(t,x)$ is {\it localized in physical space and finite-dimensional both in space and time.}} 
Our interest on such type of random noise is inspired directly from the works of \cite{Shi-15,Shi-21} by Shirikyan. 
Another feature of $\eta(t,x)$ is the {\it boundedness in random parameter}, while the statistics associated are essentially different from the white noise. This enables us to invoke the viewpoints coming from  deterministic problems,  compensating for the unavailability of stochastic tools based on It\^{o} calculus; see Section \ref{Section-1-3} for further discussions. We also mention that the bounded noise serves better to build models for some specific physical problems (for instance, in modern meteorology); see, e.g., the monograph \cite{Onofrio-13}.

\end{itemize}

\vspace{0.6em}

\noindent  {\bf Potential future extensions of the approach.}
\medskip

In order to prove Theorem \ref{Thm-informal-model}, it suffices to verify the abstract hypothesis $(\mathbf{H})$, including the asymptotic compactness, irreducibility and coupling condition, so that Theorem \ref{Thm-informal} is applicable to (\ref{Problem-0}). Our approach for this purpose is to invoke, extend and combine the ideas originated in various fields of dynamical system, dispersive equation and control theory:  

\begin{itemize}[leftmargin=2em]
\item[$(1)$] The proof of asymptotic compactness is translated to a similar issue for the non-autonomous dynamical system generated by  (\ref{semilinear-problem}), i.e., whether there exists an $\mathcal H$-compact set attracting exponentially any trajectory of the system. 

\medskip
\item[$(2)$] In the context of PDEs, the irreducibility is typically attributed to a given state that can be reached by the dynamics regardless of initial conditions. Our approach we adopt to verify the irreducibility is based on the global stability\footnote{By ``global'' we mean that the scale of states can be
as large as we want.} of equilibrium for
the unforced problem (i.e. $f(t,x)\equiv 0$ in the context of (\ref{semilinear-problem})), which is in fact one of central issues regarding the dynamics of wave  equations and even other types of dispersive equations.  
\medskip
\item[$(3)$] The verification of coupling hypothesis will be accomplished by analyzing a controlled system associated with (\ref{semilinear-problem}). Our arguments in this part are adaptations and combinations of the underlying ideas in controllability, observability and stabilization, which constitute a major part of control theory. 
\end{itemize}
See Section \ref{Section-1-3} later for relevant discussions of contexts within the prior and present works. 

\vspace{0.6em}

While we focus on model (\ref{Problem-0}) in this paper, we believe that the approach is rather general and it can be adapted with
technical modifications to yield the mixing property for other types of dispersive equations. This is mainly because, as previously mentioned, we translate the issue of mixing property into several specific topics. Meanwhile, there are several results relevant to these topics and available for other dispersive equations, which one may extend further to meet the setting in our framework. The reader is referred to, e.g., \cite{DGL-06,Laurent-10,BBZ-13,Bourgain-14,EKZ-17,ALM-16} for the nonlinear Schr\"{o}dinger equations and \cite{CC-2004, CKN-2024, CRX-2017, LRZ-10,ET-16,EKZ-18} for KdV equations.

\subsection{Overview of the proof }\label{Section-1-3}

\begin{figure}[H]
\centering
\includegraphics[width=0.88\textwidth]{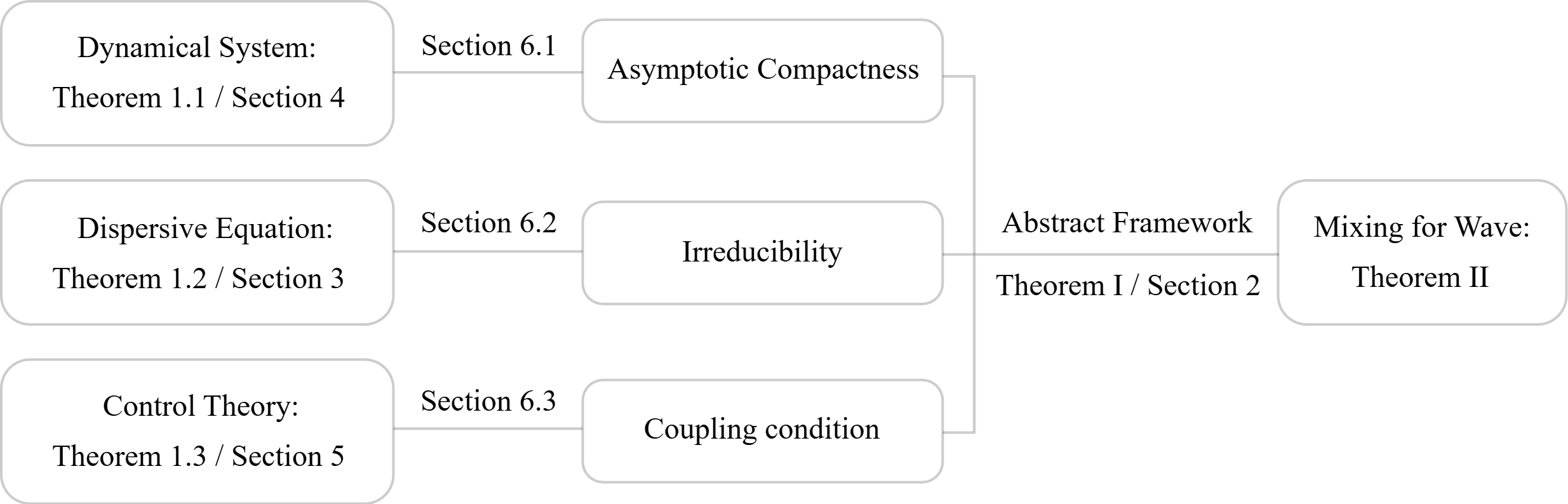}
\caption{Structure of the proof.}\label{Figure-structure}
\end{figure}

As stated in Section \ref{Section-122}, the proof of Theorem \ref{Thm-informal-model} is based on several intermediate results for the deterministic equation (\ref{semilinear-problem}). In what follows, we shall provide brief statements of these results, i.e. Theorems \ref{Thm-informal-AC}-\ref{Thm-informal-SQ} below, and describe their relations to the randomly forced equation (\ref{Problem-0}). See Figure~\ref{Figure-structure} for a rough picture of the proof.

\subsubsection{Asymptotic compactness}

In order to verify hypothesis $(\mathbf{H})$, the initial step is to construct a compact subset of $\mathcal H$, which is exponentially attracting for (\ref{semilinear-problem}). In the construction, one thing to be careful is that the regularity of attracting set should be high enough to carry the  irreducibility and coupling construction. 
Accordingly, we shall prove the existence of an $\mathcal H^{\scriptscriptstyle4/7}$-bounded attracting set for (\ref{semilinear-problem}).
In the language of dynamical system, such property can be described as 
\begin{equation*}
\textrm{ ``$(\mathcal H,\mathcal H^{\scriptscriptstyle4/7})$-asymptotic compactness''. }
\end{equation*}

 The proof of this result constitutes the main content of Section \ref{Section-nonautonomous}.

\begin{theorem}[{\bf Asymptotic compactness}]\label{Thm-informal-AC}
{\it Assume that $a(x)$ satisfies {\rm(\ref{Damping-localization})}. Then for every $R_0>0$, 
there exists a bounded subset $\mathscr B_{\scriptscriptstyle4/7}$ of $\mathcal H^{\scriptscriptstyle4/7}$ and constants $C,\kappa>0$ such that if
$$
\text{the force }f\ \text{belongs to } \overline{B}_{L^\infty(\R^+;H^{\scriptscriptstyle4/7})}(R_0),
$$
then the solution $u$ of {\rm(\ref{semilinear-problem})} satisfies that
\begin{equation*}
{\rm dist}_{\mathcal H}(u[t],\mathscr B_{\scriptscriptstyle4/7})\leq C\left(1+E_u(0)\right)e^{-\kappa t},\quad \forall\, t\geq 0, 
\end{equation*}
where 
${\rm dist}_{\mathcal H}(\cdot,\cdot)$ denotes the Hausdorff pseudo-distance in $\mathcal H$ {\rm(}see {\rm(\ref{pseudo-distence})} later{\rm)}.}
\end{theorem}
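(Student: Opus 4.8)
The plan is to split the statement into two classical pieces: first a \emph{dissipativity} estimate producing a bounded absorbing set in the energy space $\mathcal H$, and then an \emph{asymptotic smoothing} step showing that every trajectory is exponentially attracted, in the $\mathcal H$-metric, to a set bounded in the stronger space $\mathcal H^{4/7}$. For the first part I would start from the energy identity obtained by testing (\ref{semilinear-problem}) with $\partial_t u$,
\begin{equation*}
\tfrac{d}{dt}E_u(t)=-\int_D a(x)|\partial_t u|^2\,dx+\int_D f\,\partial_t u\,dx,
\end{equation*}
so that the quartic term $\tfrac14 u^4$ built into $E$ absorbs the critical nonlinearity automatically. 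The force is handled by Young's inequality against the a priori bound $R_0$; the real issue is that the dissipation $\int a|\partial_t u|^2$ only sees the $\Gamma$-type region $N_\delta(x_0)$.

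\textbf{Localized stabilization.} To convert this localized dissipation into genuine decay of the full energy I would use the multiplier method adapted to the $\Gamma$-condition: testing the equation against $m(x)\cdot\nabla u+c\,u$ with $m(x)=x-x_0$ (the Lions/Komornik multiplier) yields, under the geometric setup of Definition~\ref{Def-Gamma}, an observability-type inequality $\int_0^T E_u\,dt\lesssim \int_0^T\!\!\int_{N_\delta}a|\partial_t u|^2\,dx\,dt+\text{(force)}+\text{(l.o.t.)}$. The delicate contribution is the cubic term $\int u^3\,(m\cdot\nabla u)$, which I expect to integrate by parts into boundary terms (vanishing by the Dirichlet condition) plus bulk terms controllable by $E_u$ through $H^1\hookrightarrow L^6$. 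Combined with the energy identity and a Gr\"onwall/absorption argument this should give $E_u(t)\le C(1+E_u(0)e^{-\gamma t})$, hence a bounded absorbing ball $\mathscr B_0\subset\mathcal H$ and, as a byproduct, exponential decay of the \emph{linear} damped semigroup.

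\textbf{Decomposition and regularity bootstrap.} For the smoothing I would decompose $u=u_\ell+u_h$, where $u_\ell$ solves the unforced linear damped wave $\boxempty u_\ell+a\partial_t u_\ell=0$ with $u_\ell[0]=u[0]$, so that $\|u_\ell[t]\|_{\mathcal H}\le Ce^{-\kappa t}\|u[0]\|_{\mathcal H}$ by the linear stabilization just obtained, while $u_h$ solves $\boxempty u_h+a\partial_t u_h=f-u^3$ with zero data. It then remains to bound $u_h$ in $\mathcal H^{4/7}$ uniformly in time. Since $u$ is only bounded in $\mathcal H$, the term $u^3$ is a priori merely in $L^2$, and the core difficulty is to upgrade this. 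Here I would invoke Strichartz estimates for the 3D wave equation to extract a first fractional gain off the critical $L^2$ forcing, then run a bootstrap on elliptic regularity: the fractional product estimate gives, for $0<s<1/2$, $\|u^3\|_{H^{3s}}\lesssim\|u\|_{H^{1+s}}^3$ (via $H^{1+s}\hookrightarrow L^{6/(1-2s)}$ and $H^{1+s}\hookrightarrow W^{3s,\,6/(1+4s)}$), so that the gain $s\mapsto 3s$ raises the regularity until it saturates at $s=4/7$. This value is consistent because $H^{1+4/7}=H^{11/7}\hookrightarrow L^\infty$ closes the estimate $\|u^3\|_{H^{4/7}}\lesssim\|u\|_{L^\infty}^2\|u\|_{H^{4/7}}$, while the external force sits exactly at this level, $f\in L^\infty(\R^+;H^{4/7})$. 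With $g=f-u^3$ bounded in $L^\infty_tH^{4/7}_x$, a higher-order energy estimate for $u_h$—applying $(-\Delta)^{2/7}$ and repeating the localized-stabilization scheme at this fractional level—would produce the absorbing ball $\mathscr B_{4/7}\subset\mathcal H^{4/7}$, and then $\mathrm{dist}_{\mathcal H}(u[t],\mathscr B_{4/7})\le\|u_\ell[t]\|_{\mathcal H}\le C(1+E_u(0))e^{-\kappa t}$ yields the claim.

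\textbf{Main obstacle.} I expect the hard part to be precisely the interaction of the two degeneracies flagged in the introduction. The critical cubic nonlinearity forces the use of Strichartz/mixed-norm estimates, since ordinary elliptic regularity gives no gain from $H^1$ (the map $s\mapsto 3s$ fixes $0$); and the localized, variable-coefficient damping must still dissipate the \emph{fractional} $\mathcal H^{4/7}$-energy, which requires running the multiplier/observability argument after the nonlocal operator $(-\Delta)^{2/7}$ that does not commute with $a(x)$, generating commutators that must be absorbed. Controlling these commutators together with the critical term, uniformly over the $R_0$-ball of forces, is where the bulk of the technical work of Section~\ref{Section-nonautonomous} should lie.
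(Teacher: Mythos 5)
Your overall architecture --- multiplier-based dissipativity, then the splitting $u=u_\ell+u_h$ with $u_\ell=U(t)u[0]$ decaying in $\mathcal H$ and $u_h$ gaining regularity through the cube --- is exactly the paper's (Proposition \ref{prop-S1dissipativity} together with Lemma \ref{Lemma-AC-1}). Two of your steps, however, have genuine gaps.

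First, the regularity bootstrap $s\mapsto 3s$ cannot get off the ground. Your product estimate $\|u^3\|_{H^{3s}}\lesssim\|u\|_{H^{1+s}}^3$ requires $u\in H^{1+s}$ with $s>0$, but $u=u_\ell+u_h$ and $u_\ell$ carries the initial data, which is only in $\mathcal H$; hence along a general trajectory $u$ never lies in $H^{1+s}$ for any $s>0$, no matter how regular $u_h$ becomes. One could try to repair this by re-initializing on successively smoother attracting sets (transitivity of exponential attraction, which the paper uses, but only for the $\mathcal H^{1}$ statement in Theorem \ref{Thm-dynamicalsystem}(2)); yet even then your estimate is \emph{cubic} in the unknown higher norm, so the uniform-in-time Duhamel bound cannot be closed for large data without a smallness condition on the absorbing set. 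The paper's resolution is Lemma \ref{Lemma-Regularity}: on a bounded $3$D domain the Blair--Smith--Sogge estimates (Proposition \ref{prop-Strichartz}, admissible range $q\geq 7/2$) give the full gain in one shot --- $u\in L^\infty_tH^1_x$ with bounded Strichartz norm implies $u^3\in L^1_tH^{4/7}_x$ --- and, crucially, with a bound that is \emph{linear} in $\|u\|_{L^\infty_tH^{1}_x}$, the constant depending only on the Strichartz norm, which is in turn controlled by the energy bound of Corollary \ref{Corollary-S1bound}. So the input of the smoothing step is only the dissipativity bound, there is no self-referential estimate at the higher level, and the exponent $4/7$ is dictated by the endpoint $q=7/2$ rather than by saturation against the force. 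Relatedly, your anticipated ``main obstacle'' --- commutators of $(-\Delta)^{2/7}$ with $a(x)$ in a fractional stabilization scheme --- never arises on the correct route: once $f-u^3$ is controlled in $H^{4/7}$ locally in time, the bound on $u_h$ follows from the Duhamel formula and the exponential decay of $U(t)$ on $\mathcal H^{s}$ for all $s\in[0,1]$ (Lemma \ref{Linear-semigroup}); running localized-damping observability at fractional order is both unnecessary and far harder.

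Second, your dissipativity step hides the actual difficulty of the non-autonomous problem. With $f$ depending on $t$ the energy is not monotone, and the multiplier inequality you write (the paper's Lemma \ref{Lemma-estimate}) carries the lower-order term $\int\!\!\int u^2$ on its right-hand side; in the autonomous theory this term is removed by compactness--uniqueness, i.e.\ unique continuation, which is precisely what fails when the linearized problem is inhomogeneous. A Gr\"onwall argument alone does not dispose of it. The paper's fix is the ``discrete monotonicity'' of Lemma \ref{Lemma-nonincreasing}: when $E_u(\tau)\geq A_0$ is large, all offending terms are of size $O(E_u^{1/2}(\tau))$ and are dominated by the dissipated fraction of $E_u(\tau)$, whence $E_u(\tau+T_0)\leq\varpi E_u(\tau)$ --- enough for an absorbing set, though not pointwise decay. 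Your sketch needs this (or an equivalent) idea to go through. A smaller bookkeeping point: to obtain a \emph{fixed} set $\mathscr B_{4/7}$ you must run the splitting only after absorption into $\mathscr B_0$; applied from $t=0$, as you propose, the $\mathcal H^{4/7}$-bound on $u_h$ depends on $E_u(0)$, and the prefactor $1+E_u(0)$ in the theorem actually comes from converting the logarithmic absorption time of Proposition \ref{prop-S1dissipativity}, not from $\|u[0]\|_{\mathcal H}\lesssim 1+E_u(0)$.
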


{A more general version of Theorem \ref{Thm-informal-AC}, as well as the asymptotic compactness in a ``physical'' space $\mathcal H^1$, is contained in Theorem \ref{Thm-dynamicalsystem}. 
By taking $R_0$ sufficiently large so that $\eta\in\overline{B}_{L^\infty(\R^+;H^{\scriptscriptstyle4/7})}(R_0)$ almost surely (see Remark \ref{remark-noise}), one can check that the attraction of $\mathscr B_{\scriptscriptstyle4/7}$ also works on the solution paths of (\ref{Problem-0}). Hence, the hypothesis of asymptotic compactness in $(\mathbf{H})$ is verified with $\mathcal Y=\overline{\mathscr B_{\scriptscriptstyle4/7}}$; see Section \ref{Section-6-1} for more details.}

\vspace{0.6em}

When $a(x)\equiv a_0>0$, the conclusion of Theorem \ref{Thm-informal-AC} is rather standard; see, e.g.,  \cite{Zelik-04}. On the other hand,
the case of localized damping is much more subtle, which is up to now understood only in the autonomous setting, i.e. $$f(t,x)\equiv f(x).$$ To address the localized damping, one of the approaches is provided in \cite{FZ-93} and consists mainly of the following properties:
\begin{itemize}[leftmargin=1em]
\item The unique continuation for a 
homogeneous equation in the form
\begin{equation}\label{linear-toymodel}
\boxempty v+p(t,x)v=0,\quad t\in[0,T],
\end{equation}
obtained by linearizing the  equation considered there and removing the damping term\footnote{The unique continuation says that if a solution of (\ref{linear-toymodel}) vanishes on the effective domain of damping, then it vanishes on the entire domain; see, e.g., \cite{RZ-98}.}.

\item The monotonicity of the energy, which can be readily derived in the autonomous setting.
\end{itemize}
The combination of them enables one to deduce the global dissipativity (i.e. the existence of an absorbing set) for the equation. As a consequence, the asymptotic compactness  (and hence the existence of global attractor) follows in a fairly standard way.

Another approach is to invoke the unique continuation just mentioned for deriving the gradient structure \cite{Hale-88} for the corresponding dynamical system. This implies the asymptotic compactness without any explicit discussion of dissipativity. See, e.g., \cite{CLT-08,JL-13} for the related literature. 

\vspace{0.6em}

Does the asymptotic compactness hold for (\ref{semilinear-problem}) with a nonzero force $f(t,x)$ depending on $t$? This problem {\it remains open} mainly due to the following {\bf difficulties}:
\begin{itemize}[leftmargin=2em]
\item[$(1)$] The damping coefficient $a(x)$ can be localized in the physical space (see Remark \ref{remark-damping}).

\item[$(2)$] In the presence of $f(t,x)$ the linearized problem of (\ref{semilinear-problem}) is inhomogeneous, and  the unique continuation does not make sense in such situation. As an aftermath, the discussion of gradient structure becomes much more complicated.

\item[$(3)$] The energy function for (\ref{semilinear-problem}) is not necessarily non-increasing in time, which  can be seen from the flux estimate
\begin{equation}\label{flux-0}
E_u(T)-E_u(0)=\int_{D_T} \left[-a(x)|\partial_tu|^2+f\partial_t u\right],\quad \forall\, T\geq 0. 
\end{equation}
\end{itemize}
The main task of Section \ref{Section-nonautonomous} later is to  give an affirmative answer to this question, and then the conclusion as in Theorem \ref{Thm-informal-AC} is obtained.

\vspace{0.6em}

The ideas and methods proposed for overcoming these obstacles contribute to part of novelty of the present paper.
Roughly speaking, we observe that when the energy of a solution is large, it is non-increasing in discrete times (see Lemma \ref{Lemma-nonincreasing}): there exist  constants $T_0,A_0>0$ such that
\begin{equation}\label{Implication-mono}
E_u(0)\geq A_0\quad \Rightarrow\quad E_u(T_0)\leq E_u(0).
\end{equation}
In comparison, it is non-increasing in continuous time when $f(t,x)\equiv 0$. Property (\ref{Implication-mono}) will be obtained by establishing 
\begin{equation*}
\begin{aligned}
\int_0^{T} E_u(t)dt &\lesssim
E_u(T)+\int_{D_T} \left[a(x)|\partial_tu|^2+u^2+|f \partial_t u|+|f|^2\right],
\end{aligned}
\end{equation*}
by means of the multiplier technique, where the related constant is uniform for $T,u,f$. The preceding estimate extracts more information from the flux (in comparison with (\ref{flux-0})), 
illustrating roughly the propagation of localized dissipation to the whole system.

In the sequel, it will be demonstrated that such type of ``discrete monotonicity'' is sufficient for the global dissipativity of (\ref{semilinear-problem}). Based on the dissipativity, we arrive at the $(\mathcal H,\mathcal H^{\scriptscriptstyle4/7})$-asymptotic compactness (in the absence of  gradient structure), as desired, by using some estimations on the basis of  Strichartz estimates (see \cite{BSS-09,BLP-08} and also Proposition \ref{prop-Strichartz} later).

\subsubsection{Irreducibility}

As mentioned in Section \ref{Section-1-2}, we verify the irreducibility hypothesis in $(\mathbf{H})$, by invoking the global stability of an equilibrium for the unforced problem, i.e. (\ref{semilinear-problem}) with $f(t,x)\equiv 0$. To this end, we shall use the following result due to Zuazua \cite{Zuazua-90}.

\begin{theorem}[{\bf Exponential decay;  \cite{Zuazua-90}}]\label{Thm-informal-GS}
{\it Assume that $a(x)$ satisfies {\rm(\ref{Damping-localization})}. Then there exist constants $C,\gamma>0$ such that
\begin{equation}\label{informal-stability}
E_u(t)\leq Ce^{-\gamma t}E_u(0),\quad\forall\, t\geq 0
\end{equation}
for any global solution $u$  of {\rm (\ref{semilinear-problem})} with $f(t,x)\equiv 0$.}
\end{theorem}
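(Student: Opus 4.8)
The plan is to reduce the exponential decay to a single \emph{observability inequality} over one fixed time window, which then self-improves by the autonomy of the equation. Multiplying the unforced equation $\boxempty u+a(x)\partial_t u+u^3=0$ by $\partial_t u$ and integrating over $D$ yields the dissipation identity
\begin{equation*}
E_u(0)-E_u(T)=\int_{D_T}a(x)|\partial_t u|^2,
\end{equation*}
so that $E_u$ is non-increasing (here the $\tfrac14 u^4$ term in $E$ is exactly what absorbs the cubic nonlinearity). The goal is therefore to prove that there exist $T>0$ and $C>1$, independent of the solution, with $E_u(0)\leq C\int_{D_T}a(x)|\partial_t u|^2$. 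Once this is available, the identity gives $E_u(T)\leq(1-C^{-1})E_u(0)$; writing $\theta:=1-C^{-1}\in(0,1)$ and restarting the evolution from $u[T]$ (legitimate since the equation is autonomous), an iteration produces $E_u(kT)\leq\theta^{k}E_u(0)$ for all $k\in\N$, and monotonicity of $E_u$ fills in the intermediate times to give (\ref{informal-stability}) with $\gamma=-T^{-1}\log\theta$.

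To establish the observability inequality I would employ the multiplier method tailored to the $\Gamma$-type geometry. Taking the radial field $m(x)=x-x_0$ with $x_0$ as in Definition~\ref{Def-Gamma} and testing the equation against $m\cdot\nabla u$ together with an appropriate multiple of $u$ (the Rellich--Pohozaev multiplier), the Dirichlet condition annihilates all boundary contributions except $\tfrac12\int_0^T\!\int_{\partial D}(m\cdot n)|\partial_n u|^2$, whose integrand carries a favourable sign exactly on $\Gamma(x_0)$. Since the damping region contains a neighbourhood $N_\delta(x_0)$ of $\Gamma(x_0)$, I would introduce a cutoff vector field that coincides with $m$ away from the damping zone and is deformed inside it, so that the surviving boundary term is traded for the interior integral $\int_{D_T}a(x)(|\nabla u|^2+|\partial_t u|^2)$; the gradient part localized to $\{a>0\}$ is in turn absorbed by testing against $\xi(x)u$ for a suitable cutoff $\xi$ and invoking the equation. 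After collecting terms and using the energy identity to control the endpoint energies, these manipulations deliver
\begin{equation*}
E_u(0)\leq C\int_{D_T}a(x)|\partial_t u|^2+C\big(\|u\|_{L^2(D_T)}^2+\|u\|_{L^4(D_T)}^4\big),
\end{equation*}
i.e.\ observability up to \emph{lower-order} terms, provided $T$ is taken sufficiently large.

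The crux is to remove these lower-order terms by a compactness--uniqueness argument, and I expect this to be the principal obstacle. Arguing by contradiction, suppose the clean observability fails; then there is a sequence of solutions $u_\ell$ with $E_{u_\ell}(0)=1$ and $\int_{D_T}a|\partial_t u_\ell|^2\to0$, whence by the displayed inequality the $L^2(D_T)$-norms of $u_\ell$ stay bounded below. Extracting weak limits and upgrading to strong convergence — here the lack of smoothing forces one to lean on the Strichartz/asymptotic-compactness estimates of Section~\ref{Section-nonautonomous} to pass to the limit in the Sobolev-critical cubic term $u_\ell^3$ — one obtains a nonzero limit $u$ solving $\boxempty u+u^3=0$ with $\partial_t u\equiv0$ on the damping set, hence in particular on $N_\delta(x_0)$. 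Linearizing reduces this rigidity to the unique continuation property for $\boxempty v+p(t,x)v=0$ as in (\ref{linear-toymodel}); since $T$ has been chosen large enough that the geometric control time is exceeded, unique continuation forces $u\equiv0$, contradicting $E_u(0)=1$. Both securing the strong compactness needed to handle the critical nonlinearity and invoking a unique continuation theorem valid on the $\Gamma$-type configuration are delicate, and it is exactly here that the dispersive (non-parabolic) nature of the problem makes the standard parabolic machinery unavailable.
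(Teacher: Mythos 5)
You should first be aware that the paper contains no proof of Theorem~\ref{Thm-informal-GS}: it is quoted from Zuazua \cite{Zuazua-90} and used as a black box (its only role is to yield Proposition~\ref{prop-global-stability}, which is then applied on the compact set $\mathcal Y_\infty$ in Section~\ref{Section-6-2}). Your outline --- energy monotonicity, multiplier estimates adapted to the $\Gamma$-type geometry, removal of lower-order terms by compactness--uniqueness and unique continuation, then iteration by autonomy --- is precisely the strategy of that cited work, so you are reconstructing the ``right'' proof rather than inventing a different one.

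There is, however, a genuine gap in your compactness--uniqueness step. You negate the clean observability inequality and assert that this produces a sequence of solutions with $E_{u_\ell}(0)=1$ and $\int_{D_T}a|\partial_t u_\ell|^2\to 0$. For a \emph{linear} equation this normalization is legitimate because solutions can be rescaled; here the equation is nonlinear and non-homogeneous (if $u$ solves it, $\lambda u$ solves a different equation), so the negation of ``$\exists C\ \forall u:\ E_u(0)\leq C\int_{D_T}a|\partial_t u|^2$'' only yields a sequence with $E_{u_\ell}(0)\big/\int_{D_T}a|\partial_t u_\ell|^2\to\infty$ and \emph{no control on} $E_{u_\ell}(0)$. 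If $E_{u_\ell}(0)\to\infty$, there is no bounded sequence from which to extract limits and the argument collapses. What your contradiction argument actually proves is observability, hence decay, with constants depending on an a priori bound $R$ on the initial energy --- i.e.\ decay uniform on bounded sets of the energy space, which is the form of Zuazua's own theorem and of the ``local type'' result of \cite{JL-13} that the paper explicitly contrasts with the uniform statement (\ref{informal-stability}). Since the theorem you are proving asserts constants independent of the solution, the large-energy regime needs a separate, compactness-free argument. For the defocusing cubic nonlinearity such an argument exists and is exactly the ``discrete monotonicity'' trick of Lemma~\ref{Lemma-nonincreasing} (specialized to $f\equiv 0$): since $\|u\|_{L^2(D)}^2\leq |D|^{1/2}\|u\|_{L^4(D)}^2\leq 2|D|^{1/2}E_u^{1/2}$, the lower-order term $\int_{D_T}u^2$ in your multiplier inequality is $O\bigl(T\,E_u(0)^{1/2}\bigr)$ and is absorbed by $E_u(0)$ once $E_u(0)$ exceeds a fixed threshold $A_0$; iterating this until the energy drops below $A_0$, and only then invoking your bounded-energy compactness--uniqueness estimate, yields the uniform constants. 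Without this splice, your proof establishes a strictly weaker statement than (\ref{informal-stability}).

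Two smaller points: in the multiplier step, on $\Gamma(x_0)$ the boundary term $\tfrac12\int(m\cdot n)|\partial_n u|^2$ has the \emph{unfavourable} sign (that is why the damping must cover $N_\delta(x_0)$); the favourable sign holds on $\partial D\setminus\Gamma(x_0)$. And in the rigidity step, unique continuation applied to $v=\partial_t u$ gives $\partial_t u\equiv 0$, after which one still needs the elliptic argument ($-\Delta u+u^3=0$ with Dirichlet data forces $u\equiv 0$ by testing against $u$) to reach the contradiction; as written you let unique continuation kill $u$ directly.
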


{See Proposition \ref{prop-global-stability} for a 
direct consequence of Theorem \ref{Thm-informal-GS}, describing the global stability of zero equilibrium.}
This, combined with  setting $(\mathbf{S2})$, could give rise to the irreducibility for (\ref{Problem-0}); see Section \ref{Section-6-2} for more details. 
Let us mention that the approach of type ``irreducibility via global stability'' has been widely used both in the cases of white noise \cite{HM-06,FGRT-15} and bounded noise \cite{KNS-20,Shi-15}.

\vspace{0.6em}

The stability of the damped wave equations is an active research topic in the recent decades; see, e.g., \cite{JL-13,DLZ-03, Krieger-Xiang-2022, Coron-Krieger-Xiang-1, KT-95, KK-23,LLTT-17}.
In the context of $\Gamma$-type geometric condition (involved in setting $(\mathbf{S1})$), the global stability of type (\ref{informal-stability}) has been fully studied for wave equations with defocusing nonlinearities, which is based on the multiplier technique developed in \cite{Lions-88}.
Another approach to the global stability is within the framework of the microlocal analysis (see, e.g., \cite{BG-97}), where the so-called geometric control condition (GCC for abbreviation) is introduced  \cite{Bardos-92}, and which gives almost sharp stability results.

In particular, we mention here that the GCC-based result in \cite{JL-13} is also sufficient for verifying the irreducibility hypothesis, although it is of local type, i.e., the  constants $C,\gamma$ in (\ref{informal-stability}) depends on the size of initial data. This is mainly because the irreducibility  involved in our criterion is required to work only on a compact set. Therefore, there seems to be some hope in extending the result of Theorem \ref{Thm-informal-model} to the setting of GCC; the key step would be to establish the asymptotic compactness as in Theorem \ref{Thm-informal-AC} for such case.

\subsubsection{Coupling condition}

Inspired by the idea of ``controllability implies mixing'' developed in \cite{Shi-15,Shi-21},
the verification of coupling hypothesis will be based on a squeezing property for the associated controlled system: 
\begin{equation}\label{control-model}
\begin{cases}
    \boxempty u+a(x)\partial_t u+u^3= h(t,x)+\chi\mathscr P^{\scriptscriptstyle T}_{\scriptscriptstyle N}\zeta(t,x), \quad x\in D,\\
    u[0]=(u_0,u_1)=u^0.
    \end{cases}
\end{equation}
Here, $h(t,x)$ is a given external force,  $\zeta(t,x)$ stands for the control, and  $\mathscr P^{\scriptscriptstyle T}_{\scriptscriptstyle N}$ denotes the projection from $L^2(D_T)$ to the finite-dimensional space
\begin{equation*}
\text{span} \{ e_j\alpha^{\scriptscriptstyle T}_k,1\leq j,k\leq N\}.
\end{equation*}
We refer the reader to the monograph \cite{Coron-07} by Coron for comprehensive descriptions of the italic terminology below from the control theory.  Our analysis for the control problem is placed in Section \ref{Section-control}.

The squeezing property for (\ref{control-model}) is collected in the following.

\begin{theorem}[{\bf Squeezing property}]\label{Thm-informal-SQ}
Assume that $a(x),\,\chi(x)$ satisfy setting  $(\mathbf{S1})$. Then for every $T>\mathbf{T}$ and $R_1,R_2>0$, there exist constants $N\in\N^+$ and $d>0$  such that for every  $u^0,\hat u^0\in \overline{B}_{\mathcal H^{\scriptscriptstyle4/7}}(R_1)$ with 
\begin{equation*}
\|u^0-\hat u^0\|_{_{\mathcal H}}\leq d
\end{equation*}
and $h\in\overline{B}_{L^2(0,T;H^{\scriptscriptstyle4/7})}(R_2)$,
 there is a control $\zeta\in L^2(D_T)$ satisfying
\begin{equation}\label{Informal-squeezing}
\|S(\hat u^0,h)-S(u^0, h+\chi\mathscr P^{\scriptscriptstyle T}_{\scriptscriptstyle N}\zeta)\|_{_{\mathcal H}}\leq \frac{1}{4}\|\hat u^0-u^0\|_{_{\mathcal H}},
\end{equation}
where $S$ is defined by {\rm(\ref{definition-maps})}.
\end{theorem}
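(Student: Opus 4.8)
The plan is to pass to the equation satisfied by the difference of the two trajectories and treat it as a squeezing problem for a damped wave equation with bounded nonnegative potential, in which a finite-dimensional localized control compensates for the modes that the damping alone fails to contract uniformly. Write $\hat u=S(\hat u^0,h)$ and let $u$ denote the controlled solution, with data $u^0$ and forcing $h+\chi\mathscr{P}^{\scriptscriptstyle T}_{\scriptscriptstyle N}\zeta$, that I aim to construct. Then $w:=u-\hat u$ solves
\begin{equation*}
\boxempty w+a(x)\partial_t w+3\hat u^2\,w=\chi\mathscr{P}^{\scriptscriptstyle T}_{\scriptscriptstyle N}\zeta-\big(3\hat u\,w^2+w^3\big),\qquad w[0]=u^0-\hat u^0,
\end{equation*}
using $(\hat u+w)^3-\hat u^3=3\hat u^2 w+3\hat u w^2+w^3$. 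Since $u^0,\hat u^0\in\overline{B}_{\mathcal H^{\scriptscriptstyle4/7}}(R_1)$ and $h\in\overline{B}_{L^2(0,T;H^{\scriptscriptstyle4/7})}(R_2)$, well-posedness in $\mathcal H^{\scriptscriptstyle4/7}$ keeps $\hat u$ (and, a posteriori, $u$) bounded there on $[0,T]$, so the potential $3\hat u^2$ is nonnegative and bounded in $L^\infty(D_T)$ (via $H^{\scriptscriptstyle11/7}\hookrightarrow L^\infty$) and the remainder $3\hat u\,w^2+w^3$ is of quadratic–cubic order. The smallness $\|w[0]\|_{\mathcal H}\le d$ is what turns this remainder into a genuine perturbation.

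The core is the \emph{linear} problem obtained by freezing the potential at $3\hat u^2$ and treating the remainder as a source $g$: given $v[0]=w[0]$, find $\zeta$ so that the solution of $\boxempty v+a(x)\partial_t v+3\hat u^2 v=\chi\mathscr{P}^{\scriptscriptstyle T}_{\scriptscriptstyle N}\zeta+g$ satisfies $\|v[T]\|_{\mathcal H}\le\tfrac18\|w[0]\|_{\mathcal H}$ with $\|\zeta\|_{L^2(D_T)}\lesssim\|w[0]\|_{\mathcal H}+\|g\|_{L^1(0,T;H)}$. I would produce $\zeta$ by the Hilbert uniqueness method, for which the decisive input is an observability inequality for the backward adjoint equation $\boxempty\phi-a(x)\partial_t\phi+3\hat u^2\phi=0$, schematically of the form
\begin{equation*}
\|\phi[0]\|_{\mathcal H}^2\ \lesssim\ \big\|\mathscr{P}^{\scriptscriptstyle T}_{\scriptscriptstyle N}(\chi\phi)\big\|_{L^2(D_T)}^2+\int_{D_T}a(x)|\partial_t\phi|^2,
\end{equation*}
\emph{uniform} over the admissible potentials $3\hat u^2$. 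The second term is furnished for free by the dissipation identity of the damped equation, so that, through HUM and this identity, the actuator need only compensate for what the localized damping misses; this is exactly where the two $\Gamma$-type regions of $(\mathbf S1)$, one for $a$ and one for $\chi$, must cooperate.

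Proving this observability inequality is the heart of the matter and the main obstacle. I would first establish, by the multiplier method of Lions together with a Carleman estimate adapted to the $\Gamma$-geometry, observability of the d'Alembertian from the union of the supports of $a$ and $\chi$, valid as soon as $T>\mathbf T$, so that finite speed of propagation forces every ray to meet the observation region; the potential then enters only through lower-order terms. These are absorbed by a compactness–uniqueness argument whose crux is the unique continuation property for $\boxempty\phi+p(t,x)\phi=0$ with $\phi$ vanishing on the observation region. The same scheme justifies replacing the full trace $\chi\phi$ by its finite-dimensional truncation $\mathscr{P}^{\scriptscriptstyle T}_{\scriptscriptstyle N}(\chi\phi)$: for $N$ large the discarded high $(j,k)$-modes are already seen through the damping term, so only finitely many modes must be reached by the control. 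Handling the time-dependent potential uniformly over the bounded family is precisely what forces a compactness–contradiction formulation rather than an explicit constant, and is the most delicate point of the argument.

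Finally I would close the nonlinear step by a contraction-mapping argument: substituting $g=-(3\hat u\,w^2+w^3)$ into the linear solution operator defines a map on a small ball of $C([0,T];\mathcal H)$, which for $d$ small is a contraction thanks to the $\mathcal H$-smallness of $w$ and the $L^\infty$-bound on $\hat u$. Its fixed point is the genuine controlled trajectory $u$, and the nonlinear remainder degrades the linear bound by at most a further $\tfrac18\|w[0]\|_{\mathcal H}$, whence
\begin{equation*}
\|S(\hat u^0,h)-S(u^0,h+\chi\mathscr{P}^{\scriptscriptstyle T}_{\scriptscriptstyle N}\zeta)\|_{\mathcal H}=\|w[T]\|_{\mathcal H}\le\tfrac14\|\hat u^0-u^0\|_{\mathcal H}.
\end{equation*}
Fixing $N$ at the observability step and then $d$ at the contraction step, in that order, yields the constants asserted in the statement.
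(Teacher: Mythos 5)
Your overall skeleton---linearize along $\hat u$ with frozen potential $3\hat u^2$, solve a linear control problem, close by a small-data perturbation argument, and fix $N$ before $d$---is the same as the paper's (Theorem~\ref{Th1} is reduced to Proposition~\ref{Prop-linear}, then transferred to the nonlinear equation in Appendix~\ref{Appendix-squeezing}). The genuine gap is in your core linear step. You propose to run HUM with the weakened observability inequality
\begin{equation*}
\|\phi[0]\|_{\mathcal H}^2\ \lesssim\ \big\|\mathscr P^{\scriptscriptstyle T}_{\scriptscriptstyle N}(\chi\phi)\big\|_{L^2(D_T)}^2+\int_{D_T}a(x)|\partial_t\phi|^2,
\end{equation*}
arguing that the damping term is ``furnished for free.'' But the duality underlying HUM (Lemma~\ref{Theorem-surjective}, Proposition~\ref{theorem-duality}) converts coercivity of the map $\phi\mapsto B^*\phi$ into surjectivity of the control operator $B$, and here the only actuator is $\zeta\mapsto\chi\mathscr P^{\scriptscriptstyle T}_{\scriptscriptstyle N}\zeta$. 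The damping is a fixed feedback $-a(x)\partial_t u$ inside the dynamics, not a second input you are free to choose; an observability inequality whose right-hand side contains $\int a|\partial_t\phi|^2$ does not make the HUM functional coercive and produces no control acting through $\chi\mathscr P^{\scriptscriptstyle T}_{\scriptscriptstyle N}$ alone. Worse, the inequality your scheme actually needs---coercivity of the finite-rank observation $\mathscr P^{\scriptscriptstyle T}_{\scriptscriptstyle N}(\chi\,\cdot)$ over \emph{all} adjoint data---is false for dimensional reasons: a finite-dimensional observation cannot be bounded below on an infinite-dimensional space of solutions. Your remark that ``the discarded high $(j,k)$-modes are already seen through the damping term'' is exactly where this impossibility is hidden; it has no rigorous counterpart in the HUM framework. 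Nor can you fall back on decay of the damped flow with frozen potential: $3\hat u^2$ is time-dependent and can pump energy, so no uniform contraction of the uncontrolled linearized evolution is available.

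The paper resolves precisely this obstruction by never asking for observability on the whole space. The truncated observability (Proposition~\ref{prop-obs}(2)) is proved only for adjoint terminal data in the finite-dimensional space $\mathbf H_m$, which by duality yields exact control of the projection $\mathbf P_m v[T]$ only. The high frequencies are not controlled at all; they are made small by a different mechanism: the observability is set in the negative norms $H^{\scriptscriptstyle-1/5}$, $\mathcal H^{\scriptscriptstyle-6/5}$, which by duality produces a control with extra $H^{\scriptscriptstyle1/5}$ spatial regularity, whence $\|(I-\mathbf P_m)v[T]\|_{\mathcal H}\lesssim\lambda_m^{-1/10}\|v^0\|_{\mathcal H}$, while the free part $U(t)v^0$ (taken \emph{without} the potential, the term $-3\hat u^2\,U(t)v^0$ being shifted into the source of the controlled component) decays exponentially by Lemma~\ref{Linear-semigroup}. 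Your Carleman/compactness-uniqueness ingredients and your final contraction step are reasonable in themselves, but unless you replace the weakened-observability-plus-HUM step by a low/high-frequency splitting of this kind---exact control of finitely many modes plus a dissipation-and-regularity argument for the rest---the proof does not go through.
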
 

{See Theorem \ref{Th1} for a stronger statement of Theorem \ref{Thm-informal-SQ}, where more information on the structure of control, also necessary in dealing with (\ref{Problem-0}), is involved. Denote by $\ell$ the common law of $\eta_n$ in $L^2(D_T)$, and by $\mathcal{E}$ its support. The parameters $R_1,R_2$ can be appropriately chosen so that 
$$
\mathcal Y_\infty\subset\overline{B}_{\mathcal H^{\scriptscriptstyle4/7}}(R_1),\quad\mathcal E\subset\overline{B}_{L^2(0,T;H^{\scriptscriptstyle4/7})}(R_2).
$$
Then, combined with two classical results for optimal couplings and
an estimate for the total variation distance (see Appendix \ref{Appendix-Probability}), the squeezing (\ref{Informal-squeezing}) would imply the coupling condition for (\ref{Problem-0}); see Section \ref{Section-6-3} for more details.}

\vspace{0.6em}

Control problems, including controllability and stabilization\footnote{
In control theory, the controllability means that for any given two states in the phase space, there is a control force driving the system from one state to the other in a finite time. On the other hand, the stabilization problem is whether or not a controlled system can be asymptotically stabilized to a (non-)stationary solution. See \cite{Coron-07} for more information.}, for nonlinear wave equations (and other dispersive equations) with  localized control have attracted much attention in the last few decades; see, e.g., \cite{DL-09,BBZ-13,Bourgain-14,DGL-06,Coron-Krieger-Xiang-1, LRZ-10,BRS-11,ADS-16}. In particular, the literature with low-frequency controls in general concentrates on the stabilization problem, as the controllability properties are usually valid just for the low frequency in the evolution. Such subtlety can be partly explained by a viewpoint of Dehman and Lebeau \cite{DL-09}
that {\it ``the energy of each scale of the control force depends {\rm(}almost{\rm)} only on the energy of the same scale in the states that one wants to control''}.

Since the squeezing property considered here is closely related to the stabilization (see Remark~\ref{remark-stabilization}), the strategy of our proof for Theorem \ref{Thm-informal-SQ} is inspired by the ideas coming from the theories of stabilization, in particular, the prior works \cite{Xiang-23,Xiang-24,ADS-16,KX-23}, with  technical modifications adapted to (\ref{control-model}). The methodology we introduce for proving Theorem \ref{Thm-informal-SQ} is {\it``frequency analysis''}, i.e., 

 \vspace{0.6em}

\begin{center}    
\tikzset{every picture/.style={line width=0.75pt}} 
\begin{tikzpicture}[x=0.75pt,y=0.75pt,yscale=-1,xscale=1]
\draw [color={rgb, 255:red, 0; green, 0; blue, 0 }  ,draw opacity=0.2 ]   (284.1,12.87) -- (309.29,25.5) ;
\draw [color={rgb, 255:red, 0; green, 0; blue, 0 }  ,draw opacity=0.2 ]   (309.29,25.5) -- (284.1,37.51) ; 
\draw [color={rgb, 255:red, 0; green, 0; blue, 0 }  ,draw opacity=0.2 ]   (284.43,37.9) -- (309.62,49.67) ;
\draw [color={rgb, 255:red, 0; green, 0; blue, 0 }  ,draw opacity=0.2 ]   (309.62,49.67) -- (284.43,60.87) ;
\draw [color={rgb, 255:red, 0; green, 0; blue, 0 }  ,draw opacity=0.2 ]   (506,25.85) -- (531.19,39.36) ;
\draw [color={rgb, 255:red, 0; green, 0; blue, 0 }  ,draw opacity=0.2 ]   (531.19,39.36) -- (506,52.2) ;
\draw (155.67,3) node [anchor=north west][inner sep=0.75pt]  [font=\normalsize] [align=left] {{ duality argument}};
\draw (125,26.6) node [anchor=north west][inner sep=0.75pt]  [font=\normalsize] [align=left] {{ observability inequality }};
\draw (162.67,49.8) node [anchor=north west][inner sep=0.75pt]  [font=\normalsize] [align=left] {{ damping effect }};
\draw (311.7,15.27) node [anchor=north west][inner sep=0.75pt]  [font=\normalsize] [align=left] {{ low-frequency controllability }};
\draw (318.21,41.29) node [anchor=north west][inner sep=0.75pt]  [font=\normalsize] [align=left] {{ high-frequency dissipation }};
\draw (532,30) node [anchor=north west][inner sep=0.75pt]  [font=\normalsize] [align=left] {{ squeezing }};
\end{tikzpicture}
\end{center}

\vspace{0.6em}

Below we give a discussion of the main novelty of our approach, and refer to Section \ref{Section-controlresult} later for a technical outline of proof for Theorem \ref{Thm-informal-SQ}.
\begin{itemize}[leftmargin=2em]
\item[$(1)$] We establish a new version of {\it duality  between controllability and observability} in the context of (\ref{control-model}), i.e. Proposition \ref{theorem-duality}, which not only translates the low-frequency controllability problem to the verification of observability inequality 
$$
\int_0^T\|\mathscr P^{\scriptscriptstyle T}_{\scriptscriptstyle N}(\chi\varphi)(t)\|^2_{_{H^{\scriptscriptstyle-s}}}\gtrsim\|\varphi[T]\|^2_{_{\mathcal H^{\scriptscriptstyle-1-s}}}\quad \text{with some }s\in(0,1)
$$
for solutions $\varphi$ of the adjoint system,
but also helps us to improve the regularity of control. The latter plays an important role in deriving the strong dissipation for the high-frequency system. As a by-product, the quantitative controllability can be obtained within our framework and the control is expressed in an explicit form. 

\item[$(2)$] The presence of space-dependent coefficient $a(x)$  leads to various technical complications (see Remark \ref{remark-damping}), so that the arguments used for observability inequality in the prior works, e.g., \cite{ADS-16,Zhang-PRSL,EZ-10,DL-09}, may not be easily applicable in the context involved here. Part of our analyses aim at dealing with such issue, involving  {\it unique continuation}, {\it Carleman estimates} and 
{\it Hilbert uniqueness method} (HUM for abbreviation). As a consequence, the proof of observability constitutes a delicate part of our control analysis. 

\end{itemize}

\vspace{0.6em}

\subsection{Organization of the present paper}\label{Section-1-4} 

In Section~\ref{Section-Probalistic part}, we present a rigorous statement  of our criterion (i.e., Theorem~\ref{Thm-informal}) and its proof. In the sequel, the intermediate results mentioned in Section \ref{Section-1-3} are precisely provided in Sections \ref{Section-PDE}-\ref{Section-control}. 

We in Section \ref{Section-PDE} give a complete statement of global stability result for the unforced version of (\ref{semilinear-problem}), i.e., Theorem \ref{Thm-informal-GS}, as well as some energy and dispersive estimates that will be useful in later sections.
The main content of Section~\ref{Section-nonautonomous} is to prove a stronger version of  Theorem \ref{Thm-informal-AC}, the asymptotic compactness for (\ref{semilinear-problem}). The result therein is obtained by improving the classical arguments in global attractor for dynamical systems and by introducing the notion of discrete monotonicity. We next turn attention to the full statement and proof of squeezing property, i.e., Theorem \ref{Thm-informal-SQ}, in Section \ref{Section-control}. In this part, the ideas and methods in control theory will come into play. 

Finally, putting the above results all together, we conclude in Section~\ref{Section-conclusion} with a rigorous version of Theorem~\ref{Thm-informal-model}, illustrating how our criterion of exponential mixing is applied to the random wave equation (\ref{Problem-0}).

Appendixes \ref{Appendix-Probability} and \ref{Appendix-control} collect some auxiliary results and proofs that are needed in our probabilistic and control analyses of the main text, respectively. In addition, an index of symbols is contained in Appendix \ref{Appendix-index}. 

\medskip

\noindent{\it Note. From now on, the letter $C$ denotes the generic constant which may change from line to line.}

\section{Mixing criterion for random dynamical systems}\label{Section-Probalistic part}

The primary objective of this section is to establish our asymptotic-compactness-based  criterion, i.e. Theorem~\ref{Thm A} below,  as briefly stated in Theorem~\ref{Thm-informal}. It serves as a fundamental instrument to demonstrate exponential mixing for model~\eqref{Problem-0} in Section~\ref{Section-conclusion}.  

\vspace{0.6em}

We begin with some necessary notations and conventions. Let $\mathcal{X}$ and $\mathcal{Z}$ be Polish spaces, and the metric on $\mathcal X$ is denoted by $d$. We write $B_{\mathcal X}(x,r)=\{y\in \mathcal{X};d(x,y)<r\}$ for $x\in \mathcal{X}$ and $r>0$, and $B_{\mathcal X}(r)=B_{\mathcal X}(0,r)$ when $\mathcal{X}$ is a separable Banach space. Let us denote $\overline{B}_{\mathcal X}(r)=\overline{B_{\mathcal X}(r)}$. Define 
    \begin{equation}\label{pseudo-distence}
    \text{dist}_{\mathcal X}(x,A) =\inf_{y\in A}d(x,y),\quad x\in \mathcal X,\,A\subset \mathcal{X}.
    \end{equation}
    If there is no danger of confusion, we shall omit the subscript $\mathcal X$ of the above notations for the sake of simplicity. In addition, let us lay out some collections related to $\mathcal X$: $\mathcal{B}(\mathcal{X})$ denotes its Borel $\sigma$-algebra; $\mathcal{P}(\mathcal{X})$ is the set of Borel probability measures on $\mathcal{X}$; by $B_b(\mathcal{X})$, $C_b(\mathcal{X})$ we denote the set of bounded Borel/continuous functions on $\mathcal{X}$, endowed with the supremum norm $\|\cdot\|_{\infty}$, respectively; $L_b(\mathcal{X})$ stands for the set of bounded Lipschitz functions. For $f\in L_b(\mathcal{X})$, we denote its Lipschitz norm by $$\|f\|_{L}:=\|f\|_{\infty}+ \sup_{x\neq y}\frac{|f(x)-f(y)|}{d(x,y)}.$$ 
    For $ f\in B_b(\mathcal{X})$ and $\mu\in\mathcal{P}(\mathcal{X})$,  we write $\langle f,\mu\rangle=\int_{\mathcal{X}}f(x)\mu(dx)$.
The dual-Lipschitz distance in $\mathcal{P}(\mathcal{X})$ is defined as
    $$
    \|\mu-\nu\|_{L}^*=\sup_{f\in L_b(\mathcal X),\|f\|_L\leq 1}|\langle f,\mu \rangle-\langle f,\nu \rangle|,\quad \mu,\nu\in \mathcal{P}(\mathcal{X}),
    $$
    which metricizes the weak topology; see, e.g., \cite[Section 1.2.3]{KS-12}. 
    
    Recall that for $\mu_1,\mu_2\in\mathcal{P}(\mathcal{X})$, a pair of $\mathcal{X}$-valued random variables $(\xi_1,\xi_2)$ is called a \textit{coupling} for $\mu_1$ and $\mu_2$, if $\mathscr{D}(\xi_i)=\mu_i$, $i=1,2$. We denote by $\mathscr{C}(\mu_1,\mu_2)$ the set of these couplings.

\vspace{0.6em}

The general settings of random dynamical systems and the main theorems are presented in Section~\ref{Section-generalresult}, followed by a brief outline of the proof. The detailed proof is collected in Section~\ref{Sect. 2.2}.

\subsection{Settings and general results}\label{Section-generalresult}

Let us recall that the considered Markov process $\{x_n;n\in\N\}$ is given by \eqref{RDS},\eqref{initial condition},  where $S\colon\mathcal{X} \times\mathcal{Z} \rightarrow \mathcal{X}$ is a locally Lipschitz mapping, and $\{\xi_n;n\in\N\}$ is a  sequence of $\mathcal{Z}$-valued i.i.d.~random variables. The common law of $\xi_n$ is $\ell$, whose support is denoted by $\mathcal{E}$.  In order to indicate the initial condition and the random inputs, we also write 
\begin{equation}\label{rewrite-form}
x_n=S_{n}(x;\xi_0,\cdots,\xi_{n-1})=S_{n}(x;\boldsymbol{\xi}),\quad n\in\N^+
\end{equation}
with $\boldsymbol{\xi}:=\{\xi_n;n\in\N\}$.
Moreover, given a sequence $\boldsymbol{\zeta}=\{\zeta_n;n\in\N\}\in \mathcal{Z}^{\N}$, we denote by $$S_{n}(x ; \zeta_0, \cdots, \zeta_{n-1})=S_{n}(x;\boldsymbol{\zeta})$$ the corresponding deterministic process defined by (\ref{RDS}),(\ref{initial condition}) by replacing $\xi_n$ with $\zeta_n$.

    With the above setting, system (\ref{RDS}),(\ref{initial condition}) defines a Feller family of discrete-time Markov processes in $\mathcal{X}$; see, e.g., \cite[Section 1.3]{KS-12}. We denote by $\{\Pb_x;x\in\mathcal{X}\}$ the corresponding Markov family, by $\E_x$ the corresponding expected values, and by $\{P_n(x,A);x\in\mathcal{X},A\in\mathcal{B}(\mathcal{X}),n\in \N\}$ the corresponding Markov transition functions, i.e., $$P_n(x,A)=\Pb_x(x_n\in A).$$ We use the standard notation for the corresponding Markov semigroup $P_n\colon B_b(\mathcal{X})\rightarrow B_b(\mathcal{X})$ and its dual $P^*_n\colon\mathcal{P}(\mathcal{X})\rightarrow \mathcal{P}(\mathcal{X})$ defined by
	\begin{equation*}
		P_nf(x)= \int_{\mathcal{X}}f(y)P_n(x,dy),\quad\quad P_n^*\mu(A)=\int_{\mathcal{X}}P_n(x,A)\mu(dx)
	\end{equation*}
    for $f\in B_b(\mathcal{X})$, $\mu\in\mathcal{P}(\mathcal{X})$, $x\in\mathcal{X}$ and $A\in\mathcal{B}(\mathcal{X})$. Recall that a probability measure $\mu\in \mathcal{P}(\mathcal{X})$ is called \textit{invariant} for $\{P_n^*;n\in\N\}$  if $P_n^*\mu=\mu$ for any $n\in\N$.  Our goal is to investigate  exponential mixing for the Markov process $\{x_n;n\in\N\}$.

The following notion of attainable set will be used.
	
\begin{definition}\label{Def-attainable}
For every subset $\mathcal{Y}$ of $\mathcal{X}$, the attainable set $\mathcal{Y}_n$ in time $n$ is of the form
\begin{equation*}
\mathcal{Y}_0=\mathcal{Y},\quad \mathcal{Y}_n=\{S_n(x,\zeta_0,\cdots,\zeta_{n-1});x\in\mathcal{Y},\zeta_0,\cdots,\zeta_{n-1}\in \mathcal{E}\},\quad n\in\N^+,
\end{equation*}
and the attainable set $\mathcal{Y}_\infty$ is given by
$$
\mathcal{Y}_\infty=\overline{\bigcup_{n\in\N}\mathcal{Y}_n}.
$$  
\end{definition} 

\medskip

With the preparations above at hand, we list the hypotheses involved in our general criterion:  
	
\begin{itemize}[leftmargin=2.5em]
\item [($\mathbf{AC}$)]   ({\bf  Asymptotic compactness}) There exists a compact subset $\mathcal{Y}$ of $\mathcal{X}$, a constant $\kappa>0$, and a measurable function $V\colon\mathcal{X}\rightarrow\R^+$ which is bounded on bounded sets, such that
\begin{equation}\label{asymptotic compactness}
\text{dist}(S_n(x;\boldsymbol{\zeta}),\mathcal{Y})\leq V(x)e^{-\kappa n}
\end{equation}
for any $x\in\mathcal{X},\,\boldsymbol{\zeta}\in \mathcal{E}^{\N}$ and $n\in\N^+$.
\end{itemize}

Our observation on the asymptotic compactness has been described in Section \ref{Section-1-1}.
In particular, using the compactness of both $\mathcal{Y}$ and $\mathcal{E}$, straightforward compactness arguments imply that the attainable set $\mathcal{Y}_{\infty}$ is compact in $\mathcal{X}$; see Proposition \ref{Y_infty} later.
\begin{itemize}[leftmargin=2.5em]
\item [($\mathbf{I}$)]   ({\bf  Irreducibility on compact set}) There exists a point $z\in\mathcal{Y}$ such that for every $\varepsilon>0$, one can find an integer $m=m(\varepsilon)\in\N^+$  satisfying
\begin{equation}\label{irreducibility}
\inf_{x\in\mathcal{Y}_\infty}P_m(x,B(z,\varepsilon))>0.		
\end{equation}
		
\item [($\mathbf{C}$)]   ({\bf  Coupling condition on compact set}) There exists a constant $r\in[0,1)$  such that for every $x,x'\in\mathcal{Y}_\infty$, there is $(\mathcal{R}(x,x'),\mathcal{R}'(x,x'))\in\mathscr{C}(P_1(x,\cdot),P_1(x',\cdot))$ on a same probability space $(\Omega,\mathcal{F},\Pb)$, satisfying
\begin{equation}\label{squeezing}
\Pb(d(\mathcal{R}(x,x'),\mathcal{R}'(x,x'))>rd(x,x'))\leq g(d(x,x')),
\end{equation}
where $g\colon\R^+\rightarrow\R^+$ is a continuous increasing function with	\begin{equation}\label{g-def}
g(0)=0,\quad\limsup\limits_{n\rightarrow\infty}\frac{1}{n}\ln g(r^n)<0,
\end{equation}
and the mappings $\mathcal{R},\mathcal{R}'\colon\mathcal{Y}_\infty\times \mathcal{Y}_\infty\times \Omega\rightarrow \mathcal{X}$ are measurable.
\end{itemize} 

The last two hypotheses originate from the previous frameworks of ergodicity and mixing. More precisely,
the irreducibility indicates that a common state can be reached by the dynamics regardless of the initial conditions,  previously used to derive the unique ergodicity  {\rm(}see, e.g.,  {\rm\cite{HM-11b,EM-01,KS-12})}. 
On the other hand, 
the coupling condition can be understood as a one-step smoothing effect of the Markov process analogous to the asymptotic strong Feller property {\rm(}but only for regular solutions{\rm)}. It is directly motivated by the work of  {\rm\cite{Shi-15}}, and can be also traced to the earlier literature  {\rm\cite{KS-01,Hairer-02,Mattingly-02,MY-02}}.

As a more precise version of Theorem \ref{Thm-informal}, what follows is one of the main results of this paper, providing a criterion of exponential mixing. Its proof is contained in Section~\ref{Sect. 2.2}. 

\begin{theorem}\label{Thm A}
Assume that the support $\mathcal{E}$ of $\ell$ is compact in $\mathcal{Z}$, and hypotheses $(\mathbf{AC})$, $(\mathbf{I})$ and $(\mathbf{C})$ are satisfied. Then the Markov process $\{x_n;n\in\N\}$ has a unique invariant measure $\mu_*\in\mathcal{P}(\mathcal{X})$ with compact support. Moreover,  there exist constants $C,\beta>0$ such that
\begin{equation}\label{Exponential mixing}
\|P_n^*\nu-\mu_*\|_L^*\leq  Ce^{-\beta n}\left(1+\int_{\mathcal{X}} V(x)\nu(dx)\right)
\end{equation}  
for any $\nu\in\mathcal{P}(\mathcal{X})$ such that $\int_{\mathcal{X}} V(x)\nu(dx)<\infty$ and $n\in\N$.
\end{theorem}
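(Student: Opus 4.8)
The plan is to reduce everything to the compact, forward-invariant set $\mathcal{Y}_\infty$, to prove a uniform exponential contraction there by a coupling argument, and then to transfer the estimate to arbitrary initial data through the pathwise attraction in $(\mathbf{AC})$. First I would record the elementary bound underlying the whole scheme: for any coupling $(x_n,x_n')$ of $P_n^*\delta_x$ and $P_n^*\delta_{x'}$ and any $f$ with $\|f\|_L\le 1$ one has $|f(x_n)-f(x_n')|\le d(x_n,x_n')\wedge 2$, whence
\[
\|P_n^*\delta_x-P_n^*\delta_{x'}\|_L^*\le \E\big[d(x_n,x_n')\wedge 2\big].
\]
The task on $\mathcal{Y}_\infty$ is thus to build a coupling whose two trajectories meet exponentially fast. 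Using Proposition~\ref{Y_infty} I would fix that $\mathcal{Y}_\infty$ is compact with finite diameter and invariant under every $S(\cdot,\zeta)$, $\zeta\in\mathcal{E}$, so that both the independent coupling and the kernel $(\mathcal R,\mathcal R')$ from $(\mathbf{C})$ keep the pair inside $\mathcal{Y}_\infty\times\mathcal{Y}_\infty$.

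The heart of the argument, and the step I expect to be the main obstacle, is this coupling on $\mathcal{Y}_\infty$ blending $(\mathbf{I})$ and $(\mathbf{C})$. I would run a regeneration scheme alternating two modes; write $d_k:=d(x_k,x_k')$. In \emph{search mode} the chains are advanced by the independent coupling for $m=m(\varepsilon)$ steps, so by $(\mathbf{I})$ both land in $B(z,\varepsilon)$ with probability at least $p(\varepsilon)^2>0$, giving $d_m<2\varepsilon=:d_*$ and a switch to \emph{contraction mode}. There the pair is advanced by $(\mathcal R,\mathcal R')$; as long as each step contracts by $r$ one has $d_k\le r^k d_*$ and the scheme stays in this mode, otherwise it returns to search mode. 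The decisive point is that, once the contraction mode is entered, the probability of contracting \emph{forever} is
\[
\Pb\Big(\bigcap_{k\ge 0}\{d_{k+1}\le r\,d_k\}\Big)\ge \prod_{k\ge 0}\big(1-g(r^k d_*)\big)>0,
\]
positivity holding precisely because $\sum_{k\ge0}g(r^kd_*)<\infty$; this is where $(\ref{g-def})$ (equivalently $g(r^k)\le C\rho^k$ for some $\rho<1$) enters, and $\varepsilon$ is taken small enough that this tail is $<\tfrac12$. A strong-Markov/renewal argument then gives exponential tails $\Pb(T^*>n)\le C\theta_1^n$ for the coupling time $T^*$, i.e. the first entry into a contraction mode that persists; since $d_n\le r^{\,n-T^*}d_*$ on $\{T^*\le n\}$, convolving the two geometric rates yields $\E[d_n\wedge 2]\le C\theta^n$ uniformly in $x,x'\in\mathcal{Y}_\infty$. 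Measurability of the scheme rests on the selectors $\mathcal R,\mathcal R'$ supplied by $(\mathbf{C})$.

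From this uniform contraction the existence and uniqueness of $\mu_*$ follow by soft arguments. Integrating the bound gives $\|P_n^*\lambda-P_n^*\lambda'\|_L^*\le C\theta^n$ for all $\lambda,\lambda'\in\mathcal P(\mathcal{Y}_\infty)$; as $\mathcal{Y}_\infty$ is compact, $(\mathcal P(\mathcal{Y}_\infty),\|\cdot\|_L^*)$ is complete, so $P_n^*\delta_z$ is Cauchy and converges to an invariant $\mu_*$ carried by $\mathcal{Y}_\infty$, invariance passing to the limit through the Feller property. Any invariant $\mu$ is carried by $\mathcal Y$: by $(\mathbf{AC})$, $\mathrm{dist}(x_n,\mathcal Y)\le V(x)e^{-\kappa n}$ \emph{pathwise} with $V(x)<\infty$, so for each fixed $\varepsilon$ the quantity $\mu(\{\mathrm{dist}(\cdot,\mathcal Y)>\varepsilon\})=\int\Pb_x(\mathrm{dist}(x_n,\mathcal Y)>\varepsilon)\,\mu(dx)\to 0$ by dominated convergence; being $n$-independent it vanishes, whence $\mu(\mathcal{Y}_\infty)=1$ and the contraction forces $\mu=\mu_*$.

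Finally I would lift the restriction to compact initial data. For general $x$, $(\mathbf{AC})$ serves as a burn-in: after $n_0\lesssim 1+\kappa^{-1}\ln_+V(x)$ steps the trajectory enters a fixed neighborhood $\{\mathrm{dist}(\cdot,\mathcal Y)\le\varepsilon_1\}$ of $\mathcal{Y}_\infty$, on which irreducibility persists, since $x\mapsto P_m(x,B(z,\varepsilon))$ is lower semicontinuous and positive on the compact $\mathcal{Y}_\infty$, hence bounded below on an $\varepsilon_1$-neighborhood. The same lower-semicontinuity lets the regeneration scheme tolerate the small defect $\mathrm{dist}(x_k,\mathcal Y)\le\varepsilon_1$ and couple the chain from $x$ with the stationary dynamics; this matching of the off-$\mathcal{Y}_\infty$ chain to the kernel of $(\mathbf{C})$ is the subtle technical point here. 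One obtains $\|P_n^*\delta_x-\mu_*\|_L^*\le C\,e^{\beta n_0}e^{-\beta n}$, and choosing the contraction exponent $\beta\le\kappa$ turns $e^{\beta n_0}\lesssim(1+V(x))^{\beta/\kappa}$ into a genuinely \emph{linear} factor $1+V(x)$; integrating against $\nu$ then yields $(\ref{Exponential mixing})$.
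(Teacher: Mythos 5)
Your work on the invariant compactum is essentially correct and matches the paper's route: the construction of the compact invariant set $\mathcal{Y}_\infty$, the mode-switching coupling built from the independent coupling (search) and the kernel $(\mathcal{R},\mathcal{R}')$ of $(\mathbf{C})$ (contraction), the infinite-product lower bound on the probability of perpetual contraction coming from (\ref{g-def}), and the renewal estimate for the coupling time are precisely the ingredients of the paper's Proposition~\ref{local exponential mixing}; the paper merely packages your renewal step as an application of the Kuksin--Shirikyan criterion (Theorem~\ref{Theorem-KS}) instead of redoing it by hand. Your dominated-convergence argument that every invariant measure is carried by $\mathcal{Y}$ is also a perfectly sound route to uniqueness.

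The genuine gap is in your last step, the passage from $\mathcal{Y}_\infty$ to arbitrary initial data. Hypothesis $(\mathbf{C})$ supplies the contracting coupling $(\mathcal{R},\mathcal{R}')$ \emph{only} for pairs in $\mathcal{Y}_\infty\times\mathcal{Y}_\infty$, and a chain started at a general $x\in\mathcal{X}$ never actually enters $\mathcal{Y}_\infty$: by $(\mathbf{AC})$ it only becomes exponentially close to $\mathcal{Y}$. Lower semicontinuity of $y\mapsto P_m(y,B(z,\varepsilon))$ does extend irreducibility to an $\varepsilon_1$-neighborhood, but it says nothing about couplings: $(\mathcal{R},\mathcal{R}')$ is an abstract measurable selection satisfying (\ref{squeezing}), and nothing in the hypotheses lets you extend it (even measurably, let alone with the same contraction factor $r$ and error function $g$) to pairs with one coordinate off $\mathcal{Y}_\infty$. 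So the ``matching of the off-$\mathcal{Y}_\infty$ chain to the kernel of $(\mathbf{C})$'' that you flag as the subtle point is not subtle but unjustified; the regeneration scheme simply cannot be run for such pairs. The paper circumvents this entirely in Proposition~\ref{prop-globalmixing}: it never couples the off-compactum chain. Instead it fixes a measurable projection $\mathsf{P}\colon\mathcal{X}\to\mathcal{Y}$ with $d(x,\mathsf{P}x)\le 2\,\mathrm{dist}(x,\mathcal{Y})$ and, after $j$ steps, compares the true trajectory with the one started at $\mathsf{P}x_j\in\mathcal{Y}\subset\mathcal{Y}_\infty$ and \emph{driven by the same noise} $\boldsymbol{\xi}^j$: the $\mathcal{Y}_\infty$-mixing estimate applies to the projected chain, while the divergence of the two trajectories is controlled deterministically by the local Lipschitz constant of $S$,
\begin{equation*}
\E\, d\bigl(S_{k}(x_j;\boldsymbol{\xi}^j),S_k(\mathsf{P}x_j;\boldsymbol{\xi}^j)\bigr)\;\le\; L^k\,\E\, d(x_j,\mathsf{P}x_j)\;\le\; 2V(x)\,L^k e^{-\kappa j},
\end{equation*}
and one concludes by balancing $k=\lfloor \varepsilon n\rfloor$ against $j=\lceil (1-\varepsilon)n\rceil$ with $\varepsilon(\kappa+\ln L)<\kappa$. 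To repair your argument you should replace the ``tolerate the defect'' step by this same-noise shadowing device; as written, that step fails.
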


\noindent{\bf Outline of proof for Theorem~\ref{Thm A}.}  We now present a brief overview of the proof for our result, highlighting its main contribution. Our strategy is to first establish mixing on the regular subspace $\mathcal{Y}_\infty$, and then extend to the entire space; see Figure~\ref{Proof of framework} for a rough picture\footnote{This picture is just for illustration, but not rigorous, since neither the attracting set $\mathcal{Y}$ nor the attainable set $\mathcal{Y}_\infty$ can be in a hyperplane in general.}. The proof is divided into three steps:
    \medskip

   \noindent {\bf  Step 1 (Existence of an invariant compactum).} We begin by demonstrating that the natural working space $\mathcal{Y}_\infty$ is compact and invariant due to hypothesis ({\bf AC}); see Proposition~\ref{Y_infty}. This allows a coupling construction to deduce the exponential mixing on $\mathcal{Y}_\infty$ in the next step.

   \medskip

   \noindent {\bf Step 2  (Mixing on the invariant compactum).}  In order to establish the mixing on $\mathcal{Y}_\infty$, we shall invoke  Kuksin--Shirikyan's framework (see \cite{KS-12,Shi-08}), under the hypotheses $(\mathbf{I})$ and $(\mathbf{C})$. More precisely, let us consider a Markov process $\{\vec{\boldsymbol{x}}_n;n\in\N\}$ on the product space $\mathcal{Y}_\infty\times\mathcal{Y}_\infty$ with marginals $P_n(x,\cdot)$ and $P_n(x',\cdot)$, where $x,x'\in \mathcal{Y}_\infty$. The process $\{\vec{\boldsymbol{x}}_n;n\in\N\}$ is called an extension of the process $\{x_n;n\in\N\}$, as detailed in Appendix~\ref{K-S framework}. Hypothesis ({\bf I}) guarantees a recurrence property: the two components of $\vec{\boldsymbol{x}}_n$ can be made to approach each other with arbitrary proximity within a finite time almost surely. Once the two components of $\vec{\boldsymbol{x}}_n$ have become sufficiently close, the coupling condition ({\bf C}) ensures that they will continue to converge with a positive probability; such convergence is referred to as squeezing. Consequently, the Markov property and this loop collectively indicate exponential mixing on $\mathcal{Y}_\infty$. For further details, please refer to Proposition~\ref{local exponential mixing}.

   \medskip

    \noindent {\bf Step 3 (Extending mixing to the original space).} The last step is to extend the $\mathcal{Y}_\infty$-restricted mixing to the entire state space. This is established via the exponential attraction of the invariant compactum $\mathcal{Y}_\infty$ (guaranteed by hypothesis ({\bf AC})) together with a projection procedure; see Proposition~\ref{prop-globalmixing}. 
    
 \begin{figure}[H]   \includegraphics[width=0.4\textwidth]{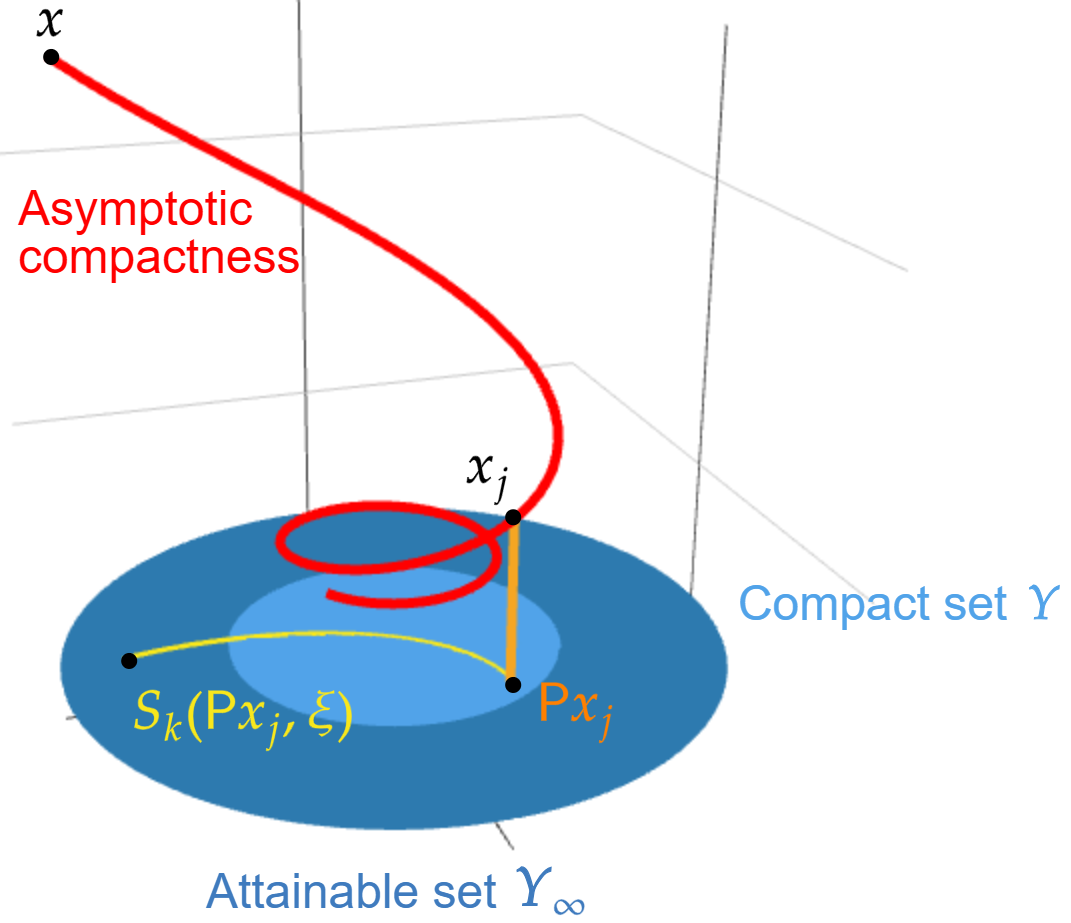}
    \caption{Outline of proof for Theorem~\ref{Thm A}.} \label{Proof of framework}
    \end{figure}

  As straightforward applications of Theorem~\ref{Thm A}, we have the following limit theorems, including the {\it strong law of large numbers} and {\it central limit theorem}  for bounded Lipschitz observables. The proofs are based on standard martingale decomposition procedures and are placed in Appendix~\ref{Proof of Thm limits}.

   \begin{proposition}\label{Limit theorems}
Under the assumptions of Theorem~{\rm\ref{Thm A}}, the following assertions hold:
    \begin{itemize}[leftmargin=2em]
    \item [$(1)$] {\rm(}Strong law of large numbers{\rm)} For every $f\in L_b(\mathcal{X})$ and $x\in\mathcal{X}$,
    \begin{equation*}
       \lim\limits_{n\rightarrow\infty}\frac{1}{n}\sum_{k=0}^{n-1}f(x_k)=\langle f,\mu_*\rangle\quad\text{almost surely}.
    \end{equation*}
    \item  [$(2)$] {\rm(}Central limit theorem{\rm)} For every $f\in L_b(\mathcal{X})$, there exists a constant $\sigma_f\geq 0$ such that
    \begin{equation*}
        \frac{1}{\sqrt{n}}\sum_{k=0}^{n-1}(f(x_k)-\langle f,\mu_*\rangle)\rightarrow \mathcal{N}(0,\sigma_f^2)\quad\text{ as }n\rightarrow\infty
    \end{equation*}
    for any $x\in\mathcal{X}$, where $\mathcal{N}(0,\sigma_f^2)$ denotes a normal random variable with zero mean and variance $\sigma_f^2$, and the convergence is in the sense of distribution.
    \end{itemize}
    \end{proposition}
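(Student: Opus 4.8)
The plan is to use the corrector (Poisson equation) method together with martingale limit theorems, the standard route for passing from exponential mixing to the strong law and the central limit theorem. Write $\bar f=\langle f,\mu_*\rangle$ and define the corrector
\[
g(x)=\sum_{n=0}^{\infty}\big(P_nf(x)-\bar f\big).
\]
Taking $\nu=\delta_x$ in \eqref{Exponential mixing} gives $|P_nf(x)-\bar f|\le Ce^{-\beta n}(1+V(x))$ for every $f$ with $\|f\|_L\le 1$, so the series converges absolutely and uniformly on bounded sets; the resulting $g$ is measurable, satisfies $|g(x)|\le C(1+V(x))$, and solves the Poisson equation $g-Pg=f-\bar f$ (by telescoping, using $P\bar f=\bar f$). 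Setting $\xi_k=g(x_k)-Pg(x_{k-1})$ one has $\E[\xi_k\mid\mathcal F_{k-1}]=0$, so $M_n=\sum_{k=1}^n\xi_k$ is a square-integrable martingale, and the decomposition $g-Pg=f-\bar f$ yields the fundamental identity
\[
\sum_{k=0}^{n-1}\big(f(x_k)-\bar f\big)=M_n+g(x_0)-g(x_n).
\]

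First I would record a pathwise a priori bound trivializing the remainder. Since the random inputs lie in $\mathcal E$ almost surely, \eqref{asymptotic compactness} gives $\mathrm{dist}(x_n,\mathcal Y)\le V(x)e^{-\kappa n}\to 0$; as $\mathcal Y$ is compact, the trajectory $\{x_n\}$ is a.s.\ relatively compact (every subsequence has a limit point in $\mathcal Y$), hence contained in a fixed compact set $K_x$. Because $V$ is bounded on bounded sets, $\sup_n V(x_n)\le C_x<\infty$ a.s., so $|g(x_n)|\le C_x$ and the increments obey $|\xi_k|\le 2C_x$ uniformly in $k$. For the strong law this is already enough: bounded increments give $\sum_k k^{-2}\E[\xi_k^2\mid\mathcal F_{k-1}]<\infty$, so the martingale strong law yields $M_n/n\to 0$ a.s., while $g(x_0)-g(x_n)$ stays bounded; dividing the fundamental identity by $n$ gives part (1).

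For the central limit theorem I would apply the martingale CLT to $M_n/\sqrt n$. The term $(g(x_0)-g(x_n))/\sqrt n\to 0$, the uniform bound on the increments makes the Lindeberg condition automatic, and the candidate variance is $\sigma_f^2=\langle Pg^2-(Pg)^2,\mu_*\rangle=\E_{\mu_*}[\xi_1^2]\ge 0$, arising as the limit of the normalized conditional variance $\tfrac1n\sum_{k=1}^n h(x_{k-1})$ with $h:=Pg^2-(Pg)^2$. Thus everything reduces to showing this average converges (in probability, indeed a.s.) to $\langle h,\mu_*\rangle$.

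The main obstacle is precisely this ergodic averaging $\tfrac1n\sum_{k=0}^{n-1}h(x_k)\to\langle h,\mu_*\rangle$, for a function $h$ that is bounded on the relevant compact region but not a priori Lipschitz, and started from a non-stationary point. I would resolve it by upgrading part (1) from Lipschitz to bounded continuous observables via the empirical measures $\mu_n^{\mathrm{emp}}=\tfrac1n\sum_{k=0}^{n-1}\delta_{x_k}$: part (1) shows $\langle f,\mu_n^{\mathrm{emp}}\rangle\to\bar f$ a.s.\ for $f$ in a countable convergence-determining family of bounded Lipschitz functions, while the relative compactness of $\{x_n\}$ forces every $\mu_n^{\mathrm{emp}}$ to be supported in the fixed compact set $K_x$, hence the sequence is a.s.\ tight. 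Tightness plus agreement on a convergence-determining class gives $\mu_n^{\mathrm{emp}}\Rightarrow\mu_*$ a.s., so $\langle h,\mu_n^{\mathrm{emp}}\rangle\to\langle h,\mu_*\rangle$ for every bounded continuous $h$. Continuity of $h$ follows from the Feller property (each $P_nf$ is continuous since $S$ is continuous and, by dominated convergence, $P_ng$, $P_ng^2$ are continuous, the defining series converging uniformly on bounded sets). With $\sigma_f^2$ thus identified, the martingale CLT delivers $M_n/\sqrt n\to\mathcal N(0,\sigma_f^2)$ in distribution, and the fundamental identity transfers this to $\tfrac1{\sqrt n}\sum_{k=0}^{n-1}(f(x_k)-\bar f)$, completing part (2).
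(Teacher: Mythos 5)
Your part (1) is essentially the paper's own proof: the same corrector $g=\sum_{n\geq 0}(P_nf-\langle f,\mu_*\rangle)$ built from the exponential mixing bound, the same pathwise boundedness of $g(x_n)$ via hypothesis $(\mathbf{AC})$, and the same martingale $M_n=g(x_n)-g(x_0)+\sum_{k=0}^{n-1}(f(x_k)-\langle f,\mu_*\rangle)$ handled by the martingale strong law. For part (2), however, you take a genuinely different route. The paper verifies the Maxwell--Woodroofe condition \eqref{condition MW} with $\beta=0$ (immediate from \eqref{Exponential mixing} and $\text{supp }\mu_*\subset\mathcal Y_\infty$), obtains the CLT for the \emph{stationary} chain from \cite{MW-00}, and then transfers it to an arbitrary initial point by comparing $\mathscr D(s_n^x(f))$ with $\mathscr D(s_n^{*}(f))$, the mixing estimate giving an $O(n^{-1/2}(1+V(x)))$ error. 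You instead apply the martingale CLT directly to $M_n/\sqrt n$ from the non-stationary start; the only non-classical input is the a.s.\ convergence of the conditional variances $\tfrac1n\sum_{k}h(x_{k-1})$, $h=Pg^2-(Pg)^2$, which you derive from part (1) together with pathwise precompactness of orbits --- and this step is sound: by \eqref{asymptotic compactness} the orbit eventually lies in any $\varepsilon$-neighborhood of the compact $\mathcal Y$, hence is totally bounded and has compact closure, so the empirical measures are a.s.\ tight, and agreement with $\mu_*$ on a countable separating family of bounded Lipschitz functions (part (1)) forces $\mu_n^{\mathrm{emp}}\Rightarrow\mu_*$ a.s.; continuity of $h$ follows from the Feller property and uniform-on-bounded-sets convergence of the corrector series, and since the empirical measures and $\mu_*$ are carried by a common compact set, $\langle h,\mu_n^{\mathrm{emp}}\rangle\to\langle h,\mu_*\rangle$ even though $h$ is not globally bounded (extend $h$ from that compact set by Tietze). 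The trade-off: the paper's argument is shorter because the probabilistic core is outsourced to \cite{MW-00}, at the price of the separate stationary-to-nonstationary transfer step; yours is self-contained modulo the classical martingale CLT, works directly from any initial condition, identifies the limit variance explicitly as $\sigma_f^2=\langle Pg^2-(Pg)^2,\mu_*\rangle$, and produces the a.s.\ weak convergence of empirical measures as a by-product.
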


\subsection{Proof of exponential mixing}\label{Sect. 2.2} As previously mentioned, the proof of Theorem~\ref{Thm A} consists of three steps.

\subsubsection{Existence of an invariant compactum.}  As mentioned in Step 1 of Section \ref{Section-generalresult}, a straightforward consequence of hypothesis $(\mathbf{AC})$ is that $\mathcal{Y}_\infty$ is a compact invariant set. Using (\ref{invariant set}) and the Feller property, a standard Kryloy--Bogolyubov averaging procedure yields that the Markov process $\{x_n;n\in\N\}$ admits an invariant measure. 
\begin{proposition}\label{Y_infty}
Assume that hypothesis $(\mathbf{AC})$ holds and $\mathcal{E}$ is compact in $\mathcal{Z}$. Then $\mathcal{Y}_{\infty}$ is compact in $\mathcal{X}$ and invariant under $S$ in the sense that
\begin{equation}\label{invariant set}
S(\mathcal{Y}_\infty\times\mathcal{E})\subset\mathcal{Y}_\infty.
\end{equation}
\end{proposition}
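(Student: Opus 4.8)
The plan is to prove that $\mathcal{Y}_\infty$ is compact and invariant by exploiting hypothesis $(\mathbf{AC})$ together with the compactness of $\mathcal{E}$ and the continuity of $S$. I will treat the two assertions separately, beginning with invariance, which is essentially a soft consequence of the definition of the attainable set, and then turning to compactness, which is the part requiring the asymptotic compactness hypothesis in an essential way.

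\textbf{Invariance.} First I would establish \eqref{invariant set}. Let $y\in\mathcal{Y}_\infty$ and $\zeta\in\mathcal{E}$. By definition $\mathcal{Y}_\infty=\overline{\bigcup_{n\in\N}\mathcal{Y}_n}$, so there exist $y_k\to y$ with each $y_k\in\mathcal{Y}_{n_k}$ for some $n_k\in\N$. Writing $y_k=S_{n_k}(x_k;\zeta^{(k)}_0,\dots,\zeta^{(k)}_{n_k-1})$ with $x_k\in\mathcal{Y}$ and $\zeta^{(k)}_i\in\mathcal{E}$, I observe that $S(y_k,\zeta)=S_{n_k+1}(x_k;\zeta^{(k)}_0,\dots,\zeta^{(k)}_{n_k-1},\zeta)\in\mathcal{Y}_{n_k+1}\subset\bigcup_{n}\mathcal{Y}_n$. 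Since $S$ is continuous, $S(y_k,\zeta)\to S(y,\zeta)$, and therefore $S(y,\zeta)$ lies in the closure $\mathcal{Y}_\infty$. This proves $S(\mathcal{Y}_\infty\times\mathcal{E})\subset\mathcal{Y}_\infty$.

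\textbf{Compactness.} The key point is that although $S$ is not a compact map, hypothesis $(\mathbf{AC})$ forces the entire union $\bigcup_n\mathcal{Y}_n$ to lie within an arbitrarily small neighborhood of the compact set $\mathcal{Y}$. Indeed, applying \eqref{asymptotic compactness} with $x\in\mathcal{Y}$ and $\boldsymbol{\zeta}\in\mathcal{E}^{\N}$ gives $\operatorname{dist}(S_n(x;\boldsymbol{\zeta}),\mathcal{Y})\leq V(x)e^{-\kappa n}$. Since $V$ is bounded on the bounded (indeed compact) set $\mathcal{Y}$ by some constant $M$, every point of $\mathcal{Y}_n$ is within distance $Me^{-\kappa n}$ of $\mathcal{Y}$. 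I would then argue total boundedness of $\mathcal{Y}_\infty$: fix $\varepsilon>0$ and choose $n_0$ with $Me^{-\kappa n_0}<\varepsilon/2$; the tail $\bigcup_{n\geq n_0}\mathcal{Y}_n$ is covered by the $\varepsilon/2$-neighborhood of $\mathcal{Y}$, which admits a finite $\varepsilon$-net since $\mathcal{Y}$ is compact. For the finitely many initial levels $\mathcal{Y}_0,\dots,\mathcal{Y}_{n_0-1}$, each is the image under the continuous map $S_n(\cdot;\cdot)$ of the compact set $\mathcal{Y}\times\mathcal{E}^{n}$ (here using that $\mathcal{E}$ is compact), hence compact and so totally bounded. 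Combining these finitely many $\varepsilon$-nets yields a finite $\varepsilon$-net for $\bigcup_n\mathcal{Y}_n$, and closures preserve total boundedness, so $\mathcal{Y}_\infty$ is totally bounded. As a closed, totally bounded subset of the complete metric space $\mathcal{X}$, it is compact.

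The main obstacle, and the place where one must be careful, is precisely the tail estimate: without $(\mathbf{AC})$ the sets $\mathcal{Y}_n$ could spread out in non-compact directions (the resolving operator for a dispersive equation is not compact), so the uniform exponential attraction to the compact $\mathcal{Y}$ is what rescues total boundedness. I would double-check that the bound $V(x)\le M$ on $\mathcal{Y}$ is legitimate — this uses that $\mathcal{Y}$ is compact hence bounded, together with the stated hypothesis that $V$ is bounded on bounded sets — and that the estimate \eqref{asymptotic compactness} is uniform over all control sequences $\boldsymbol{\zeta}\in\mathcal{E}^{\N}$, which it is as stated. The remaining ingredients (continuity of finite compositions $S_n$, preservation of total boundedness under closure) are routine.
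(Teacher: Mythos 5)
Your proof is correct and takes essentially the same approach as the paper: both arguments rest on (i) the compactness of each finite level $\mathcal{Y}_n$ as the continuous image of the compact set $\mathcal{Y}\times\mathcal{E}^n$, and (ii) the uniform tail estimate $\operatorname{dist}(\mathcal{Y}_n,\mathcal{Y})\leq Me^{-\kappa n}$ from $(\mathbf{AC})$ with $M=\sup_{\mathcal{Y}}V$, split along exactly the same dichotomy (finitely many levels versus the tail), with invariance following from continuity of $S$ in both cases. The only difference is the topological wrap-up — you use total boundedness plus completeness of $d$ (legitimate under the standard convention that a Polish space carries a complete metric, as in Kuksin--Shirikyan), whereas the paper extracts convergent subsequences directly, which sidesteps any appeal to completeness.
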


\begin{proof}[{\bf Proof}] We begin by demonstrating that the set $\mathcal{Y}_\infty$ is compact.  It can be observed that each set $\mathcal{Y}_n$ is compact, given that both $\mathcal{Y}$ and $\mathcal{E}$ are  compact. Let us now consider a sequence $\{y^n;n\in\N\}$  contained in $\bigcup_{l\in\N} \mathcal Y_l$. Then, there exists $l_n\in\N$ and $x^n\in \mathcal{Y}$ such that either $y^n=x^n$, or 
$$
y^n=S_{l_n}(x^n;\zeta_0^n,\cdots, \zeta_{l_n-1}^n)\in \mathcal{Y}_{l_n}
$$
for some $\zeta_j^n\in\mathcal {E},\ j=0,\cdots,l_n-1$. 

If the sequence $\{l_n;n\in\N\}$ is bounded, then taking $m=\max\{l_n;n\in\N\}$, it follows that $\{y^n;n\in\N\}$ is contained in $\bigcup_{0\leq l\leq m}\mathcal{Y}_l$, hence is relatively compact.   In the case where $\{l_n;n\in\N\}$ is unbounded, assume that $l_n\rightarrow\infty$ without loss of generality. By hypothesis ({\bf AC}), it follows that 
$$
{\rm dist}(y^n,\mathcal{Y})\leq V(x^n)e^{-\kappa l_n}
\rightarrow 0
$$
as $n\rightarrow \infty$. Here, we have tacitly used the boundedness of $\{x^n;n\in\N\}$.
Thus, by the compactness of $\mathcal{Y}$, we conclude that the sequence $\{y^n;n\in\N\}$ is relatively compact. Consequently, the compactness of $\mathcal{Y}_{\infty}$ follows immediately. 

It remains to prove that  $\mathcal{Y}_\infty$ is invariant. In view of its compactness, this is a direct consequence of the continuity of $S$. 
\end{proof}

\subsubsection{Mixing on the invariant compactum.} Based on Proposition~\ref{Y_infty}, we shall establish the exponential mixing for $\{x_n;n\in\N\}$ acting on the invariant compactum $\mathcal Y_\infty$. This is presented as the following result. 
	
\begin{proposition}\label{local exponential mixing}
Under the assumptions of Theorem~{\rm\ref{Thm A}}, the Markov process $\{x_n;n\in\N\}$ on $\mathcal{Y}_\infty$ admits a unique invariant measure $\mu_*\in\mathcal{P}(\mathcal Y_\infty)$. Moreover, there exist constants $C_0,\beta_0>0$ such that
\begin{equation}\label{exponential mixing on Y-infty}
    \|P_n^*\nu-\mu_*\|^*_{L}\leq C_0 e^{-\beta_0 n}
\end{equation}
for any $\nu\in\mathcal{P}(\mathcal{Y}_\infty)$ and $n\in\N$. 
\end{proposition}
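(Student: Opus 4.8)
The plan is to run a coupling argument in the spirit of Kuksin–Shirikyan, producing two copies of the chain launched from arbitrary points of $\mathcal{Y}_\infty$ that can be coupled so that their distance decays exponentially in expectation, uniformly over the starting pair. Existence of an invariant measure comes for free: since $\mathcal{Y}_\infty$ is compact and invariant by Proposition~\ref{Y_infty}, the Feller property together with the Krylov–Bogolyubov averaging procedure produces at least one $\mu_*\in\mathcal{P}(\mathcal{Y}_\infty)$. Both the uniqueness of $\mu_*$ and the rate in (\ref{exponential mixing on Y-infty}) will then follow from the contraction estimate, because for any $\nu\in\mathcal{P}(\mathcal{Y}_\infty)$, after coupling $\nu$ with $\mu_*=P_n^*\mu_*$ and running the coupled chain, one has $\|P_n^*\nu-\mu_*\|_L^*\le\sup_{x,x'\in\mathcal{Y}_\infty}\E\,d(x_n,x_n')$, using that $\|f\|_L\le 1$ forces both $\|f\|_\infty\le 1$ and $\mathrm{Lip}(f)\le 1$.

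First I would fix a contraction threshold $d_*>0$ and build an extension $\vec{\boldsymbol{x}}_n=(x_n,x_n')$ of the chain on $\mathcal{Y}_\infty\times\mathcal{Y}_\infty$ (cf.\ Appendix~\ref{K-S framework}): whenever $d(x_n,x_n')\le d_*$ we let $(x_{n+1},x_{n+1}')$ be the coupling $(\mathcal{R},\mathcal{R}')$ supplied by hypothesis $(\mathbf{C})$, and otherwise we use an arbitrary (say independent) coupling of $P_1(x_n,\cdot)$ and $P_1(x_n',\cdot)$. The measurability of $\mathcal{R},\mathcal{R}'$ assumed in $(\mathbf{C})$ guarantees that this defines a genuine Markov chain on the product space with the prescribed marginals.

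The heart of the argument is a recurrence–squeezing loop. For the \emph{recurrence} I would use irreducibility $(\mathbf{I})$: choosing $\varepsilon<d_*/2$ and the associated $m=m(\varepsilon)$, the bound $\inf_{x}P_m(x,B(z,\varepsilon))=:p>0$ forces both components into $B(z,\varepsilon)$ after $m$ steps with probability at least $p^2$, so that $d(x_m,x_m')<d_*$ with a probability $p_0>0$ uniform over the starting pair. For the \emph{squeezing}, once $d(x_n,x_n')\le d_*$ hypothesis $(\mathbf{C})$ yields $d(x_{n+1},x_{n+1}')\le r\,d(x_n,x_n')$ except on an event of probability at most $g(d(x_n,x_n'))$; iterating, if no failure occurs then $d(x_{n+k},x_{n+k}')\le r^k d_*$ for all $k$, and the probability of \emph{ever} failing during this phase is at most $\sum_{k\ge0}g(r^k d_*)$. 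The growth condition (\ref{g-def}) makes $g(r^k)$ decay geometrically, so this series converges and can be pushed below $1/2$ by shrinking $d_*$; hence, conditional on entering the threshold, perpetual contraction occurs with a probability $q>0$ bounded away from zero.

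Finally I would assemble these into an exponential tail for the coupling time. Organizing the trajectory into successive trials — each an $m$-step recurrence attempt (succeeding with probability $\ge p_0$) followed by the perpetual-squeezing attempt (succeeding with probability $\ge q$), restarting from the current pair, still in $\mathcal{Y}_\infty\times\mathcal{Y}_\infty$, upon failure — the number of trials until contraction sets in for good is dominated by a geometric random variable. This gives a random time $\tau$ with exponential tail after which $d(x_n,x_n')\le r^{\,n-\tau}d_*$, whence $\sup_{x,x'\in\mathcal{Y}_\infty}\E\,d(x_n,x_n')\le C_0e^{-\beta_0 n}$ and therefore (\ref{exponential mixing on Y-infty}) and uniqueness. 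I expect the main obstacle to be the rigorous renewal/stopping-time bookkeeping in this last step — defining the trials as stopping times on the extension, preserving the strong Markov property across restarts, and extracting the geometric domination — together with the calibration of $d_*$. The latter is where the precise form of (\ref{g-def}) is essential: it is exactly what renders $\sum_k g(r^k d_*)<1$, and hence $q>0$, achievable. Since this is precisely the Kuksin–Shirikyan recurrence–squeezing mechanism, an alternative is to verify that $(\mathbf{I})$ and $(\mathbf{C})$ supply the recurrence and squeezing hypotheses of the abstract mixing theorem in \cite{KS-12,Shi-08} and quote it directly.
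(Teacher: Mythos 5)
Your proposal is correct and follows essentially the same route as the paper: the same extension construction (the coupling from $(\mathbf{C})$ on the diagonal set, independent noises off it), squeezing from $(\mathbf{C})$ together with (\ref{g-def}) after shrinking the threshold, recurrence from $(\mathbf{I})$ via the $p^2$ bound, and assembly through the Kuksin--Shirikyan recurrence--squeezing mechanism, which the paper indeed invokes directly (Theorem \ref{Theorem-KS}) rather than redoing the renewal bookkeeping by hand. The one point your sketch glosses over --- that the $p^2$ recurrence bound is legitimate because failure of recurrence is contained in the event that the pair never entered the diagonal set, on which the two components evolve independently --- is exactly the conditional-independence observation the paper makes explicit in its case analysis in Appendix \ref{Proof of Prop mixing}.
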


    In order to demonstrate Proposition~\ref{local exponential mixing}, we employ a coupling construction. In particular, we utilize Theorem \ref{Theorem-KS}, which is a special case of the general result established by Kuksin and Shirikyan \cite{KS-12,Shi-08}. The proof of this proposition is  analogous to that presented in \cite[Theorem 1.1]{Shi-21}, where the approximate controllability and local stabilisability are replaced by irreducibility and coupling condition in the present setting. For the reader's convenience, we provide an outline of the proof below, while the details can be found in Appendix~\ref{Proof of Prop mixing}.

    \begin{proof}[{\bf Sketch of proof}] Following the route described in Step 2 of Section \ref{Section-generalresult}, we shall transform the problem into the verification of the recurrence and squeezing properties for an extension process, which will be appropriately constructed. The proof will be divided into three steps. 
    
    \medskip
    
    \noindent {\bf Step 2.1 (Extension construction).}  Let $ \boldsymbol{Y}_{\infty}=\mathcal{Y}_{\infty}\times\mathcal{Y}_{\infty}$ and constant $\delta\in(0,1)$ be specified later. We introduce the diagonal set in $\boldsymbol{Y}_{\infty}$ by 
    \begin{equation*}
    \boldsymbol{\mathcal{D}}_\delta=\{(x,x')\in\boldsymbol{Y}_{\infty};d(x,x')\leq \delta\}.
    \end{equation*}
    Then, define a coupling operator on $\boldsymbol{Y}_{\infty}$ by the relation
    \begin{equation*}
    \boldsymbol{{R}}(x,x')=\begin{cases}
    (\mathcal{R}(x,x'),\mathcal{R}'(x,x'))\quad&\text{for }(x,x')\in \boldsymbol{\mathcal{D}}_\delta,\\
    (S(x,\xi),S(x',\xi'))&\text{otherwise},
    \end{cases}
    \end{equation*}
    where $\xi$ and $\xi'$ are independent copies of $\xi_0$. Using this  coupling operator $\boldsymbol{{R}}$, we can construct a family of Markov processes $\{\vec{\boldsymbol{x}}_n;n\in\N\}$ on $ \boldsymbol{Y}_\infty$ with the following properties: 

    \begin{itemize}[leftmargin=2em]
        \item [(1)] $\{\vec{\boldsymbol{x}}_n;n\in\N\}$ is an extension of $\{x_n;n\in\N\}$. More precisely, the transition probability $\boldsymbol{P}_n(\vec{\boldsymbol{x}},\cdot)$ of $\vec{\boldsymbol{x}}_n$ is a coupling of $(P_n(x,\cdot),P_n(x',\cdot))$ for $\vec{\boldsymbol{x}}=(x,x')\in \boldsymbol{Y}_\infty$. In what follows, we make a slight abuse of notation and write $\vec{\boldsymbol{x}}_n=(x_n,x'_n)$.
        
        \item [(2)] We shall show that  the extension process $\{\vec{\boldsymbol{x}}_n;n\in\N\}$ verifies the squeezing and recurrence properties on $\boldsymbol{\mathcal{D}}_\delta$ for some $\delta\in(0,1)$ in the following sense: \\
$\centerdot$ (\textit{Squeezing}) There exist constants $C_1,\beta_1>0$ such that the random time 
        $$   \boldsymbol{\sigma}:=\inf\{n\in\N;d(x_n,x_n')> r^n \delta\}
        $$
        satisfies that
        \begin{equation} \label{sketch squeezing}
            \Pb(\boldsymbol{\sigma}=\infty)\geq 1/2,\quad \Pb(\boldsymbol{\sigma}=n)\leq C_1e^{-\beta_1 n} 
        \end{equation}
        for any $\vec{\boldsymbol{x}}\in\boldsymbol{\mathcal{D}}_{\delta}$ and $n\in\N$. Here, the constant $r\in[0,1)$ is established by (\ref{squeezing}).\\
$\centerdot$ (\textit{Recurrence}) There exist constants $C_2,\beta_2>0$ such that the random time 
        $$
        \boldsymbol{\tau}:=\inf\{n\in\N;\vec{\boldsymbol{x}}_n\in\boldsymbol{\mathcal{D}}_\delta\}
        $$
         satisfies that
        \begin{equation}\label{sketch recurrence}
            \Pb(\boldsymbol{\tau}<\infty)=1,\quad  \Pb(\boldsymbol{\tau}=n)\leq C_2e^{-\beta_2 n}
        \end{equation}
          for any $\vec{\boldsymbol{x}}\in\boldsymbol{Y}_{\infty}$ and $n\in\N$.
\end{itemize}
    
Once properties (1) and (2) are established,
we verify the conditions of Theorem~\ref{Theorem-KS}, thereby completing the proof of exponential mixing on $\mathcal{Y}_\infty$.

    \medskip
    
\noindent {\bf Step 2.2 (Verification of squeezing).}
In order to demonstrate the squeezing property, let us fix any $\vec{\boldsymbol{x}}=(x,x')\in\boldsymbol{\mathcal{D}}_\delta$. In view of the definition of $\boldsymbol{R}$ and the coupling hypothesis ({\bf C}), it follows that
\begin{equation}
\Pb(d(x_1,x'_1)\leq rd(x,x'))\leq 1-g(d(x,x')).\label{sketch squeezing 1}
\end{equation}      
This in conjunction with the Markov property allows for the application of standard iteration arguments, which in turn yield the following result: 
\begin{align*}
\displaystyle\Pb(\boldsymbol{\sigma}=\infty)\geq\prod_{n\in\N}(1-g(r^nd(x,x'))).
\end{align*}
Consequently, the first inequality in \eqref{sketch squeezing} is attained by choosing the parameter $\delta\in(0,1)$ sufficiently small and recalling that $g$ satisfies condition \eqref{g-def}.   
        Similarly,  one can further deduce that
        \begin{equation*}
            \Pb(\boldsymbol{\sigma}=n)\leq g(r^n),
        \end{equation*}
       which in turn implies the second inequality in \eqref{sketch squeezing} by taking $\beta_1\in (0,-\limsup\limits_{n\rightarrow\infty}\tfrac{1}{n}\ln g(r^n))$. 
       
       In summary, the squeezing property follows.

    \medskip

   \noindent {\bf Step 2.3 (Verification of recurrence).} 
        It remains to establish the recurrence~\eqref{sketch recurrence}.  Invoking the Markov property and  Borel--Cantelli lemma, it suffices to show that there exists $m\in\N^+$ and $p>0$ such that for every $\vec{\boldsymbol{x}}\in \boldsymbol{Y}_{\infty}$,
        \begin{equation*}
        \Pb(\vec{\boldsymbol{x}}_m\in\boldsymbol{\mathcal{D}}_\delta)\geq p.
        \end{equation*}        
        This can be  achieved through the following two observations.
        \begin{itemize}[leftmargin=2em]
        \item[$\centerdot$] The construction of the extension process allows one to verify that $x_n$ and $x_n'$ are conditionally independent on the set $\{\boldsymbol{\tau}\geq n\}$. In particular, taking hypothesis ({\bf I}) into account,  there exists $m\in\N^+$ such that          
        \begin{equation*}
        \Pb(\vec{\boldsymbol{x}}_m\in\boldsymbol{\mathcal{D}}_\delta|\boldsymbol{\tau}\geq m)\geq (\inf_{x\in\mathcal{Y}_\infty}P_m(x,B(z,\delta/2)))^2>0
        \end{equation*}

         \item[$\centerdot$] On the other hand, let us note that $\vec{\boldsymbol{x}}_{\tau}\in\boldsymbol{\mathcal{D}}_\delta$. Then making use of the strong Markov property and squeezing property, we get
        \begin{equation*}
        \Pb(\vec{\boldsymbol{x}}_m\in\boldsymbol{\mathcal{D}}_\delta|\boldsymbol{\tau}<m)\geq \inf_{\vec{\boldsymbol{x}}\in\boldsymbol{\mathcal{D}}_\delta}\Pb(\boldsymbol{\sigma}=\infty)\geq 1/2.
        \end{equation*}
        \end{itemize}
In combination, these above shall imply the recurrence. The proof of Proposition~\ref{local exponential mixing} is therefore completed.
\end{proof}

\begin{remark}
As a corollary of Proposition~{\rm\ref{local exponential mixing}}, it follows that $\text{supp }\mu_*\subset\mathcal Y_\infty$, which justifies the assertion that $\mu_*$ has compact support.  Indeed, by invoking hypothesis $(${\bf I}$)$, one can further verify that $\text{supp }\mu_*$ is precisely the attainable set from the singleton $z$.
\end{remark}

\subsubsection{Extending mixing to the original space.} It  remains to demonstrate global exponential mixing for $\{x_n;n\in\N\}$, acting on the entire state space $\mathcal X$; see Step 3 of Section \ref{Section-generalresult}. This will be done by combining the $\mathcal Y_\infty$-restricted mixing described in Proposition~\ref{local exponential mixing}, with the exponential attraction of the invariant compactum $\mathcal Y_\infty$ (due to hypothesis~(\textbf{AC})).

\begin{proposition}\label{prop-globalmixing}
Under the assumptions of Theorem~{\rm\ref{Thm A}}, the invariant measure $\mu_*$, established in Proposition~{\rm\ref{local exponential mixing}}, is globally exponential mixing in the sense of {\rm(\ref{Exponential mixing})}.
\end{proposition}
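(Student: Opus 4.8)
The plan is to combine the exponential mixing on the invariant compactum $\mathcal{Y}_\infty$ (Proposition~\ref{local exponential mixing}) with the pathwise exponential attraction supplied by hypothesis $(\mathbf{AC})$, through a nearest-point projection onto $\mathcal{Y}_\infty$ and a balancing of two time scales. Fix $\nu\in\mathcal{P}(\mathcal{X})$ with $\int_{\mathcal{X}}V\,d\nu<\infty$ and write $n=m+k$, where $m,k$ are chosen at the end. Since $\mathcal{Y}_\infty$ is compact (Proposition~\ref{Y_infty}), a measurable selection yields a Borel map $\Pi\colon\mathcal{X}\to\mathcal{Y}_\infty$ with $d(x,\Pi(x))=\text{dist}(x,\mathcal{Y}_\infty)$, and I set $\lambda:=\Pi_*(P_m^*\nu)\in\mathcal{P}(\mathcal{Y}_\infty)$. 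The triangle inequality then gives
$$
\|P_n^*\nu-\mu_*\|_L^*\leq\|P_n^*\nu-P_k^*\lambda\|_L^*+\|P_k^*\lambda-\mu_*\|_L^*.
$$
Because $\lambda$ is supported on the invariant set $\mathcal{Y}_\infty$, Proposition~\ref{local exponential mixing} bounds the second term by $C_0e^{-\beta_0 k}$, and it remains to control the first.

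For the first term I would use a synchronous (same-noise) coupling started after time $m$. On one probability space take $x_0\sim\nu$ independent of the noise $\boldsymbol{\xi}=\{\xi_j\}$, set $x_n=S_n(x_0;\boldsymbol{\xi})$, and let $\tilde x:=S_k(\Pi(x_m);\xi_m,\dots,\xi_{n-1})$. Then $x_n\sim P_n^*\nu$ and $\tilde x\sim P_k^*\lambda$, while any $f$ with $\|f\|_L\leq1$ satisfies $|f(x_n)-f(\tilde x)|\leq\min(d(x_n,\tilde x),2)$, whence
$$
\|P_n^*\nu-P_k^*\lambda\|_L^*\leq\E\big[\min(d(x_n,\tilde x),2)\big].
$$

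The crux is that, although the dynamics need not contract off $\mathcal{Y}_\infty$, hypothesis $(\mathbf{AC})$ confines all relevant trajectories to a fixed bounded set. Since $\boldsymbol{\xi}\in\mathcal{E}^{\N}$ almost surely, \eqref{asymptotic compactness} gives $\text{dist}(x_m,\mathcal{Y})\leq V(x_0)e^{-\kappa m}$ a.s.; on the event $\{V(x_0)\leq R\}$ with $R\leq e^{\kappa m}$ this places $x_m$ in the unit neighbourhood $\mathcal{B}_0$ of $\mathcal{Y}$, and applying \eqref{asymptotic compactness} once more from $x_m$ (respectively using the invariance of $\mathcal{Y}_\infty$ for the trajectory issued from $\Pi(x_m)$) shows that both trajectories remain, for every $k$, inside one bounded set $\mathcal{B}$ independent of $k$. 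On $\mathcal{B}$ the map $S$ is Lipschitz with some constant $L$, so the synchronous coupling and $\mathcal{Y}\subset\mathcal{Y}_\infty$ yield
$$
d(x_n,\tilde x)\leq L^k\,d(x_m,\Pi(x_m))=L^k\,\text{dist}(x_m,\mathcal{Y}_\infty)\leq L^k V(x_0)e^{-\kappa m}.
$$

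Splitting the expectation over $\{V(x_0)\leq R\}$ and its complement (where $\min(\cdot,2)\leq2$ and Markov's inequality gives $\Pb(V(x_0)>R)\leq R^{-1}\int V\,d\nu$), I obtain
$$
\E\big[\min(d(x_n,\tilde x),2)\big]\leq\big(L^k e^{-\kappa m}+2R^{-1}\big)\int_{\mathcal{X}}V\,d\nu.
$$
Choosing $R=e^{\kappa m/2}$ (so $R\leq e^{\kappa m}$ and the trajectory confinement applies) and splitting the horizon as $m=(1-\theta)n$, $k=\theta n$ with $\theta>0$ small enough that $\theta\ln L<\kappa(1-\theta)$, each of $L^k e^{-\kappa m}$, $e^{-\kappa m/2}$ and $C_0 e^{-\beta_0 k}$ decays exponentially in $n$; since $\|\cdot\|_L^*\leq2$ always, this yields \eqref{Exponential mixing} for suitable $C,\beta>0$. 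The main obstacle is exactly the control of the first term: with no contraction outside $\mathcal{Y}_\infty$, the perturbation $\text{dist}(x_m,\mathcal{Y}_\infty)$ is amplified by $L^k$, and the argument closes only because $(\mathbf{AC})$ simultaneously confines the two trajectories to a $k$-independent bounded set (granting a uniform Lipschitz constant) and provides the exponentially small head start $e^{-\kappa m}$, which must be balanced against $L^k$ via the two-scale decomposition $n=m+k$.
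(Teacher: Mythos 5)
Your proposal is correct, and its skeleton is the same as the paper's: project the trajectory at an intermediate time onto the compact invariant set, compare the original and projected trajectories under the \emph{same} noise using $(\mathbf{AC})$ plus the local Lipschitz property of $S$ (both trajectories being confined to a fixed bounded neighbourhood of $\mathcal{Y}$, uniformly in the number of remaining steps), apply Proposition~\ref{local exponential mixing} to the projected trajectory, and balance the two time scales $n=m+k$ under the constraint $\theta\ln L<\kappa(1-\theta)$. Two implementation choices differ, and they are worth recording. First, the paper does not use an exact nearest-point projection: it builds, by an explicit countable partition, a measurable map $\mathsf{P}\colon\mathcal{X}\to\mathcal{Y}$ with $d(x,\mathsf{P}x)\leq 2\,\mathrm{dist}(x,\mathcal{Y})$, thereby avoiding any appeal to a measurable-selection theorem; your exact projection onto $\mathcal{Y}_\infty$ is legitimate (the argmin correspondence onto a compact set admits a Borel selection), but it imports a nontrivial external result where an elementary construction suffices, and the factor $2$ is immaterial. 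Second, and more substantively, the paper's bound $I_2\leq 2V(x)L^k e^{-\kappa j}$ is only available once $j$ exceeds a deterministic threshold $K\sim\kappa^{-1}\ln\bigl(V(x)R^{-1}\bigr)$ depending on the initial point, forcing a separate treatment of the regime $n\leq K/(1-\varepsilon)$ via the inequality $1+s\geq s^{\beta/((1-\varepsilon)\kappa)}$. Your device of truncating on the event $\{V(x_0)\leq e^{\kappa m/2}\}$ and paying $2e^{-\kappa m/2}\int_{\mathcal{X}} V\,d\nu$ on the complement via Markov's inequality removes this case analysis entirely and yields the estimate uniformly in $n$; this is a small but genuine streamlining of the final step, at the cost of working with the law $\nu$ throughout rather than proving the pointwise bound (\ref{Exponential mixing2}) first (the two formulations are equivalent here, since the pointwise bound follows from yours by taking $\nu=\delta_x$).
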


\begin{proof}[{\bf Proof}]
To verify \eqref{Exponential mixing}, it suffices to show that there exist constants $C,\beta>0$ such that
\begin{equation}\label{Exponential mixing2}
    |P_nf(x)-\langle f,\mu_*\rangle|\leq C(1+V(x))e^{-\beta n}
\end{equation}
for any $f\in L_b(\mathcal{X})$ with $\|f\|_{L}\leq 1$, $x\in\mathcal{X}$ and $n\in\N$.

We claim that, in view of the compactness  of $\mathcal{Y}$, there exists a measurable map $\mathsf{P}\colon\mathcal{X}\rightarrow\mathcal{Y}$ such that
\begin{equation}\label{Y projection}
d(x,\mathsf{P}x)\leq 2\text{dist}(x,\mathcal{Y})
\end{equation}
for any $x\in\mathcal{X}$.
Indeed, let $\{y_n;n\in\N\}$ be a dense sequence in $\mathcal{Y}$, and 
$$
\quad A_n=\left\{z\in\mathcal{X};d(z,y_n)<2\text{dist}(z,\mathcal{Y})\right\}\setminus \left(\bigcup_{0\leq j\leq n-1} A_j\right).
$$
Then one can check that $\mathcal{X}=\mathcal{Y}\cup (\bigcup_{n\in\N}A_n)$ and the sets $A_n$, $\mathcal{Y}$ are disjoint. It thus follows that the desired map $\mathsf{P}$ can be taken as 
\begin{equation*}
\mathsf{P}\colon\mathcal{X}\rightarrow\mathcal{Y},\quad\mathsf{P}x=\begin{cases}
y_n\quad &\text{ for }x\in A_n,\\
x\quad &\text{ for } x\in\mathcal{Y}.
\end{cases}
\end{equation*}

Let $x\in\mathcal{X}$ be arbitrarily given and recall the alternative expression (\ref{rewrite-form}) for $\{x_n;n\in\N\}$. 
We also define the shifted sequences by $\boldsymbol{\xi}^j=\{\xi_{n+j};n\in\N\}$ for $j\in\N^+$, which is independent of $x_j$. With these settings, we compute that \begin{equation}\label{P_j+k-mu}
\begin{aligned}
|P_{k+j}f(x)-\langle f,\mu_*\rangle|
&=|\E_{x}f(x_{k+j})-\langle f,\mu_*\rangle|\\
&\leq |\E_{x}f(S_k(\mathsf{P}x_j;\boldsymbol{\xi}^j))-\langle f,\mu_*\rangle|\\
&\quad+|\E_{x}[f(S_{k}(x_j;\boldsymbol{\xi}^j))-f(S_k(\mathsf{P}x_j;\boldsymbol{\xi}^j))]|\\
& =:I_1+I_2
\end{aligned}
\end{equation}
for any $f\in L_b(\mathcal{X})$ with  $\|f\|_{L}\leq 1$ and $j,k\in\N$. In the sequel, we intend to estimate each $I_i$ separately.

From (\ref{exponential mixing on Y-infty}) it follows that 
\begin{equation}\label{I2 bound}
\begin{aligned}
I_1&=|\E_{x}[\E_{x}f(S_k(\mathsf{P}x_j;\boldsymbol{\xi}^j))-\langle f,\mu_*\rangle|\mathcal{F}_j]|\\
&\leq \E_{x}|\E_{y}(f(S_k(y;\boldsymbol{\xi}^j))-\langle f,\mu_*\rangle)|_{y=\mathsf{P}x_j}|\\
&\leq C_0e^{-\beta_0 k},
\end{aligned}
\end{equation}
where $\mathcal{F}_n$ denotes the natural filtration of $\{x_n;n\in\N\}$. In particular, let us mention that the RHS in (\ref{I2 bound}) is independent of $j\in\N$. 

Thus, it suffices to  get control over the size of $I_2$. To this end, we observe,  in view of (\ref{asymptotic compactness}) included in hypothesis ({\bf AC}), that
\begin{equation}\label{xn asymptotic compactness}
\text{dist}(S_n(x;\boldsymbol{\xi}),\mathcal{Y})\leq V(x)e^{-\kappa n}
\end{equation}
almost surely for any $n\in\N$. On the other hand, one can derive, from 
the compactness of $\mathcal{Y}$, that there exists a constant $R>0$ such that 
$$
\{S_n(y;\boldsymbol{\xi});y\in\mathcal{Y},n\in\N\}\subset N_R(\mathcal{Y}):=\{y\in\mathcal{X};\text{dist}(y,\mathcal{Y})<R\},
$$
almost surely.
Then, taking (\ref{xn asymptotic compactness}) into account, one gets that 
\begin{equation*}
\{x_n;n\geq K\}\subset N_R(\mathcal{Y}),
\end{equation*}
almost surely, where $K:=\lceil(\ln (V(x)R^{-1}))/{\kappa}\rceil$.
As a consequence, 
$$
S_{k}(x_j;\boldsymbol{\xi}^j), S_{k}(\mathsf{P}x_j;\boldsymbol{\xi}^j)\in N_R(\mathcal{Y})
$$
for any $k\in\N$ and $j\geq K$.
In view of the Lipschitz continuity of $S$ on $N_R(\mathcal{Y})\times \mathcal E$, there exists a constant $L\geq 1$ such that
\begin{equation}\label{I1 bound}
\begin{aligned}
I_2&\leq \E_{x}d(S_{k}(x_j;\boldsymbol{\xi}^j),S_k(\mathsf{P}x_j;\boldsymbol{\xi}^j))\\
&\leq  L^k\E_{x} d(x_j,\mathsf{P}x_j)\\
&\leq  2V(x)L^ke^{-\kappa j},
\end{aligned}
\end{equation}
where the last inequality follows from (\ref{Y projection}) and (\ref{xn asymptotic compactness}).

We are now prepared to prove (\ref{Exponential mixing2}).  Plugging (\ref{I2 bound}),(\ref{I1 bound}) into (\ref{P_j+k-mu}), it follows that
\begin{equation*}
\begin{aligned}
|P_{n}f(x)-\langle f, \mu_*\rangle|\leq 2V(x)L^ke^{-\kappa j}+ C_0e^{-\beta_0 k}
\end{aligned}
\end{equation*}
for any $n=k+j$ with $k\geq 0$ and $j\geq K$, where we recall that $f\in L_b(\mathcal{X})$ with  $\|f\|_{L}\leq 1$ is arbitrary. For $\varepsilon\in(0,1)$ to be specified below, we set 
$$
k=\lfloor\varepsilon n\rfloor,\quad j=\lceil(1-\varepsilon)n\rceil,
$$
under which it can be derived that
\begin{equation*}
|P_{n}f(x)-\langle f, \mu_*\rangle|\leq 2V(x)e^{(-\kappa +\varepsilon(\kappa+\ln L))n}+ C_0e^{\beta_0}e^{-\beta_0 \varepsilon n}
\end{equation*}
for any $n>K/(1-\varepsilon)$. In conclusion, taking
$$
\begin{cases}
\varepsilon<\frac{\kappa}{\kappa+\ln L},\ 
\beta=\min\left\{\kappa-\varepsilon(\kappa+\ln L),\beta_0\varepsilon,(1-\varepsilon)\kappa
\right\},\\
C=2\max\{
C_0e^{\beta_0},e^{\beta/(1-\varepsilon)}R^{-\beta/((1-\varepsilon)\kappa)}
\},
\end{cases}
$$
we have 
\begin{equation*}
|P_{n}f(x)-\langle f, \mu_*\rangle|\leq C(1+V(x))e^{-\beta n}
\end{equation*}
for any $n>K/(1-\varepsilon)$, while in the case of $n\leq K/(1-\varepsilon)$,
\begin{align*}
|P_{n}f(x)-\langle f, \mu_*\rangle| & \leq Ce^{-\beta /(1-\varepsilon)}R^{\beta/((1-\varepsilon)\kappa)}\\
& \leq Ce^{-\beta /(1-\varepsilon)}R^{\beta/((1-\varepsilon)\kappa)}(1+V(x))V(x)^{-\beta/((1-\varepsilon)\kappa)}\\
& \leq C(1+V(x))e^{-\frac{\beta}{1-\varepsilon}[\frac{\ln(V(x)R^{-1})}{\kappa}+1]}\\
& \leq C(1+V(x))e^{-\beta n},
\end{align*}
where the second inequality is due to $\beta\leq (1-\varepsilon)\kappa$ (and hence $1+s\geq s^{\beta/((1-\varepsilon)\kappa)}$ for any $s\geq 0$).
The proof is then complete.
\end{proof}

Summarizing Propositions \ref{Y_infty}--\ref{prop-globalmixing}, 
the proof of Theorem~\ref{Thm A} is now complete.
	
\section{Global stability and energy profiles of waves}\label{Section-PDE}

In this section, we shall describe a consequence of Theorem \ref{Thm-informal-GS}, i.e., Proposition~\ref{prop-global-stability} below. This proposition ensures the global stability of  zero equilibrium for the unforced problem, i.e., (\ref{semilinear-problem}) with $f(t,x)\equiv 0$.  Such property will play an essential role in the verification of irreducibility (see hypothesis $(\mathbf{H})$ in Section \ref{Section-1-1}) for \eqref{Problem-0}, where the details are contained in Section \ref{Section-6-2}.
In addition, we present some energy characterizations for solutions of linear/nonlinear wave equations, which will be useful in  our analyses of dynamical systems and control problems; see Sections \ref{Section-nonautonomous} and \ref{Section-control}. 

For any two Banach spaces $\mathcal X,\mathcal Y$, the notation $\mathcal L(\mathcal X;\mathcal Y)\ (\mathcal L(\mathcal X)=\mathcal L(\mathcal X;\mathcal X)\text{ for abbreviation})$ stands for the space of bounded linear operator from $\mathcal X$ into $\mathcal Y$, equipped with the usual operator norm. We denote by $\langle\cdot,\cdot\rangle_{\mathcal X,\mathcal X^*}$ the scalar product between $\mathcal X$ and its dual space $\mathcal X^*$. When $\mathcal X$ is also a Hilbert space, $(\cdot,\cdot)_{_{\mathcal X}}$ stands for its inner product.

To continue, we introduce the functional settings for models (\ref{Problem-0}),(\ref{semilinear-problem}). We write $\|\cdot\|=\|\cdot\|_{_{L^2}}$ and  $(\cdot,\cdot)=(\cdot,\cdot)_{_{L^2}}$ for simplicity. 
Recall that $H^s\ (s>0)$ denotes the domain of the fractional power $(-\Delta)^{s/2}$, which can be characterized via
$$
H^s=\{\phi\in H;\sum_{j\in\N^+}\lambda_j^s|(\phi,e_j)|^2<\infty
\}
$$ 
and is equipped with the graph norm
$$
\left\|(-\Delta)^{s/2}\phi\right\|^2=\sum_{j\in\N^+}\lambda_j^s|(\phi,e_j)|^2.
$$
It also follows that $H^s=H^s(D)$ for $0\leq s< 1/2$ and $H^s=\{\phi\in H^s(D);\phi|_{\partial D}=0 \}$ for $1/2< s\leq 2$. The dual space of $H^s$ is denoted by $H^{-s}$, which can be regarded as the completion of $H$ with respect to the norm $\|(-\Delta)^{-s/2}\cdot\|$.
Let us also set
$
\mathcal H^s=H^{1+s}\times H^s$ and $
\mathcal X^s_T=C([0,T];H^{1+s})\cap C^1([0,T];H^s)
$
with $s\in\R
$ and $T>0$.
For simplicity, we write $\mathcal H=\mathcal H^0$ and $\mathcal X_T=\mathcal X^0_T$. 
Denote 
\begin{equation}\label{potential-space}
B_R=B_{C([0,T];H^{\scriptscriptstyle 11/7})}(R)
\end{equation}
with $R>0$. If there is no danger of confusion, we denote $L^q_tL_x^r=L^q(\tau,\tau+T;L^r(D))$ and $L^q_tH_x^s=L^q(\tau,\tau+T;H^s)$, where $\tau\geq0$ and  $q,r\geq 1$.

\subsection{The linear problem}
We in this subsection concentrate on the linear equation
\begin{equation}\label{linear-problem}
\begin{cases}
\boxempty v+b(x)\partial_t v+p(t,x)v=f(t,x),\quad x\in D,\\
v[0]=(v_0,v_1):=v^0,
\end{cases}
\end{equation}
on time interval $[0,T]$, where $b\in C^\infty(\overline{D})$ and $p\in C([0,T];H^{\scriptscriptstyle 11/7})$. We denote by 
$$
v=\mathcal V_{p}(v_0,v_1,f)=\mathcal V_{p}(v^0,f)
$$
the solution of (\ref{linear-problem}). Here, the initial state $v^0$ and the force $f$ will be chosen to be in various spaces, and so is $\mathcal V_{p}(v^0,f)$. These solutions are defined by using the formula of variations of constants, i.e.,
\begin{equation}\label{variation-formula-1}
v[t]=U_b(t)v^0+\int_0^tU_b(t-s)\left(\begin{matrix}
0\\
-p(s)v(s)+f(s)
\end{matrix}
\right)ds,
\end{equation}
where $U_b(t),t\in\R$ stands for the $C_0$-group on $\mathcal H$ associated with the autonomous linear equation $\boxempty v+b(x)\partial_tv=0$. Moreover, $U_b(t)$ is also a $C_0$-group on $\mathcal H^s$ for every $s\in\R$.
	
When the initial condition is replaced with the terminal condition
$
v[T]=(v_0^T,v_1^T):=v^T,
$
the corresponding solution is denoted by 
$$
v=\mathcal V^T_{p}(v_0^T,v_1^T,f)=\mathcal V^T_{p}(v^T,f);
$$
notice that the wave equation (\ref{linear-problem}) is time-reversible. In this situation, the solution is given by the formula of variations of constants of a time-reversible version, i.e.,
\begin{equation}\label{variation-formula-2}
v[t]=U_b(t-T)v^T-\int_t^TU_b(t-s)\left(
\begin{matrix}
0\\
-p(s)v(s)+f(s)
\end{matrix}
\right)ds.
\end{equation}
When $f=0$, let us denote $\mathcal V^T_{p}(v^T)=\mathcal V^T_{p}(v^T,0)$ for the sake of simplicity.
	
Some characterizations of $\mathcal V_{p},\mathcal V_{p}^T$ are collected as the following proposition.
	
\begin{proposition}\label{Energy-estimate}
Let $T,R>0$ and $s\in[0,1/5]$. Then
the following assertions hold. 
\begin{enumerate}[leftmargin=2em]
\item[$(1)$] There exists a constant $C_1>0$ such that
\begin{equation}\label{energy-1}
\sup_{t\in[0,T]}\|v[t]\|^2_{_{\mathcal H^s}}\leq C_1\left[\|v^0\|^2_{_{\mathcal H^s}}+\int_0^T\|f(t)\|^2_{_{H^s}}dt
\right]
\end{equation}
for any $p\in B_R,v^0\in\mathcal H^s$ and $f\in L^2_tH_x^{s}$, where $v=\mathcal  V_{p}(v^0,f)\in \mathcal X_T^s.$ Moreover, the estimate of type {\rm(\ref{energy-1})} also holds with $V_{p}(v^0,f)$ replaced by $\mathcal  V_{p}^T(v^T,f),v^T\in\mathcal{H}^s,f\in L^2_tH_x^{s}$.
			
\item[$(2)$] There exists a constant $C_2>0$ such that
\begin{equation}\label{energy-2}
\|v[t]\|^2_{_{\mathcal H^{-1-s}}}\leq C_2\|v[\tau]\|^2_{_{\mathcal H^{-1-s}}}	
\end{equation}
for any $p\in B_R,v^T\in \mathcal H^{-1-s}$ and $t,\tau\in[0,T]$, where $v=\mathcal V_{p}^T(v^T)\in \mathcal X_T^{-1-s}$.

\item[$(3)$] 
Denoting $v=\mathcal V^T_p(v^T)$ with $v^T\in\mathcal H^{-1-s}$, the mapping 
$$
B_R\ni p\mapsto (v,\partial_tv)\in \mathcal L(\mathcal H^{-1-s};C([0,T];\mathcal H^{-1-s}))
$$
is Lipschitz and continuously differentiable.
\end{enumerate}	
\end{proposition}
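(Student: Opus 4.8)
The three assertions all rest on two common ingredients: the representation formulas (\ref{variation-formula-1}),(\ref{variation-formula-2}) together with the fact that $U_b(t)$ is a $C_0$-group on each $\mathcal H^\sigma$ (so that $\sup_{|t|\le T}\|U_b(t)\|_{\mathcal L(\mathcal H^\sigma)}<\infty$), and a bilinear bound for multiplication by the potential. The plan is to first record the estimate
\begin{equation*}
\|pw\|_{H^s}\le C\|p\|_{H^{\scriptscriptstyle11/7}}\|w\|_{H^{1+s}},\qquad \|pw\|_{H^{-1-s}}\le C\|p\|_{H^{\scriptscriptstyle11/7}}\|w\|_{H^{-s}},
\end{equation*}
valid for $s\in[0,1/5]$, the second being the dual of the first. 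For the first I would invoke the Sobolev product rule in dimension three: since $\tfrac{11}{7}+(1+s)-s=\tfrac{18}{7}>\tfrac32$ and $0\le s<\tfrac12$, the product $H^{\scriptscriptstyle11/7}\cdot H^{1+s}$ embeds into $H^s=H^s(D)$, the constraint $s<1/2$ ensuring that no Dirichlet compatibility is required of the product. With these bounds in hand, the argument for each part is a Grönwall estimate with a constant uniform over $p\in B_R$.

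For part $(1)$, I would apply $\|\cdot\|_{\mathcal H^s}$ to (\ref{variation-formula-1}), bound $U_b$ by its uniform operator norm, and use the multiplication estimate to get $\|p(\sigma)v(\sigma)\|_{H^s}\le CR\|v[\sigma]\|_{\mathcal H^s}$, together with $\int_0^T\|f(\sigma)\|_{H^s}\,d\sigma\le T^{1/2}\|f\|_{L^2_tH^s_x}$ from Cauchy--Schwarz. This yields $\|v[t]\|_{\mathcal H^s}\le C(\|v^0\|_{\mathcal H^s}+T^{1/2}\|f\|_{L^2_tH^s_x})+CR\int_0^t\|v[\sigma]\|_{\mathcal H^s}\,d\sigma$, and Grönwall gives (\ref{energy-1}) after taking the supremum and squaring; the terminal-data case is identical starting from (\ref{variation-formula-2}). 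Part $(2)$ is the same computation carried out in the negative-regularity space $\mathcal H^{-1-s}=H^{-s}\times H^{-1-s}$ with $f=0$: using the dual multiplication bound and the group property, I would write Duhamel between an arbitrary time $\tau$ and $t$ rather than starting from $T$, obtaining $\|v[t]\|_{\mathcal H^{-1-s}}\le C\|v[\tau]\|_{\mathcal H^{-1-s}}+CR|\int_\tau^t\|v[\sigma]\|_{\mathcal H^{-1-s}}\,d\sigma|$, and Grönwall (forward or backward in time) closes (\ref{energy-2}) with $C_2$ depending only on $T,R$.

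For part $(3)$, write $\mathcal G(p)\colon v^T\mapsto(v,\partial_tv)$ for the bounded operator furnished by part $(2)$. Given $p_1,p_2\in B_R$, the difference $w=\mathcal V^T_{p_1}(v^T)-\mathcal V^T_{p_2}(v^T)$ solves $\boxempty w+b\partial_tw+p_1w=(p_2-p_1)\mathcal V^T_{p_2}(v^T)$ with $w[T]=0$; an inhomogeneous version of the $\mathcal H^{-1-s}$ estimate (proved exactly as part $(1)$, but in the negative space, with forcing in $L^2_tH^{-1-s}_x$) combined with the dual multiplication bound and part $(2)$ gives $\sup_t\|w[t]\|_{\mathcal H^{-1-s}}\le C\|p_1-p_2\|_{C([0,T];H^{\scriptscriptstyle11/7})}\|v^T\|_{\mathcal H^{-1-s}}$, i.e.\ Lipschitz dependence. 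For differentiability, the natural candidate derivative in a direction $q$ is $\dot v$ solving $\boxempty\dot v+b\partial_t\dot v+p\dot v=-q\,v$, $\dot v[T]=0$; setting $\rho=\mathcal V^T_{p+q}(v^T)-v-\dot v$ one computes that $\rho$ solves $\boxempty\rho+b\partial_t\rho+p\rho=-q\,(\mathcal V^T_{p+q}(v^T)-v)$ with $\rho[T]=0$, so the same inhomogeneous estimate together with the Lipschitz bound just obtained yields $\sup_t\|\rho[t]\|_{\mathcal H^{-1-s}}\le C\|q\|^2_{C([0,T];H^{\scriptscriptstyle11/7})}\|v^T\|_{\mathcal H^{-1-s}}=o(\|q\|)$, establishing Fréchet differentiability with the stated derivative. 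Continuity of $p\mapsto D\mathcal G(p)$ then follows by comparing the linearized solutions for $p_1,p_2$ with the same $q,v^T$, whose difference again solves an equation forced by $(p_1-p_2)$ and is controlled by the same energy estimate. The main obstacle, and the only genuinely delicate point, is the bilinear multiplication estimate and the admissible range of $s$: one must check that multiplication by a potential of regularity exactly $H^{\scriptscriptstyle11/7}$ maps $H^{1+s}$ into $H^s$ (and dually $H^{-s}$ into $H^{-1-s}$) \emph{with a constant independent of $p\in B_R$ and of the time slice}, while respecting the Dirichlet realization of the fractional spaces. Keeping $s<1/2$ frees the product from any boundary-compatibility constraint, and the Sobolev sum $\tfrac{18}{7}>\tfrac32$ makes the product rule applicable throughout $s\in[0,1/5]$; once this is secured, the rest is routine Grönwall and remainder bookkeeping.
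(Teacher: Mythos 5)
Your proposal is correct and follows exactly the route the paper indicates: the paper does not write out a proof but states that the conclusions follow from the variation-of-constants formulas (\ref{variation-formula-1}),(\ref{variation-formula-2}) together with a Gronwall-type inequality, which is precisely your argument, supplemented by the (correct) Sobolev product estimate $H^{\scriptscriptstyle11/7}\cdot H^{1+s}\hookrightarrow H^s$ and its dual that make the Gronwall step uniform over $p\in B_R$. Your treatment of part $(3)$ via the remainder equation for $\rho$ is the standard linearization bookkeeping and closes correctly with the quadratic bound $o(\|q\|)$.
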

	
These conclusions can be proved by means of the formulas (\ref{variation-formula-1}) and (\ref{variation-formula-2}) together with the Gronwall-type inequality. In Proposition \ref{Energy-estimate}, both the regularity assumption on $p$ and the range of values for $s$ correspond to the context of our control arguments in Section \ref{Section-control}. However, these restrictions are in fact not ``optimal'', as our emphasis is not on sharp conditions for the relevant properties.
	
In addition to  inequality (\ref{energy-1}) in Proposition \ref{Energy-estimate}, another useful estimate for $H^1$-solutions of wave equations is the Strichartz inequality; see Proposition \ref{prop-Strichartz} below. This inequality involves the $L^r$-norm (with $r>6$) in space and, in exchange, only the $L^q$-norm (with $q<\infty$) in time.
In comparison, the aforementioned inequality is of $L^\infty$ in time and of $H^1$ in space, while $H^1$ is not included in $L^r$ with $r>6$.

\begin{proposition}\label{prop-Strichartz}
Let $T>0$ and the pair $(q,r)$ satisfy
\begin{equation}\label{Strichartz-index}
\frac{1}{q}+\frac{3}{r}=\frac{1}{2},\quad q\in[7/2,+\infty].
\end{equation}
Then there exists a constant $C=C(T,q)>0$ such that 
$$
\|v\|_{_{L^q_tL^r_x}}\leq C\left[\|v^0\|_{_{\mathcal H}}+\|f\|_{_{L^1_tL^2_x}}
\right]
$$
for any $v^0\in\mathcal H$ and $f\in L^1_tL^2_x$, where $v=\mathcal V_0(v^0,f)\in\mathcal X_T$.
\end{proposition}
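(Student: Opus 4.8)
The plan is to deduce the estimate from the Strichartz inequality for the \emph{undamped} Dirichlet wave equation on $D$, absorbing the damping term as a perturbative source. Writing $v=\mathcal V_0(v^0,f)$, the equation becomes $\boxempty v=f-b(x)\partial_t v$, so that $v$ solves a free wave equation with Dirichlet boundary condition and source $g:=f-b(x)\partial_t v$. The first step is therefore to invoke the boundary Strichartz estimate
\[
\|w\|_{_{L^q_tL^r_x}}\leq C\big[\|w^0\|_{_{\mathcal H}}+\|g\|_{_{L^1_tL^2_x}}\big],\qquad \boxempty w=g,\ w[0]=w^0,
\]
which is exactly the content of Blair--Smith--Sogge \cite{BSS-09} and Burq--Lebeau--Planchon \cite{BLP-08}. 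The only thing to check at this stage is that the pair $(q,r)$ prescribed by (\ref{Strichartz-index}) lies in the admissible range for domains with boundary: with data at energy regularity (i.e. in $\mathcal H$) and $n=3$, the scaling relation is precisely $\tfrac1q+\tfrac3r=\tfrac12$, while the boundary admissibility condition reads $\tfrac3q+\tfrac2r\leq1$. A direct computation shows that, on the scaling line, this condition is equivalent to $q\geq 7/2$, with equality exactly at $q=7/2$ (hence $r=14$); this is the origin of the endpoint $q=7/2$ in the statement, and reflects the boundary loss relative to the boundaryless admissibility, which would allow $q\geq2$.

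The second step is to control the damping contribution to the source. Since $b\in C^\infty(\overline D)$ is bounded,
\[
\|b(x)\partial_t v\|_{_{L^1_tL^2_x}}\leq \|b\|_{_{L^\infty}}\int_0^T\|\partial_t v(t)\|\,dt\leq T\|b\|_{_{L^\infty}}\sup_{t\in[0,T]}\|v[t]\|_{_{\mathcal H}},
\]
so it remains to bound $\sup_{t}\|v[t]\|_{_{\mathcal H}}$ by the right-hand side of the asserted inequality. Here one must avoid the $L^2$-in-time energy bound (\ref{energy-1}), which would produce $\|f\|_{_{L^2_tL^2_x}}$ instead of the required $\|f\|_{_{L^1_tL^2_x}}$. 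Instead, the variation-of-constants formula (\ref{variation-formula-1}) with $p=0$, combined with the boundedness of the $C_0$-group $U_b$ on $\mathcal H$ over $[0,T]$, gives directly the $L^1$-in-time energy estimate
\[
\sup_{t\in[0,T]}\|v[t]\|_{_{\mathcal H}}\leq C\big(\|v^0\|_{_{\mathcal H}}+\|f\|_{_{L^1_tL^2_x}}\big).
\]

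Combining the three displays yields
\[
\|v\|_{_{L^q_tL^r_x}}\leq C\big[\|v^0\|_{_{\mathcal H}}+\|f\|_{_{L^1_tL^2_x}}+T\|b\|_{_{L^\infty}}\sup_{t}\|v[t]\|_{_{\mathcal H}}\big]\leq C'\big[\|v^0\|_{_{\mathcal H}}+\|f\|_{_{L^1_tL^2_x}}\big],
\]
which is the claim. There is no circularity, because the damping term is moved into the source and estimated by the energy, which is controlled independently of the Strichartz norm. The genuinely deep ingredient is the boundary Strichartz estimate of \cite{BSS-09,BLP-08}, whose proof requires delicate parametrix and square-function analysis near $\partial D$ and is responsible for the restricted range $q\geq7/2$; within the present argument the only points needing care are matching $(q,r)$ to this admissible range and arranging the energy estimate in $L^1_t$ (rather than $L^2_t$) so that the forcing appears with the correct norm.
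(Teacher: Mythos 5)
Your proposal is correct and follows essentially the same route as the paper: the paper's entire proof is the citation to \cite[Corollary 1.2]{BSS-09} (see also \cite[Theorem 2.1]{JL-13}), and your argument simply fills in the routine details behind that citation --- absorbing $b(x)\partial_t v$ into the source, bounding it via the $L^1_t$-in-time energy estimate from the variation-of-constants formula, and verifying that on the scaling line $\tfrac1q+\tfrac3r=\tfrac12$ the boundary admissibility condition $\tfrac3q+\tfrac2r\leq 1$ is exactly $q\geq 7/2$. Your computations (including the endpoint $(q,r)=(7/2,14)$ and the non-circularity of the energy bound) are all accurate, so there is nothing to correct.
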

	
This can be derived from \cite[Corollary 1.2]{BSS-09} (see also \cite[Theorem 2.1]{JL-13}).

The Strichartz estimates (also called dispersive estimates) is a significant object in the study of wave equations that has attracted the interest of many authors. In particular, this type of estimate has been developed by Burq--Lebeau--Planchon \cite{BLP-08} for $q\geq 5$ and also by Blair--Smith--Sogge \cite{BSS-09} for a wider range of the indices $q,r$, under the setting of smooth bounded domains in Euclidean spaces (or more generally, compact Riemannian manifold with boundary).

In the present paper, the Strichartz estimate in Proposition \ref{prop-Strichartz} will play an important role, when studying the issue of asymptotic compactness for (\ref{semilinear-problem}) (see Theorem \ref{Thm-informal-AC}).
	
\subsection{The nonlinear problem}
	
We proceed to consider the semilinear wave equation (\ref{semilinear-problem}).
In such case, the $C_0$-group generated by the linear part is denoted by $U(t),t\in\R$ (which coincides with $U_b(t)$ for the case of $b=a$). 
	
Similarly to the case of (\ref{linear-problem}), a solution $u\in\mathcal X_T$ of (\ref{semilinear-problem}) is defined to be a solution of the integral equation
\begin{equation}\label{variation-formula-3}
u[t]=U(t)u^0+\int_0^tU(t-s)\left(
\begin{matrix}
0\\
-u^3(s)+f(s)
\end{matrix}
\right)ds.
\end{equation}
	
\begin{proposition}\label{prop-wellposedness}
Let $T>0$ be arbitrarily given. Then the following assertions hold.
\begin{enumerate}[leftmargin=2em]
\item[$(1)$] For every $u^0\in\mathcal H$ and $f\in L^2(D_T)$, there exists a unique solution $u\in \mathcal X_T$ of {\rm(\ref{semilinear-problem})}. Moreover, the mapping 
\begin{equation}\label{solution-mapping}
\mathcal H\times L^2(D_T)\ni (u^0,f)\mapsto u\in \mathcal X_T
\end{equation}
is locally Lipschitz and continuously differentiable. In particular, the Lipschitz constants are of the form $Ce^{CT}$.
			
\item[$(2)$] If also $u^0\in\mathcal H^{\scriptscriptstyle 4/7}$ and $f\in L^2_tH^{\scriptscriptstyle 4/7}_x$, then $u\in \mathcal X_T^{\scriptscriptstyle 4/7}$. Moreover, the solution mapping given in {\rm(\ref{solution-mapping})} is locally Lipschitz and continuously differentiable from $\mathcal H^{\scriptscriptstyle 4/7}\times L^2_tH^{\scriptscriptstyle 4/7}_x$ into $\mathcal X_T^{\scriptscriptstyle 4/7}$.
\end{enumerate}
\end{proposition}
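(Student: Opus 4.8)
The plan is to solve the integral equation (\ref{variation-formula-3}) by a Banach fixed-point argument, first in the energy space $\mathcal X_T$ and then bootstrapping to $\mathcal X_T^{\scriptscriptstyle 4/7}$. For part (1) I would set up the map $\Phi(u)[t]=U(t)u^0+\int_0^tU(t-s)(0,\,-u^3(s)+f(s))^{\top}\,ds$ on a ball of $\mathcal X_T$ and use the $L^1_tL^2_x$ energy inequality $\sup_t\|v[t]\|_{\mathcal H}\lesssim\|v^0\|_{\mathcal H}+\|F\|_{L^1_tL^2_x}$ for the linear group $U$. The only nontrivial input is the control of the Sobolev-critical nonlinearity: writing $u^3-v^3=(u-v)(u^2+uv+v^2)$, Hölder in space together with the embedding $H^1\hookrightarrow L^6$ gives $\|u^3-v^3\|_{L^2_x}\lesssim\|u-v\|_{L^6_x}(\|u\|_{L^6_x}^2+\|v\|_{L^6_x}^2)$, and integrating in time yields $\|u^3-v^3\|_{L^1_tL^2_x}\lesssim T(\|u\|_{\mathcal X_T}^2+\|v\|_{\mathcal X_T}^2)\|u-v\|_{\mathcal X_T}$. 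The gain of a full power of $T$ makes $\Phi$ a contraction on a short interval whose length depends only on the radius, giving local existence, uniqueness and Lipschitz dependence.

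To globalize on an arbitrary $[0,T]$ I would invoke the flux identity (\ref{flux-0}): since $E_u(t)\le E_u(0)+\int_0^t\|f\|\,\|\partial_tu\|$ and $\|\partial_tu\|^2\le 2E_u$, Gronwall furnishes an a priori bound on $E_u(t)$, hence on $\|u[t]\|_{\mathcal H}$, so the local solution never blows up and the fixed point can be reopened on a uniform time-step; concatenating the local Lipschitz estimates over $\lceil T/\text{step}\rceil$ intervals produces the stated $Ce^{CT}$ constants. Continuous differentiability of $(u^0,f)\mapsto u$ follows from the implicit function theorem applied to $F(u;u^0,f)=u-\Phi_{u^0,f}(u)$ in $\mathcal X_T$: the Nemytski\u{\i} map $u\mapsto u^3$ is $C^1$ with derivative $w\mapsto 3u^2w$ (bounded $\mathcal X_T\to L^1_tL^2_x$ by the same Hölder estimate), and $\partial_uF=\mathrm{Id}-(\text{Duhamel of }-3u^2\,\cdot\,)$ is invertible because the linearized equation $\boxempty w+a\partial_tw+3u^2w=g$ is well-posed by the energy method, treating $3u^2w$ as a bounded perturbation from $H^1$ to $L^2$ via $u\in C_tL^6$.

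For part (2), the key is a single product estimate tuned to the exponent $s=\tfrac47$. I would split the cube so that two factors absorb the derivative-free $L^6$ norm and one carries the fractional derivative: by fractional Leibniz and the borderline Sobolev embedding $H^{\scriptscriptstyle 11/7}\hookrightarrow W^{4/7,6}$, which holds with equality since $\tfrac{11}{7}-\tfrac32=\tfrac1{14}=\tfrac47-\tfrac12$, one obtains $\|u^3\|_{H^{\scriptscriptstyle 4/7}}\lesssim\|u\|_{L^6}^2\,\|u\|_{H^{\scriptscriptstyle 11/7}}$. Running the contraction in $\mathcal X_T^{\scriptscriptstyle 4/7}$ (whose norm dominates $\|u\|_{H^{\scriptscriptstyle 11/7}}$ and, via $H^1\hookrightarrow L^6$, also $\|u\|_{L^6}$) with this estimate gives local $\mathcal H^{\scriptscriptstyle 4/7}$-existence, and uniqueness in $\mathcal X_T$ identifies it with the solution from part (1); here differentiability can invoke Proposition \ref{Energy-estimate} directly, since $u\in\mathcal X_T^{\scriptscriptstyle 4/7}\subset C_tH^{\scriptscriptstyle 11/7}$ makes the potential $p=3u^2$ lie in $C_tH^{\scriptscriptstyle 11/7}$.

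The main obstacle, and the reason the exponent $\tfrac47$ is not arbitrary, is the global propagation of the higher regularity. A naive energy estimate in $\mathcal H^{\scriptscriptstyle 4/7}$ produces a Riccati-type inequality $\tfrac{d}{dt}\|u[t]\|_{\mathcal H^{\scriptscriptstyle 4/7}}^2\lesssim\|u[t]\|_{\mathcal H^{\scriptscriptstyle 4/7}}^4$ that threatens finite-time blow-up. The product estimate above circumvents this: keeping the two $L^6$ factors controlled by the \emph{globally} bounded energy from part (1), one gets instead $\tfrac{d}{dt}\|u[t]\|_{\mathcal H^{\scriptscriptstyle 4/7}}^2\lesssim C(E_u)\,\|u[t]\|_{\mathcal H^{\scriptscriptstyle 4/7}}^2+\|f\|_{H^{\scriptscriptstyle 4/7}}\|u[t]\|_{\mathcal H^{\scriptscriptstyle 4/7}}$, a genuinely linear Gronwall inequality, so the $\mathcal H^{\scriptscriptstyle 4/7}$-norm stays finite on every $[0,T]$. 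The local Lipschitz and $C^1$ dependence in $\mathcal H^{\scriptscriptstyle 4/7}\times L^2_tH^{\scriptscriptstyle 4/7}_x$ then follow exactly as in part (1), with all estimates carried out in the higher-regularity norm.
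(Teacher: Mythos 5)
Your proof is correct in its essentials, and it is worth noting at the outset that the paper offers no proof to compare against: the authors declare Proposition \ref{prop-wellposedness} ``fairly standard'' and skip it. Your part (1) --- contraction via the Duhamel formula (\ref{variation-formula-3}), the H\"older/Sobolev bound $\|u^3-v^3\|_{L^2}\lesssim\|u-v\|_{H^1}(\|u\|_{H^1}^2+\|v\|_{H^1}^2)$, globalization through the flux identity (\ref{flux-0}) (your square-root Gronwall step is exactly the paper's estimate (\ref{estimate-8})), concatenation giving the $Ce^{CT}$ Lipschitz constants, and $C^1$ dependence by the implicit function theorem --- is surely the argument the authors had in mind. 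Where you genuinely diverge from the paper's toolkit is part (2). The route most native to the paper would be: take the $\mathcal X_T$ solution from part (1), control its Strichartz norms by Proposition \ref{prop-Strichartz}, invoke the regularization Lemma \ref{Lemma-Regularity} (i.e.\ $\|u^3\|_{L^1_tH^{4/7}_x}\lesssim\|u\|_{L^\infty_tH^1_x}$, quoted from \cite{JL-13}), and run Duhamel in $\mathcal H^{\scriptscriptstyle 4/7}$; this is precisely how the authors propagate $\mathcal H^{\scriptscriptstyle 4/7}$ regularity later, in Lemma \ref{Lemma-AC-1}. You instead avoid Strichartz estimates entirely and use the trilinear Kato--Ponce bound $\|u^3\|_{H^{4/7}}\lesssim\|u\|_{L^6}^2\|u\|_{H^{11/7}}$, which is \emph{linear} in the high norm and therefore closes a linear Gronwall inequality; your remark that this is what defuses the Riccati-type blow-up threat is exactly the right point, and the paper's lemma achieves the same effect by making the right-hand side depend only on the energy norm. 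Both routes work; yours is more self-contained, the paper's borrows heavy machinery it needs anyway. The one step you should spell out is that your fractional Leibniz rule and the borderline embedding $H^{11/7}\hookrightarrow W^{4/7,6}$ are being applied to the spectral Dirichlet spaces $H^s=D((-\Delta)^{s/2})$ on the bounded domain $D$, not on $\R^3$, where they are not literally off-the-shelf. This is legitimate here: since $4/7,\,11/7\in(1/2,2]$, the paper's identification $H^s=\{\phi\in H^s(D);\phi|_{\partial D}=0\}$ applies with equivalent norms, the powers $u^2,u^3$ inherit the Dirichlet condition, and one passes to $\R^3$ by a Stein extension and restricts back --- but as written your key product estimate is quoted as if on the whole space, and a referee would ask for this bridge.
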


The proof of Proposition \ref{prop-wellposedness} is fairly standard, so we skip it. 
	
In what follows, we introduce the result of global (exponential) stability for the unforced problem, where condition (\ref{Damping-localization}) on the damping coefficient $a(x)$ comes into play.
Let us begin with the exponential decay of the semigroup $U(t)$.
	
\begin{lemma}\label{Linear-semigroup}
Assume that $a(x)$ satisfies {\rm(\ref{Damping-localization})}. Then there exist constants $C,\gamma>0$ such that
\begin{equation}\label{linear-decay}
\|U(t)\|_{_{\mathcal L(\mathcal H^s)}}\leq Ce^{-\gamma t}
\end{equation}
for any $t\geq 0$ and $s\in[0,1]$.
\end{lemma}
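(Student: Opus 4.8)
The plan is to reduce the claim to the two endpoint cases $s=0$ and $s=1$ and then to interpolate for $s\in(0,1)$. Write $v[t]=U(t)v^0$ for the solution of $\boxempty v+a(x)\partial_t v=0$ with homogeneous Dirichlet condition, and set
\[
E(t)=\tfrac12\int_D\big(|\nabla v(t)|^2+|\partial_t v(t)|^2\big)\,dx=\tfrac12\|v[t]\|_{\mathcal H}^2 .
\]
The dissipation identity $\tfrac{d}{dt}E(t)=-\int_D a(x)|\partial_t v|^2\,dx\le 0$ shows that $E$ is nonincreasing; the point is to upgrade this monotonicity into a quantitative exponential rate.

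For the endpoint $s=0$ I would first establish an observability-type inequality: there exist $T_*>0$ and $c\ge 1$, depending only on $D$ and $a$, such that
\[
E(0)\le c\int_0^{T_*}\!\!\int_D a(x)|\partial_t v|^2\,dx\,dt
\]
for every solution. Combined with $\int_0^{T_*}\!\int_D a|\partial_t v|^2\,dx\,dt=E(0)-E(T_*)$ from the dissipation law, this yields $E(T_*)\le\theta E(0)$ with $\theta=1-c^{-1}\in[0,1)$; iterating over the intervals $[nT_*,(n+1)T_*]$ and using the group property of $U$ gives $E(nT_*)\le\theta^{\,n}E(0)$, hence $\|U(t)\|_{\mathcal L(\mathcal H)}\le Ce^{-\gamma_0 t}$ for some $\gamma_0>0$. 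The observability inequality is the linear counterpart of Theorem~\ref{Thm-informal-GS} and is proved by the multiplier method of Lions under the $\Gamma$-type geometry: one tests the equation against $m(x)\cdot\nabla v$ with $m(x)=x-x_0$ (supplemented by the lower-order multiplier $v$ and a cutoff localizing near $\Gamma(x_0)$) and absorbs the boundary flux on $\Gamma(x_0)$ by the interior dissipation, which is active on the neighborhood $N_\delta(x_0)$ of $\Gamma(x_0)$ thanks to \eqref{Damping-localization}. Since the cubic term is absent, this is in fact simpler than the proof of Theorem~\ref{Thm-informal-GS}.

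For the endpoint $s=1$ I would differentiate the equation in time. As $v|_{\partial D}=0$ for all $t$, the function $w:=\partial_t v$ solves the same damped wave equation with homogeneous Dirichlet data, so the case $s=0$ applies to $w$ and gives exponential decay of $E_w(t)=\tfrac12\|w[t]\|_{\mathcal H}^2$; reading $\partial_{tt}v(0)=\Delta v_0-a(x)v_1$ off the equation shows $E_w(0)\lesssim\|v^0\|_{\mathcal H^1}^2$. To recover the full $\mathcal H^1=H^2\times H^1$ norm of $v[t]$, I would use $\Delta v=\partial_{tt}v+a(x)\partial_t v$ together with the identity $\|v(t)\|_{H^2}=\|\Delta v(t)\|$ valid in the present functional framework, which yields $\|v(t)\|_{H^2}\lesssim\|\partial_{tt}v(t)\|+\|\partial_t v(t)\|\lesssim E_w(t)^{1/2}$ and, likewise, $\|\partial_t v(t)\|_{H^1}\lesssim E_w(t)^{1/2}$. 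Hence $\|v[t]\|_{\mathcal H^1}\lesssim E_w(t)^{1/2}\lesssim e^{-\gamma_1 t}\|v^0\|_{\mathcal H^1}$ for some $\gamma_1>0$, which is the decay for $s=1$.

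For the intermediate range $s\in(0,1)$ I would invoke complex interpolation. Since $H^\sigma$ is the domain of $(-\Delta)^{\sigma/2}$, one has $[\mathcal H^0,\mathcal H^1]_s=\mathcal H^s$, and interpolating the linear operator $U(t)$ between its two endpoint bounds gives
\[
\|U(t)\|_{\mathcal L(\mathcal H^s)}\le\|U(t)\|_{\mathcal L(\mathcal H^0)}^{1-s}\,\|U(t)\|_{\mathcal L(\mathcal H^1)}^{s}\le Ce^{-((1-s)\gamma_0+s\gamma_1)t},
\]
which is the assertion. I expect the main obstacle to be the observability inequality underlying the $s=0$ case: this is the only place where the localized, possibly degenerate, damping and the $\Gamma$-type geometry genuinely enter, and the careful treatment of the boundary flux produced by the multiplier is exactly what makes hypothesis $(\mathbf{S1})$ indispensable. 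The remaining ingredients---time differentiation, elliptic regularity, and interpolation---are comparatively soft.
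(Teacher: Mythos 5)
Your proposal is correct in outline, but it takes a genuinely different route from the paper: the paper does not prove Lemma~\ref{Linear-semigroup} at all, it simply cites \cite[Proposition 2.3]{JL-13}, where the decay is obtained under the more general geometric control condition. Your argument is the classical self-contained alternative available under the stronger $\Gamma$-geometry of hypothesis $\mathbf{(S1)}$: observability plus the dissipation identity at $s=0$, time differentiation at $s=1$, and interpolation in between. The two soft steps are indeed sound as written: since $\mathcal H^1=H^2\times H^1$ is the domain of the generator, $\partial_t v$ is again a semigroup trajectory with data $(v_1,\Delta v_0-a v_1)$, the identity $\|v\|_{H^2}=\|\Delta v\|$ holds in the paper's functional setting ($H^s$ is the domain of $(-\Delta)^{s/2}$ with graph norm), and the spaces $H^s$ form a complex interpolation scale, so $\|U(t)\|_{\mathcal L(\mathcal H^s)}\leq\|U(t)\|^{1-s}_{\mathcal L(\mathcal H)}\|U(t)\|^{s}_{\mathcal L(\mathcal H^1)}$ gives a rate and constant uniform over $s\in[0,1]$. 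What your route buys is elementarity and self-containedness; what the paper's citation buys is the sharper GCC geometry, which matters for the extensions discussed after Theorem~\ref{Thm-informal-GS}.

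The one genuine gap is in the step you yourself single out as the crux. The observability inequality $E(0)\leq c\int_0^{T_*}\!\int_D a(x)|\partial_t v|^2$ does \emph{not} follow from the multiplier computation alone: testing against $m\cdot\nabla v$ and $v$ with cutoffs always leaves a lower-order remainder of the form $\int_0^{T_*}\!\int_D v^2$ on the right-hand side. This is visible in the paper's own nonlinear analogue, Lemma~\ref{Lemma-estimate}, whose conclusion retains exactly such a $u^2$ term even after all the cutoff work; that term is not absorbable by the damping term pointwise. For the linear autonomous equation the standard removal is a compactness--uniqueness argument: if the inequality failed, normalize $E_n(0)=1$ with $\int\!\int a|\partial_t v_n|^2\to 0$, use Aubin--Lions to pass to the limit in the lower-order term, and obtain a nonzero solution with $\partial_t v\equiv 0$ on $N_\delta(x_0)\times(0,T_*)$; one must then conclude $v\equiv 0$, which is a unique continuation statement. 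Since $a(x)$ is only $C^\infty$ (not analytic), Holmgren's theorem does not apply directly; one needs unique continuation for operators with partially analytic coefficients (the coefficients are time-independent, hence analytic in $t$), as in the references the paper itself points to (cf.~its footnote on unique continuation and \cite{RZ-98}), or one can quote the linear case of Zuazua's result \cite{Zuazua-90}. This step is standard but is genuine mathematical content --- it is precisely where the paper elsewhere deploys unique continuation and Carleman estimates --- so it cannot be waved away as a by-product of the multiplier identity.
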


This lemma can be found in \cite[Proposition 2.3]{JL-13}, where the author considered a more general setting of geometric control condition.

The global stability of  zero equilibrium for the unforced problem is stated as follows.

\begin{proposition}\label{prop-global-stability}
Assume that $a(x)$ satisfies {\rm(\ref{Damping-localization})} and $f(t,x)\equiv 0$. Then there exist constants $C,\gamma>0$ such that
\begin{equation}\label{global-stability}
\|u[t]\|_{_{\mathcal H}}^2\leq Ce^{-\gamma t}\left(
\|u^0\|^2_{_{\mathcal H}}+\|u_0\|_{_{H^1}}^4
\right)
\end{equation}
for any $u^0=(u_0,u_1)\in\mathcal H$ and $t\geq 0$, where $u\in C(\R^+;H^1)\cap C^1(\R^+;H)$ stands for the solution of {\rm (\ref{semilinear-problem})}.
\end{proposition}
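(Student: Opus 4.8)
The plan is to deduce the statement directly from the energy decay in Theorem~\ref{Thm-informal-GS}, converting between the energy $E_u$ and the $\mathcal H$-norm at both ends. First I would record that the solution $u$ is global: by Proposition~\ref{prop-wellposedness} it exists locally in $\mathcal X_T$, and the flux identity (\ref{flux-0}) with $f\equiv 0$ reads $E_u(T)-E_u(0)=-\int_{D_T}a(x)|\partial_t u|^2\le 0$, so $E_u$ is non-increasing; this a priori bound precludes blow-up and extends $u$ to a global solution in $C(\R^+;H^1)\cap C^1(\R^+;H)$. Theorem~\ref{Thm-informal-GS} then applies and furnishes constants $C,\gamma>0$ with $E_u(t)\le Ce^{-\gamma t}E_u(0)$ for all $t\ge 0$.

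Next I would bound the $\mathcal H$-norm by the energy from below. Since $H^1$ carries the norm $\|\phi\|_{_{H^1}}=\|(-\Delta)^{1/2}\phi\|=\|\nabla\phi\|$ and the quartic term in (\ref{energy-functional}) is nonnegative,
\begin{equation*}
\|u[t]\|_{_{\mathcal H}}^2=\|\nabla u(t)\|^2+\|\partial_t u(t)\|^2\le 2E_u(t).
\end{equation*}
For the initial data I would instead bound the energy from above, using the Sobolev embedding $H^1\hookrightarrow L^4$ valid in dimension three (a consequence of $H^1\hookrightarrow L^6$ on the bounded domain $D$):
\begin{equation*}
E_u(0)=\tfrac12\|\nabla u_0\|^2+\tfrac12\|u_1\|^2+\tfrac14\|u_0\|_{_{L^4}}^4\le \tfrac12\|u^0\|_{_{\mathcal H}}^2+C\|u_0\|_{_{H^1}}^4.
\end{equation*}

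Chaining the three displays yields
\begin{equation*}
\|u[t]\|_{_{\mathcal H}}^2\le 2E_u(t)\le 2Ce^{-\gamma t}E_u(0)\le Ce^{-\gamma t}\big(\|u^0\|_{_{\mathcal H}}^2+\|u_0\|_{_{H^1}}^4\big),
\end{equation*}
which is the asserted inequality after renaming $C$. There is essentially no hard step here: the entire difficulty of the global stability, namely the propagation of the localized dissipation to the full energy, is already encapsulated in Theorem~\ref{Thm-informal-GS}. The only points requiring a line of justification are global existence (so that the decay theorem is applicable) and the quartic-versus-quadratic mismatch at the level of initial data, which is precisely why the right-hand side carries the extra term $\|u_0\|_{_{H^1}}^4$ rather than being linear in $\|u^0\|_{_{\mathcal H}}^2$.
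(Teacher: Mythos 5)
Your proof is correct and follows exactly the route the paper intends: the paper simply states that Proposition~\ref{prop-global-stability} is a direct consequence of Theorem~\ref{Thm-informal-GS}, and your argument supplies precisely the omitted details (the bound $\|u[t]\|_{_{\mathcal H}}^2\le 2E_u(t)$, the estimate $E_u(0)\lesssim \|u^0\|_{_{\mathcal H}}^2+\|u_0\|_{_{H^1}}^4$ via $H^1\hookrightarrow L^4$, and global existence so the decay theorem applies). Nothing to correct.
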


{This proposition is a direct consequence of Theorem \ref{Thm-informal-GS}.}

\section{Asymptotic compactness in non-autonomous dynamics}\label{Section-nonautonomous}

This section is devoted to establishing the $(\mathcal H,\mathcal H^{\scriptscriptstyle4/7})$-asymptotic compactness for the non-autonomous dynamical system generated by (\ref{semilinear-problem}); see Theorem \ref{Thm-dynamicalsystem} later, which is an exact and stronger statement of Theorem \ref{Thm-informal-AC}. {In addition, we consider the 
asymptotic compactness in a ``physical'' space $\mathcal H^1$, for which
more regularity in time and less regularity in space are imposed on the force $f(t,x)$.}

The main theorem and an outline of its proof is placed in Section \ref{Section-DSresult} below, while Sections \ref{Section-DS-1} and \ref{Section-DS-2} contain the  details.

\subsection{Results and outline of proof}\label{Section-DSresult}

Due to the non-autonomy, it is more convenient to consider initial conditions at  general time $\tau\geq 0$:
\begin{equation}\label{semilinear-problem-tau}
\begin{cases}
\boxempty u+a(x)\partial_t u+u^3=f(t,x),\quad x\in D,\\
u[\tau]=(u_0,u_1)=u^\tau.
\end{cases}
\end{equation}
From the viewpoint of dynamical systems, the main characteristics of (\ref{semilinear-problem-tau}) consist of non-autonomous force and weak dissipation. To be precise, the force $f$ is allowed to be time-dependent, while the damping coefficient $a(x)$ is localized in the sense of setting $(\mathbf{S1})$.

In view of the global well-posedness of (\ref{semilinear-problem-tau}) (see Proposition \ref{prop-wellposedness}(1) above), it generates a process on $\mathcal H$ via
\begin{equation*}
\mathcal U^f(t,\tau)u^\tau=u[t],
\end{equation*}
with $f\in L^\infty(\R^+;H)$,
which verifies that $\mathcal U^f(\tau,\tau)=I$ for all $\tau\geq 0$, $\mathcal U^f(t,\tau)=\mathcal U^f(t,s)\circ \mathcal U^f(s,\tau)$ for all $t\geq s\geq \tau$, and the mapping $(t,\tau,u^\tau)\mapsto \mathcal U^f(t,\tau)u^\tau$ is continuous for $t\geq \tau,u^\tau\in \mathcal H$.

Recall that $E_u(t)=E(u[t])$ is the energy function defined via (\ref{energy-functional}). The main theorem of this section is collected in the following.

\begin{theorem}\label{Thm-dynamicalsystem}
Assume that $a(x)$ satisfies {\rm(\ref{Damping-localization})} and let $R_0>0$ be arbitrarily given. Denote $u[\cdot]=\mathcal U^f(\cdot,\tau)u^\tau$ with $u^\tau,f$ to be specified below.
Then the following assertions hold.
\begin{enumerate}
[leftmargin=2em]
\item[$(1)$] There exists a bounded subset $\mathscr B_{\scriptscriptstyle4/7}$ of $\mathcal H^{\scriptscriptstyle4/7}$ and  constants $C,\kappa>0$ 
such that 
\begin{equation*}
{\rm dist}_{_{\mathcal H}}(\mathcal U^f(t,\tau)u^\tau,\mathscr B_{\scriptscriptstyle4/7})\leq C(1+E_u(\tau))e^{-\kappa (t-\tau)}
\end{equation*}
for any $u^\tau\in\mathcal H,f\in\overline{B}_{L^\infty(\R^+;H^{\scriptscriptstyle4/7})}(R_0)$ and $t\geq \tau$.

\item[$(2)$] There exists a bounded subset $\mathscr B_1$ of $\mathcal H^{1}$ and constants $\hat C,\hat\kappa>0$ 
such that 
\begin{equation*}
{\rm dist}_{_{\mathcal H}}(\mathcal U^f(t,\tau)u^\tau,\mathscr B_1)\leq \hat C(1+E_u(\tau))e^{-\hat\kappa (t-\tau)}
\end{equation*}
for any $u^\tau\in\mathcal H,f\in\overline{B}_{F}(R_0)$ and $t\geq \tau$, where $F=W^{1,\infty}(\R^+;H)\cap L^\infty(\R^+;H^{\scriptscriptstyle1/3})$\footnote{Naturally, the norm on the space $F$ is defined as $\|\cdot\|_{_{F}}:=\|\cdot\|_{_{W^{1,\infty}(\R^+;H)}}+\|\cdot\|_{_{L^\infty(\R^+;H^{\scriptscriptstyle1/3})}}$.}.
\end{enumerate}
\end{theorem}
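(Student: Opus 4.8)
The plan is to prove both assertions by first establishing \emph{global dissipativity} in $\mathcal H$ and then upgrading to higher regularity by dispersive estimates, after which the exponential attraction follows by splitting the flow into a part decaying in $\mathcal H$ and a part bounded in the regular space. The core of the dissipativity is a uniform multiplier estimate: with a constant $C$ independent of $\tau$, $u$, $f$ and of $T$ (for $T$ beyond a fixed threshold),
\[
\int_\tau^{\tau+T} E_u(t)\,dt \le C\Big[E_u(\tau+T) + \int_\tau^{\tau+T}\!\!\int_D\big(a(x)|\partial_t u|^2 + u^2 + |f|^2 + |f\,\partial_t u|\big)\Big].
\]
I would obtain this by testing (\ref{semilinear-problem-tau}) against the multiplier $m\cdot\nabla u + c\,u$ with $m(x)=x-x_0$ and a suitable constant $c$, then integrating by parts over $D\times[\tau,\tau+T]$. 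The Dirichlet condition kills the interior boundary contribution of $c\,u$, while the geometry of the $\Gamma$-type domain $N_\delta(x_0)$ (Definition \ref{Def-Gamma}) forces the uncontrolled part of the boundary flux, on $\{(x-x_0)\cdot n\le 0\}$, to carry a favourable sign; the remaining boundary piece lives over $\Gamma(x_0)$, exactly where (\ref{Damping-localization}) supplies damping. The defocusing cubic is an asset here, since $\int_D u^3\,(m\cdot\nabla u)=-\tfrac{3}{4}\int_D u^4$ (no boundary term, by Dirichlet), so it reinforces rather than obstructs the coercivity of $\int_\tau^{\tau+T}E_u$. The time-boundary terms are bounded by $E_u(\tau)+E_u(\tau+T)$, and $E_u(\tau)$ is then eliminated in favour of $E_u(\tau+T)$ via the flux identity (\ref{flux-0}).

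The second step is where the non-monotonicity of the energy is handled without unique continuation. The key observation is that the lower-order term is \emph{sublinear} in the energy: the quartic part of (\ref{energy-functional}) gives $\|u(t)\|_{L^2}^2\le 2|D|^{1/2}E_u(t)^{1/2}$, so $\int_\tau^{\tau+T}\!\int_D u^2$ is of order $E_u^{1/2}$ and is negligible against the linear-in-$E_u$ terms once the energy is large. Combining the estimate of the previous step with the flux identity (\ref{flux-0}) — used both to bound $E_u(t)$ from below by $E_u(\tau)$ minus dissipation and power input, and to express the energy drop $E_u(\tau+T)-E_u(\tau)$ — and absorbing $\int|f\,\partial_t u|$ by Young's inequality (the $\partial_t u$ factor into the energy integral, the $f$ factor controlled by $\|f\|_{L^\infty(\R^+;H^{4/7})}\le R_0$ through $H^{4/7}\hookrightarrow L^2$), I would reach the discrete monotonicity (\ref{Implication-mono}): for a fixed large $T_0$ and $A_0=A_0(R_0)$, $E_u(\tau)\ge A_0$ forces $E_u(\tau+T_0)\le E_u(\tau)$. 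Since the energy also grows by at most a bounded factor over one step, a routine iteration yields an $\mathcal H$-absorbing ball $\{E_u\le\rho_0(R_0)\}$, entered at the exponential rate $C(1+E_u(\tau))e^{-\kappa(t-\tau)}$.

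With $u$ known to be uniformly bounded in $\mathcal H$ (hence $u^3\in L^\infty_tL^2_x$), I would split $u=v+w$, where $v$ solves the unforced damped linear equation with the current data and $w$ carries the source $-u^3+f$ with zero data. By Lemma \ref{Linear-semigroup}, $v$ decays exponentially in $\mathcal H$; this genuine decay of the group $U(t)$ is precisely what replaces pointwise dissipation when the damping is merely localized. For $w$, Proposition \ref{prop-Strichartz} places $u$ in a space–time norm $L^q_tL^r_x$ with $r>6$, so $u^3$ lies in a dual-admissible class; feeding this into the inhomogeneous Duhamel estimate, together with $f\in L^\infty(\R^+;H^{4/7})$, bounds $w$ uniformly in $\mathcal H^{4/7}$. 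Taking $\mathscr B_{4/7}$ to be the resulting ball and using $\mathrm{dist}_{\mathcal H}(u[t],\mathscr B_{4/7})\le\|v[t]\|_{\mathcal H}$ gives assertion $(1)$. For assertion $(2)$ I would instead differentiate in time: $\partial_t u$ solves a damped wave equation with potential $3u^2$ and source $\partial_t f$, so $f\in W^{1,\infty}(\R^+;H)$ drives $\partial_t u$ into a bounded set in $\mathcal H$ (the rough-data transient again decaying by Lemma \ref{Linear-semigroup}); elliptic regularity applied to $-\Delta u=f-u^3-\partial_{tt}u-a\partial_t u$, using $f\in L^\infty(\R^+;H^{1/3})$, then promotes $u$ to $H^2$, i.e.\ $u[\cdot]$ bounded in $\mathcal H^1$, and the same splitting concludes.

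The main obstacle is the combination of the first two steps: producing the integrated-energy inequality with a constant \emph{uniform in the time-dependent force and in $T$} under \emph{merely localized} damping, and then converting it into honest dissipativity despite the energy not being monotone. The classical route would dispose of the residual $\int u^2$ by a compactness–uniqueness argument resting on unique continuation for the linearization, which is unavailable for the inhomogeneous, time-dependent problem (\ref{semilinear-problem-tau}); the substitute is the sublinearity of $\|u\|_{L^2}^2$ in $E_u$ furnished by the defocusing quartic, which is exactly what makes the discrete monotonicity (\ref{Implication-mono}) go through.
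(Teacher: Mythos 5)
Your treatment of assertion (1) is essentially the paper's own argument: the multiplier inequality with $q=x-x_0$ plus a zero-order multiplier (Lemma \ref{Lemma-estimate}), discrete monotonicity of the energy for large energies (Lemma \ref{Lemma-nonincreasing}), an absorbing ball (Proposition \ref{prop-S1dissipativity}), and then the splitting $u[t]=U(t-\tau)u^\tau+w[t]$ with Strichartz-based regularization of $u^3$ (Lemmas \ref{Lemma-Regularity}, \ref{Lemma-AC-1}) and the decay of $U(t)$ from Lemma \ref{Linear-semigroup}. One caveat: the non-strict inequality $E_u(\tau+T_0)\le E_u(\tau)$ that you state is too weak to produce an absorbing set (the energy could sit forever at a large constant value); you need the contraction $E_u(\tau+T_0)\le \varpi E_u(\tau)$ with $\varpi<1$, which is what the paper proves and what your own estimates (Young absorption plus sublinearity of the lower-order terms) in fact deliver, so this is a matter of statement rather than of method.

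The genuine gap is in assertion (2): you differentiate the equation for $u$ itself, whereas one must differentiate the equation for the nonlinear remainder $w$. If $u^\tau=(u_0,u_1)\in\mathcal H$ only, then the data of your differentiated system is $(\partial_t u(\tau),\partial^2_{tt}u(\tau))=(u_1,\Delta u_0-a u_1-u_0^3+f(\tau))\in H\times H^{-1}$, which is \emph{not} in $\mathcal H$. Consequently, (i) Lemma \ref{Linear-semigroup} cannot absorb this ``rough transient'': it gives decay of $U(t)$ on $\mathcal H^s$ only for $s\in[0,1]$, not on $\mathcal H^{-1}$; and (ii) more fundamentally, your intermediate claim that $\partial_t u$ is eventually \emph{bounded in} $\mathcal H$ is false: since $U(t)$ is a group preserving each $\mathcal H^s$ in both time directions, the rough part of the data never regularizes, so $\partial_t u(t)\notin H^1$ for every $t$ whenever $u_1\notin H^1$ (already for $f\equiv 0$). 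The elliptic step then collapses as well, because $\partial^2_{tt}u(t)\notin H$. What is true is a statement about \emph{distances}, and to obtain it one must first subtract the linear transient and only then differentiate: with $u=z+w$, $z[t]=U(t-\tau)u^\tau$, the derivative $\theta=\partial_t w$ of the solution of (\ref{w-equation}) has data $\theta[\tau]=(0,-u_0^3+f(\tau))\in\mathcal H$, precisely because $w[\tau]=0$; this is equation (\ref{theta-equation}) in the paper. Even after this correction, the source of the $\theta$-equation contains the mixed term $3u^2\partial_t z$, which for $\mathcal H$-data is not in $L^1_tL^2_x$ (one only has $u^2\partial_t z\in L^{6/5}_x$); the paper controls it by first proving the intermediate $(\mathcal H,\mathcal H^{\scriptscriptstyle 1/3})$-attraction (Corollary \ref{Corollary-bound}), which puts $u$ in $H^{\scriptscriptstyle 4/3}\hookrightarrow L^{18}$ and $\partial_t z$ in $H^{\scriptscriptstyle 1/3}\hookrightarrow L^{\scriptscriptstyle 18/7}$, together with a Strichartz bootstrap for $\theta$ itself. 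Your one-jump route to $\mathcal H^1$ has no counterpart for this step, and the elliptic recovery must finally be applied to $w$ (via $-\Delta w=-\partial_t\theta-a\theta-u^3+f$), not to $u$.
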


Either of the assertions indicates also that the non-autonomous dynamical system generated by {\rm(\ref{semilinear-problem-tau})} possesses a uniform attractor {\rm(}see, e.g., {\rm\cite[Part 2]{CV-02})}. 

The proof of main theorem can be divided into three steps: 

\medskip

\noindent {\bf Step 1 ($\mathcal H$-dissipativity).} We first establish the $\mathcal H$-dissipativity for the process $\mathcal U^f(t,\tau)$, i.e.,
the existence of an $\mathcal H$-bounded set $\mathscr B_0$ which absorbs exponentially the trajectories issued from bounded subsets of $\mathcal H$ (see Proposition \ref{prop-S1dissipativity}). For this purpose, we derive that there exist suitably large constants $T_0,A_0>0$ such that for some constant $\varpi \in (0,1)$, 
\begin{equation}\label{discrete-monotonicity}
E_u(\tau)\geq A_0\quad \Rightarrow \quad E_u(\tau+T_0)\leq \varpi  E_u(\tau)
\end{equation}
(see Lemma \ref{Lemma-nonincreasing}), which turns out to be sufficient for the $\mathcal H$-dissipativity. The proof of (\ref{discrete-monotonicity}) involves an essential energy inequality
$$
\int_\tau^{\tau+T} E_u(t)\lesssim
E_u(\tau+T)+\int_\tau^{\tau+T}\int_D a(x)|\partial_tu|^2+\int_\tau^{\tau+T}\int_D\left(
u^2+|f \partial_t u|+|f|^2
\right)
$$
for any $\tau,T\geq 0$ (see Lemma \ref{Lemma-estimate}),
for which the $\Gamma$-type geometric condition (\ref{Damping-localization}) of $a(x)$ is necessary and the multiplier-type techniques will be used. 

\medskip

\noindent {\bf Step 2 ($(\mathcal H,\mathcal H^{\scriptscriptstyle4/7})$-asymptotic compactness).} Thanks to the $\mathcal H$-dissipativity, we are able to focus on the case where $u^\tau\in\mathscr B_0$. With this setting, we split a trajectory $u[\cdot]:=\mathcal U^f(\cdot,\tau)u^\tau$ via
$$
u[t]=U(t-\tau)u^\tau+w[t],
$$
where $w$ stands for the ``nonlinear part'' of $u$ and solves 
\begin{equation}\label{w-equation}
\begin{cases}
\boxempty w+a(x)\partial_t w+u^3=f(t,x),\quad x\in D,\\
w[\tau]=0.
\end{cases}
\end{equation}
Inspired by the work of \cite{JL-13},
the $\mathcal H^{\scriptscriptstyle4/7}$-boundedness of $w[\cdot]$ can be derived by means of a Strichartz-based regularization property of nonlinearity (see Lemma \ref{Lemma-AC-1}). The first assertion of Theorem~\ref{Thm-dynamicalsystem}
then follows, thanks to the damping effect resulted by $a(x)$ (see Lemma \ref{Linear-semigroup}).

\medskip

\noindent {\bf Step 3 ($(\mathcal H,\mathcal H^{1})$-asymptotic compactness).}
The proof of Theorem \ref{Thm-dynamicalsystem}(2) proceeds with the transitivity of exponential attractions. To be precise, the desired result will be derived from the intermediate results of 
\begin{enumerate}[leftmargin=2em]
\item $(\mathcal H,\mathcal H^{\scriptscriptstyle1/3})$-asymptotic compactness (see Corollary \ref{Corollary-bound}), and 

\item $(\mathcal H^{\scriptscriptstyle1/3},\mathcal H^{1})$-asymptotic compactness (see Lemma \ref{Lemma-AC-2}).
\end{enumerate}
We deduce directly the first result from the same argument as in Step 2, except that the $\mathcal H^{\scriptscriptstyle4/7}$-boundedness of $w[\cdot]$ is reduced to be of $\mathcal H^{\scriptscriptstyle1/3}$; notice that  only the $H^{\scriptscriptstyle1/3}$-regularity of $f(t,x)$ is available in this step. To obtain the second, 
we shall invoke the Strichartz estimate (see Proposition \ref{prop-Strichartz}) and the idea of discrete monotonicity analogous to (\ref{discrete-monotonicity}). These enable us to obtain 
$\mathcal H$-boundedness of $\theta[\cdot]$ with $\theta=\partial_tw$, where the extra assumption on the time regularity of $f(t,x)$ comes into play and which leads to the $\mathcal H^1$-boundedness of $w[\cdot]$.

\subsection{Global dissipativity}\label{Section-DS-1}

In this subsection, it suffices to assume that
$
f\in L^\infty(\mathbb R^+;H)
$. The generic constant $C$ involved in the remainder of this section would not depend on special choices of the parameters $u^\tau,f,\tau,T.$

\begin{proposition}\label{prop-S1dissipativity}
Assume that $a(x)$ satisfies {\rm(\ref{Damping-localization})} and let 
$R_1>0$ be arbitrarily given. Then there exists a bounded subset $\mathscr B_0$ of $\mathcal H$ and a constant $p>0$ such that 
$$
\mathcal U^f(\tau+t,\tau)u^\tau\in \mathscr B_0
$$
for any $u^\tau\in\mathcal H$, $f\in \overline{B}_{L^\infty(\R^+;H)}(R_1)$ and $t\geq T,\tau\geq 0$, where the elapsed time $T>0$ is given by  
\begin{equation}\label{elapsed-time}
T=p\ln{(1+pE_u(\tau))}
\end{equation}
with $u[\cdot]=\mathcal U^f(\cdot,\tau)u^\tau$.
\end{proposition}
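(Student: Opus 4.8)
The plan is to derive the $\mathcal H$-dissipativity by combining two ingredients: a crude one--step growth bound for the energy, and the discrete monotonicity property \eqref{discrete-monotonicity} (Lemma~\ref{Lemma-nonincreasing}), the latter being the substantive input. The conceptual picture is standard: discrete monotonicity says that the energy contracts by a fixed factor $\varpi\in(0,1)$ over each time window of length $T_0$ \emph{whenever it exceeds the threshold} $A_0$, while the growth bound forbids the energy from escaping too fast when it happens to be small. Together these confine the orbit to a bounded sublevel set of $E$ after a waiting time that is logarithmic in the initial energy.

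First I would record the growth bound. Starting from the flux identity \eqref{flux-0}, discarding the nonpositive damping term and applying Young's inequality together with $\|\partial_t u\|^2\le 2E_u$ from \eqref{energy-functional} and $\|f(t)\|\le R_1$ a.e., one obtains the integral inequality $E_u(\tau+s)\le E_u(\tau)+\sqrt2\,R_1\int_\tau^{\tau+s}E_u(r)^{1/2}\,dr$. A comparison (Gronwall/Bihari) argument applied to $E_u^{1/2}$ then gives, for every $s\in[0,T_0]$,
\begin{equation*}
E_u(\tau+s)\le \big(E_u(\tau)^{1/2}+\tfrac{R_1}{\sqrt2}\,s\big)^2\le 2E_u(\tau)+R_1^2T_0^2,
\end{equation*}
so over one window the energy at most doubles up to an additive constant. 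Set $A_1:=2A_0+R_1^2T_0^2\,(\ge A_0)$. I then claim the discrete sublevel set $\{E_u\le A_1\}$ is invariant along the times $\tau+kT_0$: if $E_u(\tau+kT_0)\le A_0$ the growth bound yields $E_u(\tau+(k+1)T_0)\le A_1$, whereas if $A_0<E_u(\tau+kT_0)\le A_1$ then \eqref{discrete-monotonicity} gives $E_u(\tau+(k+1)T_0)\le\varpi A_1\le A_1$; induction closes the claim.

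Next I would bound the entry time. While $E_u(\tau+kT_0)\ge A_1\,(>A_0)$, discrete monotonicity applies at every such step, so $E_u(\tau+kT_0)\le\varpi^k E_u(\tau)$, and hence the first index $k^\ast$ with $E_u(\tau+k^\ast T_0)<A_1$ satisfies $k^\ast\le 1+\ln(E_u(\tau)/A_1)/\ln(1/\varpi)$. From $\tau+k^\ast T_0$ onward the trajectory is trapped at the discrete level $A_1$ by the invariance just proved, and a final application of the growth bound upgrades this to the continuous bound $E_u(t)\le 2A_1+R_1^2T_0^2=:A_2$ for all $t\ge\tau+k^\ast T_0$. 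Therefore $\mathscr B_0:=\{\psi\in\mathcal H;\,E(\psi)\le A_2\}$ — which is $\mathcal H$-bounded since $E(\psi)\ge\tfrac12\|\psi\|_{\mathcal H}^2$ — absorbs the orbit after elapsed time $k^\ast T_0$; since $k^\ast T_0\le C(1+\ln(1+E_u(\tau)))$, choosing $p$ large enough (absorbing $T_0$, $\ln(1/\varpi)$ and $A_1$) yields $k^\ast T_0\le p\ln(1+pE_u(\tau))$, which recasts the waiting time in the stated form \eqref{elapsed-time}.

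The genuinely hard step is not this bookkeeping but the discrete monotonicity \eqref{discrete-monotonicity} on which it rests, i.e.\ Lemma~\ref{Lemma-nonincreasing}, which in turn relies on the energy inequality of Lemma~\ref{Lemma-estimate}. That is where the $\Gamma$-type geometric condition \eqref{Damping-localization} and the multiplier technique must be invoked to show that the \emph{localized} dissipation $\int a(x)|\partial_t u|^2$ controls the full energy $\int_\tau^{\tau+T}E_u$, and where the main analytic difficulty — the weak, spatially localized damping interacting with a genuinely time-dependent forcing $f$ — is confronted.
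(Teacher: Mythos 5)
Your proposal is correct and follows essentially the same route as the paper: both arguments rest on Lemma~\ref{Lemma-nonincreasing} together with the growth estimate \eqref{estimate-8}, and both obtain an entry time of logarithmic size by iterating the contraction $\varpi$ while the energy exceeds the threshold. The only difference is packaging: the paper takes $\mathscr B_0$ to be the forward-reachable set of the sublevel set $\mathscr B_{01}=\{E\le A_0\}$ and appeals to the uniform boundedness \eqref{S1-boundedness} (stated as ``not difficult to check''), whereas you take $\mathscr B_0$ to be an explicit sublevel set $\{E\le A_2\}$ and prove the trapping directly via the invariant discrete level $A_1$ — which in effect supplies the detail the paper glosses over.
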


To begin with, let us recall some elementary estimates for the energy function $E_u$. Notice first the flux estimate
\begin{equation}\label{energy-equality}
E_u(\tau+T)-E_u(\tau)=-\int_\tau^{\tau+T}\int_D a(x)|\partial_tu|^2dxdt+\int_\tau^{\tau+T}\int_D f\partial_t u dxdt
\end{equation}
for any $\tau,T\geq 0$. In addition, by multiplying the equation by $\partial_tu$ and integrating over $D$, one can obtain that
\begin{equation*}
\frac{d}{dt}E_{u}(t)\leq \int_Df\partial_tudx\leq \|f\|\|\partial_tu\|\leq \|f\|\sqrt{2} E_{u}^{1/2}(t)
\end{equation*}
and hence 
\begin{equation}\label{estimate-8}
E^{1/2}_u(t)-E^{1/2}_u(s)\leq \frac{\sqrt{2}}{2}(t-s)\|f\|_{_{L^\infty(\R^+;H)}}
\end{equation}
for any $t\geq s\geq \tau$.

What follows is an elementary but essential inequality for the energy function $E_u$, which is derived by means of the multiplier method as previously mentioned.

\begin{lemma}\label{Lemma-estimate}
Assume that $a(x)$ satisfies {\rm(\ref{Damping-localization})}. Then
there exists a constant $K_0>0$ such that
$$
\begin{aligned}
\int_\tau^{\tau+T} E_u(t)dt &\leq K_0 \left[
E_u(\tau+T)+\int_\tau^{\tau+T}\int_D a(x)|\partial_tu|^2dxdt+\int_\tau^{\tau+T}\int_D\left(u^2+|f \partial_t u|+|f|^2\right)dxdt
\right]
\end{aligned}
$$
for any $u^\tau\in\mathcal H,f\in L^\infty(\mathbb R^+;H)$ and $\tau,T\geq 0$, where $u[\cdot]=\mathcal U^f(\cdot,\tau)u^\tau$.
\end{lemma}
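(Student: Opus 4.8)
The plan is to prove the estimate by the multiplier method, testing the equation against the combined multiplier $Mu:=m\cdot\nabla u+u$ with the Rellich field $m(x):=x-x_0$, and then exploiting the $\Gamma$-type geometry of \eqref{Damping-localization} to absorb the resulting boundary contribution. Throughout write $Q:=D\times(\tau,\tau+T)$, let $n$ denote the outward unit normal on $\partial D$, and set $\partial_n u:=\nabla u\cdot n$. A standard computation (integrating $\boxempty u\cdot Mu$ over $Q$ by parts, using $u|_{\partial D}=0$ so that $\nabla u=(\partial_n u)\,n$ on $\partial D$, and $\mathrm{div}\,m=3$ in $\R^3$) gives
\[
\int_Q \boxempty u\,(Mu)=\Big[\int_D \partial_t u\,(m\cdot\nabla u+u)\,dx\Big]_\tau^{\tau+T}+\tfrac12\int_Q\big(|\partial_t u|^2+|\nabla u|^2\big)-\tfrac12\int_{\partial D\times(\tau,\tau+T)}(m\cdot n)|\partial_n u|^2.
\]
Substituting $\boxempty u=-a(x)\partial_t u-u^3+f$ and recalling \eqref{energy-functional}, the quartic terms drop out entirely: the cubic nonlinearity contributes $-\int_Q u^3(m\cdot\nabla u+u)=\tfrac34\int_Q u^4-\int_Q u^4=-\tfrac14\int_Q u^4$ (since $\int_D u^3(m\cdot\nabla u)=\tfrac14\int_D m\cdot\nabla(u^4)=-\tfrac34\int_D u^4$ with vanishing boundary term), which cancels the $\tfrac14\int_Q u^4$ carried by the potential part of $E_u$. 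Hence one arrives at the clean identity
\[
\int_\tau^{\tau+T}E_u\,dt=\tfrac12\int_{\partial D\times(\tau,\tau+T)}(m\cdot n)|\partial_n u|^2-\Big[\int_D \partial_t u\,(m\cdot\nabla u+u)\,dx\Big]_\tau^{\tau+T}+\int_Q(-a\partial_t u+f)(m\cdot\nabla u+u).
\]

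Next I would dispose of every term on the right except the boundary integral. The endpoint term is bounded by $C(E_u(\tau)+E_u(\tau+T))$ via Cauchy--Schwarz and Poincar\'e (recall $m$ is bounded on $\overline D$), and the unwanted $E_u(\tau)$ is removed using the flux identity \eqref{energy-equality}, which yields $E_u(\tau)\le E_u(\tau+T)+\int_Q a|\partial_t u|^2+\int_Q|f\partial_t u|$. For the damping cross term I would write $a\,\partial_t u\,u=(\sqrt a\,\partial_t u)(\sqrt a\,u)$ and $a\,\partial_t u\,(m\cdot\nabla u)=(\sqrt a\,\partial_t u)(\sqrt a\,m\cdot\nabla u)$, so that Young's inequality gives a bound $\tfrac12 a|\partial_t u|^2+Cu^2+\varepsilon|\nabla u|^2$ (the last because $a\in C^\infty(\overline D)$ is bounded), and the gradient part is then $\le 2\varepsilon\int_Q E_u$, absorbed into the left-hand side for $\varepsilon$ small. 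The forcing terms are treated the same way: $\int_Q f\,u\le\tfrac12\int_Q(f^2+u^2)$ and $\int_Q f\,(m\cdot\nabla u)\le \varepsilon\int_Q|\nabla u|^2+C_\varepsilon\int_Q f^2$. After these reductions every surviving right-hand term is one of those permitted in the statement.

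The main obstacle is the boundary integral $\tfrac12\int_{\partial D\times(\tau,\tau+T)}(m\cdot n)|\partial_n u|^2$. On $\partial D\setminus\Gamma(x_0)$ one has $m\cdot n\le 0$ by Definition~\ref{Def-Gamma}, so that part has a favourable sign and is discarded; the difficulty is the \emph{positive} contribution over $\Gamma(x_0)$, which must be converted into the interior dissipation $\int_Q a|\partial_t u|^2$ because our estimate carries no boundary observation. This is exactly where the localized structure \eqref{Damping-localization}---that $\{a\ge a_0\}$ contains the neighbourhood $N_\delta(x_0)$ of $\Gamma(x_0)$---is essential. Following the cut-off / piecewise-multiplier device of Zuazua \cite{Zuazua-90} (see also Lions \cite{Lions-88}), I would fix $\phi\in C^\infty(\overline D)$ with $\phi\equiv1$ near $\Gamma(x_0)$ and $\mathrm{supp}\,\phi\subset N_\delta(x_0)$, and subtract the identity generated by the auxiliary multiplier $\phi\,m\cdot\nabla u$; this cancels the boundary integral over $\Gamma(x_0)$ at the price of interior remainders supported in $N_\delta(x_0)\subset\{a\ge a_0\}$. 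Among these, the velocity contributions are controlled directly by $a_0^{-1}\int_Q a|\partial_t u|^2$, while the localized gradient energy $\int_Q\phi|\nabla u|^2$ is in turn dominated, through a local equipartition identity obtained by testing against $\phi u$, by the same damping integral together with the lower-order term $\int_Q u^2$. Collecting all contributions and absorbing the small multiples of $\int_\tau^{\tau+T}E_u\,dt$ on the left yields the asserted inequality, with $K_0$ depending only on $D$ and $a$ (through $x_0,\delta,a_0,\|a\|_\infty$) and manifestly independent of $u^\tau,f,\tau,T$.
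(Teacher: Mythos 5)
Your proposal is correct in substance and follows essentially the same route as the paper's proof of Lemma \ref{Lemma-estimate}: the paper also works with the combined multiplier $m\cdot\nabla u+u$ (implemented as identity (\ref{estimate-1}) with $q=m$ plus identity (\ref{estimate-2}) with $\xi=1$, yielding (\ref{estimate-3})), removes $E_u(\tau)$ through the flux identity (\ref{energy-equality}), neutralizes the boundary contribution over $\Gamma(x_0)$ by a multiplier localized in the damped region $N_\delta(x_0)\subset\{a\ge a_0\}$, and controls the localized interior remainders by the damping together with an equipartition identity and global absorption of $\varepsilon\int_Q|\nabla u|^2$.

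The one substantive deviation is how the $\Gamma(x_0)$-boundary term is treated: you cancel it by subtracting the identity for $\phi\,m\cdot\nabla u$ (equivalently, using the field $(1-\phi)m$), whereas the paper bounds it from above via (\ref{estimate-1}) applied to an auxiliary vector field $h$ with $h=n$ on $\Gamma(x_0)$, $h\cdot n\ge0$ on all of $\partial D$, and $h=0$ off $N_{\delta_1}(x_0)$; these are interchangeable devices (for yours, note you need $0\le\phi\le1$ so that $(1-\phi)(m\cdot n)\le0$ on $\partial D\setminus\Gamma(x_0)$). Where your sketch is imprecise is the final reduction. The interior remainders created by the $\phi m$-multiplier carry the weights $\operatorname{div}(\phi m)=3\phi+m\cdot\nabla\phi$ and $\partial_k(\phi m_j)=\phi\delta_{kj}+m_j\partial_k\phi$, and they include a sign-indefinite $u^4$ term as well; the pieces weighted by $\nabla\phi$ are supported in $\operatorname{supp}\phi$ but are \emph{not} pointwise dominated by $\phi$, so they are not controlled by $\int_Q\phi|\nabla u|^2$, and the equipartition obtained by testing against $\phi u$ alone does not close the estimate. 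The repair is exactly the paper's nested-cutoff construction: take $\phi$ supported in $N_{\delta_1}(x_0)$ with $\delta_1<\delta$, introduce a second cutoff $g\in C^1(\overline D;[0,1])$ with $g\equiv1$ on $N_{\delta_1}(x_0)$ and $g\equiv0$ on $D\setminus N_{\delta}(x_0)$, and test against $gu$; this dominates $\int_Q\mathbf{1}_{\operatorname{supp}\phi}\left(|\nabla u|^2+u^4\right)$, while the resulting term $\int_Q|u\,\nabla u\cdot\nabla g|$ is split as $\varepsilon\int_Q|\nabla u|^2+C(\varepsilon)\int_Q u^2$ and absorbed globally. (Alternatively, replacing $\phi m$ by $\phi^2 m$ makes every remainder weight bounded by $C\phi$, after which your $\phi u$-equipartition suffices.) With this adjustment your argument coincides with the paper's.
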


\begin{proof}[{\bf Proof}]
Let $q\in C^{1}(\overline D;\mathbb R^3)$. Multiplying (\ref{semilinear-problem-tau}) by $q\cdot \nabla u$ and integrating over $[\tau,\tau+T]\times D$, it follows that
\begin{align}\label{estimate-1}
& \left.\int_D\partial_t u(q\cdot \nabla u)dx\right|_\tau^{\tau+T}+\frac{1}{2}\int_\tau^{\tau+T}\int_D({\rm div\,} q)\left[
|\partial_tu|^2-|\nabla u|^2-\frac{1}{2}u^4
\right]dxdt \notag\\
& \quad +\sum_{j,k=1}^3\int_\tau^{\tau+T}\int_D\partial_k q_j\partial_ku\partial_judxdt+\int_\tau^{\tau+T}\int_D \left(
a(x)\partial_tu-f
\right)(q\cdot\nabla u)dxdt \notag\\
& =\frac{1}{2}\int_\tau^{\tau+T}\int_{\partial D}(q\cdot n)\left|\frac{\partial u}{\partial n}\right|^2dxdt.
\end{align}
In addition, for $\xi\in C^{1}(\overline D)$ we have
\begin{equation}\label{estimate-2}
\begin{aligned}
&\left.\int_D \xi u\partial_tudx\right|_\tau^{\tau+T}+\int_\tau^{\tau+T}\int_D\xi u(a(x)\partial_tu-f)dxdt+\int_\tau^{\tau+T}\int_D \xi\left(
|\nabla u|^2+u^4
\right)dxdt\\
&=\int_\tau^{\tau+T}\int_D\xi|\partial_tu|^2dxdt-\int_\tau^{\tau+T}\int_Du(\nabla u\cdot\nabla\xi)dxdt.
\end{aligned}
\end{equation}

Next, we take $q=m(x):=x-x_0$ and $\xi=1$ in (\ref{estimate-1}) and (\ref{estimate-2}). It is then obtained that
\begin{equation}\label{estimate-3}
\begin{aligned}
\int_\tau^{\tau+T} E_u(t)dt\leq &-\left.\int_D \partial_tu(m\cdot\nabla u+u)dx\right|_\tau^{\tau+T}\\
&-\int_\tau^{\tau+T}\int_D \left(
a(x)\partial_tu-f
\right)(m\cdot\nabla u+u)dxdt\\
&+\frac{1}{2}\int_\tau^{\tau+T}\int_{\Gamma(x_0)}(m\cdot n)\left|\frac{\partial u}{\partial n}\right|^2dxdt\\
=:& J_1+J_2+J_3,
\end{aligned}
\end{equation}
where the set   $\Gamma(x_0)$ is provided in Definition \ref{Def-Gamma}. 
Let us estimate $J_i$ separately. Taking (\ref{energy-equality}) into account, one sees that
\begin{equation*}
\begin{aligned}
J_1&\leq C\left[
\| u(\tau+T)\|_{_{H^1}}^2+\|\partial_tu(\tau+T)\|^2+\|u(\tau)\|_{_{H^1}}^2+\|\partial_tu(\tau)\|^2
\right]\\
&\leq C\left[
E_u(\tau+T)+E_u(\tau)
\right]\\
&=C\left[
2E_u(\tau+T)+\int_\tau^{\tau+T}\int_D a(x)|\partial_tu|^2dxdt-\int_\tau^{\tau+T}\int_D f\partial_t u dxdt
\right]\\
&=:C \tilde J_1.
\end{aligned}
\end{equation*}
For $J_2$, it is not difficult to check that
\begin{equation}\label{estimate-5}
\begin{aligned}
J_2\leq C\int_\tau^{\tau+T}\left(\|a\partial_t u\|^2+\|f\|^2\right)dt+\frac{1}{2}\int_\tau^{\tau+T}\|\nabla u\|^2dt.
\end{aligned}
\end{equation}
					
To deal with $J_3$, we introduce a cut-off function $h\in C^1(\overline D;\mathbb R^3)$ satisfying
$$
h=n\  {\rm on\ }\Gamma(x_0),\quad
h\cdot n\geq 0\  {\rm on\ }\partial D,\quad
h=0 \ {\rm in\ }D\setminus N_{\delta_1}(x_0),
$$
where $0<\delta_1<\delta$ is arbitrarily given. Then, letting $q=h$ in (\ref{estimate-1}), it follows that
\begin{align}
J_3\leq & \  C\int_\tau^{\tau+T}\int_{\Gamma(x_0)}(h\cdot n)\left|\frac{\partial u}{\partial n}\right|^2dxdt \notag\\
\leq & \ C\left[
\left.\int_{N_{\delta_1}(x_0)}\partial_t u(h\cdot \nabla u)dx\right|_\tau^{\tau+T}+\int_\tau^{\tau+T}\int_{N_{\delta_1}(x_0)}\left(|\partial_t u|^2+|\nabla u|^2+u^4+f^2\right)dxdt \right] \notag\\
\leq & \ 
C\left[
\tilde J_1+\int_\tau^{\tau+T}\int_{N_{\delta_1}(x_0)}\left(|\partial_t u|^2+|\nabla u|^2+u^4+f^2\right)dxdt \right]. \label{estimate-6}
\end{align}
We need to eliminate the terms $|\nabla u|^2$ and $u^4$ in the RHS of (\ref{estimate-6}). For this purpose, let us define another cut-off function $g\in C^1(\overline D;[0,1])$ via
$$
g=1\ {\rm in\ }N_{\delta_1}(x_0),\quad
g=0\  {\rm in\ }D\setminus N_{\delta}(x_0).
$$
We then apply (\ref{estimate-2}) again with $\xi=g$ to deduce that
\begin{align*}
&  \int_\tau^{\tau+T}\int_{N_{\delta}(x_0)}g\left(|\nabla u|^2+u^4
\right)dxdt\\
& =-\left.\int_{N_{\delta}(x_0)} g u\partial_tudx\right|_\tau^{\tau+T}-\int_\tau^{\tau+T}\int_{N_{\delta}(x_0)}g u(a(x)\partial_tu-f)dxdt \\
& \quad +\int_\tau^{\tau+T}\int_{N_{\delta}(x_0)}g|\partial_tu|^2dxdt -\int_\tau^{\tau+T}\int_{N_{\delta}(x_0)}u(\nabla u\cdot\nabla g)dxdt\\
& \leq C\left[\tilde J_1+\int_\tau^{\tau+T}\int_{N_{\delta}(x_0)}\left(u^2+|\partial_tu|^2+|f|^2\right)dxdt+
\int_\tau^{\tau+T}\int_{N_{\delta}(x_0)}|u(\nabla u\cdot\nabla g)|dxdt
\right].
\end{align*}
For the last term, one can derive that
\begin{align*}
\int_\tau^{\tau+T}\int_{N_{\delta}(x_0)}|u(\nabla u\cdot\nabla g)|dxdt\leq \varepsilon\int_\tau^{\tau+T}\int_{D} |\nabla u|^2dxdt+C(\varepsilon)\int_\tau^{\tau+T}\int_{N_{\delta}(x_0)} u^2dxdt,
\end{align*}
where $\varepsilon\in (0,1)$ and $C(\varepsilon)>0$ denotes a constant depending on $\varepsilon$.
Consequently,
\begin{equation*}
\begin{aligned}
&\int_\tau^{\tau+T}\int_{N_{\delta_1}(x_0)}\left(
|\nabla u|^2+u^4
\right)dxdt\\
&\leq\int_\tau^{\tau+T}\int_{N_{\delta}(x_0)}g\left(
|\nabla u|^2+u^4
\right)dxdt\\
&\leq C\left[
\tilde J_1+\int_\tau^{\tau+T}\int_{N_{\delta}(x_0)}\left(u^2+|\partial_tu|^2+|f|^2\right)dxdt
\right]+C\varepsilon\int_\tau^{\tau+T}\int_{D} |\nabla u|^2dxdt.
\end{aligned}
\end{equation*}
This together with (\ref{estimate-6}) leads to
\begin{equation}\label{estimate-7}
J_3\leq  C\left[
\tilde J_1+\int_\tau^{\tau+T}\int_{N_{\delta}(x_0)}\left(u^2+|\partial_t u|^2+f^2\right)dxdt \right]+C\varepsilon\int_\tau^{\tau+T}\int_{D} |\nabla u|^2dxdt.
\end{equation}
Putting condition (\ref{Damping-localization}) and inequalities (\ref{estimate-3})-(\ref{estimate-5}),(\ref{estimate-7}) (with a sufficiently small $\varepsilon$) all together, we deduce that
\begin{equation*}
\begin{array}{ll}
\displaystyle\int_\tau^{\tau+T} E_u(t)dt\leq C\left[
\tilde J_1+\int_\tau^{\tau+T}\int_{D} \left(a(x)|\partial_t u|^2+f^2+u^2
\right)dxdt+\int_\tau^{\tau+T}\int_{N_{\delta}(x_0)}|\partial_tu|^2dxdt
\right],
\end{array}
\end{equation*}
which leads to the conclusion of this lemma.
\end{proof}
				
On the basis of Lemma \ref{Lemma-estimate}, we can verify that when the energy of a solution is suitably large, it could enjoy a property of discrete monotonicity, which remains sufficient for the construction of an $\mathcal H$-absorbing set. 

\begin{lemma}\label{Lemma-nonincreasing}
Assume that $a(x)$ satisfies {\rm(\ref{Damping-localization})}. Let $\varpi\in(0,1)$ be arbitrarily given and $K_0>0$ established in Lemma {\rm\ref{Lemma-estimate}}. Take $T_0>0$ such that
\begin{equation}\label{large-T0}
T_0>\frac{K_0(13-4\varpi)}{\varpi}.
\end{equation}
Then for every $R_1>0$, there exists a constant $A_0=A_0(T_0,R_1,\varpi)>0$ such that the implication
\begin{equation*}
E_u(\tau)\geq A_0\quad \Rightarrow\quad E_u(\tau+T_0)\leq \varpi E_u(\tau)
\end{equation*}
holds for any $u^\tau\in\mathcal H,f\in \overline{B}_{L^\infty(\R^+;H)}(R_1)$ and $\tau\geq 0$, where $u[\cdot]=\mathcal U^f(\cdot,\tau)u^\tau$. 
\end{lemma}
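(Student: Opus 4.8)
The plan is to exploit the fact that, although $E_u$ need not be monotone, the flux identity (\ref{energy-equality}) ties the dissipation $\int_\tau^{\tau+T_0}\int_D a|\partial_t u|^2$ to the energy drop $E_u(\tau)-E_u(\tau+T_0)$ plus a forcing contribution, while Lemma~\ref{Lemma-estimate} controls the time-integrated energy $\int_\tau^{\tau+T_0}E_u$ by that \emph{same} dissipation together with the endpoint energy and some lower-order terms. The first move is to substitute $\int_\tau^{\tau+T_0}\int_D a|\partial_t u|^2 = E_u(\tau)-E_u(\tau+T_0)+\int_\tau^{\tau+T_0}\int_D f\partial_t u$ into the conclusion of Lemma~\ref{Lemma-estimate}. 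The endpoint energy $E_u(\tau+T_0)$ cancels, leaving
\begin{equation*}
\int_\tau^{\tau+T_0}E_u(t)\,dt \le K_0\Big[E_u(\tau)+2\int_\tau^{\tau+T_0}\!\!\int_D|f\partial_t u|+\int_\tau^{\tau+T_0}\!\!\int_D u^2+\int_\tau^{\tau+T_0}\!\!\int_D|f|^2\Big].
\end{equation*}
This is the central clean estimate: the time-averaged energy is controlled by $E_u(\tau)$ alone, up to three error integrals.

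Second, I would check that these three error integrals are \emph{sublinear} in $E_u(\tau)$, hence negligible once $E_u(\tau)$ is large. The growth bound (\ref{estimate-8}) gives $E_u^{1/2}(t)\le E_u^{1/2}(\tau)+\tfrac{\sqrt2}{2}T_0R_1$ on $[\tau,\tau+T_0]$; and, crucially, the quartic part of the energy yields the interpolation $\|u(t)\|^2\le 2|D|^{1/2}E_u^{1/2}(t)$, which is genuinely sublinear (the circular Poincar\'e bound $\|u\|^2\lesssim E_u$ must be avoided, as it is comparable to the left-hand side). Combining these, $\int_\tau^{\tau+T_0}\int_D u^2\lesssim T_0(E_u^{1/2}(\tau)+1)$, and likewise $\int_\tau^{\tau+T_0}\int_D|f\partial_t u|\le R_1\int_\tau^{\tau+T_0}\sqrt{2E_u}\lesssim T_0(E_u^{1/2}(\tau)+1)$ and $\int_\tau^{\tau+T_0}\int_D|f|^2\le T_0R_1^2$. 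Thus the bracket is $\le K_0E_u(\tau)+C_1(1+E_u^{1/2}(\tau))$ with $C_1=C_1(T_0,R_1,|D|)$.

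Third, I would convert this integral estimate into a pointwise bound on the endpoint. Using (\ref{estimate-8}) in the form $E_u^{1/2}(\tau+T_0)\le E_u^{1/2}(t)+\tfrac{\sqrt2}{2}T_0R_1$ and squaring, $E_u(\tau+T_0)\le 2E_u(t)+T_0^2R_1^2$; integrating in $t$ over $[\tau,\tau+T_0]$ gives $T_0E_u(\tau+T_0)\le 2\int_\tau^{\tau+T_0}E_u+T_0^3R_1^2$. Chaining the three steps,
\begin{equation*}
E_u(\tau+T_0)\le \frac{2K_0}{T_0}E_u(\tau)+\frac{2C_1}{T_0}\big(1+E_u^{1/2}(\tau)\big)+T_0^2R_1^2.
\end{equation*}
I would then first fix $T_0$ so large that the leading coefficient $2K_0/T_0$ is strictly below $\varpi$ — this is exactly where a lower bound on $T_0$ of the form in (\ref{large-T0}) enters, the explicit constant $K_0(13-4\varpi)/\varpi$ being reproduced by a slightly more wasteful accounting of the constants at each squaring — and then choose $A_0=A_0(T_0,R_1,\varpi)$ large enough that, for $E_u(\tau)\ge A_0$, the remaining $O(E_u^{1/2}(\tau))+O(1)$ terms are absorbed into the gap $(\varpi-2K_0/T_0)E_u(\tau)$, yielding $E_u(\tau+T_0)\le\varpi E_u(\tau)$.

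I expect the main obstacle to be precisely the lack of monotonicity: since $E_u$ may both rise and fall on $[\tau,\tau+T_0]$, one must use the growth estimate (\ref{estimate-8}) in both directions — forward to bound the error integrals, and backward to pass from the time-integral to the endpoint value — and must verify that the sublinear control of $\int u^2$ through the $L^4$ energy term keeps the error terms from competing with the linear term $K_0E_u(\tau)$. Keeping the constants honest through these repeated squarings is what forces the quantitative threshold (\ref{large-T0}) on $T_0$ and the size of $A_0$.
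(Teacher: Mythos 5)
Your proof is correct, and it takes a genuinely different route from the paper's. The paper argues by \emph{contradiction}: assuming sequences with $E_{u^n}(\tau^n)\to\infty$ and $E_{u^n}(\tau^n+T_0)>\varpi E_{u^n}(\tau^n)$, it uses the negated conclusion together with \eqref{estimate-8} to obtain a \emph{two-sided} pointwise bound $\tfrac{\varpi^{1/2}}{2}E_{u^n}^{1/2}(\tau^n)\le E_{u^n}^{1/2}(t)\le\tfrac{3}{2}E_{u^n}^{1/2}(\tau^n)$ on the whole interval (its (\ref{estimate-9})); the lower bound converts the term $-\tfrac{1}{K_0}\int E_{u^n}$ coming from Lemma~\ref{Lemma-estimate} into $-\tfrac{\varpi T_0}{4K_0}E_{u^n}(\tau^n)$, the upper bound controls $E_{u^n}(\tau^n+T_0)\le\tfrac{9}{4}E_{u^n}(\tau^n)$ and the error terms, and collecting $\tfrac{9}{4}+(1-\varpi)$ against $\tfrac{\varpi T_0}{4K_0}$ is exactly what produces the constant $13-4\varpi$ in \eqref{large-T0}. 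You instead argue \emph{directly}: substituting the flux identity \eqref{energy-equality} into Lemma~\ref{Lemma-estimate} so that the endpoint energies cancel gives $\int_\tau^{\tau+T_0}E_u\le K_0E_u(\tau)+C_1(1+E_u^{1/2}(\tau))$, and you then return to the endpoint by integrating the backward growth bound $E_u(\tau+T_0)\le 2E_u(t)+T_0^2R_1^2$ in $t$ --- a conversion step the paper never performs, since its contradiction hypothesis substitutes for it. Both arguments consume the same ingredients (Lemma~\ref{Lemma-estimate}, \eqref{energy-equality}, \eqref{estimate-8}, and the quartic-term interpolation $\|u\|^2\le 2|D|^{1/2}E_u^{1/2}$; your remark that the Poincar\'e route $\|u\|^2\lesssim E_u$ must be avoided is exactly right and matches the paper's own computation), but your organization buys an affine bound $E_u(\tau+T_0)\le\tfrac{2K_0}{T_0}E_u(\tau)+C(1+E_u^{1/2}(\tau))$ valid for \emph{all} data, with largeness of $E_u(\tau)$ entering only at the final absorption, and a weaker threshold $T_0>2K_0/\varpi$. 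Since $13-4\varpi>2$ for $\varpi\in(0,1)$, the hypothesis \eqref{large-T0} implies your condition, so your argument proves the lemma as stated (in fact a slightly stronger version); there is no need to ``reproduce'' the paper's constant by more wasteful bookkeeping, as you tentatively suggested --- it suffices that your threshold is implied by the stated one.
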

\begin{proof}[{\bf Proof}]
We argue by contradiction. It is for the moment assumed that there exist sequences
$$
A^n\geq 1,\quad \tau^n\geq 0,\quad (u_0^n,u_1^n)\in\mathcal H,\quad f^n\in \overline{B}_{L^\infty(\R^+;H)}(R_1)
$$
such that
\begin{align}
& E_{u^n}(\tau^n)\geq A^n\rightarrow \infty, \label{contradiction-DS-1} \\
& E_{u^n}(\tau^n+T_0)>\varpi E_{u^n}(\tau^n), \label{contradiction-DS-2} 
\end{align}
where $u^n[\cdot]=\mathcal U^{f^n}(\cdot,\tau^n)(u_0^n,u_1^n)$.
					
Using (\ref{estimate-8}) and (\ref{contradiction-DS-1}), one has
\begin{equation*}
E_{u^n}^{1/2}(\tau^n+t)\leq E_{u^n}^{1/2}(\tau^n)+\frac{\sqrt{2}}{2} R_1 T_0\leq \frac{3}{2}E_{u^n}^{1/2}(\tau^n)
\end{equation*}
for any $t\in[0,T_0]$.
In addition, we invoke (\ref{estimate-8}) again and notice (\ref{contradiction-DS-2}) to derive
$$
E_{u^n}^{1/2}(\tau^n+t)\geq  E_{u^n}^{1/2}(\tau^n+T_0)-\frac{\sqrt{2}}{2} R_1 T_0\geq \frac{\varpi^{1/2}}{2}E_{u^n}^{1/2}(\tau^n),
$$ 
provided that $A_n^{1/2}\geq \sqrt{\frac{2}{\varpi}}R_1T_0$.
In summary,
\begin{equation}\label{estimate-9}
\frac{\varpi^{1/2}}{2}E_{u^n}^{1/2}(\tau^n)\leq E_{u^n}^{1/2}(\tau^n+t)\leq \frac{3}{2}E_{u^n}^{1/2}(\tau^n)
\end{equation}
for any $t\in[0,T_0]$.
					
At the same time, by noticing (\ref{energy-equality}) and (\ref{estimate-9}) we observe that
\begin{equation}\label{estimate-10}
\begin{aligned}
E_{u^n}(\tau^n+T_0)-E_{u^n}(\tau^n)&\leq -\int_{\tau^n}^{\tau^n+T_0}\int_D a(x)|\partial_tu^n|^2dxdt+ R_1\sqrt{2} \int_{\tau^n}^{\tau^n+T_0}E_{u^n}^{1/2}(t)dt\\
&\leq -\int_{\tau^n}^{\tau^n+T_0}\int_D a(x)|\partial_tu^n|^2dxdt+\frac{3\sqrt{2}}{2} R_1 T_0E_{u^n}^{1/2}(\tau^n).
\end{aligned}
\end{equation}
Moreover, an application of Lemma \ref{Lemma-estimate} (with $u=u^n$) leads to
\begin{equation*}
\begin{aligned}
&-\int_{\tau^n}^{\tau^n+T_0}\int_D a(x)|\partial_tu^n|^2dxdt\\
&\leq-\frac{1}{K_0} \int_{\tau^n}^{\tau^n+T_0} E_{u^n}(t)dt+E_{u^n}(\tau^n+T_0)+
\int_{\tau^n}^{\tau^n+T_0}\int_D\left[
(u^n)^2+|f^n \partial_t u^n|+(f^n)^2
\right]dxdt
\end{aligned}
\end{equation*}
Again by (\ref{estimate-9}), it can be derived that
\begin{equation*} 
\int_{\tau^n}^{\tau^n+T_0}\int_D(u^n)^2 dxdt\leq 2|D|^{1/2}\int_{\tau^n}^{\tau^n+T_0}E_{u^n}^{1/2}(t)dt \leq  3|D|^{1/2}T_0E_{u^n}^{1/2}(\tau^n),
\end{equation*}
where $|D|$ denotes the volomn of $D$,
and (similarly to (\ref{estimate-10}))
\begin{equation*}
\begin{array}{ll}
\displaystyle \int_{\tau^n}^{\tau^n+T_0}\int_D|f^n \partial_t u^n| dxdt\leq
\frac{3\sqrt{2}}{2} R_1 T_0E_{u^n}^{1/2}(\tau^n).
\end{array}
\end{equation*}
Then we infer that
\begin{equation*}
\begin{aligned}
&-\int_{\tau^n}^{\tau^n+T_0}\int_D a(x)|\partial_tu^n|^2dxdt\\
&\leq -\left(\frac{\varpi T_0}{4K_0}-\frac{9}{4}\right)E_{u^n}(\tau^n)+\left(3|D|^{1/2}T_0+	\frac{3\sqrt{2}}{2} R_1 T_0\right)E_{u^n}^{1/2}(\tau^n)+R_1^2T_0.
\end{aligned}
\end{equation*}
Inserting this into (\ref{estimate-10}) and noticing (\ref{contradiction-DS-2}), it follows that
\begin{align}
0< & \  E_{u^n}(\tau^n+T_0)-\varpi E_{u^n}(\tau^n)\notag\\
\leq & \  -\left[\frac{\varpi T_0}{4K_0}-\frac{9}{4}-(1-\varpi)\right]E_{u^n}(\tau^n)+\left(3|D|^{1/2}T_0+3\sqrt{2}R_1 T_0\right)E_{u^n}^{1/2}(\tau^n)+R_1^2T_0.\label{estimate-11}
\end{align}
Due to (\ref{large-T0}) and (\ref{contradiction-DS-1}), 
$$
{\rm RHS\ of}\ (\ref{estimate-11})\rightarrow -\infty
$$
as $n\rightarrow \infty$. This gives rise to a contradiction.
The proof is then complete.
\end{proof}
				
The discrete monotonicity of the energy for (\ref{semilinear-problem-tau}) makes it ``natural'' to derive its global dissipativity in the scale of $\mathcal H$.

\begin{proof}[{\bf Proof of Proposition \ref{prop-S1dissipativity}}]
Let $R_1>0$ be arbitrarily given and $T_0,A_0$ the constants established in Lemma \ref{Lemma-nonincreasing}. Making use of the discrete monotonicity, it is not difficult to check that the process $\mathcal U^f(t,\tau)$ is uniformly bounded for $t\geq\tau$. That is,
for every $R_2>0$, there exists a constant $C=C(R_1,R_2)>0$ such that
\begin{equation}\label{S1-boundedness}
\|\mathcal U^f(t,\tau)u^\tau\|_{_{\mathcal H}}\leq C
\end{equation}
for any $u^\tau\in 
\overline{B}_{\mathcal H}(R_2),f\in \overline{B}_{L^\infty(\R^+;H)}(R_1)$ and $t\geq \tau$. Next, let us define
$$
\mathscr B_{01}=\left\{
\psi\in \mathcal H;E(\psi)\leq A_0
\right\},\quad \mathscr B_0=\{ \mathcal U^f(t,\tau)u^\tau; t\geq \tau,
u^\tau\in \mathscr B_{01},f\in \overline{B}_{L^\infty(\R^+;H)}(R_1)
\},
$$
where $E$ is defined as in (\ref{energy-functional}).
Clearly, $\mathscr B_{01}\subset \mathscr B_0$. In addition, taking (\ref{S1-boundedness}) into account, one can observe that $\mathscr B_0$ is bounded in $\mathcal H$. What follows is to illustrate that $\mathscr B_0$ is a uniform absorbing set.

For an arbitrarily given $u^\tau\in\mathcal H$, we define
$$
M=\lceil |\ln{\varpi}|^{-1}\ln{(1+A_0^{-1}E(u^\tau))}\rceil.
$$
Below is to show that
\begin{equation}\label{claim-1}
\min\{E_u(\tau+nT_0);n= 0,1,\cdots,M\}\leq A_0.
\end{equation}
Otherwise, one can check readily that 
\begin{equation*}
E_u(\tau+nT_0)>A_0,\quad \forall\, n=0,1,\cdots,M,
\end{equation*}
where $u[\cdot]=\mathcal U^f(\cdot,\tau)u^\tau$.
Thanks to Lemma \ref{Lemma-nonincreasing}, it follows that
\begin{equation*}
E_u(\tau+nT_0)\leq  \varpi E_u(\tau+(n-1)T_0),\quad \forall\, n=1,\cdots,M,
\end{equation*}
which implies that
\begin{equation*}
E_u(\tau+MT_0)\leq \varpi^ME_u(\tau)=\varpi^ME(u^\tau)\leq A_0.
\end{equation*}
This leads to a contradiction, which means (\ref{claim-1}). Hence, there exists a time
$$
\tau'\in \{
\tau+nT_0;n=0,1,\cdots,M
\}
$$
such that the energy could not exceed $A_0$, i.e., $\mathcal U^f(\tau',\tau)u^\tau\in \mathscr B_{01}$. Accordingly,
$$
\mathcal U^f(\tau+t,\tau)u^\tau\in\mathscr B_0
$$
for any $t\geq MT_0$, where we have used the cocycle property
$$
\mathcal U^f(\tau+t,\tau)u^\tau=\mathcal U^f(\tau+t,\tau')\circ \mathcal{U}^f(\tau',\tau)u^\tau.
$$
The proof is then complete.
\end{proof}
				
For the sake of convenience, the uniform $\mathcal H$-boundedness for $\mathcal U^f(t,\tau)$, which has been presented by (\ref{S1-boundedness}), is collected as the following corollary.

\begin{corollary}\label{Corollary-S1bound}
Assume that $a(x)$ satisfies {\rm(\ref{Damping-localization})} and let $R_1>0$ be arbitrarily given. Then there exists a constant $C>0$ such that
\begin{equation*}
\|\mathcal U^f(t,\tau)u^\tau\|_{_{\mathcal H}}\leq C
\end{equation*}
for any $u^\tau\in
\overline{B}_{\mathcal H}(R_1),f\in \overline{B}_{L^{\infty}(\R^+;H)}(R_1)$ and $t\geq \tau$.
\end{corollary}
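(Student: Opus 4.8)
The plan is to recognize that this corollary is simply the special case $R_2=R_1$ of the uniform bound (\ref{S1-boundedness}) that was already extracted during the proof of Proposition \ref{prop-S1dissipativity}; what remains is to make explicit the short argument behind (\ref{S1-boundedness}), which rests entirely on combining the discrete monotonicity of Lemma \ref{Lemma-nonincreasing} with the continuous-time growth bound (\ref{estimate-8}). First I would record that the energy functional controls the phase-space norm: from (\ref{energy-functional}) one has $\tfrac12\|\psi\|_{_{\mathcal H}}^2\le E(\psi)$, while the embedding $H^1\hookrightarrow L^4$ gives the reverse-type bound $E(\psi)\le C(1+\|\psi\|_{_{\mathcal H}}^4)$. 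Hence it suffices to bound $E_u(t)$ uniformly for $t\ge\tau$, and for $u^\tau\in\overline{B}_{\mathcal H}(R_1)$ the initial value $E_u(\tau)=E(u^\tau)$ is already bounded by a constant $A_1=A_1(R_1)$.

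Next I would examine the energy along the arithmetic sequence of times $\tau+nT_0$, $n\in\N$, with $T_0,A_0$ as in Lemma \ref{Lemma-nonincreasing} (fixing for instance $\varpi=1/2$). The key mechanism is a dichotomy at each grid point. If $E_u(\tau+nT_0)\ge A_0$, then Lemma \ref{Lemma-nonincreasing} forces $E_u(\tau+(n+1)T_0)\le \varpi E_u(\tau+nT_0)$, so the energy strictly contracts over that window. If instead $E_u(\tau+nT_0)<A_0$, then the growth over one window is controlled by (\ref{estimate-8}):
\begin{equation*}
E_u^{1/2}(\tau+(n+1)T_0)\le E_u^{1/2}(\tau+nT_0)+\tfrac{\sqrt2}{2}R_1T_0\le A_0^{1/2}+\tfrac{\sqrt2}{2}R_1T_0=:A_2^{1/2}.
\end{equation*}
An immediate induction then yields $E_u(\tau+nT_0)\le \max\{A_1,A_2\}$ for every $n\in\N$, with a bound depending only on $R_1$ (through $A_1$) and on the fixed constants $A_0,T_0,R_1$ (through $A_2$).

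To pass from grid points to arbitrary $t\ge\tau$, I would apply (\ref{estimate-8}) once more on the window $[\tau+nT_0,t]$ containing $t$, obtaining $E_u^{1/2}(t)\le \max\{A_1,A_2\}^{1/2}+\tfrac{\sqrt2}{2}R_1T_0$, a bound uniform in $t,\tau,u^\tau$ and $f$ within the prescribed balls. Converting back via $\|\cdot\|_{_{\mathcal H}}^2\le 2E$ then closes the argument, giving (\ref{S1-boundedness}) with $R_2=R_1$ and hence the corollary.

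I do not expect a genuine obstacle here, since the statement merely repackages an estimate already derived; the only point requiring care is the non-monotonicity of the energy in continuous time, reflected in the flux identity (\ref{energy-equality}). This is precisely what the at-most-linear growth of $E^{1/2}$ in (\ref{estimate-8}) tames: over each fixed window of length $T_0$ the energy can increase by only a bounded amount, so the controlled excursions between consecutive grid points cannot accumulate, and the contraction supplied by Lemma \ref{Lemma-nonincreasing} whenever the energy exceeds $A_0$ prevents any escape to infinity.
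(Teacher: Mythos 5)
Your proposal is correct and follows essentially the same route as the paper: the corollary is indeed just the case $R_2=R_1$ of the bound (\ref{S1-boundedness}), which the paper asserts in the proof of Proposition \ref{prop-S1dissipativity} as a consequence of the discrete monotonicity of Lemma \ref{Lemma-nonincreasing} without writing out the details. Your dichotomy-plus-induction argument at the grid times $\tau+nT_0$, combined with the growth estimate (\ref{estimate-8}) to cover intermediate times and the equivalence $\tfrac12\|\cdot\|_{_{\mathcal H}}^2\le E(\cdot)\le C(1+\|\cdot\|_{_{\mathcal H}}^4)$, is precisely the omitted verification, carried out correctly.
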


\subsection{Asymptotic compactness}\label{Section-DS-2}

We begin with a Strichartz-based regularization property of cubic nonlinearity. 

\begin{lemma}\label{Lemma-Regularity}
Let $R>0,s\in[0,1)$ and
$
\varepsilon=\min\{1-s,4/7\}. 
$
Then there exists a pair $(q,r)$ satisfying {\rm(\ref{Strichartz-index})} such that the following assertion holds: If $u\in L^\infty_tH^{1+s}_x$ is a function with finite Strichartz norms $\|u\|_{_{L^q_tL^r_x}}\leq R$, then $u^3\in L^1_tH_x^{s+\varepsilon}$ and
$$
\|u^3\|_{_{L^1_tH^{s+\varepsilon}_x}}\leq C\|u\|_{_{L^\infty_tH^{1+s}_x}},
$$
where the constant $C>0$ depends only on $q,r,R$.
\end{lemma}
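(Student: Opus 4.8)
The plan is to reduce the claim to a pointwise-in-time multiplicative estimate and then integrate in time. Set $\sigma:=s+\varepsilon$, which lies in $(0,1]$ since $s<1$ and $0<\varepsilon\le 1-s$. First I would establish, for a suitable Lebesgue exponent $r$, the pointwise bound
\begin{equation*}
\|u^3(t)\|_{H^{\sigma}}\lesssim \|u(t)\|_{L^r_x}^2\,\|u(t)\|_{H^{1+s}},
\end{equation*}
and then deduce the $L^1_tH^{\sigma}_x$ estimate by integrating and applying H\"older in time.

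To obtain the pointwise estimate I would invoke the fractional Leibniz (Kato--Ponce) rule for the product $u^3=u\cdot u\cdot u$, letting $(-\Delta)^{\sigma/2}$ fall on a single factor. Concretely, splitting $u^3=u\cdot u^2$ and applying the rule twice, every resulting term takes the form $\|(-\Delta)^{\sigma/2}u\|_{L^{p_0}}\|u\|_{L^r}^2$, provided the exponents satisfy $\tfrac12=\tfrac1{p_0}+\tfrac2r$; indeed the intermediate exponent $n$ arising in the second application, defined by $\tfrac1n=\tfrac12-\tfrac1r$, is compatible with the same choice because $\tfrac1n=\tfrac1{p_0}+\tfrac1r$ under this relation. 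To control the first factor by the available regularity I would use
\begin{equation*}
\|(-\Delta)^{\sigma/2}u\|_{L^{p_0}}\lesssim \|(-\Delta)^{\sigma/2}u\|_{H^{1-\varepsilon}}=\|u\|_{H^{1+s}},
\end{equation*}
where the embedding $H^{1-\varepsilon}\hookrightarrow L^{p_0}$ in dimension $3$ fixes $\tfrac1{p_0}=\tfrac12-\tfrac{1-\varepsilon}{3}$, and hence $r=\tfrac{6}{1-\varepsilon}$. With this $r$, the Strichartz admissibility $\tfrac1q+\tfrac3r=\tfrac12$ forces $q=\tfrac2\varepsilon$; the requirement $q\in[7/2,+\infty]$ is then \emph{equivalent} to $\varepsilon\le 4/7$, which is exactly what the definition $\varepsilon=\min\{1-s,4/7\}$ guarantees. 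This interlocking between the regularity gain and the admissibility constraint is the conceptual core of the lemma and is what dictates the precise threshold $4/7$.

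It remains to integrate in time. From the pointwise bound,
\begin{equation*}
\|u^3\|_{L^1_tH^{\sigma}_x}\lesssim \int \|u(t)\|_{L^r_x}^2\|u(t)\|_{H^{1+s}}\,dt\le \|u\|_{L^\infty_tH^{1+s}_x}\,\|u\|_{L^2_tL^r_x}^2,
\end{equation*}
and since $q\ge 7/2>2$, H\"older in time over the fixed-length interval yields $\|u\|_{L^2_tL^r_x}\le C\|u\|_{L^q_tL^r_x}\le CR$. The two Strichartz factors are thereby absorbed into the constant, leaving precisely the single factor $\|u\|_{L^\infty_tH^{1+s}_x}$ on the right-hand side, with the constant depending on $q,r,R$ (and the length of the time interval, which is fixed throughout this section).

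The main technical point to handle with care is the validity of the fractional Leibniz rule and of the critical fractional Sobolev embedding on the bounded domain $D$, where the spaces $H^\tau$ are defined through powers of the Dirichlet Laplacian rather than on all of $\R^3$. In the relevant range this can be settled by identifying these spaces with the standard Sobolev spaces (as recorded in the functional-setting discussion preceding this section) and, where necessary, by an extension/localization argument transferring the $\R^3$ inequalities; note that the exponents $p_0\in[14/5,6)$ and $r\in(6,14]$ produced above stay strictly between the endpoints, so no borderline embedding is invoked. The remaining bookkeeping --- checking $\sigma\in(0,1]$, that all H\"older and Leibniz exponents lie in $(1,\infty)$, and tracking the constant's dependence --- is routine and I would not belabor it.
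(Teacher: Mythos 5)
Your proposal is correct in substance, but note that the paper does not actually prove this lemma: it is quoted as a special case of \cite[Corollary 4.2]{JL-13} (see also \cite[Theorem 8]{DLZ-03}), so there is no in-paper proof to compare against. Your argument is in effect a self-contained reconstruction of the cited result, and its core is right. The exponent arithmetic closes exactly as you say: with $\sigma=s+\varepsilon$, the chain (fractional Leibniz) $\rightarrow$ (Sobolev $H^{1-\varepsilon}\hookrightarrow L^{p_0}$, $\tfrac{1}{p_0}=\tfrac12-\tfrac{1-\varepsilon}{3}$) $\rightarrow$ (H\"older relation $\tfrac12=\tfrac{1}{p_0}+\tfrac{2}{r}$) forces $r=6/(1-\varepsilon)$ and $q=2/\varepsilon$, and the admissibility constraint $q\geq 7/2$ in (\ref{Strichartz-index}) is precisely $\varepsilon\leq 4/7$ --- this is indeed the mechanism behind the threshold in $\varepsilon=\min\{1-s,4/7\}$ (and, for a general power $p$, behind the exponent $(17-3p)/14$ mentioned after the lemma). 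The double application of the Leibniz rule with the intermediate exponent $n$ is consistent, all exponents stay in admissible ranges, and the H\"older step in time uses only $q>2$ on a finite interval.

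Two points are glossed over as ``routine'' that you should make explicit, since they involve the specific functional setting of the paper rather than $\R^3$ analysis. First, because $\sigma=s+\varepsilon\geq 4/7>1/2$ in every case, the target space $H^{s+\varepsilon}$ is not the full Sobolev space $H^{\sigma}(D)$ but its subspace of functions vanishing on $\partial D$ (the domain of the fractional Dirichlet Laplacian). Your extension argument only places $u^3$ in $H^{\sigma}(D)$; to conclude, you must add that $u(t)\in H^{1+s}\subset H^1_0(D)$, hence $u^3(t)$ has zero trace (e.g.\ approximate $u$ in $H^1$ by $C_c^\infty(D)$ functions $u_n$ and check $u_n^3\to u^3$ in $W^{1,1}(D)$), after which the spectral norm and the Sobolev norm are equivalent on $u^3$. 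Second, the inhomogeneous $H^{\sigma}$ norm also contains the $L^2$ piece $\|u^3\|_{L^2}=\|u\|_{L^6}^3$, which the fractional-derivative term does not control; it obeys the same H\"older pattern, $\|u^3\|_{L^2}\leq \|u\|_{L^r}^2\|u\|_{L^{p_0}}\lesssim \|u\|_{L^r}^2\|u\|_{H^{1+s}}$, but it should be recorded. Neither point threatens the argument; with them included, your proof is a complete and legitimate substitute for the external citation.
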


This lemma is a special case of \cite[Corollary 4.2]{JL-13} (see also \cite[Theorem 8]{DLZ-03}). In general, such regularization property remains true with $u^3$ replaced by any defocusing and energy-subcritical nonlinearity $F$:
$$
F(0)=0,\quad sF(s)\geq 0,\quad |F(s)|\leq C(1+|s|)^p,\quad |F'(s)|\leq C(1+|s|)^{p-1},
$$
where $1\leq p<5$. In this case, one takes
$
\varepsilon=\min\{1-s,(5-p)/2,(17-3p)/14\}.
$

With the help of Lemma \ref{Lemma-Regularity}, we shall establish the $(\mathcal H,\mathcal H^{\scriptscriptstyle4/7})$-asymptotic compactness. Recall the constant $\gamma>0$ established in (\ref{linear-decay}).
				
\begin{lemma}\label{Lemma-AC-1}
Assume that $a(x)$ satisfies {\rm(\ref{Damping-localization})} and let $R_0>0$ be arbitrarily given. Let $\mathscr B_0$ be the absorbing set established in Proposition {\rm\ref{prop-S1dissipativity}}, where $R_1$ is chosen so that $\overline{B}_{L^\infty(\R^+;H^{\scriptscriptstyle4/7})}(R_0)\subset \overline{B}_{L^\infty(\R^+;H)}(R_1)$.
Then there exists a bounded subset $\mathscr B_{\scriptscriptstyle4/7}$ of $\mathcal H^{\scriptscriptstyle4/7}$ and a constant $C>0$
such that 
\begin{equation}\label{attracting-4/7}
{\rm dist}_{_{\mathcal H}}(\mathcal U^f(t,\tau)u^\tau,\mathscr B_{\scriptscriptstyle4/7})\leq Ce^{-\gamma (t-\tau)}  
\end{equation}
for any $ u^\tau\in\mathscr B_0,f\in \overline{B}_{L^\infty(\R^+;H^{\scriptscriptstyle4/7})}(R_0)$ and $t\geq \tau$. 
\end{lemma}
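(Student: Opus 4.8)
The plan is to split the trajectory into a linear part that decays in $\mathcal H$ and a nonlinear part that is uniformly regular. Writing $u[\cdot]=\mathcal U^f(\cdot,\tau)u^\tau$, I set $v[t]=U(t-\tau)u^\tau$ and $w[t]=u[t]-v[t]$, so that $v$ solves the homogeneous damped equation while $w$ solves the inhomogeneous problem (\ref{w-equation}) with $w[\tau]=0$. By Lemma~\ref{Linear-semigroup} (with $s=0$) one has $\|v[t]\|_{\mathcal H}\leq Ce^{-\gamma(t-\tau)}\|u^\tau\|_{\mathcal H}$, and since $\mathscr B_0$ is $\mathcal H$-bounded this is $\leq Ce^{-\gamma(t-\tau)}$. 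Hence, once I produce a fixed $M>0$ with $\|w[t]\|_{\mathcal H^{4/7}}\leq M$ for all $t\geq\tau$, the set $\mathscr B_{4/7}:=\overline B_{\mathcal H^{4/7}}(M)$ contains every $w[t]$, and
\[
{\rm dist}_{\mathcal H}(u[t],\mathscr B_{4/7})\leq \|u[t]-w[t]\|_{\mathcal H}=\|v[t]\|_{\mathcal H}\leq Ce^{-\gamma(t-\tau)},
\]
which is exactly (\ref{attracting-4/7}). The whole problem thus reduces to the uniform $\mathcal H^{4/7}$-boundedness of $w$.

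To control $w$ I first gain spatial regularity for the nonlinearity on time windows of a fixed length $T>0$. On each window $I_n=[\tau+nT,\tau+(n+1)T]$ I regard $u$ as the solution of $\boxempty u=f-u^3-a(x)\partial_tu$ and apply the Strichartz estimate of Proposition~\ref{prop-Strichartz}. Because $u^\tau\in\mathscr B_0$, Corollary~\ref{Corollary-S1bound} yields a uniform bound $\|u[t]\|_{\mathcal H}\leq C$; consequently $\|a\partial_tu\|_{L^1(I_n;L^2)}\leq C$, and by the embedding $H^1\hookrightarrow L^6$ also $\|u^3\|_{L^1(I_n;L^2)}=\int_{I_n}\|u\|_{L^6}^3\,ds\leq C$, while $\|f\|_{L^1(I_n;L^2)}\leq TR_0$. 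Thus the Strichartz norm $\|u\|_{L^q(I_n;L^r)}\leq R$ is bounded uniformly in $n$ and $\tau$. Feeding this into Lemma~\ref{Lemma-Regularity} with $s=0$ (so $\varepsilon=4/7$) gives $u^3\in L^1(I_n;H^{4/7})$ with $\|u^3\|_{L^1(I_n;H^{4/7})}\leq C\|u\|_{L^\infty(I_n;H^1)}\leq C$, again uniformly in $n$ and $\tau$. I emphasise that there is no circularity here: the bound $u^3\in L^1_tL^2_x$ needed for the Strichartz step follows from the $\mathcal H$-bound alone, \emph{before} any regularity gain is invoked.

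The main obstacle is that Lemma~\ref{Lemma-Regularity} controls $u^3$ only in $L^1$ in time, so the Duhamel representation
\[
w[t]=\int_\tau^tU(t-s)\begin{pmatrix}0\\ f(s)-u^3(s)\end{pmatrix}ds
\]
cannot be closed by a pointwise Gronwall argument. I would resolve this by exploiting the exponential decay $\|U(t-s)\|_{\mathcal L(\mathcal H^{4/7})}\leq Ce^{-\gamma(t-s)}$ (Lemma~\ref{Linear-semigroup}, valid since $4/7\in[0,1]$) together with the window decomposition: for $t\in I_N$ I split the integral over $I_0,\dots,I_N$, bound $\|U(t-s)\|_{\mathcal L(\mathcal H^{4/7})}$ for $s\in I_n$ by $Ce^{-\gamma(N-n-1)T}$, and use the uniform estimate $\int_{I_n}\|f-u^3\|_{H^{4/7}}\,ds\leq TR_0+C$ to obtain
\[
\|w[t]\|_{\mathcal H^{4/7}}\leq C(TR_0+C)\sum_{n=0}^{N}e^{-\gamma(N-n-1)T}\leq \frac{C\,e^{\gamma T}(TR_0+C)}{1-e^{-\gamma T}}=:M.
\]
Since $M$ is independent of $t$ and $\tau$, this furnishes the desired uniform $\mathcal H^{4/7}$-bound on $w$ and completes the argument. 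The delicate points to verify carefully are the uniformity in $\tau$ (and in the window index $n$) of both the Strichartz constant and the regularity constant, which both trace back to Corollary~\ref{Corollary-S1bound}, and the geometric summation that converts $L^1$-in-time regularity into a pointwise-in-time $\mathcal H^{4/7}$ estimate.
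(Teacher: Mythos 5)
Your proof is correct and follows essentially the same route as the paper's: the paper likewise splits $u[t]$ into the decaying linear flow $U(t-\tau)u^\tau$ plus Duhamel contributions, gains $H^{\scriptscriptstyle4/7}$-regularity of $u^3$ on fixed-length time windows via the Strichartz estimate of Proposition~\ref{prop-Strichartz} (made uniform in the window and in $\tau$ by Corollary~\ref{Corollary-S1bound}) and Lemma~\ref{Lemma-Regularity}, and then sums geometrically using the exponential decay of $U(t)$ on $\mathcal H^{\scriptscriptstyle4/7}$ from Lemma~\ref{Linear-semigroup}. The only cosmetic differences are that the paper keeps the $f$- and $u^3$-contributions as separate integrals $I_2,I_3$ and works with windows of length one, whereas you merge them into the single term $w$ with windows of length $T$.
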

				
\begin{proof}[{\bf Proof}]
By means of (\ref{variation-formula-3}), it can be derived that
\begin{equation*}
\begin{aligned}
u[t]&=U(t-\tau)\left(
\begin{matrix}
u_0\\
u_1
\end{matrix}
\right) +\int_0^{t-\tau} U(s)\left(
\begin{matrix}
0\\
-u^3(t-s)
\end{matrix}
\right)ds+\int_\tau^{t} U(t-s)\left(
\begin{matrix}
0\\
f(s)
\end{matrix}
\right)ds\\
&=: I_1(t)+I_2(t)+I_3(t),
\end{aligned}
\end{equation*}
where $u[\cdot]=\mathcal U^f(\cdot,\tau)u^\tau$ with $u^\tau\in\mathscr B_0$ and $f\in \overline{B}_{L^\infty(\R^+;H^{\scriptscriptstyle4/7})}(R_0)$.
Let us treat the terms $I_i$ separately. 
For $I_1$, an application of Lemma \ref{Linear-semigroup} yields that
$$
\|I_1(t)\|_{_{\mathcal H}}\leq Ce^{-\gamma (t-\tau)}.
$$
For $I_2$, we write
\begin{align*}
	I_2(t)&=\sum_{k=0}^{\lfloor t-\tau\rfloor-1}\int_k^{k+1} U(s)\left(
	\begin{matrix}
		0\\
		-u^3(t-s)
	\end{matrix}
	\right)ds+\int_{\lfloor t-\tau\rfloor}^{t-\tau} U(s)\left(
	\begin{matrix}
		0\\
		-u^3(t-s)
	\end{matrix}
	\right)ds
	\\
	&=\sum_{k=0}^{\lfloor t-\tau\rfloor-1}U(k)\int_0^{1} U(s)\left(
	\begin{matrix}
		0\\
		-u^3(t-k-s)
	\end{matrix}
	\right)ds+\int_{\lfloor t-\tau\rfloor}^{t-\tau} U(s)\left(
	\begin{matrix}
		0\\
		-u^3(t-s)
	\end{matrix}
	\right)ds
\end{align*}
Then, making use of Proposition \ref{prop-Strichartz} and Corollary \ref{Corollary-S1bound}, one can observe that for every $(q,r)$ satisfying (\ref{Strichartz-index}),
\begin{align*}
	&\|u(t-k-\cdot)\|_{_{L^q(0,1;L^r(D))}}=
	\|u(\cdot)\|_{_{L^q(t-k-1,t-k;L^r(D))}}\\
	&\leq C\left(\|u[t-k-1]\|_{_{\mathcal H}}+\|-u^3+f\|_{_{L^1(t-k-1,t-k;L^2(D))}}\right)\\
	&\leq C,
\end{align*}
where the constant $C$ does not depend on $t,\tau,k$.  This together with Lemma \ref{Lemma-Regularity} (with $s=0$ and $\varepsilon=4/7$) means that
$$
\|u^3(t-k-\cdot)\|_{_{L^1_tH^{\scriptscriptstyle4/7}_x}}\leq C\|u(t-k-\cdot)\|_{_{L_t^\infty H_x^1}}\leq C.
$$
Analogously, 
$$
\|u^3(t-\lfloor t-\tau\rfloor-\cdot)\|_{_{L^1_tH^{\scriptscriptstyle4/7}_x}}\leq C.
$$
Consequently, we conclude that
$$
\|I_2(t)\|_{_{\mathcal H^{\scriptscriptstyle4/7}}}\leq C\left(\sum_{k=0}^{\lfloor t-\tau\rfloor-1}e^{-\gamma k}+1\right)\leq C\left(\frac{1}{1-e^{-\gamma}}+1\right).
$$
Finally, it is easy to get that
$$
\|I_3(t)\|_{_{\mathcal H^{\scriptscriptstyle4/7}}}\leq CR_0\int_\tau^te^{-\gamma (t-s)}ds\leq CR_0\gamma^{-1}.
$$
In conclusion, there exists a bounded subset $\mathscr B_{\scriptscriptstyle4/7}$ of $\mathcal H^{\scriptscriptstyle4/7}$ such that
$$
I_2(t)+I_3(t)\in \mathscr B_{\scriptscriptstyle4/7}
$$
for all $t\geq \tau$. This combined with the uniform exponential decay of $I_1(t)$ implies the conclusion of this lemma.
\end{proof}

From the proof of Lemma \ref{Lemma-AC-1}, one can also derive that the process $\mathcal U^f(t,\tau)$ sends, uniformly for $t\geq \tau$, bounded subsets of $\mathcal H^{\scriptscriptstyle4/7}$ into bounded subsets.
				
\begin{corollary}\label{Corollary-S2bound}
Assume that $a(x)$ satisfies {\rm(\ref{Damping-localization})} and let $R_0>0$ be arbitrarily given. Then there exists a constant $C>0$ such that
\begin{equation*}
\|\mathcal U^f(t,\tau)u^\tau\|_{_{\mathcal H^{\scriptscriptstyle4/7}}}\leq C
\end{equation*}
for any $u^\tau\in \overline{B}_{\mathcal H^{\scriptscriptstyle4/7}}(R_0),f\in \overline{B}_{L^\infty(\R^+;H^{\scriptscriptstyle4/7})}(R_0)$ and $t\geq \tau$.
\end{corollary}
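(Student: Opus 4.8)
The plan is to reuse the Duhamel decomposition already set up in the proof of Lemma~\ref{Lemma-AC-1}, exploiting the fact that here the initial datum carries the full $\mathcal H^{\scriptscriptstyle4/7}$-regularity rather than merely $\mathcal H$-regularity. Writing $u[\cdot]=\mathcal U^f(\cdot,\tau)u^\tau$, I would split
$$
u[t]=U(t-\tau)u^\tau+\int_0^{t-\tau}U(s)\left(\begin{matrix}0\\ -u^3(t-s)\end{matrix}\right)ds+\int_\tau^t U(t-s)\left(\begin{matrix}0\\ f(s)\end{matrix}\right)ds=:I_1(t)+I_2(t)+I_3(t),
$$
exactly as before, and estimate the three pieces separately in $\mathcal H^{\scriptscriptstyle4/7}$.

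The preliminary uniform $\mathcal H$-bound is immediate: since $\mathcal H^{\scriptscriptstyle4/7}\hookrightarrow\mathcal H$ and $H^{\scriptscriptstyle4/7}\hookrightarrow H$, the hypotheses $u^\tau\in\overline{B}_{\mathcal H^{\scriptscriptstyle4/7}}(R_0)$ and $f\in\overline{B}_{L^\infty(\R^+;H^{\scriptscriptstyle4/7})}(R_0)$ give $\|u^\tau\|_{_{\mathcal H}}\le CR_0$ and $\|f\|_{_{L^\infty(\R^+;H)}}\le CR_0$, so Corollary~\ref{Corollary-S1bound} yields $\|u[t]\|_{_{\mathcal H}}\le C$ for all $t\ge\tau$. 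This is precisely what powers the nonlinear and forcing terms. Applying Proposition~\ref{prop-Strichartz} on consecutive unit time intervals produces uniform Strichartz bounds on $u$, whence Lemma~\ref{Lemma-Regularity} (with $s=0$, $\varepsilon=4/7$) furnishes a uniform bound on $\|u^3\|_{_{L^1_tH^{\scriptscriptstyle4/7}_x}}$ over each such interval. Combining this with the group-splitting over unit intervals used in Lemma~\ref{Lemma-AC-1} and the exponential decay of $U$ at regularity level $\mathcal H^{\scriptscriptstyle4/7}$ (Lemma~\ref{Linear-semigroup}), one obtains $\|I_2(t)\|_{_{\mathcal H^{\scriptscriptstyle4/7}}}\le C\sum_{k\ge0}e^{-\gamma k}\le C$. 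The forcing term is controlled directly by $\|I_3(t)\|_{_{\mathcal H^{\scriptscriptstyle4/7}}}\le CR_0\int_\tau^t e^{-\gamma(t-s)}ds\le CR_0\gamma^{-1}$, again via Lemma~\ref{Linear-semigroup}.

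The only genuinely new ingredient, and the point where this corollary improves on Lemma~\ref{Lemma-AC-1}, is the treatment of the homogeneous term $I_1$. In the lemma $u^\tau$ lived only in the $\mathcal H$-bounded absorbing set $\mathscr B_0$, so $I_1$ could be controlled solely in the $\mathcal H$-norm, which is why the conclusion took the form of $\mathcal H$-attraction. Here, because $u^\tau\in\overline{B}_{\mathcal H^{\scriptscriptstyle4/7}}(R_0)$, I would instead apply Lemma~\ref{Linear-semigroup} directly at regularity level $\mathcal H^{\scriptscriptstyle4/7}$ to get
$$
\|I_1(t)\|_{_{\mathcal H^{\scriptscriptstyle4/7}}}=\|U(t-\tau)u^\tau\|_{_{\mathcal H^{\scriptscriptstyle4/7}}}\le Ce^{-\gamma(t-\tau)}\|u^\tau\|_{_{\mathcal H^{\scriptscriptstyle4/7}}}\le CR_0.
$$
Summing the three estimates yields $\|u[t]\|_{_{\mathcal H^{\scriptscriptstyle4/7}}}\le C$ uniformly in $t\ge\tau$, which is the claim. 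I do not anticipate any real obstacle: the whole argument is a repackaging of the proof of Lemma~\ref{Lemma-AC-1}, the one substantive change being that the extra regularity of the initial data now lets the homogeneous part be absorbed into the uniform bound instead of merely decaying in the weaker topology.
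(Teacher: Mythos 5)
Your proposal is correct and is exactly the argument the paper intends: the paper proves this corollary by remarking that it follows "from the proof of Lemma \ref{Lemma-AC-1}", and your write-up is precisely that derivation — the same Duhamel splitting into $I_1+I_2+I_3$, the same Strichartz/regularization treatment of $I_2$ and $I_3$, with the single change that the extra $\mathcal H^{\scriptscriptstyle4/7}$-regularity of $u^\tau$ lets $I_1$ be bounded (via Lemma \ref{Linear-semigroup} at level $s=4/7$) rather than merely decaying in $\mathcal H$.
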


One can notice that when the assumption of space regularity on $f(t,x)$ is relaxed, the regularity of the attracting set verifying (\ref{attracting-4/7}) becomes lower correspondingly. See the corollary below, where a boundedness result is also involved.

\begin{corollary}\label{Corollary-bound}
Assume that $a(x)$ satisfies {\rm(\ref{Damping-localization})} and let $R_0>0$ be arbitrarily given. 
Then the following assertions hold.
\begin{enumerate}[leftmargin=2em]
\item[$(1)$] Let $\mathscr B_0$ be the absorbing set established in Proposition {\rm\ref{prop-S1dissipativity}}, where $R_1$ is chosen so that $\overline{B}_{L^\infty(\R^+;H^{\scriptscriptstyle1/3})}(R_0)\subset \overline{B}_{L^\infty(\R^+;H)}(R_1)$. There exists a bounded subset $\mathscr B_{\scriptscriptstyle1/3}$ of $\mathcal H^{\scriptscriptstyle1/3}$ and a constant $C_1>0$
such that 
$$
{\rm dist}_{_{\mathcal H}}(\mathcal U^f(t,\tau)u^\tau,\mathscr B_{\scriptscriptstyle1/3})\leq C_1e^{-\gamma (t-\tau)}
$$
for any $ u^\tau\in\mathscr B_0,f\in \overline{B}_{L^\infty(\R^+;H^{\scriptscriptstyle1/3})}(R_0)$ and $t\geq \tau$. 

\item[$(2)$] There exists a constant $C_2>0$ such that
\begin{equation*}
\|\mathcal U^f(t,\tau)u^\tau\|_{_{\mathcal H^{\scriptscriptstyle1/3}}}\leq C_2
\end{equation*}
for any $u^\tau\in \overline{B}_{\mathcal H^{\scriptscriptstyle1/3}}(R_0),f\in \overline{B}_{L^\infty(\R^+;H^{\scriptscriptstyle1/3})}(R_0)$ and $t\geq \tau$.
\end{enumerate}
\end{corollary}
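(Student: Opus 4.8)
The plan is to mimic the proof of Lemma~\ref{Lemma-AC-1} almost verbatim, tracking only the regularity exponents that change when the force is assumed to lie in $H^{1/3}$ rather than $H^{4/7}$. First I would observe that since $H^{1/3}\hookrightarrow H=L^2$, the ball $\overline{B}_{L^\infty(\R^+;H^{1/3})}(R_0)$ is contained in $\overline{B}_{L^\infty(\R^+;H)}(R_1)$ for a suitable $R_1$, so that Proposition~\ref{prop-S1dissipativity} and Corollary~\ref{Corollary-S1bound} apply and supply the $\mathcal H$-dissipativity together with a uniform $\mathcal H$-bound on the trajectory $u[\cdot]=\mathcal U^f(\cdot,\tau)u^\tau$.

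For assertion $(1)$, fix $u^\tau\in\mathscr B_0$ and decompose via the Duhamel formula (\ref{variation-formula-3}) as $u[t]=I_1(t)+I_2(t)+I_3(t)$ exactly as in Lemma~\ref{Lemma-AC-1}, where $I_1$ is the free evolution of the data, $I_2$ the contribution of the nonlinearity, and $I_3$ the contribution of $f$. The estimate $\|I_1(t)\|_{_{\mathcal H}}\leq Ce^{-\gamma(t-\tau)}$ is unchanged (Lemma~\ref{Linear-semigroup}). The crucial point is that $I_2$ is handled identically: the Strichartz control of $\|u\|_{_{L^q_tL^r_x}}$ invokes only the $\mathcal H$-boundedness from Corollary~\ref{Corollary-S1bound} and $f\in L^1_tL^2_x$, both of which still hold, so Lemma~\ref{Lemma-Regularity} (with $s=0$, $\varepsilon=4/7$) again yields $\|u^3\|_{_{L^1_tH^{\scriptscriptstyle4/7}_x}}\leq C$ and hence $I_2(t)$ stays uniformly bounded in $\mathcal H^{\scriptscriptstyle4/7}$. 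Only $I_3$ is affected: since now $f(s)\in H^{1/3}$ and $U(t)$ decays exponentially on $\mathcal H^{1/3}$ (Lemma~\ref{Linear-semigroup} with $s=1/3\in[0,1]$), I would bound $\|I_3(t)\|_{_{\mathcal H^{\scriptscriptstyle1/3}}}\leq CR_0\int_\tau^t e^{-\gamma(t-s)}\,ds\leq CR_0\gamma^{-1}$. Using the embedding $\mathcal H^{\scriptscriptstyle4/7}\hookrightarrow\mathcal H^{\scriptscriptstyle1/3}$, both $I_2(t)$ and $I_3(t)$ lie in a fixed bounded subset $\mathscr B_{\scriptscriptstyle1/3}$ of $\mathcal H^{\scriptscriptstyle1/3}$, and the exponential decay of $I_1$ gives $\mathrm{dist}_{_{\mathcal H}}(u[t],\mathscr B_{\scriptscriptstyle1/3})\leq C_1e^{-\gamma(t-\tau)}$, as claimed.

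For assertion $(2)$, I would run the same decomposition but for $u^\tau\in\overline{B}_{\mathcal H^{\scriptscriptstyle1/3}}(R_0)$. The embedding $\mathcal H^{\scriptscriptstyle1/3}\hookrightarrow\mathcal H$ places $u^\tau$ in some $\mathcal H$-ball, so Corollary~\ref{Corollary-S1bound} again furnishes the $\mathcal H$-bound needed for the Strichartz estimate; consequently $I_2$ and $I_3$ remain uniformly bounded in $\mathcal H^{\scriptscriptstyle1/3}$ exactly as above. The only change is in $I_1$: rather than letting it decay in $\mathcal H$, I keep it at the level $\mathcal H^{\scriptscriptstyle1/3}$ and apply Lemma~\ref{Linear-semigroup} with $s=1/3$ to obtain $\|I_1(t)\|_{_{\mathcal H^{\scriptscriptstyle1/3}}}=\|U(t-\tau)u^\tau\|_{_{\mathcal H^{\scriptscriptstyle1/3}}}\leq C\|u^\tau\|_{_{\mathcal H^{\scriptscriptstyle1/3}}}\leq CR_0$. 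Summing the three contributions yields the uniform bound $\|u[t]\|_{_{\mathcal H^{\scriptscriptstyle1/3}}}\leq C_2$.

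I do not expect a genuine obstacle here, since the statement is a direct transcription of Lemma~\ref{Lemma-AC-1} and Corollary~\ref{Corollary-S2bound}. The only point demanding care is the bookkeeping of exponents: one must notice that the regularization of the cubic term produces the higher regularity $\mathcal H^{\scriptscriptstyle4/7}$ \emph{irrespective} of the force, so the attracting set is pushed down to $\mathcal H^{\scriptscriptstyle1/3}$ purely by the reduced space-regularity of $f$, and that Lemma~\ref{Linear-semigroup} is valid on the full range $s\in[0,1]$, guaranteeing the decay and boundedness of the free group at the level $s=1/3$.
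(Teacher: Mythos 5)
Your proof is correct and follows exactly the route the paper intends: Corollary~\ref{Corollary-bound} is stated as a direct consequence of the argument of Lemma~\ref{Lemma-AC-1} (and Corollary~\ref{Corollary-S2bound}), and your exponent bookkeeping — the cubic term still regularizes to $\mathcal H^{\scriptscriptstyle4/7}$ via Lemma~\ref{Lemma-Regularity} with $(s,\varepsilon)=(0,4/7)$, while only the force contribution $I_3$ drops to $\mathcal H^{\scriptscriptstyle1/3}$, and Lemma~\ref{Linear-semigroup} applies at $s=1/3$ — is precisely the adjustment the paper has in mind. The handling of assertion~(2), keeping $I_1$ bounded at the $\mathcal H^{\scriptscriptstyle1/3}$ level rather than letting it decay in $\mathcal H$, likewise mirrors how the paper derives Corollary~\ref{Corollary-S2bound} from the proof of Lemma~\ref{Lemma-AC-1}.
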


This corollary will be useful in establishing the second assertion of Theorem \ref{Thm-dynamicalsystem}.
Before that, let us complete the proof of the first assertion.
				
\begin{proof}[{\bf Proof of Theorem \ref{Thm-dynamicalsystem}(1)}]
Let $R_0>0$ be arbitrarily given.
We first apply  Proposition \ref{prop-S1dissipativity}, where $R_1$ is chosen so that $\overline{B}_{L^\infty(\R^+;H^{\scriptscriptstyle4/7})}(R_0)\subset \overline{B}_{L^\infty(\R^+;H)}(R_1)$.
It then follows that for every $u^\tau\in\mathcal H$, there exists an elapsed time $T$ of the form (\ref{elapsed-time}), such that
\begin{equation*}
\mathcal U^f(\tau+t,\tau)u^\tau\in\mathscr B_0
\end{equation*}
for any $f\in \overline{B}_{L^\infty(\R^+;H^{\scriptscriptstyle4/7})}(R_0)$ and $t\geq T,\tau\geq 0$. To continue, 
letting $\mathscr B_{\scriptscriptstyle4/7}$ be the set established in Lemma \ref{Lemma-AC-1}, we derive that
\begin{equation*}
{\rm dist}_{_{\mathcal H}}(\mathcal U^f(\tau+t,\tau)u^\tau,\mathscr B_{\scriptscriptstyle4/7})\leq Ce^{-\gamma (t-T)}.
\end{equation*}
This together with (\ref{elapsed-time}) implies that
\begin{equation}\label{ACproof1}
{\rm dist}_{_{\mathcal H}}(\mathcal U^f(\tau+t,\tau)u^\tau,\mathscr B_{\scriptscriptstyle4/7})\leq C(1+E_u(\tau))e^{-\kappa t},
\end{equation}
where $\kappa=\min\{\gamma,(4p)^{-1}\}$ with $p$ arising in (\ref{elapsed-time}).

In the case where $t\in[0,T]$, we make use of (\ref{estimate-8}) to deduce that 
\begin{equation*}
\begin{aligned}
E_u^{1/2}(\tau+t)& \leq E_u^{1/2}(\tau)+\frac{\sqrt{2}}{2}R_1T\\
& \leq E_u^{1/2}(\tau)+C(\ln(1+pE_u(\tau)))\\
& \leq C(1+E_u^{1/2}(\tau)).
\end{aligned}
\end{equation*}
This yields that 
\begin{equation}\label{ACproof2}
\begin{aligned}
{\rm dist}_{_{\mathcal H}}(\mathcal U^f(\tau+t,\tau)u^\tau,\mathscr B_1)& \leq C(1+E_u^{1/2}(\tau))e^{\kappa T}e^{-\kappa t}\\
& \leq C(1+E_u^{1/2}(\tau))(1+E_u(\tau))^{\kappa p}e^{-\kappa t}\\
& \leq C(1+E_u(\tau))e^{-\kappa t}
\end{aligned}
\end{equation}
for any $t\in[0,T]$. Finally, the desired conclusion follows from (\ref{ACproof1}) and (\ref{ACproof2}).
\end{proof}

In order to prove Theorem \ref{Thm-dynamicalsystem}(2), one thing to be done is to verify the $(\mathcal H^{\scriptscriptstyle1/3},\mathcal H^1)$-asymptotic compactness. Let us recall the following Sobolev embeddings:
$$
H^{\scriptscriptstyle1/3}\hookrightarrow L^{\scriptscriptstyle18/7}(D),\quad H^{\scriptscriptstyle4/3}\hookrightarrow L^{\scriptscriptstyle18}(D),
$$
which will be used later without mentioning explicitly.

\begin{lemma}\label{Lemma-AC-2}
Assume that $a(x)$ satisfies {\rm(\ref{Damping-localization})} and let $R_0>0$ be arbitrarily given. Let $\mathscr B_{\scriptscriptstyle1/3}$ be the attracting set established in Corollary {\rm\ref{Corollary-bound}(1)}.
Then there exists a bounded subset $\mathscr B_{1}$ of $\mathcal H^{1}$ and a constant $C>0$ 
such that 
$$
{\rm dist}_{\mathcal H^{\scriptscriptstyle1/3}}(\mathcal U^f(t,\tau)u^\tau,\mathscr B_{1})\leq Ce^{-\gamma (t-\tau)}
$$
for any $u^\tau\in\mathscr B_{\scriptscriptstyle1/3},f\in \overline{B}_F(R_0)$ and $t\geq \tau$.
\end{lemma}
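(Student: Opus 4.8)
The plan is to lift the dissipativity-plus-regularization strategy of Section~\ref{Section-DS-1} one time-derivative higher. Write $u[t]=\mathcal U^f(t,\tau)u^\tau=u_L[t]+w[t]$ with $u_L[t]=U(t-\tau)u^\tau$ and $w$ solving \eqref{w-equation}. Since $u^\tau\in\mathscr B_{\scriptscriptstyle1/3}$ is bounded in $\mathcal H^{\scriptscriptstyle1/3}$, Lemma~\ref{Linear-semigroup} (with $s=1/3\in[0,1]$) gives $\|u_L[t]\|_{\mathcal H^{\scriptscriptstyle1/3}}\le Ce^{-\gamma(t-\tau)}$. Hence the whole statement reduces to producing a \emph{fixed} bounded subset $\mathscr B_1$ of $\mathcal H^1$ containing $w[t]$ for all $t\ge\tau$, after which $\mathrm{dist}_{\mathcal H^{\scriptscriptstyle1/3}}(u[t],\mathscr B_1)\le\|u[t]-w[t]\|_{\mathcal H^{\scriptscriptstyle1/3}}=\|u_L[t]\|_{\mathcal H^{\scriptscriptstyle1/3}}\le Ce^{-\gamma(t-\tau)}$ yields the claim. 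Throughout I may use that, by Corollary~\ref{Corollary-bound}(2), $u[t]$ is uniformly bounded in $\mathcal H^{\scriptscriptstyle1/3}$, so $\|u\|_{L^\infty_tL^{18}}\le C$ and $\|\partial_tu\|_{L^\infty_tL^{18/7}}\le C$ (Sobolev embeddings $H^{\scriptscriptstyle4/3}\hookrightarrow L^{18}$, $H^{\scriptscriptstyle1/3}\hookrightarrow L^{18/7}$), together with uniform Strichartz bounds on $u$ from Proposition~\ref{prop-Strichartz}.

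The heart of the matter is to bound $\theta:=\partial_tw$ in $\mathcal H$. Differentiating \eqref{w-equation} in time, $\theta$ solves the damped wave equation
\[
\boxempty\theta+a(x)\partial_t\theta+3u^2\theta=-3u^2\partial_tu_L+\partial_tf,\qquad \theta[\tau]=\big(0,\,f(\tau)-u^3(\tau)\big).
\]
The second slot of the datum lies in $H$ and is uniformly bounded: $u(\tau)\in H^{\scriptscriptstyle4/3}\hookrightarrow L^{18}$ forces $u^3(\tau)\in L^2$, and $f\in F\subset L^\infty(\R^+;H^{\scriptscriptstyle1/3})$ gives $f(\tau)\in L^2$. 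The source is admissible and partly decaying: $\partial_tf\in L^\infty(\R^+;H)$ is bounded because $f\in W^{1,\infty}(\R^+;H)$ (this is exactly where the extra time-regularity of $f$ enters), while $\|u^2\partial_tu_L\|_{L^2}\le\|u\|_{L^{18}}^2\|\partial_tu_L\|_{L^{18/7}}\le Ce^{-\gamma(t-\tau)}$ by the $\mathcal H^{\scriptscriptstyle1/3}$-decay of $u_L$.

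To bound $\theta[t]$ in $\mathcal H$ uniformly in $t,\tau,u^\tau,f$, I would reproduce the discrete-monotonicity scheme of Lemmas~\ref{Lemma-estimate}--\ref{Lemma-nonincreasing} for $\mathcal E_\theta(t)=\tfrac12(\|\nabla\theta\|^2+\|\partial_t\theta\|^2)$. Running the multipliers $q=m(x)=x-x_0$, $\xi=1$ and the cut-offs $h,g$ against the $\theta$-equation produces, via the $\Gamma$-type condition \eqref{Damping-localization}, an observability inequality $\int_\tau^{\tau+T}\mathcal E_\theta\lesssim\mathcal E_\theta(\tau+T)+\int\!\!\int a|\partial_t\theta|^2+(\text{lower order})$, where each lower-order term from the potential $3u^2$ and the source is controlled by Hölder/Sobolev using the uniform $L^{18}$-bound on $u$ and $L^{18/7}$-bound on $\partial_tu$; for instance $\int u^2\theta^2\le C\|\theta\|_{H^1}^2$, $\int u\,\partial_tu\,\theta^2\le C\|\theta\|_{H^1}^2$, and $\int|u^2\theta|\,|m\cdot\nabla\theta+\theta|\le C\|\theta\|_{H^1}^2$. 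Combined with the flux identity for $\theta$, this should give $\mathcal E_\theta(\tau)\ge A_0\Rightarrow\mathcal E_\theta(\tau+T_0)\le\varpi\,\mathcal E_\theta(\tau)$, whence uniform $\mathcal H$-boundedness of $\theta[\cdot]$ exactly as in Proposition~\ref{prop-S1dissipativity}. Once $\theta[t]\in\mathcal H$ is bounded, $\partial_tw\in H^1$ and $\partial_{tt}w\in L^2$ uniformly; rewriting \eqref{w-equation} as $-\Delta w=-\partial_{tt}w-a\partial_tw-u^3+f$, every right-hand term is uniformly in $L^2$ ($u^3\in L^2$ from $u\in H^1\hookrightarrow L^6$, $a\partial_tw\in H^1$, $f\in H^{\scriptscriptstyle1/3}$), so elliptic regularity with the Dirichlet condition gives a uniform $H^2$-bound on $w$. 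Thus $w[t]$ lies in a fixed bounded $\mathscr B_1\subset\mathcal H^1=H^2\times H^1$, and the attraction estimate follows from the first paragraph.

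The step I expect to be the main obstacle is the discrete monotonicity for $\theta$. Unlike the $u$-equation, the lower-order contributions created by the potential $3u^2$ are genuinely \emph{quadratic} in $\theta$ (of the same order as $\mathcal E_\theta$) with coefficients that need not be small, so they cannot simply be swallowed by the multiplier constant $K_0$. Closing the observability inequality with constants independent of $\tau,T,u^\tau,f$ — and thereby running the contradiction argument in the spirit of Lemma~\ref{Lemma-nonincreasing} — will require exploiting the favorable sign of the non-negative potential $3u^2$ in the multiplier identities together with the uniform $\mathcal H^{\scriptscriptstyle1/3}$/Strichartz control of $u$, and this is where the delicate, non-routine estimates are concentrated.
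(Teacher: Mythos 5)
Your reduction is the same as the paper's: decompose $u=z+w$ with $z[\cdot]=U(\cdot-\tau)u^\tau$, note the $\mathcal H^{\scriptscriptstyle1/3}$-decay of $z$, differentiate (\ref{w-equation}) to get the equation for $\theta=\partial_tw$, and recover the $H^2$-bound on $w$ by elliptic regularity once $\theta[\cdot]$ is bounded in $\mathcal H$. The gap is precisely the step you flag as the ``main obstacle'' and leave unresolved: the uniform $\mathcal H$-bound on $\theta$. Your proposed route --- rerunning the multiplier/discrete-monotonicity scheme of Lemmas~\ref{Lemma-estimate}--\ref{Lemma-nonincreasing} on the $\theta$-equation --- fails as stated, and for a structural reason. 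The contradiction argument in Lemma~\ref{Lemma-nonincreasing} works because every error term there ($\int u^2$, $\int|f\partial_tu|$, $\int|f|^2$) is \emph{sublinear} in the energy, of order $E_u^{1/2}$, so for large energy it is dominated by the good term $-\frac{\varpi T_0}{4K_0}E_u(\tau)$. For the $\theta$-equation the potential $3u^2$ produces terms such as $\int u^2\theta(m\cdot\nabla\theta)=-\tfrac32\int\mathrm{div}(u^2m)\,\theta^2$, which after H\"older/Sobolev are bounded only by $C\|\theta\|_{H^1}^2$, i.e.\ they are of \emph{the same order} as $\mathcal E_\theta$ with a constant $C$ that is not small relative to $1/K_0$; moreover the sign of $3u^2$ is destroyed by the multiplier (one gets $\mathrm{div}(u^2m)=3u^2+2u\,(m\cdot\nabla u)$, which has no sign). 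The observability inequality then reads $\int\mathcal E_\theta\le K_0[\cdots]+K_0C\int\mathcal E_\theta$ and is vacuous, so ``exploiting the favorable sign'' cannot close the argument by itself.

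What you are missing is the one observation that makes any argument here close, and which the paper uses: $\theta=\partial_tu-\partial_tz$ is \emph{already} uniformly bounded in $L^2$ for free, by Corollary~\ref{Corollary-S1bound} and Lemma~\ref{Linear-semigroup}, so $\|\theta\|_{L^1_tL^2_x}\le C$ costs nothing. With this lower-order bound in hand, the paper's proof is purely perturbative and avoids multipliers entirely: pick $T_1$ with $\|U(T_1)\|_{\mathcal L(\mathcal H)}\le\frac12$, write Duhamel for (\ref{theta-equation}), estimate $\|u^2\theta\|_{L^1_tL^2_x}\le C\|\theta\|_{L^3_tL^6_x}$, interpolate $\|\theta\|_{L^3_tL^6_x}\le\|\theta\|_{L^1_tL^2_x}^{1/6}\|\theta\|_{L^5_tL^{10}_x}^{5/6}$, and control $\|\theta\|_{L^5_tL^{10}_x}$ by the Strichartz estimate (Proposition~\ref{prop-Strichartz} with $(q,r)=(5,10)$) applied to the $\theta$-equation itself; the $\varepsilon$-piece is absorbed and the remainder is the free $L^1_tL^2_x$ bound. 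This yields $\|\theta[\tau+T_1]\|_{\mathcal H}\le\frac34\|\theta[\tau]\|_{\mathcal H}+C$, and iteration gives the uniform bound. (In fact your multiplier scheme could probably also be repaired by the same observation --- interpolating the quadratic terms against the free $L^2$ bound makes them sublinear, e.g.\ $\int u^2\theta^2\le\|u\|_{L^{18}}^2\|\theta\|_{L^2}^{5/3}\|\theta\|_{L^6}^{1/3}\le C\|\theta\|_{H^1}^{1/3}$ --- but without invoking that bound, which your write-up never does, the argument you outline does not close.)
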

				
\begin{proof}[{\bf Proof}]
We define
$$
z[\cdot]=U(t-\tau)u^\tau,\quad u[\cdot]=\mathcal U^f(\cdot,\tau)u^\tau
$$ 
for every $u^\tau=(u_0,u_1)\in \mathscr B_{\scriptscriptstyle1/3}$ and $f\in \overline{B}_F(R_0)$. Recall that the difference $w=u-z$ solves equation (\ref{w-equation}). Since by Lemma \ref{Linear-semigroup},
\begin{equation}\label{bound-DS-9}
\|z[t]\|_{_{\mathcal H^{\scriptscriptstyle1/3}}}\leq Ce^{-\gamma (t-\tau)}
\end{equation}
for any $t\geq \tau$, it suffices to check that for an appropriate choice of $\mathscr B_1\subset \mathcal H^1$, there holds
\begin{equation}\label{inclusion-1}
w[t]\in \mathscr B_1.
\end{equation}

Let $T_1>0$ be sufficiently large so that 
$\|U(T_1)\|_{_{\mathcal L(\mathcal H)}}\leq \frac{1}{2}.$
Differentiating (\ref{w-equation}) with respect to $t$, one can obtain an equation for $\theta:=\partial_t w$, i.e.,
\begin{equation}\label{theta-equation}
\begin{cases}
\boxempty \theta+a(x)\partial_t \theta+3u^2\partial_t z+3u^2\theta=\partial_tf,\quad x\in D,\\
\theta[\tau]=(0,-u^3_0+f(0)).
\end{cases}
\end{equation}
Then, making use of the formula of variations of constants,
we compute that
$$
\|\theta [\tau+T_1]\|_{_{\mathcal H}}\leq \frac{1}{2}\|\theta [\tau]\|_{_{\mathcal H}}+C\left(1+
\|u^2\theta\|_{_{L^1_tL^2_x}}+\|u^2\partial_tz\|_{_{L^1_tL^2_x}}
\right)
$$
for any $\tau\geq 0$. Let us first observe that
\begin{equation}\label{Stri-1}
\|u^2\theta\|_{_{L^1_tL^2_x}}\leq \|u\|_{_{L^3_tL^6_x}}^2\|\theta\|_{_{L^3_tL^6_x}}\leq C\|\theta\|_{_{L^3_tL^6_x}},
\end{equation}
by applying Corollary \ref{Corollary-S1bound}, where $C=C(\mathscr B_{\scriptscriptstyle1/3},R_0)>0$. This together with the interpolation inequality
\begin{equation*}
\|\theta\|_{_{L^3_tL^6_x}}\leq \|\theta\|^{1/6}_{_{L^1_tL^2_x }}\|\theta\|^{5/6}_{_{L^5_tL^{10}_x}}
\end{equation*}
implies that
\begin{equation*}
\|u^2\theta\|_{_{L^1_tL^2_x}}\leq \varepsilon \|\theta\|_{_{L^5_tL^{10}_x}}+C(\varepsilon)\|\theta\|_{_{L^1_tL^2_x}}
\end{equation*}
with $\varepsilon\in(0,1)$ and $C(\varepsilon)>0$.
At the same time, it follows that
\begin{align}\label{Stri-3}
\|u^2\partial_tz\|_{_{L^1_tL^2_x}}\leq C\int_\tau^{\tau+T_1}
\|u\|_{_{H^{\scriptscriptstyle4/3}}}^2\|\partial_tz\|_{_{H^{\scriptscriptstyle1/3}}}dt 
\leq C.
\end{align}
Here, we have tacitly used Corollary \ref{Corollary-bound}(2) and (\ref{bound-DS-9}). In summary, one has
\begin{equation}\label{AC-estimate-1}
\|\theta [\tau+T_1]\|_{_{\mathcal H}}
\leq \frac{1}{2}\|\theta [\tau]\|_{_{\mathcal H}}+C+\varepsilon\|\theta\|_{_{L^5_tL^{10}_x}}+C(\varepsilon)\|\theta\|_{_{L^1_tL^{2}_x}}. 
\end{equation}

To deal with the term $\|\theta\|_{_{L^5_tL^{10}_x}}$, we apply Proposition \ref{prop-Strichartz} with $(q,r)=(5,10)$, in order to infer that 
\begin{equation*}
\begin{aligned}
\|\theta\|_{_{L^5_tL^{10}_x}}&\leq C\left(\|\theta[\tau]\|_{_{\mathcal H}}+\|-3u^2\theta-3u^2\partial_tz+\partial_tf\|_{_{L^1_tL^2_x}}\right)\\
&\leq C\left(1+\|\theta[\tau]\|_{_{\mathcal H}}+\|\theta\|_{_{L^3_tL^6_x}}
\right)\\
&\leq C\left(1+\|\theta[\tau]\|_{_{\mathcal H}}+\|\theta\|_{_{L^1_tL^2_x}}
\right)+\frac{1}{2}\|\theta\|_{_{L^5_tL^{10}_x}},
\end{aligned}
\end{equation*}
where we have also invoked (\ref{Stri-1})-(\ref{Stri-3}).
Thus, we conclude that
$$
\|\theta\|_{_{L^5_tL^{10}_x}}\leq C \left[1+
\|\theta [\tau]\|_{_{\mathcal H}} +\|\theta\|_{_{L^1_tL^{2}_x}}\right].
$$
Inserted into (\ref{AC-estimate-1}), this means that
\begin{align*}
\|\theta [\tau+T_1]\|_{_{\mathcal H}}&\leq \frac{3}{4}\|\theta [\tau]\|_{_{\mathcal H}}+C\left[1+\|\partial_t(u-z)\|_{_{L^1_tL^2_x}}\right]\\
&\leq \frac{3}{4}\|\theta [\tau]\|_{_{\mathcal H}}+C
\end{align*}
for a sufficiently small $\varepsilon$; here we have used Corollary \ref{Corollary-S1bound} again. Then, in view of (\ref{theta-equation}), it follows that there exists a constant $C=C(\mathscr B_{\scriptscriptstyle1/3},R_0,T_1)>0$ such that 
\begin{equation}\label{estimate-theta}
\|\theta[t]\|_{_{\mathcal H}}\leq C
\end{equation}
for any $u^\tau\in \mathscr B_{\scriptscriptstyle1/3},f\in \overline{B}_F(R_0)$ and $t\geq \tau$. 
					
Finally, since
$$
\partial_t w=\theta,\quad -\Delta w=-\partial_{t}\theta-a(x)\theta-u^3+f,
$$
the desired inclusion (\ref{inclusion-1}) holds for $\mathscr B_1=\overline{B}_{\mathcal H^1}(R)$ 
with a sufficiently large $R>0$, according to  (\ref{estimate-theta}). The proof is then complete.
\end{proof}

To conclude this section, we complete the proof of Theorem \ref{Thm-dynamicalsystem}.

\begin{proof}[{\bf Proof of Theorem \ref{Thm-dynamicalsystem}(2)}]
Let $R_0>0$ be arbitrarily given, and choose $R_1$ in Proposition \ref{prop-S1dissipativity} so that $$\overline{B}_F(R_0)\subset\overline{B}_{L^\infty(\R^+;H^{\scriptscriptstyle1/3})}(R_0)\subset \overline{B}_{L^\infty(\R^+;H)}(R_1).$$ Then, for every $u^\tau\in\mathcal H$, there exists an elapsed time $T$ of the form (\ref{elapsed-time}), such that
\begin{equation}\label{ACproof-1}
\mathcal U^f(\tau+t,\tau)u^\tau\in\mathscr B_0
\end{equation}
for any $f\in \overline{B}_{F}(R_0)$ and $t\geq T,\tau\geq 0$. In addition, let $\mathscr B_{\scriptscriptstyle1/3}$ and $\mathscr B_1$ be the sets established in Corollary \ref{Corollary-bound}(1) and Lemma \ref{Lemma-AC-2}, respectively.

In what follows we assume $\tilde u^\tau\in \mathscr B_0$ and set $t=t_1+t_2$ with $t_i\geq 0$. Then, there exists $\phi\in\mathscr B_{\scriptscriptstyle1/3}$ such that
\begin{equation*}
\|\mathcal U^f(\tau+t_1,\tau)\tilde u^\tau-\phi\|_{_{\mathcal H}}\leq Ce^{-\gamma t_1}.
\end{equation*}
From Proposition \ref{prop-wellposedness}(1), it then follows that there exists a constant $L>0$ such that
\begin{equation*}
\|\mathcal U^f(\tau+t,\tau)\tilde u^\tau-\mathcal U^f(\tau+t,\tau+t_1)\phi\|_{_{\mathcal H}}\leq CLe^{Lt_2}e^{-\gamma t_1}.
\end{equation*}
Furthermore,  there exists $\psi\in\mathscr B_1$ such that
\begin{equation*}
\|\mathcal U^f(\tau+t,\tau+t_1)\phi-\psi\|_{_{\mathcal H}}\leq Ce^{-\gamma t_2}.
\end{equation*}
In summary, 
\begin{equation}\label{ACproof-2}
\|\mathcal U^f(\tau+t,\tau)\tilde u^\tau-\psi\|_{_{\mathcal H}}\leq CLe^{Lt_2}e^{-\gamma t_1}+Ce^{-\gamma t_2}.
\end{equation}
Now, letting 
$$
t_1=(1-\varepsilon)t,\quad t_2=\varepsilon t,\quad \varepsilon\in (0,1)
$$ 
in (\ref{ACproof-2}), it follows that
$$
{\rm dist}_{_{\mathcal H}}(\mathcal U^f(\tau+t,\tau)\tilde u^\tau,\mathscr B_1)\leq Ce^{-[\gamma(1-\varepsilon)-L\varepsilon] t}+Ce^{-\varepsilon\gamma t}.
$$
Taking $\varepsilon$ sufficiently small so that $\gamma(1-\varepsilon)>L\varepsilon$, we conclude that
\begin{equation}\label{ACproof-3}
{\rm dist}_{_{\mathcal H}}(\mathcal U^f(\tau+t,\tau)\tilde u^\tau,\mathscr B_1)\leq Ce^{-\kappa t},
\end{equation}
where we take $\kappa=\min\{ \gamma(1-\varepsilon)-L\varepsilon,\varepsilon\gamma,(4p)^{-1}\}$ with $p$ arising in (\ref{elapsed-time}).

Now, putting (\ref{elapsed-time}),(\ref{ACproof-1}) and (\ref{ACproof-3}) all together, it follows that
\begin{equation*}
{\rm dist}_{_{\mathcal H}}(\mathcal U^f(\tau+t,\tau)u^\tau,\mathscr B_1)\leq C(1+E_u(\tau))e^{-\kappa t}
\end{equation*}
for any $t\geq T$. Finally, the case of $t\in[0,T]$ can be addressed by repeating the deduction as in (\ref{ACproof2}). The proof is then complete.
\end{proof}

\section{Stabilization analysis for the controlled systems}\label{Section-control}

We in this section demonstrate an exact and stronger statement of Theorem \ref{Thm-informal-SQ}, regarding the squeezing property of a controlled system (\ref{controlproblem-0}) and collected as Theorem \ref{Th1} below. The squeezing result constitutes the main ingredient in the verification of coupling hypothesis (see hypothesis $(\mathbf{H})$ in Section \ref{Section-1-1}) for (\ref{Problem-0}); see Section \ref{Section-6-3}. 
The proof of Theorem \ref{Th1} will be based on a contractibility result for the linearized system, which is formulated as Proposition~\ref{Prop-linear} below. Both of these results and outline of proof are included in Section \ref{Section-controlresult}. The details of proof are then provided in Sections~\ref{Section-duality}-\ref{Section-controlstructure}.

\subsection{Results and outline of proof}\label{Section-controlresult}

The system under consideration reads
\begin{equation}\label{controlproblem-0}
\begin{cases}
\boxempty u+a(x)\partial_t u+u^3=h(t,x)+\chi \mathscr P^{\scriptscriptstyle T}_{\scriptscriptstyle N}\zeta(t,x),\quad x\in D,\\
u[0]=(u_0,u_1)=u^0,
\end{cases}
\end{equation}
on time interval $[0,T]$. Here, the parameters $T>0$ and $N\in\N^+$ will be determined later; $h=h(t,x)$ is a given external force, while $\zeta=\zeta(t,x)$ stands for the control; $\mathscr P^{\scriptscriptstyle T}_{\scriptscriptstyle N}$ is the projection in $L^2(D_T)$ onto the finite-dimensional subspace spanned by  $ e_j\alpha^{\scriptscriptstyle T}_k,1\leq j,k\leq N$. 
The functions $a(x),\chi(x)$ are geometrically localized in the sense of  $(\mathbf{S1})$. 

\subsubsection{Statement of main results}
 
Define a mapping by  
\begin{equation*}
\mathcal S\colon\mathcal H\times L^2(D_T)\rightarrow C([0,T];\mathcal H),\quad\mathcal S(u_0,u_1,f)=u[\cdot],
\end{equation*}
where $u\in\mathcal X_T$ stands for the solution of (\ref{semilinear-problem}).
Obviously, system (\ref{controlproblem-0}) is obtained by replacing $f$ with $h+\chi\mathscr P^{\scriptscriptstyle T}_{\scriptscriptstyle N}\zeta$ in (\ref{semilinear-problem}), so that its solutions can also be represented by the mapping $\mathcal{S}$. Recall the set $B_R=B_{C([0,T];H^{\scriptscriptstyle11/7})}(R)$ is defined by (\ref{potential-space}). For every $\varepsilon\in(0,1)$, we take $T'_\varepsilon>0$  to be suitably large so that 
\begin{equation}\label{asymptotic-1}
\|U(t)\|_{_{\mathcal L(\mathcal H)}}\leq \frac{\varepsilon}{2},\quad\forall\, t\geq T'_\varepsilon;
\end{equation}
the existence of such $T'_\varepsilon$ is assured by Lemma \ref{Linear-semigroup}. We further set
\begin{equation}\label{largetime-2}
T''=2\sup_{x\in D}|x-x_1|,\quad T_\varepsilon=\max\{T'_\varepsilon,T''\}, 
\end{equation}
where the point $x_1$ arises in (\ref{Gamma-condition}).

With the above preparations, the main result of this subsection is collected as follows.

\begin{theorem}\label{Th1}
Assume that $a(x),\,\chi(x)$ satisfy setting  $(\mathbf{S1})$. 
Let $\varepsilon\in(0,1)$, $T>T_\varepsilon$ and $R>0$ be arbitrarily given. Then there exist constants  $d=d(\varepsilon,T,R)>0$,  $N=N(\varepsilon,T,R)\in\N^+$ and a mapping 
$
\Phi\colon B_R\rightarrow \mathcal L(\mathcal H;L^2(D_T))
$ such that the following assertions hold.
\begin{enumerate}[leftmargin=2em]
\item[$(1)$] {\rm(}Squeezing{\rm)} Let $\hat u^0\in\mathcal H^{\scriptscriptstyle4/7}$ and $h\in L^2_tH^{\scriptscriptstyle4/7}_x$ such that
$\hat u\in B_R$ with
$
\hat u[\cdot]=\mathcal S(\hat u^0,h).
$
For every $u^0\in \mathcal H$, if $$\|u^0-\hat u^0\|_{_{\mathcal H}}\leq d,$$ there is a control $\zeta\in L^2(D_T)$ such that 
\begin{equation}\label{implication-0}
\|u[T]-\hat u[T]\|_{_{\mathcal H}}\leq \varepsilon\|u^0-\hat u^0\|_{_{\mathcal H}}
\end{equation}
holds, where $u[\cdot]=\mathcal S(u^0,h+\chi \mathscr P^{\scriptscriptstyle T}_{\scriptscriptstyle N}\zeta)$.
			
\item[$(2)$] {\rm(}Structure of control{\rm)} The control $\zeta$ verifying {\rm(\ref{implication-0})} has the form 
$$\zeta=\Phi(\hat u)(u^0-\hat u^0).
$$ 
Moreover, the mapping 
$
\Phi
$
is Lipschitz and continuously differentiable.
\end{enumerate}
\end{theorem}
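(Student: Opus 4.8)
The plan is to linearize the controlled dynamics about the reference trajectory $\hat u$, to reduce the nonlinear squeezing to a contraction estimate for the associated linear equation, and to treat the remaining nonlinear effect perturbatively. First I would set $w=u-\hat u$. Since $\hat u$ solves (\ref{semilinear-problem}) with force $h$ and $u$ solves it with force $h+\chi\mathscr P^{\scriptscriptstyle T}_{\scriptscriptstyle N}\zeta$, subtracting and using $u^3-\hat u^3=3\hat u^2w+(3\hat uw^2+w^3)$ gives
\begin{equation*}
\boxempty w+a(x)\partial_t w+p\,w=\chi\mathscr P^{\scriptscriptstyle T}_{\scriptscriptstyle N}\zeta-g(w),\qquad w[0]=u^0-\hat u^0,
\end{equation*}
where $p:=3\hat u^2$ and $g(w):=3\hat uw^2+w^3$ is the quadratic remainder. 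By Proposition~\ref{prop-wellposedness}(2), the hypotheses $\hat u^0\in\mathcal H^{\scriptscriptstyle4/7}$ and $h\in L^2_tH^{\scriptscriptstyle4/7}_x$ give $\hat u\in\mathcal X_T^{\scriptscriptstyle4/7}\subset C([0,T];H^{\scriptscriptstyle11/7})$; as $H^{\scriptscriptstyle11/7}$ is a multiplicative algebra in dimension three, $p=3\hat u^2\in B_{R'}$ with $R'=R'(R)$, so $p$ lies in the class for which Proposition~\ref{Energy-estimate} applies.

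Second, I would invoke (and, since it is not yet available, first establish) the linear contractibility result, Proposition~\ref{Prop-linear}: for every $p\in B_{R'}$ there is $N$ and a bounded linear operator $\mathcal K(p)\in\mathcal L(\mathcal H;L^2(D_T))$, depending Lipschitz-continuously and $C^1$ on $p$, such that $v=\mathcal V_p(v^0,\chi\mathscr P^{\scriptscriptstyle T}_{\scriptscriptstyle N}\mathcal K(p)v^0)$ satisfies $\|v[T]\|_{\mathcal H}\le\tfrac{\varepsilon}{2}\|v^0\|_{\mathcal H}$. The construction follows the frequency analysis of Section~\ref{Section-controlresult}: the duality Proposition~\ref{theorem-duality} turns the low-frequency controllability into the observability inequality $\int_0^T\|\mathscr P^{\scriptscriptstyle T}_{\scriptscriptstyle N}(\chi\varphi)(t)\|^2_{H^{\scriptscriptstyle-s}}\,dt\gtrsim\|\varphi[T]\|^2_{\mathcal H^{\scriptscriptstyle-1-s}}$ for the adjoint system $\boxempty\varphi-a\partial_t\varphi+p\varphi=0$; a HUM minimization then yields $\mathcal K(p)$ in explicit (hence smoothly $p$-dependent) form, driving the low modes to zero, while the high modes are contracted below $\varepsilon/2$ by the damped decay of Lemma~\ref{Linear-semigroup}, which forces $T>T'_\varepsilon$; the geometric time $T''=2\sup_{x\in D}|x-x_1|$ enters as the observability time of the multiplier $x-x_1$ on the effective region $N_{\delta'}(x_1)$. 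I then set $\Phi(\hat u):=\mathcal K(3\hat u^2)$, so that $\zeta:=\Phi(\hat u)(u^0-\hat u^0)$ is linear in $u^0-\hat u^0$; assertion~(2) and the Lipschitz/$C^1$ regularity of $\Phi$ follow by composing the regular dependence $p\mapsto\mathcal K(p)$ with the smooth (polynomial) map $\hat u\mapsto3\hat u^2$ on $B_R$.

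Third comes the nonlinear perturbation, with the control $\zeta=\Phi(\hat u)(u^0-\hat u^0)$ held fixed. Since $\|\zeta\|_{L^2(D_T)}\le C\|w[0]\|_{\mathcal H}\le Cd$, Proposition~\ref{prop-wellposedness}(1) gives $\|w\|_{\mathcal X_T}\le Ce^{CT}\big(\|w[0]\|_{\mathcal H}+\|\zeta\|_{L^2(D_T)}\big)\le C\|w[0]\|_{\mathcal H}$ uniformly on the bounded set determined by $R$, $\|h\|$ and $d$. Letting $v=\mathcal V_p(w[0],\chi\mathscr P^{\scriptscriptstyle T}_{\scriptscriptstyle N}\zeta)$ be the linearized solution carrying the same control, the difference $r:=w-v$ solves $\boxempty r+a\partial_t r+p\,r=-g(w)$ with $r[0]=0$, so Proposition~\ref{Energy-estimate}(1) yields $\|r[T]\|_{\mathcal H}\le C\|g(w)\|_{L^2_tL^2_x}$. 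Using $H^1\hookrightarrow L^6$ and $\hat u\in L^\infty_tL^\infty_x$ one bounds $\|g(w)\|_{L^2_tL^2_x}\lesssim\|w\|_{\mathcal X_T}^2(1+\|w\|_{\mathcal X_T})\le C\|w[0]\|_{\mathcal H}^2$. Combining with the linear contraction gives
\begin{equation*}
\|w[T]\|_{\mathcal H}\le\|v[T]\|_{\mathcal H}+\|r[T]\|_{\mathcal H}\le\tfrac{\varepsilon}{2}\|w[0]\|_{\mathcal H}+C\|w[0]\|_{\mathcal H}^2\le\varepsilon\|w[0]\|_{\mathcal H},
\end{equation*}
as soon as $d\le\varepsilon/(2C)$, which is assertion~(1). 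Note that no fixed-point iteration is needed, precisely because the structure requirement (2) forces the control to be the open-loop linear operator $\Phi(\hat u)$ built from the linearization alone.

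The hard part is the observability inequality underlying Proposition~\ref{Prop-linear}. The localized, $x$-dependent damping $a(x)$ together with the finite-dimensional space–time projection $\chi\mathscr P^{\scriptscriptstyle T}_{\scriptscriptstyle N}$ obstructs any direct appeal to standard observability, so one must combine unique continuation, Carleman estimates and HUM adapted to the $\Gamma$-type geometry, and—crucially for assertion~(2)—arrange the construction so that $\mathcal K(p)$ inherits Lipschitz and $C^1$ dependence on $p$. Proving this observability in the negative-regularity form, which is exactly what upgrades the control regularity and makes the high-frequency damping usable, is the technically heaviest step of the whole argument.
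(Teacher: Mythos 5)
Your proposal follows essentially the same route as the paper's own proof (Appendix~\ref{Appendix-squeezing}): linearize about $\hat u$, invoke Proposition~\ref{Prop-linear} to produce the open-loop control $\zeta=\Phi(\hat u)(u^0-\hat u^0)$ with $\|v[T]\|_{_{\mathcal H}}\le\tfrac{\varepsilon}{2}\|v^0\|_{_{\mathcal H}}$ and $\int_0^T\|\zeta\|^2dt\le C\|v^0\|^2_{_{\mathcal H}}$, then absorb the quadratic remainder $3\hat u w^2+w^3$ by a linear energy estimate for $r=w-v$ and choose $d$ small; assertion (2) is inherited verbatim from the structure statement of Proposition~\ref{Prop-linear}, exactly as in the paper.

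One step needs repair, and it concerns the uniformity of constants rather than the architecture. You obtain the a priori bound $\|w\|_{_{\mathcal X_T}}\le C\|w[0]\|_{_{\mathcal H}}$ by applying the local Lipschitz property of the solution map (Proposition~\ref{prop-wellposedness}(1)) to the data pairs $(u^0,h+\chi\mathscr P^{\scriptscriptstyle T}_{\scriptscriptstyle N}\zeta)$ and $(\hat u^0,h)$, and you concede that the constant is uniform only on ``the bounded set determined by $R$, $\|h\|$ and $d$''. Since the local Lipschitz constant of $\mathcal S$ grows with the size of that set, your $C$ --- and hence your threshold $d$ --- depends on $\|h\|_{_{L^2_tH^{4/7}_x}}$ and on $\|\hat u^0\|_{_{\mathcal H}}$; note that the hypothesis $\hat u\in B_R$ bounds neither $h$ nor $\partial_t\hat u(0)$, so this dependence is not subsumed by $R$. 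The theorem, however, asserts $d=d(\varepsilon,T,R)$. The paper avoids this by running the energy estimate directly on the equation for $w$, in which $h$ cancels and $\hat u$ enters only through $(\hat u+w)^3-\hat u^3$: once one has the bootstrap bound $\|w\|_{_{C([0,T];H^1)}}\lesssim 1$ (which closes for $d$ small depending only on $(\varepsilon,T,R)$, since $\|\hat u(t)\|_{_{H^1}}\le CR$), multiplication by $\partial_tw$ and Gronwall give $\|w[t]\|^2_{_{\mathcal H}}\le C(T,R)\bigl[\|v^0\|^2_{_{\mathcal H}}+\int_0^T\|\zeta\|^2dt\bigr]$ with constants depending only on $(T,R)$. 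Replacing your appeal to Proposition~\ref{prop-wellposedness}(1) by this direct estimate restores the stated dependence of $d$; the remainder of your argument, including $\|r[T]\|_{_{\mathcal H}}\le C\|3\hat uw^2+w^3\|_{_{L^2_tL^2_x}}\lesssim\|w[0]\|^2_{_{\mathcal H}}$ via Proposition~\ref{Energy-estimate}(1) (legitimate because $3\hat u^2\in C([0,T];H^{\scriptscriptstyle11/7})$ by the algebra property), coincides with the paper's. As a side remark, in the intended application of Section~\ref{Section-6-3} the force $h$ ranges over a set bounded by $R_2$, so your weaker version would still suffice there, but not for Theorem~\ref{Th1} as stated.
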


In the verification of coupling hypothesis for (\ref{Problem-0}) (see Section \ref{Section-6-3}), we shall apply Theorem~\ref{Th1} by taking $R>0$ sufficiently large so that
$$
\left\{ \hat u[\cdot]=\mathcal S(\hat u^0,h);\hat u^0\in\mathcal Y_\infty,h\in\mathcal E\right\}\subset B_R,
$$
where $\mathcal Y_\infty$ is the attainable set from the pathwise attracting set $\mathscr B_{\scriptscriptstyle4/7}$ (see Theorem \ref{Thm-informal-AC} and Theorem \ref{Thm-dynamicalsystem}), and $\mathcal E$ stands for the support of $\mathscr D(\eta_n)$. Then, combined with two classical results for optimal coupling (see Proposition \ref{Coupling lemma 1} and Lemma \ref{Coupling lemma 2}) and
an estimate for the total variation distance (see Lemma \ref{Coupling lemma 3}), the squeezing property established in Theorem \ref{Th1} could yield the coupling condition. In particular, inequality (\ref{implication-0}) leads to the availability of Lemma \ref{Coupling lemma 2}, while the structure of control will be used in the step where Lemma \ref{Coupling lemma 3} comes into play.

The proof of Theorem \ref{Th1} is based on a ``linear test''. That is, it suffices to establish the contractibility for the linearized system along the target solution $\hat u$; the issue of contractibility is the existence and construction of controls such that the states of controlled solutions become ``smaller'' in time $T$. The linearized controlled system under consideration is of the form  
\begin{equation}\label{Problem-linearized}
\begin{cases}
\boxempty v+a(x)\partial_t v+3\hat u^2v=\chi\mathscr P^{\scriptscriptstyle T}_{\scriptscriptstyle N}\zeta(t,x),\quad x\in D,\\
v[0]=(v_0,v_1)=v^0.
\end{cases}
\end{equation}
It is worth mentioning that in the study of the contractibility,
system (\ref{Problem-linearized}) can be considered individually for a general function 
$\hat u\in C([0,T];H^{\scriptscriptstyle11/7}),$
i.e., it need not be an uncontrolled solution of (\ref{controlproblem-0}). 
	
In a slight abuse of the previous notations,
we denote by 
$
v=\mathcal V_{\hat u}(v^0,f)
$
the solution of  (\ref{linear-problem}) with $b,p$ replaced by $a,3\hat u^2$, respectively, where $\hat u\in B_R,v^0\in\mathcal H$ and $f\in L^2(D_T)$. By this setting a solution of (\ref{Problem-linearized}) can be written as $\mathcal V_{\hat u}(v^0,\chi\mathscr P^{\scriptscriptstyle T}_{\scriptscriptstyle N}\zeta)$. 
In the case where the initial condition is replaced with the terminal condition
$
v[T]=(v_0^T,v_1^T)=v^T\in\mathcal H,
$
the corresponding solution is denoted by 
$
v=\mathcal V^T_{\hat u}(v^T,f).
$
	
The contractibility result for system (\ref{Problem-linearized}) is stated as follows.

\begin{proposition}\label{Prop-linear}
Assume that $a(x),\,\chi(x)$ satisfy setting  $(\mathbf{S1})$.  
Let $\varepsilon\in(0,1)$, $T>T_\varepsilon$ and $R>0$ be arbitrarily given. Then there exists a constant $N=N(\varepsilon,T,R)\in\N^+$ and a mapping $
\Phi\colon B_R\rightarrow \mathcal L(\mathcal H;L^2(D_T))
$ such that 
the following assertions hold.
\begin{enumerate}[leftmargin=2em]
\item[$(1)$] {\rm(}Contractibility{\rm)} For every $\hat u\in B_R$ and $v^0\in \mathcal H$, there exists a control $\zeta\in L^2(D_T)$ such that 
\begin{equation}\label{binding-property}
\|v[T]\|_{_{\mathcal H}}\leq \varepsilon\|v^0\|_{_{\mathcal H}},
\end{equation}
where $v=\mathcal V_{\hat u}(v^0,\chi\mathscr P^{\scriptscriptstyle T}_{\scriptscriptstyle N}\zeta)$.

\item[$(2)$] {\rm(}Structure of control{\rm)} The control $\zeta$ verifying {\rm(\ref{binding-property})} has the form
\begin{equation}\label{structure-control}
\zeta=\Phi(\hat u)v^0.
\end{equation}
Moreover, the mapping $\Phi$ is Lipschitz and continuously differentiable.
\end{enumerate}		
\end{proposition}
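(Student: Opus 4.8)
The plan is to prove Proposition~\ref{Prop-linear} by a Hilbert uniqueness method (HUM) built on an observability inequality for the adjoint system, realizing the chain \emph{duality} $\to$ \emph{observability} $\to$ \emph{low-frequency controllability} $+$ \emph{high-frequency dissipation} $\to$ contraction. First I would reduce the contractibility to a single observability estimate. Writing the (time-reversed) adjoint equation $\boxempty\varphi + a(x)\partial_t\varphi + 3\hat u^2\varphi = 0$, so that its terminal data $\varphi[T]$ is the relevant unknown, the target is the weighted, frequency-truncated observability
\begin{equation*}
\int_0^T\|\mathscr P^{\scriptscriptstyle T}_{\scriptscriptstyle N}(\chi\varphi)(t)\|^2_{_{H^{\scriptscriptstyle-s}}}\,dt \ \gtrsim\ \|\varphi[T]\|^2_{_{\mathcal H^{\scriptscriptstyle-1-s}}}
\end{equation*}
for some $s\in(0,1)$, uniformly over $\hat u\in B_R$. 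By the duality of Proposition~\ref{theorem-duality}, this coercivity bound yields a bounded control operator: minimizing a HUM functional of the form $J(\varphi[T]) = \tfrac12\int_0^T\|\mathscr P^{\scriptscriptstyle T}_{\scriptscriptstyle N}(\chi\varphi)\|^2_{_{H^{\scriptscriptstyle-s}}}\,dt - \langle v^0,\varphi[0]\rangle$ over terminal data produces a unique minimizer, and the associated $\zeta$ drives the designated low-frequency (negative-regularity) component of $v[T]$ to zero. The point of working in $H^{\scriptscriptstyle-s}$ rather than $L^2$ is that it upgrades the regularity of $\zeta$, which I will need for the high-frequency estimate.

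Next I would assemble the contraction. Splitting $v[T]$ according to the spectral projection onto $\{e_j\}_{j\le N}$, the control from the previous step is designed to cancel the low-frequency part, while the high-frequency part is left uncontrolled. For the latter I would use the exponential decay of the damped group (Lemma~\ref{Linear-semigroup}): since $\hat u\in B_R\subset C([0,T];H^{\scriptscriptstyle11/7})$ and $H^{\scriptscriptstyle11/7}\hookrightarrow L^\infty$, the potential $3\hat u^2$ is a bounded order-zero perturbation, so on the high modes a Duhamel/Gronwall argument together with condition (\ref{asymptotic-1}) (valid since $T>T'_\varepsilon$) forces the uncontrolled high-frequency evolution to contract by a factor below $\varepsilon/2$. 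Taking $N$ large enough that the residual coupling between the two regimes is absorbed then gives $\|v[T]\|_{_{\mathcal H}}\le\varepsilon\|v^0\|_{_{\mathcal H}}$, which is exactly (\ref{binding-property}).

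The main obstacle is the observability inequality itself, the analytic heart of the argument, which I expect to establish in three moves. (i) \emph{Multiplier observability}: using the radial multiplier $m(x)\cdot\nabla\varphi$ attached to the $\Gamma$-type geometry, via identities in the spirit of (\ref{estimate-1})--(\ref{estimate-2}), one gets observability from the full localized observation $\chi\varphi$ on $[0,T]$, provided $T>T''=2\sup_{x\in D}|x-x_1|$, which is precisely the geometric origin of the time threshold $T_\varepsilon$. (ii) \emph{Absorbing lower-order terms}: the zeroth-order contribution $3\hat u^2\varphi$ and the damping $a(x)\partial_t\varphi$ are removed by a compactness--uniqueness argument, reducing the remaining error to a unique continuation statement for $\boxempty\varphi+p\varphi=0$ that is itself obtained from Carleman estimates, with constants made uniform in $\hat u\in B_R$. (iii) \emph{Frequency truncation}: finally one passes from $\chi\varphi$ to $\mathscr P^{\scriptscriptstyle T}_{\scriptscriptstyle N}(\chi\varphi)$, estimating the discarded high time--space frequencies through the negative-order norms, which is why the inequality is naturally phrased in the $H^{\scriptscriptstyle-s}$/$\mathcal H^{\scriptscriptstyle-1-s}$ scale. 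I anticipate steps (ii)--(iii)—uniformity of the constant in $\hat u$ and the cutoff in the presence of the variable damping $a(x)$—to be the genuinely delicate part.

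For assertion (2), the HUM construction delivers $\zeta=\Phi(\hat u)v^0$ explicitly through the minimizer of $J$, so $\Phi(\hat u)\in\mathcal L(\mathcal H;L^2(D_T))$ is automatically linear in $v^0$. Its Lipschitz and $C^1$ dependence on $\hat u$ then follows by combining the smooth dependence of the adjoint solution operator on the potential—Proposition~\ref{Energy-estimate}(3), applied with $p=3\hat u^2$—with the uniform coercivity supplied by the observability bound, so that the control Gramian is boundedly invertible and depends smoothly on $\hat u$. The reduction, duality, and regularity steps are comparatively routine once the observability of the previous paragraph is secured.
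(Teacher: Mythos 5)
Your overall architecture—duality between low-frequency controllability and observability, observability via Carleman/multiplier estimates plus a compactness--uniqueness step and a frequency truncation, and an HUM minimizer yielding a control that is linear in $v^0$ and Lipschitz/$C^1$ in $\hat u$—is essentially the paper's (Propositions \ref{theorem-duality}, \ref{prop-obs}, \ref{solvability-minimize}). The genuine gap is in your high-frequency step. You leave the high modes uncontrolled and claim they contract because $3\hat u^2$ is ``a bounded order-zero perturbation'' of the decaying damped group, handled by Duhamel/Gronwall. This fails both quantitatively and structurally. Quantitatively, $\|3\hat u^2\|_{L^\infty}$ is of order $R^2$ while the decay rate $\gamma$ of Lemma \ref{Linear-semigroup} is fixed, so Gronwall only gives a bound of the form $Ce^{(CR^2-\gamma)T}\|v^0\|_{_{\mathcal H}}$, which is not a contraction for large $R$. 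Structurally, neither the potential nor—more seriously—the variable damping $a(x)\partial_t$ commutes with the spectral projections: the equation for $(I-\mathbf P_m)v$ contains the coupling term $(I-\mathbf P_m)\bigl(a\,\partial_t\mathbf P_m v\bigr)$, whose $L^2$ norm is comparable to $\|\partial_t\mathbf P_m v\|$ \emph{uniformly in} $m$, since the spectral gaps of the Dirichlet Laplacian do not grow (exactly the hyperbolic obstruction recorded in Remark \ref{remark-noise}(3)). So this coupling cannot be ``absorbed by taking $N$ large'', and the decay of a frequency-truncated damped evolution is not a consequence of Lemma \ref{Linear-semigroup} at all; it would require a new observability estimate uniform over high-frequency data. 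Notably, you correctly observe that the $H^{-s}$ duality upgrades the regularity of $\zeta$ ``for the high-frequency estimate'', but your actual high-frequency argument never uses this regularity.

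The paper closes this step with a different decomposition, which is what your proof needs: write $v=z+w$, where $z[\cdot]=U(\cdot)v^0$ is the free damped flow—it decays in \emph{all} frequencies for $T>T'_\varepsilon$, so the dynamics is never projected—and $w=\mathcal V_{\hat u}(0,0,-3\hat u^2z+\chi\mathscr P^{\scriptscriptstyle T}_{\scriptscriptstyle N}\zeta)$ has zero initial data. Since $H^{\scriptscriptstyle11/7}$ is an algebra, $-3\hat u^2z\in L^2_tH^1_x$ with norm $\lesssim\|v^0\|_{_{\mathcal H}}$, and the duality construction gives $\zeta\in L^2_tH^{\scriptscriptstyle1/5}_x$ with norm $\lesssim\|v^0\|_{_{\mathcal H}}$; hence $\|w[T]\|_{_{\mathcal H^{\scriptscriptstyle1/5}}}\lesssim\|v^0\|_{_{\mathcal H}}$ uniformly in $\hat u\in B_R$. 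The control kills $\mathbf P_mw[T]$ (equivalently $\mathbf P_mv[T]=\mathbf P_mU(T)v^0$, as in Proposition \ref{theorem-duality-1}), and then $\|(I-\mathbf P_m)w[T]\|_{_{\mathcal H}}\leq\lambda_m^{-1/10}\|w[T]\|_{_{\mathcal H^{\scriptscriptstyle1/5}}}$ is small for large $m$: the ``high-frequency dissipation'' is a \emph{regularity} statement about the controlled remainder, not a decay statement about the projected flow. A related flaw in your scheme is the low-frequency target: with data only in $\mathcal H$, steering $\mathbf P_mv[T]$ to $0$ (as you propose) rather than to $\mathbf P_mU(T)v^0$ forces the $H^{\scriptscriptstyle1/5}$ control cost to grow like $\lambda_m^{1/10}\|v^0\|_{_{\mathcal H}}$, which re-enters the bound on $(I-\mathbf P_m)w[T]$ and cancels the $\lambda_m^{-1/10}$ gain; controlling the remainder $w$ instead of $v$ avoids this cancellation cleanly.
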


The proof of Proposition \ref{Prop-linear} constitutes the bulk of this section. See Section \ref{Section-outline} below for an outline of its proof, while the technical details are  contained in Sections \ref{Section-duality}-\ref{Section-controlstructure}.

By using a perturbation argument which is rather standard (see, e.g., \cite{ADS-16,BRS-11}), it can be derived that the control contracting system (\ref{Problem-linearized}) also squeezes (\ref{controlproblem-0}), and then the conclusions of Theorem \ref{Th1} are proved.
The details relevant to the implication ``Proposition~\ref{Prop-linear}~$\Rightarrow$~Theorem~\ref{Th1}'' are left to Appendix \ref{Appendix-squeezing}.

\subsubsection{Outline of proof for Proposition {\rm\ref{Prop-linear}}}\label{Section-outline}

The strategy for constructing the desired controls is the frequency analysis, which has been briefly stated in Section \ref{Section-1-3}. 
More precisely, we split (\ref{Problem-linearized}) into two parts, i.e., a low-frequency system coupled with a high-frequency system.  The controllability is available for the former, while extra dissipation analysis for the latter is established. The contractibility then follows from the results established both for the low- and high-frequency systems. 

Let $\mathbf P_m\ (m\in\N^+)$ denote the projection of $\mathcal H$ onto 
$$
\mathbf H_m:=H_m\times H_m\quad\text{with }H_m={\rm span}\{e_j;1\leq j\leq m\}.
$$
We also introduce the so-called adjoint system of (\ref{Problem-linearized}), reading 
\begin{equation}\label{adjoint-system-1}
\begin{cases}
\boxempty \varphi-a(x)\partial_t\varphi+3\hat u^2\varphi=0,\quad x\in D,\\
\varphi[T]=(\varphi^T_0,\varphi^T_1)=:\varphi^T.
\end{cases}
\end{equation}
In the sequel, our proof of Proposition \ref{Prop-linear} can be summarized as four steps. 

\medskip 

\noindent {\bf Step 1 (low-frequency controllability dual with observability).} We first establish the equivalence of the following two statements. 
\begin{enumerate}[leftmargin=2em]
\item Controllability of (\ref{Problem-linearized}): for every $v^0\in\mathcal H^s\ (s\in(0,1))$, there is a control $\zeta\in L^2_tH^s_x$ such that 
\begin{equation}\label{LF-nullcontrollability}
\mathbf P_mv[T]=0\quad \text{and}\quad  \int_0^T\|\zeta(t)\|_{_{H^s}}^2dt\lesssim \|v^0\|_{_{\mathcal{H}^s}}^2.
\end{equation}

\item Observability of (\ref{adjoint-system-1}): the inequality of type
\begin{equation}\label{OI}
\int_0^T\|\mathscr P^{\scriptscriptstyle T}_{\scriptscriptstyle N}(\chi\varphi)\|_{_{H^{-s}}}^2 \gtrsim \|\varphi^T\|^2_{_{\mathcal H^{-1-s}}},
\end{equation}
is valid for those solutions $\varphi$ whose terminal state has the form $\varphi^T=(q_2,-q_1+aq_2)$ with $ (q_1,q_2)\in \mathbf{H}_m$.
\end{enumerate}
In control theory, such type of equivalence is called ``duality between controllability and observability''; see Coron \cite{Coron-07}. This is in fact an application of a classical result from functional analysis, illustrating the equivalence between the surjective property of a bounded linear operator and the coercivity of its adjoint (see Lemma \ref{Theorem-surjective}). A precise description and demonstration of the equivalence ``$(\ref{LF-nullcontrollability})\Leftrightarrow(\ref{OI})$'' will be found in Section \ref{Section-duality}.

\medskip

\noindent {\bf Step 2 (observability).} The next task is naturally to address the issue of observability inequality (\ref{OI}). In fact, the verification of observability is a complicated part of our duality method. So as not to interrupt the flow of main ideas, the  sketch of proof for observability, divided into Steps 2.1-2.3, will be placed at the end of the outline. The relevant details are contained in Section \ref{Section-observability}.

\medskip 

Once the analysis involved in Step 2 is accomplished, the null controllability in the low frequency, i.e. (\ref{LF-nullcontrollability}),  follows immediately from the duality stated in Step 1. 

\medskip 

\noindent {\bf Step 3 (high-frequency dissipation and contractibility).} With the help of (\ref{LF-nullcontrollability}), the strong dissipation in the high frequency, i.e.,
\begin{equation}\label{HF-dissipation}
\|(I-\mathbf P_m)v[T]\|_{_{\mathcal H}}\leq \frac{\varepsilon}{2} \|v^0\|_{_{\mathcal H}} 
\end{equation}
with an appropriately chosen $m\in\N^+$ (depending on $\varepsilon\in(0,1)$), can be then derived. More precisely, we invoke the method of asymptotic regularity, coming from the theory of dynamical system (see, e.g., \cite{BV-92}). As a consequence, it will be shown that for every $v^0\in\mathcal H$, there is a control $\zeta\in L^2_tH_x^s$ such that
\begin{equation}\label{LF-controllability}
\mathbf P_mv[T]=\mathbf P_mU(T)v^0\quad \text{and}\quad\int_0^T\|\zeta(t)\|_{_{H^s}}^2dt\lesssim \|v^0\|_{_{\mathcal{H}}}^2.
\end{equation}
In particular, the $H^s$-regularity of $\zeta$  would yield the high-frequency dissipation (\ref{HF-dissipation}). Thanks to the decay of $U(t)$ (see Lemma \ref{Linear-semigroup}), the combination of (\ref{HF-dissipation}) and (\ref{LF-controllability})   gives rise to the contractibility (\ref{binding-property}). That is, the first assertion of Proposition \ref{Prop-linear} follows. See Section \ref{Section-Contractibility} for more details of this step.

\medskip 

\noindent {\bf Step 4 (structure of the control).} By now it remains to investigate the structure for the control $\zeta$ verifying (\ref{LF-nullcontrollability}) or (\ref{LF-controllability}), in order to prove the second assertion of Proposition~\ref{Prop-linear}. Roughly speaking, the proof is based on an essential observation: the control $\zeta$ can be constructed as the  minimizer of the functional
$$\tilde\zeta\mapsto\int_0^T\|\tilde\zeta(t)\|^2_{_{H^s}}dt,$$
where $\tilde\zeta\in L^2_tH^s_x$ takes over the set of all controls verifying the equality in (\ref{LF-nullcontrollability}).
Invoking the idea of HUM due to Lions \cite{Lions-88},
such minimality implies that the control $\zeta$ can be expressed via a solution of adjoint system (\ref{adjoint-system-1}), where the terminal state $\varphi^T$ is the unique optimal solution of another minimization problem defined on $\mathbf H_m$. For the problem we encounter here, the main advantage of the finite-dimensional minimization problem is that it can induce a control map, whose dependence on $v_0,v_1,\hat u$ can be further characterized by adapting the argument developed in \cite[Proposition 5.5]{Shi-15}. See Section \ref{Section-controlstructure} for more details.

\medskip 

To complete the outline, let us give a brief sketch of verification for the observability (\ref{OI}), which is the main purpose of Step 2. Our approach involves several various techniques in controllability and observability, including Carleman estimates, regularization analysis of control map, compact-uniqueness argument and truncation technique.

\begin{itemize}[leftmargin=1em]
\item {\bf Step 2.1.} We shall first prove (\ref{OI}) for a special case where $s=0$ and $N=\infty$ (i.e., $\mathscr P^{\scriptscriptstyle T}_{\scriptscriptstyle N}$ becomes the identity):
\begin{equation}\label{basic-ob1}
\int_0^T\|\chi\varphi\|^2 \gtrsim \|\varphi^T\|^2_{_{\mathcal H^{-1}}}.
\end{equation}
To this end, we make use of the Carleman estimates (see, e.g.,  \cite{Zhang-00,Zhang-PRSL}) combined with  energy method involved in Proposition \ref{Energy-estimate}(2). As a by-product, the inequality of type (\ref{basic-ob1}) could imply a full-frequency controllability for (\ref{controlproblem-0}) with $N=\infty$: for every $v^0\in\mathcal H$, there is a unique control $\zeta\in L^2(D_T)$ such that the HUM-based minimality (as stated in Step 4) holds,
\begin{equation*}
v[T]=0\quad \text{and}\quad  \int_0^T\|\zeta(t)\|^2dt\lesssim \|v^0\|_{_{\mathcal{H}}}^2
\end{equation*}
This induces a ``control map'', i.e.  $\Lambda\colon \mathcal H\rightarrow L^2(D_T)$, $\Lambda(v^0)=\zeta$.

\item {\bf Step 2.2.} The next thing to be done is to demonstrate 
\begin{equation}\label{basic-ob2}
\int_0^T\|\chi\varphi\|_{_{H^{-s}}}^2 \gtrsim \|\varphi^T\|^2_{_{\mathcal H^{-1-s}}}\quad \text{with }\hat u\equiv 0,
\end{equation}
where the inequality corresponds to (\ref{OI}) in another special case of $s\in(0,1)$ and $N=\infty$. By using the duality between controllability and observability (see Step 1), the issue of (\ref{basic-ob2}) is converted into a regularization problem of $\Lambda$ (with $\hat u\equiv 0$). More precisely, inequality (\ref{basic-ob2}) will be derived from the following assertion:  when $v^0\in\mathcal H^s$, the resulting control $\Lambda(v^0)$ has an extra regularity in space, i.e. 
\begin{equation}\label{regularity-controlmap}
\Lambda(v^0)\in L^2_tH^s_x\quad \text{with }\int_0^T\|\Lambda(v^0)\|_{_{H^s}}^2dt\lesssim \|v^0\|_{_{\mathcal{H}}}^2\footnote{
Inspired by \cite{EZ-10}, the time-regularity of $\Lambda(v^0)$ can also be improved. Nevertheless, it is not necessary in the analysis of (\ref{basic-ob2}), so we do not illustrate such property in the present paper.}.
\end{equation}
In order to assure (\ref{regularity-controlmap}), we shall adopt the general method developed in \cite{EZ-10}.

\item {\bf Step 2.3.} On the basis of (\ref{basic-ob1}) and (\ref{basic-ob2}), we are able to extend the observability to for a more general case:
\begin{equation}\label{basic-ob3}
\int_0^T\|\chi\varphi\|_{_{H^{-s}}}^2 \gtrsim \|\varphi^T\|^2_{_{\mathcal H^{-1-s}}}\quad \text{with }\hat u\in B_R.
\end{equation}
Evantually, inequality (\ref{basic-ob3}) could imply (\ref{OI}) as desired. The proofs of (\ref{basic-ob3}) and (\ref{OI}) follow the ideas of compactness-uniqueness argument and truncation technique, respectively; both of these arguments are inspired by the analysis in \cite[Section 4]{ADS-16}.
\end{itemize}

\subsection{Low-frequency controllability dual with observability}\label{Section-duality}

The main context of this subsection is to make the analysis in Step 1 of Section \ref{Section-outline} rigorous, establishing the duality between controllability for system (\ref{Problem-linearized}) and observability for system (\ref{adjoint-system-1}). See Proposition \ref{theorem-duality} below.

For the sake of convenience we denote by $$\varphi=\mathcal W^T_{\hat u}(\varphi^T_0,\varphi^T_1)=\mathcal W^T_{\hat u}(\varphi^T)$$ the solution of adjoint system (\ref{adjoint-system-1}). Let us write $u^\bot[t]=(-\partial_t u,u)(t)$ with $u\in C^1([0,T];H^s)$ $(s\in\R)$ for simplicity. We also denote $\mathcal H^s_*=H^{-1-s}\times H^{-s}$ and $\mathcal H_*=\mathcal H^0_*$.

\begin{proposition}\label{theorem-duality}
Let $T,R>0$ and $m,N\in\N^+$ be arbitrarily given\footnote{Although we assume that these parameters are arbitrary here, the verification of observability (\ref{observability-1}) below involves special choices of $T,N$. Roughly speaking, $T$ will be determined by the geometric condition {\rm(\ref{Gamma-condition})} on $\chi$, while $N$ is carefully chosen according to the values of $T,R,m$. See Proposition {\rm\ref{prop-obs}} later for more details.}. Then the following two statements are equivalent for every $\hat u\in B_R$.
\begin{enumerate}[leftmargin=2em]
\item[$(1)$] There exists a constant $C_1>0$ such that for every $v^0\in \mathcal H^{\scriptscriptstyle1/5}$, there exists a control $\zeta\in L^2_tH^{\scriptscriptstyle1/5}_x$ such that 
\begin{equation}\label{Null-controllability-1}
\mathbf P_mv[T]=0\quad {\it and}\quad \int_0^T \|\zeta(t)\|_{_{H^{\scriptscriptstyle1/5}}}^2dt\leq C_1\|v^0\|^2_{_{\mathcal H^{\scriptscriptstyle1/5}}},
\end{equation} 
where $v=\mathcal V_{\hat u}(v^0,\chi\mathscr P^{\scriptscriptstyle T}_{\scriptscriptstyle N}\zeta)$.
		
\item[$(2)$] There exists a constant $C_2>0$ such that 
\begin{equation}\label{observability-1}
\int_0^T\|\mathscr P^{\scriptscriptstyle T}_{\scriptscriptstyle N}(\chi\varphi)(t)\|^2_{_{H^{\scriptscriptstyle-1/5}}}dt\geq C_{2}\|\varphi^T\|^2_{_{\mathcal H^{\scriptscriptstyle-6/5}}}
\end{equation}
for any $(q_1,q_2)\in \mathbf H_m$, where 
$\varphi=\mathcal W^T_{\hat u}(\varphi^T)$ with $\varphi^T=(q_2,-q_1+aq_2)$.
\end{enumerate}
Moreover, if inequality {\rm (\ref{observability-1})} holds, then the constant $C_1$ arising in {\rm(\ref{Null-controllability-1})} can be 
chosen so that it is
expressed in function of $T,R,C_2$.
\end{proposition}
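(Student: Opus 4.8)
The plan is to recast both statements as properties of a single bounded linear operator and to invoke the abstract equivalence between surjectivity and coercivity, i.e. Lemma~\ref{Theorem-surjective}. Since the problem is linear, I would split $v=\mathcal V_{\hat u}(v^0,\chi\mathscr P^{\scriptscriptstyle T}_{\scriptscriptstyle N}\zeta)$ into its free part $\mathcal V_{\hat u}(v^0,0)$ and its controlled part $\mathcal V_{\hat u}(0,\chi\mathscr P^{\scriptscriptstyle T}_{\scriptscriptstyle N}\zeta)$, and introduce the control operator
$$
L\colon L^2_tH^{\scriptscriptstyle1/5}_x\longrightarrow \mathbf H_m,\qquad L\zeta=\mathbf P_m\,\mathcal V_{\hat u}(0,\chi\mathscr P^{\scriptscriptstyle T}_{\scriptscriptstyle N}\zeta)[T],
$$
together with the free map $L_0v^0=\mathbf P_m\,\mathcal V_{\hat u}(v^0,0)[T]$. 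Statement~(1) then says exactly that $L$ is surjective with a quantitative right inverse: solving $L\zeta=-L_0v^0$ produces a control steering $\mathbf P_mv[T]$ to zero, while the energy estimate (\ref{energy-1}) of Proposition~\ref{Energy-estimate} gives $\|L_0v^0\|\lesssim\|v^0\|_{\mathcal H^{\scriptscriptstyle1/5}}$ and thus transfers the cost estimate on $\zeta$ into the bound in (\ref{Null-controllability-1}).

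The heart of the argument is to compute $L^{*}$ and to recognise it as the observation map of the adjoint system (\ref{adjoint-system-1}). I would establish the duality identity by multiplying (\ref{Problem-linearized}) (with $v^0=0$) by a solution $\varphi=\mathcal W^T_{\hat u}(\varphi^T)$, subtracting $v$ times (\ref{adjoint-system-1}), and integrating over $D_T$. The potential terms $3\hat u^2v\varphi$ cancel, the wave operators and the two damping terms combine into $\partial_t(\partial_tv\,\varphi-v\,\partial_t\varphi+a v\varphi)$ evaluated at $t=T$, and the terminal data is taken in the form $\varphi^T=(q_2,-q_1+aq_2)$ \emph{precisely} so that this boundary pairing collapses to the plain $L^2\times L^2$ duality $\langle \mathbf P_mv[T],(q_1,q_2)\rangle$. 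On the forcing side, self-adjointness of $\mathscr P^{\scriptscriptstyle T}_{\scriptscriptstyle N}$ in $L^2(D_T)$ rewrites $\int_{D_T}\chi\mathscr P^{\scriptscriptstyle T}_{\scriptscriptstyle N}\zeta\cdot\varphi=\int_0^T(\zeta,\mathscr P^{\scriptscriptstyle T}_{\scriptscriptstyle N}(\chi\varphi))\,dt$, pairing $L^2_tH^{\scriptscriptstyle1/5}_x$ with $L^2_tH^{\scriptscriptstyle-1/5}_x$. This identifies $L^{*}(q_1,q_2)=\mathscr P^{\scriptscriptstyle T}_{\scriptscriptstyle N}(\chi\varphi)$ and shows $\|L^{*}(q_1,q_2)\|^2_{L^2_tH^{\scriptscriptstyle-1/5}_x}=\int_0^T\|\mathscr P^{\scriptscriptstyle T}_{\scriptscriptstyle N}(\chi\varphi)(t)\|^2_{H^{\scriptscriptstyle-1/5}}\,dt$, so coercivity of $L^{*}$ is exactly the observability inequality (\ref{observability-1}). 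Throughout, the well-posedness of the adjoint in the negative-index spaces $\mathcal H^{\scriptscriptstyle-6/5}=\mathcal H^{\scriptscriptstyle-1-1/5}$, and the continuity needed to justify the integration by parts, are supplied by Proposition~\ref{Energy-estimate}(1)--(3).

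With these two identifications in hand, Lemma~\ref{Theorem-surjective} yields the equivalence $L\text{ surjective}\Leftrightarrow \|L^{*}(q_1,q_2)\|\geq c\,\|(q_1,q_2)\|$, and the quantitative form of that lemma gives the controllability cost as a constant times $c^{-2}$ with $c^2=C_2$; combined with the $T,R$-dependent constant from (\ref{energy-1}) this produces $C_1=C_1(T,R,C_2)$ as asserted. The one point requiring genuine care is norm-matching: the target $\mathbf H_m$ (where $\mathbf P_mv[T]$ lives, paired in $L^2\times L^2$ against $(q_1,q_2)$) must be equipped with the dual norm pulled back along the injective map $(q_1,q_2)\mapsto\varphi^T=(q_2,-q_1+aq_2)$ from $\|\cdot\|_{\mathcal H^{\scriptscriptstyle-6/5}}$, noting that $a q_2$ need not lie in $H_m$ so $\varphi^T$ is measured in the full space. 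I expect the main obstacle to be precisely this duality bookkeeping—aligning every pairing, the terminal structure of $\varphi^T$, and the index $s=1/5$ consistently through the integration by parts with the space-dependent damping $a(x)$—rather than any single hard estimate, since the genuine analytic difficulty of verifying (\ref{observability-1}) itself is deferred to the later steps outlined in Section~\ref{Section-outline}.
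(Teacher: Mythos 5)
Your proposal follows essentially the same route as the paper's proof: the same operator (the paper's $\mathcal F_T$ is your $L$), the same abstract equivalence (Lemma~\ref{Theorem-surjective}), the same identification of the adjoint as $q\mapsto\mathscr P^{\scriptscriptstyle T}_{\scriptscriptstyle N}(\chi\varphi)$ with the terminal data $\varphi^T=(q_2,-q_1+aq_2)$ chosen to make the boundary pairing collapse, and the same norm bookkeeping via the equivalence of $\|(q_2,-q_1+aq_2)\|_{\mathcal H^{\scriptscriptstyle-6/5}}$ with the dual norm $\|(q_1,q_2)\|_{\mathcal H_*^{\scriptscriptstyle1/5}}$ on $\mathbf H_m$. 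If anything, your one-shot integration by parts is slightly cleaner than the paper's computation, which splits the adjoint solution as $\tilde\varphi-\hat\varphi$ with terminal data $(q_2,-q_1)$ and $(0,-aq_2)$ and runs the duality identity twice to absorb the term $(av(T),q_2)$.

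The one place where your write-up glosses over a real step is the claim that statement (1) ``says exactly'' that $L$ is surjective. The direction (surjectivity $\Rightarrow$ (1)) is precisely your construction $L\zeta=-L_0v^0$ together with the bound $\|L_0v^0\|\lesssim\|v^0\|_{\mathcal H^{\scriptscriptstyle1/5}}$ from (\ref{energy-1}). But the converse ((1) $\Rightarrow$ surjectivity), which you need for the implication (1) $\Rightarrow$ (2), is not a tautology: statement (1) only tells you that the range of $L$ contains the set $\{-L_0v^0;\,v^0\in\mathcal H^{\scriptscriptstyle1/5}\}$, and you must still show this set is all of $\mathbf H_m$. The paper handles this by time-reversal: given $v^T\in\mathbf H_m$, set $w=\mathcal V^T_{\hat u}(-v^T,0)$, note that $w[0]\in\mathcal H^{\scriptscriptstyle1/5}$ (backward well-posedness from Proposition~\ref{Energy-estimate}(1), together with $\mathbf H_m\subset\mathcal H^{\scriptscriptstyle1/5}$), and apply statement (1) with $v^0=w[0]$; the difference of the controlled solution and $w$ then realizes $\mathcal F_T(\zeta)=v^T$. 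This is only a few lines, but it is a genuine piece of the equivalence that your plan should state explicitly.
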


\begin{remark}
Notice that the norms $\|\cdot\|_{_{H^s}}\ (s\in\R)$ are equivalent on the finite-dimensional space $H_m$, which means that the RHS of {\rm(\ref{observability-1})} can be in principle replaced by other norms on $\mathcal H^s$. In particular, our decision to use the $\mathcal H^{\scriptscriptstyle-6/5}$-norm there is to ensure that the relevant constants $C_1,C_2$ are independent of $m,N$ in verifying the observability, which is essential in the proof of  contractibility {\rm(\ref{binding-property})}. See Sections {\rm\ref{Section-observability}} and {\rm\ref{Section-Contractibility}} later.
\end{remark}

The proof of Proposition \ref{theorem-duality} will make use of the following classical result of functional analysis; see, e.g., \cite[Proposition 2.16]{Coron-07}.

\begin{lemma}\label{Theorem-surjective}
Assume that $\mathcal X$ and $\mathcal Y$ are Hilbert spaces and $\mathcal F\in \mathcal L(\mathcal X;\mathcal Y)$. Then $\mathcal F$ is surjective if and only if there exists a constant $C>0$ such that
\begin{equation}\label{surjective-criterion}
	\|\mathcal F^*y\|_{_{\mathcal X}}\geq C\|y\|_{_{\mathcal Y}}
\end{equation}
for any $y\in\mathcal Y$. Moreover, if {\rm(\ref{surjective-criterion})} holds, then 
there exists $\mathcal G\in \mathcal L(\mathcal Y;\mathcal X)$ such that the following assertions hold.
\begin{enumerate}[leftmargin=2em]
\item[$1)$] The operator $\mathcal G$ is a right inverse of $\mathcal F$, satisfying that
\begin{equation}\label{surjective-results}
(\mathcal F\circ \mathcal G)y=y,\quad \|\mathcal Gy\|_{_{\mathcal X}}\leq C^{-1}\|y\|_{_{\mathcal Y}}
\end{equation}
for any $y\in\mathcal Y$.

\item[$2)$] It follows that 
\begin{equation}\label{minimizing-1}
\|\mathcal Gy\|_{_{\mathcal X}}\leq \|x\|_{_{\mathcal X}}
\end{equation}
for any $y\in\mathcal Y$ and $x\in\mathcal F^{-1}(\{y\})$, i.e., $\|\mathcal Gy\|_{_{\mathcal X}}=\inf_{x\in\mathcal F^{-1}(\{y\})}\|x\|_{_{\mathcal X}}$. Moreover, {\rm(\ref{minimizing-1})} holds with equality if and only if $x=\mathcal Gy$.
\end{enumerate}
\end{lemma}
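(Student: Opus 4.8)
The plan is to prove the equivalence in two directions and then build the operator $\mathcal G$ by a single explicit construction from which all three of its listed properties fall out at once. For the direction ``$\mathcal F$ surjective $\Rightarrow$ (\ref{surjective-criterion})'' I would invoke the open mapping theorem: since $\mathcal F$ is a bounded surjection between Hilbert spaces it is open, so there is $c>0$ with $B_{\mathcal Y}(0,c)\subset\mathcal F(B_{\mathcal X}(0,1))$. Then for any $y\in\mathcal Y$ and any $\eta\in B_{\mathcal Y}(0,c)$, writing $\eta=\mathcal F\phi$ with $\|\phi\|_{\mathcal X}\leq 1$, one has $|\langle y,\eta\rangle_{\mathcal Y}|=|\langle\mathcal F^*y,\phi\rangle_{\mathcal X}|\leq\|\mathcal F^*y\|_{\mathcal X}$; taking the supremum over such $\eta$ yields $c\|y\|_{\mathcal Y}\leq\|\mathcal F^*y\|_{\mathcal X}$, i.e. (\ref{surjective-criterion}) with $C=c$.

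For the converse and the construction of $\mathcal G$, the central observation is that coercivity (\ref{surjective-criterion}) makes the self-adjoint operator $\mathcal F\mathcal F^*\in\mathcal L(\mathcal Y)$ boundedly invertible: indeed $\langle\mathcal F\mathcal F^*y,y\rangle_{\mathcal Y}=\|\mathcal F^*y\|_{\mathcal X}^2\geq C^2\|y\|_{\mathcal Y}^2$, so by the Lax--Milgram theorem $(\mathcal F\mathcal F^*)^{-1}\in\mathcal L(\mathcal Y)$ exists. I then set $\mathcal G:=\mathcal F^*(\mathcal F\mathcal F^*)^{-1}$, which is bounded and satisfies $\mathcal F\mathcal G=\mathrm{Id}_{\mathcal Y}$; this simultaneously proves that $\mathcal F$ is surjective and the right-inverse identity in (\ref{surjective-results}). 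Equivalently, $\mathcal G y=\mathcal F^*\hat\psi$ where $\hat\psi$ is the unique minimizer of the strictly convex coercive functional $\psi\mapsto\tfrac12\|\mathcal F^*\psi\|_{\mathcal X}^2-\langle y,\psi\rangle_{\mathcal Y}$, and I may present it in this variational form since that is the form actually used in the HUM construction later in the paper.

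The norm bound and the minimality are then short. Writing $\hat\psi=(\mathcal F\mathcal F^*)^{-1}y$, coercivity gives $C^2\|\hat\psi\|^2\leq\|\mathcal F^*\hat\psi\|^2=\langle\mathcal F\mathcal F^*\hat\psi,\hat\psi\rangle=\langle y,\hat\psi\rangle\leq\|y\|\,\|\hat\psi\|$, whence $\|\hat\psi\|\leq C^{-2}\|y\|$ and therefore $\|\mathcal G y\|^2=\|\mathcal F^*\hat\psi\|^2=\langle y,\hat\psi\rangle\leq C^{-2}\|y\|^2$, i.e. the estimate in (\ref{surjective-results}). For the minimality (\ref{minimizing-1}), fix $x\in\mathcal F^{-1}(\{y\})$; then $x-\mathcal G y\in\ker\mathcal F$ while $\mathcal G y\in\mathrm{Range}(\mathcal F^*)$, and the orthogonal decomposition $\ker\mathcal F=(\mathrm{Range}\,\mathcal F^*)^\perp$ makes these two vectors orthogonal. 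The Pythagorean identity then gives $\|x\|_{\mathcal X}^2=\|\mathcal G y\|_{\mathcal X}^2+\|x-\mathcal G y\|_{\mathcal X}^2\geq\|\mathcal G y\|_{\mathcal X}^2$, with equality precisely when $x=\mathcal G y$.

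Since this is essentially a textbook result, there is no deep obstacle; the only point requiring genuine care is the self-contained justification that coercivity of $\mathcal F^*$ forces $\mathcal F\mathcal F^*$ to be \emph{boundedly} invertible rather than merely injective with dense range, which I would secure through Lax--Milgram, together with the bookkeeping needed to extract all three conclusions from the single formula $\mathcal G=\mathcal F^*(\mathcal F\mathcal F^*)^{-1}$. As the statement is classical, it would be equally legitimate simply to cite \cite[Proposition 2.16]{Coron-07}.
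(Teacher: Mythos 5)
Your proof is correct in all three parts: the open-mapping argument for the forward implication, the Lax--Milgram inversion of the Gram operator $\mathcal F\mathcal F^*$ for the converse, and the orthogonality argument $\ker\mathcal F=(\mathrm{Range}\,\mathcal F^*)^\perp$ for the minimality claim, including the equality case via the Pythagorean identity. Note that the paper itself offers no proof of this lemma --- it simply cites \cite[Proposition 2.16]{Coron-07} --- so there is nothing to diverge from; and your construction $\mathcal G=\mathcal F^*(\mathcal F\mathcal F^*)^{-1}$ is in substance the standard Gramian/HUM argument underlying the cited result, with the constants coming out exactly as stated (the chain $C^2\|\hat\psi\|^2\leq\langle y,\hat\psi\rangle\leq\|y\|\,\|\hat\psi\|$ yielding $\|\mathcal Gy\|\leq C^{-1}\|y\|$ is the sharp bookkeeping the lemma requires). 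Your remark that the variational form of $\mathcal Gy$ --- as $\mathcal F^*\hat\psi$ with $\hat\psi$ minimizing $\psi\mapsto\tfrac12\|\mathcal F^*\psi\|^2_{\mathcal X}-\langle y,\psi\rangle_{\mathcal Y}$ --- matches how the lemma is actually deployed is apt: this is precisely the structure exploited later in the paper, where the minimality statement (\ref{minimizing-1}) is used to identify the HUM control (cf.\ the discussion following the proof of Proposition \ref{theorem-duality} and the functional $J$ in Proposition \ref{solvability-minimize}). One could quibble that Lax--Milgram is heavier than needed --- for a bounded self-adjoint coercive operator, invertibility follows directly from coercivity (closed range plus $\overline{\mathrm{Range}}=\ker^\perp=\mathcal Y$) --- but this is a matter of taste, not a gap.
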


\begin{proof}[{\bf Proof of Proposition \ref{theorem-duality}}] 
Let us define a mapping by
\begin{equation*}
\mathcal F_T\colon L^2_tH^{\scriptscriptstyle1/5}_x\rightarrow \mathbf H_m\subset \mathcal  H^{\scriptscriptstyle1/5},\quad\mathcal F_T(\zeta)=\mathbf P_mv[T],
\end{equation*}
where 
$
v=\mathcal V_{\hat u}(0,0,\chi\mathscr P^{\scriptscriptstyle T}_{\scriptscriptstyle N}\zeta).
$
It is not difficult to check that $\mathcal F_T$ is a bounded linear operator. 
Moreover, we claim that the adjoint  of $\mathcal F_T$ can be represented by
\begin{equation}\label{adjoint-operator}
\mathcal F_T^*\colon\mathbf H_m\subset\mathcal H_*^{\scriptscriptstyle1/5}\rightarrow L^2_tH^{\scriptscriptstyle-1/5}_x,\quad\mathcal F_T^*(q)=\mathscr P^{\scriptscriptstyle T}_{\scriptscriptstyle N}(\chi \varphi)
\end{equation}
for every $q=(q_1,q_2)\in \mathbf H_m$\footnote{Notice that if $H_m$ is endowed with the norm on $H^s\ (s>0)$, its dual space is isometrically isomorphic to $H_m$ endowed with the norm on $H^{-s}$. Accordingly, we identify the dual space of $\mathbf H_m$, endowed with the $\mathcal H^s$-norm, as $\mathbf H_m$ endowed with the $\mathcal H^s_*$-norm.}, where 
$
\varphi=\mathcal W^T_{\hat u}(q_2,-q_1+aq_2).
$
To demonstrate this, notice first that
\begin{equation}\label{dual-1}
\begin{array}{ll}
\displaystyle\langle
\mathcal F_T(\zeta), q
\rangle_{_{\mathcal H^{\scriptscriptstyle1/5},\mathcal H_*^{\scriptscriptstyle1/5}}}=
(
v[T],\tilde\varphi^\bot[T]
)_{_{H\times H}},
\end{array}
\end{equation}
where $\tilde\varphi=\mathcal W_{\hat u}^T(q_2,-q_1).$ 
We then derive that
\begin{equation}\label{dual-2}
\begin{aligned}
(
v[T], \tilde\varphi^\bot[T]
)_{_{H\times H}}
&=\int_0^T
\left(
\left(
\begin{matrix}
\partial_tv\\
\Delta v-a\partial_tv-3\hat u^2v+\chi\mathscr P^{\scriptscriptstyle T}_{\scriptscriptstyle N}\zeta
\end{matrix}
\right)
,\left(
\begin{matrix}
-\partial_t\tilde\varphi\\
\tilde\varphi
\end{matrix}
\right)
\right)_{_{H\times H}}dt\\
&\quad\quad+\int_0^T
\left(
\left(
\begin{matrix}
	v\\
	\partial_tv
\end{matrix}
\right)
,\left(
\begin{matrix}
	-\Delta\tilde\varphi -a\partial_t\tilde\varphi+3\hat u^2\tilde\varphi
	\\
	\partial_t\tilde\varphi
\end{matrix}
\right)
\right)_{_{H\times H}}dt.
\end{aligned}
\end{equation}
It can be seen that
\begin{equation}\label{identity-6}
{\rm RHS\ of\ }(\ref{dual-2})= -(av(T),q_2)+\int_0^T(\chi\mathscr P^{\scriptscriptstyle T}_{\scriptscriptstyle N}\zeta,\tilde\varphi)dt. 
\end{equation}
Notice that 
$$
(av(T),q_2)=\langle v[T], \hat \varphi^\bot[T]\rangle_{_{\mathcal H^{\scriptscriptstyle1/5},\mathcal H_*^{\scriptscriptstyle1/5}}}=\int_0^T(\chi\mathscr P^{\scriptscriptstyle T}_{\scriptscriptstyle N}\zeta,\hat\varphi)dt,
$$
where $\hat\varphi=\mathcal W_{\hat u}(0,-aq_2)$, by repeating the deduction presented in (\ref{dual-2}),(\ref{identity-6}).
Inserting this into (\ref{identity-6}) and using  (\ref{dual-1}), it follows that
\begin{equation}\label{dual-3}
\begin{array}{ll}
\displaystyle\langle
\mathcal F_T(\zeta), q
\rangle_{_{\mathcal H^{\scriptscriptstyle1/5},\mathcal H_*^{\scriptscriptstyle1/5}}}=\int_0^T(\zeta,\mathscr P^{\scriptscriptstyle T}_{\scriptscriptstyle N}(\chi\varphi))dt,
\end{array}
\end{equation}
where $\varphi=\tilde\varphi-\hat \varphi=\mathcal W^T_{\hat u}(q_2,-q_1+aq_2)$. Here we have also noticed that the operator $\mathscr P^{\scriptscriptstyle T}_{\scriptscriptstyle N}$ is self-adjoint on $L^2(D_T)$.
Then, taking into account
$$
{\rm LHS\ of\ } (\ref{dual-3})=\int_0^T(\zeta,\mathcal F_T^*(q))dt,
$$ 
the desired claim (\ref{adjoint-operator}) is proved.

Thanks to Lemma \ref{Theorem-surjective}, the statement
\begin{enumerate}[leftmargin=2em]
\item[(3)] The mapping $\mathcal F_T$ is surjective.
\end{enumerate}
holds if and only if there exists a constant $C>0$ such that
\begin{equation}\label{observability-2}
\int_0^T\|\mathcal F_T^*(q)(t)\|^2_{_{H^{\scriptscriptstyle-1/5}}}dt
\geq C\|q\|^2_{_{\mathcal H_*^{\scriptscriptstyle1/5}}}
\end{equation}
for any $q=(q_1,q_2)\in\mathbf H_m$. It can be derived that statement (3) is equivalent to (2). Indeed, for every $s\in[0,1/5]$
there exist constants $c_1,c_2>0$ such that
\begin{equation*}
c_1\|p\|^2_{_{\mathcal H^s_*}}\leq \|(p_2,-p_1+ap_2)\|_{_{\mathcal  H^{-1-s}}}^2\leq c_2\|p\|^2_{_{\mathcal H^s_*}}
\end{equation*}
for any $p=(p_1,p_2)\in \mathcal H_*^s$.
Then, letting $s=1/5$ and $p=q$ and recalling  (\ref{adjoint-operator}), inequalities (\ref{observability-1}) and (\ref{observability-2}) are equivalent. This implies immediately the equivalence of statements (2) and (3).

It remains to show that the statements (1) and (3) are equivalent. To this end, assume for the moment that (3) holds. Then, applying Lemma \ref{Theorem-surjective} again yields that there exists $\mathcal G\in\mathcal L(\mathbf H_m;L^2_tH^{\scriptscriptstyle1/5}_x)$ such that
\begin{equation}\label{surjective-results-1}
\mathcal F_T(\zeta)=v^T,\quad 
\int_0^T\|\zeta(t)\|_{_{H^{\scriptscriptstyle1/5}}}^2dt\leq C\|v^T\|^2_{_{\mathcal H^{\scriptscriptstyle1/5}}}
\end{equation}
for every $v^T=(v_0^T,v_1^T)\in \mathbf H_m$, where $\zeta=\mathcal G(v^T)$. We then define 
\begin{equation}\label{problem-surjective}
\tilde v=\mathcal V_{\hat u}(0,0,\chi\mathscr P^{\scriptscriptstyle T}_{\scriptscriptstyle N}\zeta).
\end{equation}
In view of the construction of $\zeta$, it follows that
\begin{equation}\label{terminal-2}
\mathbf P_m\tilde v[T]=v^T.
\end{equation}
At the same time, let 
$
\hat v=\mathcal V_{\hat u}(v^0,0)
$
with a state $v^0\in\mathcal H$ to be controlled. 
One can in the sequel obtain that the sum $v:=\tilde v+\hat v$ verifies $v=\mathcal V_{\hat u}(v^0,\chi\mathscr P^{\scriptscriptstyle T}_{\scriptscriptstyle N}\zeta)$ and
\begin{equation*}
\mathbf P_mv[T]=v^T+\mathbf P_m\hat v[T].
\end{equation*}

Accordingly, in order to construct a control steering system (\ref{Problem-linearized}) from $v^0$ to $0$ in $\mathbf H_m$, it suffices to take
$$
v^T=-\mathbf P_m\hat v[T]
$$
in (\ref{surjective-results-1}). With this setting, the first property described in (\ref{Null-controllability-1}) is clearly obtained, while the combination of (\ref{energy-1}) with (\ref{surjective-results-1}) leads to the second. Statement (1) thus follows.

To show the converse implication $(1)\Rightarrow(3)$, we define 
$$
w=\mathcal V^T_{\hat u}(-v^T,0)
$$
for an arbitrarily given $v^T\in \mathbf H_m$.
Then, we use the property described in statement (1) with
$
v^0=w[0];
$
the resulting control and controlled solution are still denoted by $\zeta$ and $v$, respectively. As a consequence, the difference $\tilde v:=v-w$ satisfies (\ref{problem-surjective}) and (\ref{terminal-2}), where we have also used the estimate of type (\ref{energy-1}) for $\mathcal V^T_{\hat u}$. This implies that $\mathcal F_T(\zeta)=v^T$, as desired.

Finally, the characterization of the constant $C_1$ in (\ref{Null-controllability-1}) can be achieved by following the flow of statements $(2)\Rightarrow (3)\Rightarrow (1)$ among the above arguments, as well as noticing the bound in (\ref{surjective-results}) for the right inverse $\mathcal G$. The proof of Proposition \ref{theorem-duality} is then complete.
\end{proof}

Taking (\ref{minimizing-1}) into account, one can observe that if inequality  (\ref{observability-1}) holds, the control $\zeta$ established in  (\ref{Null-controllability-1}) is in fact constructed as the minimizer of the functional $$\tilde\zeta\mapsto \int_0^T\|\tilde\zeta(t)\|_{_{H^{\scriptscriptstyle1/5}}}^2dt$$ over the set of controls steering system (\ref{Problem-linearized}) to the origin in $\mathbf H_m$. That is, 
if $\tilde\zeta\in L^2_tH^{\scriptscriptstyle1/5}_x$ satisfies that $\mathbf P_mv[T]=0$ with $v=\mathcal V_{\hat u}(v^0,\chi\mathscr P^{\scriptscriptstyle T}_{\scriptscriptstyle N}\zeta)$, then 
\begin{equation}\label{minimizer-2}
\int_0^T \|\zeta(t)\|_{_{H^{\scriptscriptstyle1/5}}}^2dt\leq\int_0^T \|\tilde\zeta(t)\|_{_{H^{\scriptscriptstyle1/5}}}^2dt,
\end{equation}
with equality if and only if $\tilde\zeta=\zeta$.

\begin{remark}\label{Case-N-infinity}
There is a full-frequency version of Proposition {\rm\ref{theorem-duality}}. More precisely, the following two statements are equivalent for every $\hat u\in B_R$.
\begin{enumerate}[leftmargin=2em]
\item[$(1)$] There exists a constant $C_1>0$ such that for every $v^0\in \mathcal H^{\scriptscriptstyle1/5}$
there is a control $\zeta\in L^2_tH^{\scriptscriptstyle1/5}_x$ satisfying 
\begin{equation}\label{Null-controllability-3}
v[T]=0\quad\text{and}\quad \int_0^T \|\zeta(t)\|_{_{H^{\scriptscriptstyle1/5}}}^2dt\leq C_1\|v^0\|^2_{_{\mathcal H^{\scriptscriptstyle1/5}}},
\end{equation} 
where $v=\mathcal V_{\hat u}(v^0,\chi\zeta)$. 
		
\item[$(2)$] There exists a constant $C_2>0$ such that
\begin{equation}\label{observability-3}
\int_0^T\|\chi\varphi(t)\|^2_{_{H^{\scriptscriptstyle-1/5}}}dt\geq C_{2}\|\varphi^T\|^2_{_{\mathcal  H^{\scriptscriptstyle-6/5}}}
\end{equation}
for any $\varphi^T\in \mathcal H^{\scriptscriptstyle-6/5}$, where $\varphi=\mathcal W^T_{\hat u}(\varphi^T)$.
\end{enumerate}
Moreover, if inequality {\rm (\ref{observability-3})} holds, then the constant $C_1$ arising in {\rm(\ref{Null-controllability-3})} can be chosen so that it is expressed in function of $T,R,C_{2}$. These above can be proved by repeating the proof of Proposition {\rm\ref{theorem-duality}} step by step, except that the parameters $m,N$ are taken to be ``infinity'', i.e. the projections $\mathbf P_m$ and $\mathscr P^{\scriptscriptstyle T}_{\scriptscriptstyle N}$ become the identity operator $I$.
\end{remark}

\subsection{Verification of the observability}\label{Section-observability}

The main result of this subsection is contained in the following proposition, providing a precise version of the observability (\ref{OI}). The proof of this proposition follows the procedure as described in Step 2 of Section \ref{Section-outline}. 

Recall the constant $T''$  established in (\ref{largetime-2}). 

\begin{proposition}\label{prop-obs}
Let $T>T''$ and $R>0$ be arbitrarily given. Then the following assertions hold.
\begin{enumerate}
[leftmargin=2em]
\item[$(1)$] There exists a constant $C_0=C_0(T,R)>0$ such that 
\begin{equation}\label{full-observability}
\int_0^T\|\chi\varphi(t)\|^2_{_{H^{\scriptscriptstyle-1/5}}}dt\geq C_0\|\varphi^T\|^2_{_{\mathcal H^{\scriptscriptstyle-6/5}}}.
\end{equation}
for any $\hat u\in B_R$ and $\varphi^T\in \mathcal H^{\scriptscriptstyle-6/5}$, where $\varphi=\mathcal W^T_{\hat u}(\varphi^T)$.

\item[$(2)$] For every $m\in \N^+$, there exists an integer $N=N(T,R,m)\in\N^+$ such that 
\begin{equation}\label{truncated-observability}
\int_0^T\|\mathscr P^{\scriptscriptstyle T}_{\scriptscriptstyle N}(\chi\varphi)(t)\|^2_{_{H^{\scriptscriptstyle-1/5}}}dt\geq
\frac{C_0}{4}\|\varphi^T\|^2_{_{\mathcal H^{\scriptscriptstyle-6/5}}},
\end{equation}
for any $\hat u\in B_R$ and $(q_1,q_2)\in \mathbf H_m$, where $\varphi=\mathcal W_{\hat u}^T(\varphi^T)$ with $\varphi^T=(q_2,-q_1+aq_2)$. 
\end{enumerate}
\end{proposition}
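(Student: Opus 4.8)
The plan is to realize Steps~2.1--2.3 of the outline in Section~\ref{Section-outline}: Part~(1) is obtained by bootstrapping the spatial regularity index of the observation from $0$ up to $1/5$, and Part~(2) follows from Part~(1) by truncation. First I would prove the energy-level estimate~(\ref{basic-ob1}), namely $\int_0^T\|\chi\varphi\|^2\,dt\gtrsim\|\varphi^T\|^2_{\mathcal H^{-1}}$, uniformly for $\hat u\in B_R$. The key preliminary is that the potential $q:=3\hat u^2$ is uniformly bounded: since $H^{\scriptscriptstyle11/7}\hookrightarrow L^\infty(D)$ in dimension three (because $11/7>3/2$), one has $\|q\|_{L^\infty(D_T)}\leq CR^2$ for all $\hat u\in B_R$. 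For such a bounded zeroth-order perturbation of $\boxempty-a(x)\partial_t$, the observability on $\{\chi\geq\chi_0\}$ follows from a global Carleman estimate (see~\cite{Zhang-00,Zhang-PRSL}): the $\Gamma$-type geometry in~(\ref{Gamma-condition}) furnishes an admissible weight, the time condition $T>T''=2\sup_{x\in D}|x-x_1|$ ensures the weight dominates the whole cylinder, and both $a(x)\partial_t\varphi$ and $q\varphi$ are absorbed as lower-order terms once the Carleman parameter is taken large. By the duality of Remark~\ref{Case-N-infinity} with $s=0$, this estimate is equivalent to $L^2$-controllability of~(\ref{Problem-linearized}) at the $\mathcal H$-level and yields the HUM control map $\Lambda$.

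Next I would raise the index to $1/5$. For $\hat u\equiv0$ I would establish~(\ref{basic-ob2}) by transferring it, through the duality of Remark~\ref{Case-N-infinity} with $s=1/5$, to the regularity-gain~(\ref{regularity-controlmap}) of $\Lambda$: that $v^0\in\mathcal H^{\scriptscriptstyle1/5}$ forces $\Lambda(v^0)\in L^2_tH^{\scriptscriptstyle1/5}_x$ with the quantitative bound. For the constant-coefficient adjoint equation the control is built from smoother adjoint states, so this extra regularity propagates through the HUM construction via the commutator scheme of~\cite{EZ-10}. To reach a general $\hat u\in B_R$, and hence~(\ref{basic-ob3})---which is exactly~(\ref{full-observability})---I would treat the potential term as a lower-order perturbation of the free equation and run a compactness--uniqueness argument in the spirit of~\cite[Section~4]{ADS-16}. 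Supposing~(\ref{full-observability}) fails produces sequences $\hat u_n\in B_R$ and $\varphi^T_n$ with $\|\varphi^T_n\|_{\mathcal H^{\scriptscriptstyle-6/5}}=1$ and $\int_0^T\|\chi\varphi_n\|^2_{H^{\scriptscriptstyle-1/5}}\to0$; the energy bound~(\ref{energy-2}) together with the compact embeddings between the spaces $\mathcal H^s$ and an Aubin--Lions argument give a strongly convergent subsequence $\varphi_n\to\varphi$, while $\hat u_n^2\rightharpoonup V$ weakly-$*$ in $L^\infty(D_T)$. The limit $\varphi$ solves the adjoint equation with the bounded potential $V$ and satisfies $\chi\varphi\equiv0$ on $\{\chi\geq\chi_0\}$, so unique continuation for the wave operator with bounded potential (see~\cite{RZ-98}) forces $\varphi\equiv0$; on the other hand, the perturbation inequality coming from the free-potential case keeps a weaker, compactly-embedded norm of $\varphi^T_n$ bounded away from $0$, so $\varphi$ cannot vanish, and this contradiction proves~(\ref{full-observability}) with a constant $C_0$ uniform over $B_R$.

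Finally I would derive Part~(2) from Part~(1) by truncation. Writing $\chi\varphi=\mathscr P^{\scriptscriptstyle T}_{\scriptscriptstyle N}(\chi\varphi)+(I-\mathscr P^{\scriptscriptstyle T}_{\scriptscriptstyle N})(\chi\varphi)$ and invoking~(\ref{full-observability}), it suffices to make the tail $\|(I-\mathscr P^{\scriptscriptstyle T}_{\scriptscriptstyle N})(\chi\varphi)\|_{L^2_tH^{\scriptscriptstyle-1/5}_x}$ smaller than $\tfrac12\sqrt{C_0}\,\|\varphi^T\|_{\mathcal H^{\scriptscriptstyle-6/5}}$ by taking $N=N(T,R,m)$ large. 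Here the restriction $(q_1,q_2)\in\mathbf H_m$ is essential: the terminal states $\varphi^T=(q_2,-q_1+aq_2)$ then lie in a bounded subset of a fixed higher-regularity space, so by the energy estimate~(\ref{energy-1}) the family $\chi\varphi$ is bounded in some $L^2_tH^{\sigma}_x$ with $\sigma>-1/5$; since $\mathscr P^{\scriptscriptstyle T}_{\scriptscriptstyle N}\to I$ strongly, the tail tends to $0$ uniformly over this bounded set and over $\hat u\in B_R$, which yields the factor $C_0/4$ in~(\ref{truncated-observability}).

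The step I expect to be hardest is the fractional observability, specifically the regularity-gain~(\ref{regularity-controlmap}) of the control map: keeping the negative-order norms consistent across the duality and executing the~\cite{EZ-10} scheme with a constant uniform in $\hat u\in B_R$ is precisely what makes the passage from the energy-level bound~(\ref{basic-ob1}) to the fractional bound~(\ref{basic-ob3}) possible. The unique-continuation input used in the compactness--uniqueness step is also delicate, but it is available for the bounded potential $3\hat u^2$, and the uniformity in $B_R$ is a free by-product of the contradiction scheme.
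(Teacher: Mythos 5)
Your overall architecture coincides with the paper's: a Carleman-based $\mathcal H^{-1}$-level observability (\ref{basic-case-1}) uniform over bounded potentials, the \cite{EZ-10}-style regularity gain of the HUM control map to reach the fractional level (\ref{basic-case-2}) for $\hat u\equiv 0$, a compactness--uniqueness argument to pass to general $\hat u\in B_R$, and a truncation argument (with exactly the $C_0/4$ bookkeeping via $\|\mathscr P^{\scriptscriptstyle T}_{\scriptscriptstyle N}(\chi\varphi)\|\geq\|\chi\varphi\|-\|(I-\mathscr P^{\scriptscriptstyle T}_{\scriptscriptstyle N})(\chi\varphi)\|$, norm equivalence on $\mathbf H_m$, and uniform smallness of the projection tail) for Part (2). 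Your contradiction step showing the limit cannot vanish --- Duhamel for $U^*(T-t)\varphi^T_n$ plus the free observability (\ref{basic-case-2}) --- is also the paper's argument. The one place where your proposal genuinely breaks down is the \emph{uniqueness} half of the compactness--uniqueness scheme.

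You conclude $\varphi\equiv 0$ from $\chi\varphi\equiv 0$ by citing unique continuation for the wave operator with bounded potential (\cite{RZ-98}). But the limit function produced by the extraction has terminal datum only in $\mathcal H^{-6/5}$, so $\varphi\in C([0,T];H^{-1/5})\cap C^1([0,T];H^{-6/5})$: its spatial regularity is strictly below $L^2$. The unique continuation theorems you invoke are formulated for (at least) locally square-integrable, in practice energy-class, solutions; at this negative regularity they do not apply, and the obvious repair --- mollify $\varphi$ to gain regularity and then apply UC --- fails because mollification does not commute with the variable coefficients $a(x)$ and the limit potential $p(t,x)$, leaving commutator errors of the same order as the solution itself. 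This is precisely why the paper's Appendix \ref{Appendix-obs} never uses a qualitative UC theorem: it regularizes the \emph{terminal datum} with the spectral cut-off $\vartheta(-\varepsilon^2\Delta)$, applies the quantitative $\mathcal H^{-1}$-level Carleman observability (\ref{basic-case-1}) (valid for $L^\infty$ potentials with uniform constant, see Remark \ref{remark-general potential}) to the truncated solution $\varphi^{0,\varepsilon}$, and controls the resulting commutators $[\vartheta(-\varepsilon^2\Delta),a]$ and $[\vartheta(-\varepsilon^2\Delta),p]$ through the operator estimates of Lemma \ref{Communicators-estimate}, obtaining $\|\varphi^{0,\varepsilon}[T]\|_{_{\mathcal H^{-1}}}\leq C(\varepsilon^{4/5}+\varepsilon^{8/35})\|\psi\|_{_{\mathcal H^{\scriptscriptstyle-6/5}}}\to 0$ and hence $\psi=0$ quantitatively. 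This regularization--commutator argument is the technical core of the proof of (\ref{full-observability}); your proposal treats it as "available" off the shelf, which it is not, so the gap is real even though every other step of your plan is sound.
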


Taking the first assertion of Proposition \ref{prop-obs} for granted, we prove in what follows the second, regarding the ``truncated'' observability inequality (see Step 2.3 in Section \ref{Section-outline}).

\begin{proof}[{\bf Proof of Proposition \ref{prop-obs}(2)}]
We first claim that for an arbitrarily given $m\in\N^+$, there exists a constant $C_m>0$ (depending also on $T,R$) such that
\begin{equation}\label{bound-29}
\|\chi\varphi\|^2_{_{H^1(D_T)}}\leq C_m\int_0^T\|\chi\varphi\|^2_{_{H^{\scriptscriptstyle-1/5}}}dt
\end{equation}
for any $\hat u\in B_R$ and $(q_1,q_2)\in \mathbf H_m$, where $\varphi=\mathcal W_{\hat u}^T(\varphi^T)$ with $\varphi^T=(q_2,-q_1+aq_2)$. This can be proved by noticing, in view of (\ref{energy-1}) and (\ref{full-observability}), that 
\begin{equation*}
\|\chi\varphi\|^2_{_{H^1(D_T)}}\leq K_1\|\varphi^T\|_{_{\mathcal H}}^2\leq K_2\|\varphi^T\|_{_{\mathcal H^{\scriptscriptstyle-{6/5}}}}^2\leq K_3\int_0^T\|\chi\varphi\|^2_{_{H^{\scriptscriptstyle-1/5}}}dt,
\end{equation*}
where the constants $K_i>0$ do not depend on $\hat u,q_1,q_2$. 
At the same time, notice that there exists a sequence $\{\mu_N;N\in\N^+\}$ such that $\mu_N\rightarrow 0^+$ and
$$
\int_0^T\|(I-\mathscr P^{\scriptscriptstyle T}_{\scriptscriptstyle N})f\|_{_{H^{\scriptscriptstyle-1/5}}}^2dt\leq \mu_N\|f\|_{_{H^1(D_T)}}^2
$$
for any $f\in H^1(D_T)$. This together with (\ref{bound-29}) yields that
\begin{equation*}
\int_0^T\|\chi \varphi(t)\|^2_{_{H^{\scriptscriptstyle-1/5}}}dt
\leq 2\int_0^T\|\mathscr P^{\scriptscriptstyle T}_{\scriptscriptstyle N}(\chi \varphi)(t)\|^2_{_{H^{\scriptscriptstyle-1/5}}}dt+2C_m\mu_N\int_0^T\|\chi\varphi\|^2_{_{H^{\scriptscriptstyle-1/5}}}dt.
\end{equation*}
Therefore, one can choose $N=N(C_m)\geq 1$ sufficiently large so that $C_m\mu_N\leq 1/4$, and hence
\begin{equation}\label{bound-30}
\int_0^T\|\chi \varphi(t)\|^2_{_{H^{\scriptscriptstyle-1/5}}}dt\leq 4\int_0^T\|\mathscr P^{\scriptscriptstyle T}_{\scriptscriptstyle N}(\chi \varphi)(t)\|^2_{_{H^{\scriptscriptstyle-1/5}}}dt.
\end{equation}
Finally, inequality (\ref{truncated-observability}) follows from (\ref{full-observability}) and (\ref{bound-30}).
\end{proof}

Based on the above analysis, it remains to establish inequality (\ref{full-observability}) for the first part of Proposition \ref{prop-obs}, i.e., the ``full'' observability inequality. Its proof is based on the following intermediate result, which provides the precise statements for  (\ref{basic-ob1}) and (\ref{basic-ob2}) (see Steps 2.1 and 2.2 in Section \ref{Section-outline}), respectively.

\begin{lemma}\label{basic-observability}
Let $T>T''$ be arbitrarily given. Then the following assertions hold.
\begin{enumerate}[leftmargin=2em]
\item[$(1)$] For every $R>0$, there exists a constant $C=C(T,R)>0$ such that 
\begin{equation}\label{basic-case-1}
\int_0^T\|\chi\varphi(t)\|^2dt\geq C\|\varphi^T\|^2_{_{\mathcal H^{-1}}}
\end{equation}
for any $\hat u\in B_R$ and $\varphi^T\in \mathcal H^{-1}$, where $\varphi=\mathcal W^T_{\hat u}(\varphi^T)$. 

\item[$(2)$] There exists a constant $C=C(T)>0$ such that 
\begin{equation}\label{basic-case-2}
\int_0^T\|\chi\varphi(t)\|^2_{_{H^{\scriptscriptstyle-1/5}}}dt\geq C\|\varphi^T\|^2_{_{\mathcal  H^{\scriptscriptstyle-6/5}}}
\end{equation}
for any $\varphi^T\in \mathcal  H^{\scriptscriptstyle-6/5}$, where $\varphi=\mathcal W^T_{0}(\varphi^T)$.
\end{enumerate}
\end{lemma}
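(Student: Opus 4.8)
I would prove the two parts in order, since part (2) is built on part (1) specialized to $\hat u\equiv 0$. For part (1), the plan is to obtain the negative-regularity observability (\ref{basic-case-1}) from an energy-level observability produced by a Carleman estimate. First I would fix a weight $\phi(t,x)=|x-x_1|^2-\beta(t-\tfrac{T}{2})^2$ with $\beta\in(0,1)$; its pseudoconvexity for the d'Alembertian $\boxempty$, together with the requirement that the observation region $\{\chi\geq\chi_0\}\supset N_{\delta'}(x_1)$ cover a neighbourhood of $\Gamma(x_1)$, forces precisely the threshold $T>T''=2\sup_{x\in D}|x-x_1|$. Applying the global Carleman inequality for $\boxempty$ (as in \cite{Zhang-00,Zhang-PRSL}) with a large Carleman parameter, I would absorb the first-order damping $-a(x)\partial_t\varphi$ and the zeroth-order potential $3\hat u^2\varphi$ as lower-order perturbations. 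Since $\hat u\in B_R$ and $11/7>3/2$ gives $H^{\scriptscriptstyle11/7}\hookrightarrow C^{0,1/14}$, the potential is bounded in $C([0,T];L^\infty)$ uniformly over $B_R$, so the resulting constant is of the form $C(T,R)$. This yields an energy-level observability $\int_0^T\|\chi\psi\|_{_{H^1}}^2\gtrsim\|\psi^T\|_{_{\mathcal H}}^2$ for the adjoint system.

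To descend from the energy level to $\mathcal H^{-1}$, I would set $\psi=(-\Delta)^{-1/2}\varphi$, so that $\|\psi^T\|_{_{\mathcal H}}\approx\|\varphi^T\|_{_{\mathcal H^{-1}}}$ and, using that $\chi$ is smooth, $\|\chi\psi\|_{_{H^1}}\approx\|\chi\varphi\|$ modulo lower-order remainders. Because $a$ is smooth and $\hat u^2\in C^{0,1/14}$, the commutators $[(-\Delta)^{1/2},a]$ and $[(-\Delta)^{1/2},\hat u^2]$ turn the equation for $\psi$ into the same adjoint equation up to spatially smoothing source terms, which are again absorbed in the Carleman argument; the remainders and the well-posedness of the rough solution $\varphi\in\mathcal X_T^{-1}$ are controlled by the negative-norm energy bound of Proposition \ref{Energy-estimate}(2) with $s=0$. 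This delivers (\ref{basic-case-1}).

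For part (2) I would use the full-frequency duality of Remark \ref{Case-N-infinity}. Specializing part (1) to $\hat u\equiv 0$ and dualizing yields the $L^2$-null-controllability of $\mathcal V_0(\cdot,\chi\,\cdot)$ together with the HUM control map $\Lambda\colon\mathcal H\to L^2(D_T)$ realizing the minimal-$L^2$-norm control $\zeta=\Lambda(v^0)$ with $v[T]=0$ and $\int_0^T\|\Lambda(v^0)\|^2\lesssim\|v^0\|_{_{\mathcal H}}^2$. The heart of the matter is the regularity lift (\ref{regularity-controlmap}): for $v^0\in\mathcal H^{\scriptscriptstyle1/5}$ one has $\Lambda(v^0)\in L^2_tH^{\scriptscriptstyle1/5}_x$ with $\int_0^T\|\Lambda(v^0)\|_{_{H^{\scriptscriptstyle1/5}}}^2\lesssim\|v^0\|_{_{\mathcal H^{\scriptscriptstyle1/5}}}^2$, which I would establish by the Ervedoza--Zuazua method \cite{EZ-10}. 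Here the restriction $\hat u\equiv 0$ is decisive: applying $(-\Delta)^{s/2}$ (with $s=1/5$) to the free adjoint equation $\boxempty\varphi-a\partial_t\varphi=0$ produces only the commutator source $[(-\Delta)^{s/2},a]\partial_t\varphi$, which is of order $s-1<0$ and hence smoothing because $a$ is smooth, so the $H^{\scriptscriptstyle1/5}$-regularity of the data propagates to the control through the optimality system. The $s=1/5$ instance of the duality in Remark \ref{Case-N-infinity}, namely ``$(\ref{Null-controllability-3})\Leftrightarrow(\ref{observability-3})$'', then converts (\ref{regularity-controlmap}) back into the observability (\ref{basic-case-2}).

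The main obstacle is the regularity lift (\ref{regularity-controlmap}) in part (2): showing that the HUM control inherits the spatial $H^{\scriptscriptstyle1/5}$-regularity of the data requires a quantitative commutator analysis along the lines of \cite{EZ-10}, and it is exactly here that one cannot afford the rough potential $3\hat u^2$ (only $H^{\scriptscriptstyle11/7}$ in space), which explains the restriction to $\hat u\equiv 0$ in this lemma and the deferral of the general $\hat u\in B_R$ case to the compactness-uniqueness argument of Proposition \ref{prop-obs}(1). By contrast, the Carleman estimate of part (1), while technical, follows an established template once the damping and potential are absorbed uniformly over $\hat u\in B_R$.
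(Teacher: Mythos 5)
Your part (2) follows essentially the paper's route: duality (Remark \ref{Case-N-infinity}) reduces (\ref{basic-case-2}) to the regularity lift (\ref{regularity-controlmap}) of the HUM map, proved in the spirit of Ervedoza--Zuazua by exploiting $\hat u\equiv 0$. The paper's implementation, however, uses no commutators at all: since the free system is autonomous (time-independent damping, no potential), the second difference quotient $\hat q(t)=\sigma^{-2}\bigl(q(t+\sigma)-2q(t)+q(t-\sigma)\bigr)$ again solves the adjoint equation, and the dual identity bounds $\sigma^{-1}\bigl(U^*(-\sigma)\Lambda(y^T)-\Lambda(y^T)\bigr)$ in $\mathcal H$ uniformly in $\sigma$, giving $\Lambda\in\mathcal L(\mathcal H^1)$ and then $\mathcal L(\mathcal H^s)$ by interpolation. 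Your conjugation by $(-\Delta)^{1/10}$ would additionally require commutator bounds for $[(-\Delta)^{1/10},a]$ for the Dirichlet fractional Laplacian and a perturbative treatment of the resulting source in the optimality system; this is avoidable, but the plan is morally the same and the reliance on $\hat u\equiv0$ is correctly identified.

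The genuine gap is in part (1), in the descent from the energy level to $\mathcal H^{-1}$ via $\psi=(-\Delta)^{-1/2}\varphi$. First, the localized observation does not survive the conjugation: $\|\chi\psi\|_{_{H^1}}$ and $\|\chi\varphi\|$ differ by $[\chi,(-\Delta)^{1/2}]\psi$, a \emph{nonlocal} operator applied to $\psi$; it is of order zero, hence controlled only by the global quantity $\|\psi(t)\|\approx\|\varphi(t)\|_{_{H^{-1}}}$, not by anything supported in $\{\chi>0\}$. The best you can reach this way is
\begin{equation*}
\int_0^T\|\chi\varphi\|^2dt+\int_0^T\|\varphi\|^2_{_{H^{-1}}}dt\gtrsim \|\varphi^T\|^2_{_{\mathcal H^{-1}}},
\end{equation*}
and the global compact term cannot be ``absorbed in the Carleman argument'': in the paper's inequality (\ref{Carleman-3}) the global term is harmless only because it carries the factor $e^{-\lambda R_0^2/8}$, arising from the region where the weight is small, whereas errors created by a nonlocal operator are spread over the whole domain, enter with the full weight $e^{C\lambda}$, and grow with $\lambda$ at the same rate as the main terms (the weight $e^{\lambda\psi}$ does not commute with $(-\Delta)^{\pm1/2}$). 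Removing that term by compactness--uniqueness would require exactly the unique continuation statement that (\ref{basic-case-1}) quantifies (vanishing of $\chi\varphi^0$ forces $\varphi^0\equiv0$ for an $L^\infty$ limit potential at the $\mathcal H^{-1}$ level) --- this is how the paper \emph{uses} (\ref{basic-case-1}) inside the compactness--uniqueness proof of Proposition \ref{prop-obs}(1), so your route is circular at this point. Second, the potential commutator is not lower order: $\hat u\in B_R$ only gives $\hat u^2\in C([0,T];H^{\scriptscriptstyle 11/7})\hookrightarrow C([0,T];C^{0,1/14})$, so $[(-\Delta)^{1/2},\hat u^2]$ has positive order ($\approx 13/14$), and no standard calculus handles such commutators for Dirichlet fractional powers on a bounded domain.

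The paper's proof of (\ref{basic-case-1}) is designed precisely to bypass these issues: the doubled-variable antiderivative $z(t,s,x)=\int_s^t\varphi(\xi,x)d\xi$ is one order smoother than $\varphi$ \emph{by construction}, is a local operation compatible with Carleman weights, and solves the ultrahyperbolic equation (\ref{z-equation-0}); the Carleman estimate is applied directly to $z$, the potential enters only through its $L^\infty$ norm (Remark \ref{remark-general potential}), and the global remainder comes with the small factor $e^{-\lambda R_0^2/8}$ that the energy estimate (\ref{energy-2}) then beats. If you keep your two-step structure, you must replace the conjugation step by this time-integration device --- at which point you recover the paper's argument.
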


Inequalities (\ref{basic-case-1}) and (\ref{basic-case-2}) are well-understood when $a(x)\equiv 0$. In such case, inequality (\ref{basic-case-1}) can be found in \cite{Zhang-PRSL} (see also \cite{Zhang-00} for the case of boundary control), while the reader is referred to \cite{DL-09} for (\ref{basic-case-2}). On the other hand, we are not able to find an accurate proof in the literature dealing with the space-dependent coefficient $a(x)$. 
Though it is  believed that the presence of $a(x)$ could not lead to essential obstacles, the space dependence of coefficient
would cause some technical complications. 
So, for the reader’s convenience, we provide a sketch of proof for Lemma \ref{basic-observability} below.

\begin{proof}[{\bf Sketch of proof for (\ref{basic-case-1})}]
Let us introduce some notations that will be useful later:
\begin{equation*}
\begin{aligned}
&\mathcal Q=(0,T)\times (0,T)\times D,\\
&T_i=\tfrac{T}{2}-\varepsilon_iT,\quad T_i'=\tfrac{T}{2}+\varepsilon_iT,\\
&\mathcal Q_i=(T_i,T_i')\times (T_i,T_i')\times D,
\end{aligned}
\end{equation*}
where $i=0,1,2$ and 
$
0<\varepsilon_0<\varepsilon_1<\varepsilon_2<\frac{1}{2}
$
to be determined below. Recall the point $x_1\in\R^3\setminus \overline{D}$ established in (\ref{Gamma-condition}).
With $R_0:=\inf_{x\in D}|x-x_1|$ and $R_1:=\sup_{x\in D}|x-x_1|$, let $\alpha\in(0,1)$ be sufficiently close to $1$ so that $R_1^2<\frac{\alpha T^2}{4}$ (in view of $T>T''$).
We then introduce a real function 
$$
\psi(t,s,x)=\frac{1}{2}\left[
|x-x_1|^2-\alpha\left(t-\frac{T}{2}\right)^2-\alpha\left(s-\frac{T}{2}\right)^2
\right]
$$
and define the sets
$$
\Lambda_j=\left\{
(t,s,x)\in \mathcal Q;2\psi(t,s,x)\geq \frac{R_0^2}{j+2}
\right\},\quad j=0,1.
$$
Then, choose $\varepsilon_1$ close to $1/2$ so that $\psi(t,s,x)<0$ for all $(t,s,x)\notin \mathcal Q_1$ and $\Lambda_1\subset \mathcal Q_1$.
At the same time, since $(T/2,T/2,x)\in \Lambda_0$ for every $x\in D$, there holds $\mathcal Q_0\subset \Lambda_0$ for a sufficiently small $\varepsilon_0\in(0,\varepsilon_1)$. Finally, let $\varepsilon_2\in(\varepsilon_1,1/2)$ be arbitrarily given. Summarizing the above, we can conclude the following hierarchy:
\begin{equation}\label{inclusions}
\mathcal Q_0\subset\Lambda_0\subset \Lambda_1\subset \mathcal Q_1\subset \mathcal Q_2.
\end{equation}

We consider a more regular quantity $z$ in the scale of $\mathcal H$: 
\begin{equation*}
z(t,s,x)=\int_s^t\varphi(\xi,x)d\xi.
\end{equation*}
The desired estimate for $\varphi$ is then obtained by using some useful estimates for $z$. Notice that the function $z$
verifies the equation
\begin{equation}\label{z-equation-0}
\partial^2_{tt}z+\partial^2_{ss}z-\Delta z=a(x)(\partial_tz+\partial_sz)-3\int_s^t\hat u^2(\xi,x)\partial_tz(\xi,s,x)d\xi.
\end{equation}
We also use a function $\theta=e^{\lambda \psi}$ with $\lambda >1$. Our goal is to derive that 
\begin{equation}\label{Carleman-1}
\begin{aligned}
\int_{\mathcal Q_0}
(\partial_tz)^2+(\partial_sz)^2
&\lesssim e^{-\lambda }\int_{\mathcal Q}
z^2+(\partial_tz)^2+(\partial_sz)^2\\
&\quad+\int_{0}^{T}\int_{0}^{T}\int_{N_{\delta'}(x_1)}\left[z^2+
(\partial_tz)^2+(\partial_sz)^2
\right]
\end{aligned}
\end{equation}
for a sufficiently large $\lambda$, after some calculations for the weighted function $\theta z$. Inequality (\ref{Carleman-1}) is in fact the Carleman-type estimate, where the set $N_{\delta'}(x_1)$ arises in condition (\ref{Gamma-condition}).

To establish (\ref{Carleman-1}), we make use of \cite[Lemma 2.7]{Zhang-00} (together with some fundamental calculations) to deduce that 
\begin{align}
 \int_{\mathcal Q_1}\theta^2\left[
(\partial_tz)^2+(\partial_sz)^2+|\nabla z|^2
\right] 
&\, \lesssim \lambda^{-1}\int_{\mathcal Q}\theta^2\left[
(\partial_tz)^2+(\partial_sz)^2
\right] \notag \\
& \quad +\lambda^p\int_{\mathcal Q_2}\left[z^2+(\partial_tz)^2+(\partial_sz)^2+|\nabla z|^2 \right] \label{Carleman-2}\\
& \quad +e^{C\lambda}\int_{T_2}^{T_2'}\int_{T_2}^{T_2'}\int_{\Gamma(x_1)}\left|\frac{\partial z}{\partial n}\right|^2,\notag
\end{align}
where $p\in\N^+$ is an absolute constant. We estimate each integral in the RHS as follows:
\begin{enumerate}[leftmargin=2em]
\item Split the first integral in the form
$\int_{\mathcal Q}=\int_{\Lambda_1}+\int_{\mathcal Q\setminus\Lambda_1}$. Then, by the definition of $\Lambda_1$ it follows that
$$
\lambda^{-1}\int_{\mathcal Q\setminus\Lambda_1}\theta^2\left[
(\partial_tz)^2+(\partial_sz)^2
\right]\leq \lambda^{-1}e^{\lambda R_0^2/3}\int_{\mathcal Q}\left[(\partial_tz)^2+(\partial_sz)^2\right],
$$
while the integral on $\Lambda_1$ can be absorbed by the LHS for sufficiently large $\lambda$.

\item The key for dealing with the second integral is to eliminate  $\int_{\mathcal Q_2}|\nabla z|^2$. Roughly speaking, we multiply equation (\ref{z-equation-0}) by $\zeta z$ with $\zeta(t,s)=t(T-t)s(T-s)$, in order to see that
$$
\int_{\mathcal Q_2}|\nabla z|^2\lesssim\int_{\mathcal Q}\zeta|\nabla z|^2\lesssim  \int_{\mathcal Q}\left[
z^2+(\partial_tz)^2+(\partial_sz)^2
\right],
$$
as desired.

\item The integral on $\Gamma(x_1)$ would be bounded by an integral on the neighborhood $N_\delta(x_1)$, i.e.,
$$
\int_{T_2}^{T_2'}\int_{T_2}^{T_2'}\int_{\Gamma(x_1)}\left|\frac{\partial z}{\partial n}\right|^2\lesssim \int_{0}^{T}\int_{0}^{T}\int_{N_{\delta'}(x_1)}\left[z^2+
(\partial_tz)^2+(\partial_sz)^2
\right].
$$
This will be done by means of the well-known multiplier technique; see, e.g., \cite[Chapter VII]{Lions-88} (and also \cite{Zhang-PRSL}).
\end{enumerate}
Thus, inequality  (\ref{Carleman-1}) follows since
\begin{equation*}
{\rm LHS\ of\ }(\ref{Carleman-2})\geq e^{\lambda R^2_0/2}\int_{\mathcal Q_0}\left[(\partial_tz)^2+(\partial_sz)^2
\right],
\end{equation*}
where we also use (\ref{inclusions}) and the fact $2\psi(t,s,x)\geq R^2_0/2$ for all $(t,s,x)\in\Lambda_0$.

Together with condition (\ref{Gamma-condition}),
inequality (\ref{Carleman-1}) leads to the following estimate for $\varphi$:
\begin{equation}\label{Carleman-3}
\int_{T_0}^{T_0'}\|\varphi(t)\|^2dt\lesssim e^{-\lambda R_0^2/8}\int_0^T\|\varphi(t)\|^2 dt +e^{C\lambda}\int_{0}^{T}\|\chi\varphi(t)\|^2 dt
\end{equation}
with a sufficiently large $\lambda$. Finally, the observability (\ref{basic-case-1}) can be proved by combining (\ref{Carleman-3}), the energy estimate (\ref{energy-2}), and the fact 
\begin{equation*}
\int_{S_0}^{S_0'}\|\partial_t\varphi(t)\|^2_{_{H^{-1}}}dt\lesssim\int_{T_0}^{T_0'}\|\varphi(t)\|^2dt,\quad \forall\, S_0\in(T_0,T/2),S_0'\in(T/2,T_0'),
\end{equation*}
whose verfication follows the same idea as in \cite[Lemma 3.4]{Zhang-PRSL}. 
\end{proof}

\begin{remark}\label{remark-general potential}
By analyzing the above sketch, one can notice that the proof of {\rm(\ref{basic-case-1})}
does involve the $L^\infty$-norm of $\hat u$ rather than its $H^{\scriptscriptstyle11/7}$-norm. As a consequence, 
inequality {\rm(\ref{basic-case-1})} remains true in the case where the potential term $3\hat u^2\varphi$ is replaced by $p\varphi$ with $p\in L^\infty(D_T)$. In addition,
the uniformity of the constant $C$ therein is valid for $\|p\|_{_{L^\infty(D_T)}}\leq R$.
\end{remark}

\begin{proof}[{\bf Sketch of proof for (\ref{basic-case-2})}]
Thanks to the duality between controllability and observability (see  Remark \ref{Case-N-infinity}), it suffices to show that for every $v^0\in\mathcal H^{\scriptscriptstyle1/5}$, there is a control $\zeta\in L^2_tH^{\scriptscriptstyle1/5}_x$ satisfying
\begin{equation}\label{regular-controlmap}
v[T]=0\quad \text{and} \quad\int_0^T\|\zeta(t)\|^2_{_{H^{\scriptscriptstyle1/5}}}dt\lesssim \|(v_0,v_1)\|^2_{_{\mathcal H^{\scriptscriptstyle1/5}}},
\end{equation}
where $v=\mathcal V_0(v^0,\chi\zeta)$.

Let the operators
$
A\colon D(A)\subset \mathcal H\rightarrow\mathcal H$ and $B\colon H\rightarrow \mathcal H
$
defined as
$$
A=\left(
\begin{matrix}
0 & -1 \\
-\Delta & a(x)
\end{matrix}
\right),\quad D(A)=\mathcal H^1,\quad
Bf=\left(
\begin{matrix}
	0 \\
	\chi f
\end{matrix}
\right).
$$
Note that the infinitesimal generator of $U(t)$ is in fact $-A$. In addition, the adjoint operators of $A,B$ are
$$
A^*=\left(
\begin{matrix}
	0 & 1 \\
	\Delta & a(x)
\end{matrix}
\right),\quad D(A^*)=D(A),\quad
B^*\left(
\begin{matrix}
	f_0 \\
	f_1
\end{matrix}
\right)
=\chi f_1.
$$
The adjoint $U^*(t)$ of $U(t)$ is generated by $-A^*$.
With the above setting, the controlled system under consideration can be rewritten as
\begin{equation*}
\frac{dy}{dt}+Ay(t)=B\zeta(t),\quad 
y(0)=y_0,
\end{equation*}
while its adjoint system is of the form
\begin{equation}\label{Abstract-system-2}
\frac{dq}{dt}=A^*q(t), \quad q(T)=q^T.
\end{equation}

Thanks to the $\mathcal H^{-1}$-observability (\ref{basic-case-1}) (with $\hat u\equiv 0$), one can use the same argument as in \cite[Chapter 1.4]{Coron-07} for the construction of an HUM map. More precisely, for every $y^{T}\in \mathcal H$, there exists a unique $q^{T}\in \mathcal H$ such that the solution $y\in C([0,T];\mathcal H)$ of system 
\begin{equation}\label{Abstract-system-4}
\frac{dy}{dt}+Ay(t)=BB^*q(t),\quad y(0)=0
\end{equation}
verifies $y(T)=y^{T}$, where $q\in C([0,T];\mathcal H)$ is the solution of (\ref{Abstract-system-2}). This defines a control map 
\begin{equation*}
\Lambda\colon\mathcal H\rightarrow \mathcal H,\quad \Lambda(y^{T})=q^{T}.
\end{equation*}
It is rather standard to verify that $\Lambda\in\mathcal L(\mathcal H)$, while much of efforts will be in ensuring that $\Lambda\in\mathcal L(\mathcal H^1)$ (and hence $\Lambda\in\mathcal L(\mathcal H^s),\, s\in(0,1)$ by interpolation).

To this end, we shall make use of the dual identity
\begin{equation*}
(y^{T},\hat q^{T})_{_{\mathcal H}}= \int_0^{T}(B^*q(t),B^*\hat q(t))dt
\end{equation*}
for any $y^T,\hat q^T\in\mathcal H$,
where $y\in C([0,T];\mathcal H)$ is the solution of (\ref{Abstract-system-4}) with $q^{T}=\Lambda(y^{T})$, and $\hat q\in C([0,T];\mathcal H)$ stands for the solution of adjoint system with $\hat q(T)=\hat q^{T}$. To proceed further, the element $\hat q^{T}$ will be taken as 
$$
\hat q^{T}=\frac{q(T+\sigma)-2\Lambda(y^{T})+q(T-\sigma)}{\sigma^2},\quad \sigma\in(0,1);
$$
notice that $q(T-\sigma)=U^*(\sigma)\Lambda(y^{T})$ and $\Lambda(y^{T})=U^*(\sigma)q(T+\sigma)$.
Hence, it can be checked that 
$$
\hat q(t)=\frac{q(t+\sigma)-2q(t)+q(t-\sigma)}{\sigma^2}.
$$
With this setting, an application of dual identity, together with the observability (\ref{basic-case-1}), gives rise to 
$$
\left\|
\frac{U^*(-\sigma)\Lambda(y^{T})-\Lambda(y^{T})}{\sigma}
\right\|_{_{\mathcal H}}^2\lesssim\|y^{T}\|^2_{_{\mathcal H^1}},
$$
provided that $y^T\in\mathcal H^1$. This implies $\Lambda(y^T)\in D(A^*)=\mathcal H^1$ and $\|\Lambda(y^T)\|_{_{\mathcal H^1}}\lesssim \|y^{T}\|^2_{_{\mathcal H^1}}$. In conclusion, the $\mathcal H^{\scriptscriptstyle1/5}$-controllability (\ref{regular-controlmap}) is obtained; in fact, the relevant control $\zeta$ is constructed via 
$
\zeta(t)=\chi\partial_t\varphi,
$
where $q=(\varphi,\partial_t\varphi)\in C([0,T];\mathcal H)$ is the solution of (\ref{Abstract-system-2}) with 
$
q^{T}=-\Lambda(U(T)v^0).
$
\end{proof}

As stated in Step 2.3 of Section \ref{Section-outline}, the basic inequalities (\ref{basic-case-1}),(\ref{basic-case-2}) enable us to accomplish the proof for (\ref{full-observability}), by means of compactness-uniqueness argument.
Since this part of proof is rather analogous to the analysis developed in \cite[Section 4.1]{ADS-16}, we place it in Appendix~\ref{Appendix-obs}.

\subsection{High-frequency dissipation and contractibility}\label{Section-Contractibility}

With these results established in Sections \ref{Section-duality} and \ref{Section-observability}, we are able to demonstrate the high-frequency dissipation and hence the contractibility for (\ref{Problem-linearized}) (see Step 3 of Section \ref{Section-outline}). The first assertion of Proposition \ref{Prop-linear} can be then obtained. 

Let us begin with the following result relevant to (\ref{LF-controllability}), which 
will imply the strong dissipation in the high frequency.

\begin{proposition}\label{theorem-duality-1}
Let $T>T''$, $R>0$ and $m\in\N^+$ be arbitrarily given, and set $N=N(T,R,m)\in\N^+$ to be established in Proposition {\rm\ref{prop-obs}(2)}. Then there exists a constant $C=C(T,R)>0$, not depending on $m,N$, such that for every $\hat u\in B_R$ and $v^0\in \mathcal H$, there is a control $\zeta\in L^2_tH^{\scriptscriptstyle1/5}_x$ satisfying 
\begin{equation}\label{Null-controllability-2}
\mathbf P_mv[T]=\mathbf P_mU(T)v^0\quad {\it and}\quad \int_0^T \|\zeta(t)\|_{_{H^{\scriptscriptstyle1/5}}}^2dt\leq C\|v^0\|^2_{_{\mathcal H}},
\end{equation} 
where $v=\mathcal V_{\hat u}(v^0,\chi\mathscr P^{\scriptscriptstyle T}_{\scriptscriptstyle N}\zeta)$.
\end{proposition}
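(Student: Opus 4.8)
The plan is to reduce the assertion to the low-frequency null-controllability already packaged in the duality of Proposition~\ref{theorem-duality}, but applied to a \emph{smooth} target rather than to $v^0$ itself; this is precisely what converts the $\mathcal H^{\scriptscriptstyle 1/5}$-cost of (\ref{Null-controllability-1}) into the desired $\mathcal H$-cost. First I would decompose, by linearity of $\mathcal V_{\hat u}$, the sought solution as $v = z + w$, where $z = \mathcal V_{\hat u}(v^0,0)$ is the uncontrolled linearized evolution and $w = \mathcal V_{\hat u}(0,\chi\mathscr P^{\scriptscriptstyle T}_{\scriptscriptstyle N}\zeta)$ carries the control. Since $\mathbf P_m v[T] = \mathbf P_m z[T] + \mathbf P_m w[T]$, the requirement $\mathbf P_m v[T] = \mathbf P_m U(T)v^0$ is equivalent to steering $w$ so that
\begin{equation*}
\mathbf P_m w[T] = \mathbf P_m\big(U(T)v^0 - z[T]\big) =: y^T \in \mathbf H_m .
\end{equation*}

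Next I would show that the gap $U(T)v^0 - z[T]$ gains regularity, which is the crux of the $m$-uniformity. Setting $g = z - U(\cdot)v^0$, one checks that $g$ solves $\boxempty g + a\partial_t g = -3\hat u^2 z$ with $g[0]=0$ (the potential $3\hat u^2$ is moved into the source), so $g[T] = z[T]-U(T)v^0$. Using that $H^{\scriptscriptstyle 11/7}\hookrightarrow L^\infty$ is an algebra in dimension three, so that $\hat u^2 \in C([0,T];H^{\scriptscriptstyle 11/7})$ with norm $\lesssim R^2$, together with the product/multiplier estimate $\|\hat u^2 z\|_{H^{\scriptscriptstyle 1/5}} \lesssim \|\hat u^2\|_{H^{\scriptscriptstyle 11/7}}\|z\|_{H^1}$, the source obeys $\|3\hat u^2 z\|_{L^2_tH^{\scriptscriptstyle 1/5}_x} \lesssim R^2 \|z\|_{C([0,T];H^1)}$. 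The energy estimate (\ref{energy-1}) applied to the equation for $z$ (with $p = 3\hat u^2 \in B_R$ and zero force) gives $\|z\|_{C([0,T];H^1)}\lesssim \|v^0\|_{\mathcal H}$, and applied to the equation for $g$ (with $p = 0$ and source $-3\hat u^2 z$) gives
\begin{equation*}
\|U(T)v^0 - z[T]\|_{\mathcal H^{\scriptscriptstyle 1/5}} = \|g[T]\|_{\mathcal H^{\scriptscriptstyle 1/5}} \lesssim \|3\hat u^2 z\|_{L^2_tH^{\scriptscriptstyle 1/5}_x} \lesssim \|v^0\|_{\mathcal H},
\end{equation*}
with a constant depending only on $T, R$. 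Since $\mathbf P_m$ has norm at most one on every $\mathcal H^s$, it follows that $\|y^T\|_{\mathcal H^{\scriptscriptstyle 1/5}} \lesssim \|v^0\|_{\mathcal H}$, uniformly in $m$.

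Finally I would invoke the duality. By Proposition~\ref{prop-obs}(2) the observability (\ref{observability-1}) holds with constant $C_0/4$, which is independent of $m, N$; hence by Proposition~\ref{theorem-duality} (equivalence of its statements (2) and (3)) the operator $\mathcal F_T$ is surjective onto $\mathbf H_m$ and, via Lemma~\ref{Theorem-surjective}, admits a right inverse $\mathcal G$ with $\int_0^T\|\mathcal G y^T\|^2_{H^{\scriptscriptstyle 1/5}}dt \leq C_1\|y^T\|^2_{\mathcal H^{\scriptscriptstyle 1/5}}$, where $C_1 = C_1(T,R)$ is independent of $m, N$. Taking $\zeta = \mathcal G(y^T)$ yields $\mathbf P_m w[T] = y^T$, hence $\mathbf P_m v[T] = \mathbf P_m U(T)v^0$, while combining the two displayed bounds gives $\int_0^T\|\zeta\|^2_{H^{\scriptscriptstyle 1/5}}dt \leq C_1\|y^T\|^2_{\mathcal H^{\scriptscriptstyle 1/5}} \leq C\|v^0\|^2_{\mathcal H}$ with $C = C(T,R)$. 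I expect the main obstacle to be verifying the smoothing gain with an $m$-independent constant: the naive route of steering $v$ itself to $0$ via (\ref{Null-controllability-1}) both demands $v^0\in\mathcal H^{\scriptscriptstyle 1/5}$ and carries an $\mathcal H^{\scriptscriptstyle 1/5}$-cost, so everything hinges on replacing the target by the smooth object $y^T$ and on the product estimate $\|\hat u^2 z\|_{H^{\scriptscriptstyle 1/5}}\lesssim \|\hat u^2\|_{H^{\scriptscriptstyle 11/7}}\|z\|_{H^1}$ holding with a constant tied only to $R$, rather than entering through any norm equivalence on the finite-dimensional space $\mathbf H_m$.
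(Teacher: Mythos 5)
Your proposal is correct and follows essentially the same route as the paper's proof: both exploit the fact that the discrepancy between the linearized flow and the free group $U(\cdot)v^0$ gains $\mathcal H^{\scriptscriptstyle1/5}$-regularity (via the $H^{\scriptscriptstyle11/7}$ multiplier/algebra bound, with constants depending only on $T,R$), and then apply the $m,N$-uniform low-frequency controllability furnished by Proposition~\ref{prop-obs}(2) through the duality of Proposition~\ref{theorem-duality} and Lemma~\ref{Theorem-surjective}. The only difference is organizational: the paper absorbs the source $-3\hat u^2 z$ into a backward auxiliary solution $\tilde w=\mathcal V^T_{\hat u}(0,0,-3\hat u^2 z)$ with $\tilde w[T]=0$ and null-controls $\tilde v=w-\tilde w$ from the smooth datum $-\tilde w[0]$, whereas you compute the smooth target $y^T=\mathbf P_m\bigl(U(T)v^0-z[T]\bigr)$ directly and hit it with the right inverse $\mathcal G$ of $\mathcal F_T$; these are two equivalent reformulations of the same reduction.
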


\begin{proof}[{\bf Proof}]
For $\hat u\in B_R$ and $v^0\in \mathcal H$, we consider a controlled system for the difference 
$$
w=v-z\quad {\rm with\ }v=\mathcal V_{\hat u}(v^0,\chi\mathscr P^{\scriptscriptstyle T}_{\scriptscriptstyle N}\zeta),\ z[\cdot]=U(\cdot)v^0,
$$
where $\zeta$ stands for the control to be determined.
Then, it follows that
\begin{equation}\label{Problem-w}
w=\mathcal V_{\hat u}(0,0,-3\hat u^2z+\chi\mathscr P^{\scriptscriptstyle T}_{\scriptscriptstyle N}\zeta)\footnote{Notice that the solution $w$ could possess higher regularity than $v$, while $z[T]\rightarrow 0$ in $\mathcal H$ as $T\rightarrow+\infty$ (see Lemma \ref{Linear-semigroup}). Such a simple but crucial observation justifies the use of the terminology ``asymptotic regularity''}.
\end{equation} 
We next introduce a function
\begin{equation*}
\tilde w=\mathcal V_{\hat u}^T(0,0,-3\hat u^2z).
\end{equation*}
Recall that for every $g\in H^{\scriptscriptstyle11/7}$, the mapping $f\mapsto gf$ is a bounded linear operator from $H^1$ into itself, and its operator norm can be bounded by $C\|g\|_{_{H^{\scriptscriptstyle11/7}}}$; this is mainly due to the fact that $H^{\scriptscriptstyle11/7}$ is a Banach algebra with respect to pointwise multiplication. This together with Lemma~\ref{Linear-semigroup} implies  that 
\begin{equation}\label{bound-0}
	\int_0^T\|(\hat u^2z)(t)\|^2_{_{H^{1}}}dt\leq C\|v^0\|^2_{_{\mathcal H}},
\end{equation}
where we have also used the setting $\hat u\in B_R$.
This means, in view of the estimate of type (\ref{energy-1}) for $\mathcal V^T_{\hat u}$, that 
\begin{equation}\label{bound-1}
\|\tilde w[t]\|^2_{_{\mathcal  H^{1}}}\leq C\|v^0\|^2_{_{\mathcal H}}
\end{equation}
for all $t\in[0,T]$, where the constant $C$ depends on $T,R$. Letting
$
\tilde v=w-\tilde w,
$
it then follows that 
$$
\tilde v=\mathcal V_{\hat u}(-\tilde w[0],\chi\mathscr P^{\scriptscriptstyle T}_{\scriptscriptstyle N}\zeta).
$$

Now, making use of Propositions \ref{theorem-duality} and \ref{prop-obs}, it follows that there exists a control $\zeta\in L^2_tH^{\scriptscriptstyle1/5}_x$ such that 
\begin{equation}\label{bound-2}
\mathbf P_m\tilde v[T]=0\quad \text{and}\quad \int_0^T \|\zeta(t)\|_{_{H^{\scriptscriptstyle1/5}}}^2dt\leq C\|\tilde w[0]\|^2_{_{\mathcal H^{\scriptscriptstyle1/5}}},
\end{equation} 
where the constant $C$ depends on $T,R$.
Finally, putting (\ref{Problem-w}),(\ref{bound-1}),(\ref{bound-2}) all together, we conclude that
\begin{equation}\label{LFC-w}
\mathbf P_mw[T]=0\quad \text{and}\quad \int_0^T \|\zeta(t)\|_{_{H^{\scriptscriptstyle1/5}}}^2dt\leq C\|v^0\|^2_{_{\mathcal H}},
\end{equation}
which leads to (\ref{Null-controllability-2}), as desired.
\end{proof}

We conclude this subsection with a proof of Proposition \ref{Prop-linear}(1). 

\begin{proof}[{\bf Proof of Proposition \ref{Prop-linear}(1)}]
We first notice that 
\begin{equation}\label{projection-0}
\int_0^T\|\mathscr P^{\scriptscriptstyle T}_{\scriptscriptstyle N}\phi(t)\|^2_{_{H^{\scriptscriptstyle1/5}}}dt=\sum_{j,k=1}^N\lambda_j^{1/5}\phi_{jk}^2\leq \sum_{j,k=1}^\infty\lambda_j^{1/5}\phi_{jk}^2=\int_0^T\|\phi(t)\|^2_{_{H^{\scriptscriptstyle1/5}}}dt
\end{equation}
for any $\phi\in L^2_tH^{\scriptscriptstyle1/5}_x$, where $\phi_{jk}=\int_{D_T}\phi(t,x)\alpha^{\scriptscriptstyle T}_k(t)e_j(x)dtdx$.
To continue,
recall the constant $T_\varepsilon$ established in (\ref{largetime-2}). 
Let us continue to use the setting in the proof of Proposition \ref{theorem-duality-1}, where the time spread $T$ is specified as $T>T_\varepsilon$ and $m\in\N^+$ will be determined later. Recall that $v$ is decomposed as
$$
v=w+z\quad {\rm with\ }w=\mathcal V_{\hat u}(0,0,-3\hat u^2z+\chi\mathscr P^{\scriptscriptstyle T}_{\scriptscriptstyle N}\zeta),\ z[\cdot]=U(\cdot)v^0.
$$ 
Taking (\ref{energy-1}),(\ref{bound-0}),(\ref{LFC-w}),(\ref{projection-0}) into account, it  follows that 
\begin{equation*}
\begin{array}{ll}
\displaystyle\|w[T]\|_{_{\mathcal H^{\scriptscriptstyle1/5}}}^2\leq C\int_0^T\|-3\hat u^2z+\chi\mathscr P^{\scriptscriptstyle T}_{\scriptscriptstyle N}\zeta(t)\|^2_{_{H^{\scriptscriptstyle1/5}}}dt\leq C\|v^0\|_{_{\mathcal H}}^2,
\end{array}
\end{equation*}
and hence
\begin{equation*}
\|(I-\mathbf P_m)w[T]\|_{_{\mathcal H}}^2\leq C\lambda^{-1/5}_m\|v^0\|_{_{\mathcal H}}^2.
\end{equation*}
As a consequence,  for a sufficiently large $m\in\N^+$ there holds
$$
\|(I-\mathbf P_m)w[T]\|_{_{\mathcal H}}\leq \frac{\varepsilon}{2}\|v^0\|_{_{\mathcal H}},
$$
which leads to 
$$
\|w[T]\|_{_{\mathcal H}}\leq \frac{\varepsilon}{2}\|v^0\|_{_{\mathcal H}}.
$$
This together with (\ref{asymptotic-1}) gives rise to the desired.
\end{proof}

\subsection{Structure of the control}\label{Section-controlstructure}

In the previous  subsection, we have obtained the existence of the controls contracting system (\ref{Problem-linearized}). In order to complete the proof of Proposition \ref{Prop-linear}, it remains to investigate the structure of control. As stated in Step 4 of Section \ref{Section-outline}, the HUM-based argument of optimal control will come into play below.

Let us introduce a functional 
$
J\colon\mathbf H_m\rightarrow\R
$
by setting
$$
J(q)=\frac{1}{2}\int_0^T\|\mathscr P^{\scriptscriptstyle T}_{\scriptscriptstyle N}(\chi\varphi)(t)\|^2_{_{H^{\scriptscriptstyle-1/5}}}dt+\langle (v_0,v_1+av_0),\varphi^\bot[0] \rangle_{_{\mathcal H^{\scriptscriptstyle1/5},\mathcal H_*^{\scriptscriptstyle1/5}}},\quad q=(q_1,q_2),
$$
where $\varphi=\mathcal W_{\hat u}^T(\varphi^T)$ with $\varphi^T=(q_2,-q_1+aq_2)$.
Our characterizations of the functional $J$ is collected in the following result, which contributes to the last ingredient of the proof for Proposition \ref{Prop-linear}(2). 

\begin{proposition}\label{solvability-minimize}
Let $T>T''$, $R>0$ and $m\in\N^+$ be arbitrarily given, and set $N=N(T,R,m)\in\N^+$ to be established in Proposition {\rm\ref{prop-obs}(2)}. Then the following assertions hold.
\begin{enumerate}[leftmargin=2em]
\item[$(1)$] For every  $\hat u\in B_R$ and $v^0\in \mathcal H^{\scriptscriptstyle1/5}$, the functional $J$ has a unique global minimizer $\hat q=(\hat q_1,\hat q_2)\in\mathbf H_m$.
		
\item[$(2)$] There exists a constant $C=C(T,R)>0$ such that
\begin{equation*}
\mathbf P_m\hat v[T]=0\quad {\it and}\quad \int_0^T \|\hat\zeta(t)\|_{_{H^{\scriptscriptstyle1/5}}}^2dt\leq C\|v^0\|^2_{_{\mathcal H^{\scriptscriptstyle1/5}}}
\end{equation*} 
for any $\hat u\in B_R$ and $v^0\in \mathcal H^{\scriptscriptstyle1/5}$, where 
$\hat v=\mathcal V_{\hat u}(v^0,\chi\mathscr P^{\scriptscriptstyle T}_{\scriptscriptstyle N}\hat\zeta)$ with 
\begin{equation*}
\hat\zeta=(-\Delta)^{-1/5}\mathscr P^{\scriptscriptstyle T}_{\scriptscriptstyle N}(\chi\hat\varphi), \quad \hat\varphi=\mathcal W_{\hat u}^T(\hat\varphi^T),\quad \hat \varphi^T=(\hat q_2,-\hat q_1+a\hat q_2).
\end{equation*}
				
\item[$(3)$] For every $\hat u\in B_R$, the mapping $\Upsilon_{\hat u}$, defined by 
$$
\Upsilon_{\hat u}\colon\mathcal H^{\scriptscriptstyle1/5} \rightarrow \mathbf H_m\subset\mathcal H_*^{1/5},\quad \Upsilon_{\hat u}(v^0)=\hat q,
$$ 
is a bounded linear operator.
		
\item[$(4)$] The mapping 
$
B_R\ni\hat u\mapsto\Upsilon_{\hat u}\in \mathcal L(\mathcal H^{\scriptscriptstyle1/5};\mathcal H_*^{\scriptscriptstyle1/5})
$ 
is Lipschitz and continuously differentiable.
\end{enumerate}
\end{proposition}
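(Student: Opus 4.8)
The plan is to treat $J$ as a strictly convex quadratic-plus-linear functional on the finite-dimensional space $\mathbf H_m$ and to read off the control from its unique critical point via the Hilbert uniqueness method, following the scheme of \cite[Proposition 5.5]{Shi-15}; the regularity in $\hat u$ will then be obtained by writing $\Upsilon_{\hat u}$ as $A_{\hat u}^{-1}L_{\hat u}$ and invoking the smooth dependence of the adjoint solution map. For assertion $(1)$, I would first note that the quadratic part $q\mapsto\frac12\int_0^T\|\mathscr P^T_N(\chi\varphi)\|^2_{H^{-1/5}}dt$ is coercive on $\mathbf H_m$: by the truncated observability $(\ref{truncated-observability})$ of Proposition \ref{prop-obs}$(2)$ it is bounded below by $\frac{C_0}{8}\|\varphi^T\|^2_{\mathcal H^{-6/5}}$, which on the finite-dimensional $\mathbf H_m$ dominates $c\|q\|^2$ for some $c>0$. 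The linear term is bounded by $C\|v^0\|_{\mathcal H^{1/5}}\|\varphi^T\|_{\mathcal H^{-6/5}}$ using the energy estimate $(\ref{energy-1})$ for the adjoint solution, hence it is subordinate to the quadratic part; thus $J$ is strictly convex and coercive and has a unique global minimizer $\hat q$ by the direct method.

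For assertion $(2)$, I would write the Euler--Lagrange equation $DJ(\hat q)[p]=0$ for all $p\in\mathbf H_m$. Using $(\cdot,\cdot)_{H^{-1/5}}=((-\Delta)^{-1/5}\cdot,\cdot)$, the derivative of the quadratic part equals $\int_0^T(\hat\zeta,\mathscr P^T_N(\chi\psi))\,dt$ with $\hat\zeta=(-\Delta)^{-1/5}\mathscr P^T_N(\chi\hat\varphi)$ and $\psi=\mathcal W^T_{\hat u}(p_2,-p_1+ap_2)$; by the duality identity $(\ref{dual-3})$ this is $\langle\mathbf P_m\hat v_c[T],p\rangle$, where $\hat v_c=\mathcal V_{\hat u}(0,\chi\mathscr P^T_N\hat\zeta)$, while the linear term contributes $\langle\mathbf P_m\hat v_f[T],p\rangle$ with $\hat v_f=\mathcal V_{\hat u}(v^0,0)$ through the same integration-by-parts pairing evaluated at $t=0$. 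Non-degeneracy of the pairing on $\mathbf H_m$ then makes $DJ(\hat q)=0$ exactly the relation $\mathbf P_m\hat v[T]=0$ for $\hat v=\hat v_f+\hat v_c$. For the quantitative bound I would use $J(\hat q)\le J(0)=0$, which gives $\frac12\int_0^T\|\mathscr P^T_N(\chi\hat\varphi)\|^2_{H^{-1/5}}dt\le|\langle(v_0,v_1+av_0),\hat\varphi^\bot[0]\rangle|\lesssim\|v^0\|_{\mathcal H^{1/5}}\|\hat\varphi^T\|_{\mathcal H^{-6/5}}$; reabsorbing $\|\hat\varphi^T\|_{\mathcal H^{-6/5}}$ by observability and noting the identity $\int_0^T\|\hat\zeta\|^2_{H^{1/5}}dt=\int_0^T\|\mathscr P^T_N(\chi\hat\varphi)\|^2_{H^{-1/5}}dt$ yields $\int_0^T\|\hat\zeta\|^2_{H^{1/5}}dt\le C\|v^0\|^2_{\mathcal H^{1/5}}$. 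The essential point is that $C$ depends only on $T,R$ and not on $m,N$, which is precisely what the $\mathcal H^{-6/5}$-normalization in $(\ref{observability-1})$ was arranged to deliver.

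For assertion $(3)$, the minimizer solves the linear system $A_{\hat u}\hat q=L_{\hat u}v^0$, where $A_{\hat u}$ is the symmetric positive-definite matrix of the quadratic form and $L_{\hat u}$ is the linear map induced by the linear term; hence $\Upsilon_{\hat u}=A_{\hat u}^{-1}L_{\hat u}$ is linear, and the bound from $(2)$ together with the equivalence of norms on $\mathbf H_m$ gives boundedness $\mathcal H^{1/5}\to\mathcal H_*^{1/5}$. Assertion $(4)$ is the main obstacle. Here I would argue that $\hat u\mapsto 3\hat u^2$ is Lipschitz and continuously differentiable from $B_R$ into $C([0,T];H^{11/7})$ (using that $H^{11/7}$ is a Banach algebra), and then invoke Proposition \ref{Energy-estimate}$(3)$, which makes the adjoint solution map $\hat u\mapsto\mathcal W^T_{\hat u}$ Lipschitz and continuously differentiable. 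Consequently both $A_{\hat u}$ and $L_{\hat u}$ depend Lipschitz-$C^1$ on $\hat u$; since the observability constant is bounded below uniformly on $B_R$, the family $A_{\hat u}^{-1}$ exists with uniformly bounded norm, and as operator inversion is smooth on the open set of invertible operators, the composition $\Upsilon_{\hat u}=A_{\hat u}^{-1}L_{\hat u}$ inherits the Lipschitz and $C^1$ regularity. The delicate bookkeeping throughout is to keep every constant uniform in $m,N$ while differentiating in $\hat u$, so that the control map produced here can be plugged into the perturbation argument leading from Proposition \ref{Prop-linear} to Theorem \ref{Th1}.
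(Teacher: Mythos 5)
Your proposal is correct and rests on the same pillars as the paper's proof: the HUM-type finite-dimensional minimization, the duality identity (\ref{dual-3})/(\ref{dual-identity-3}) to convert the Euler--Lagrange equation into $\mathbf P_m\hat v[T]=0$, the truncated observability (\ref{truncated-observability}) as the source of all coercivity, and Proposition \ref{Energy-estimate}(3) for the dependence on $\hat u$. Where you diverge is in the packaging of assertions (1)--(4). For (1), you argue via strict convexity and coercivity; the paper instead proves uniqueness by the parallelogram law plus observability --- these are the same fact stated differently. For the bound in (2), you use $J(\hat q)\le J(0)=0$, whereas the paper evaluates the Euler--Lagrange identity (\ref{zero-derivation}) at $q=\hat q$ (an equality rather than an inequality); both absorb the linear term by observability and both keep the constant uniform in $m,N$, which you correctly identify as the point of the $\mathcal H^{\scriptscriptstyle-6/5}$-normalization. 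The genuine difference is in (3)--(4): you factor $\Upsilon_{\hat u}=A_{\hat u}^{-1}L_{\hat u}$ through the Gram operator of the quadratic form and deduce linearity, boundedness, and Lipschitz-$C^1$ dependence on $\hat u$ from uniform coercivity of $A_{\hat u}$ (norm of $A_{\hat u}^{-1}$ bounded by the observability constant, uniformly on $B_R$ and in $m,N$) together with smoothness of operator inversion. The paper never forms this operator explicitly: it proves linearity by subtracting Euler--Lagrange identities and testing against $q=\alpha\hat q+\beta\hat q'-\hat q''$, and proves the Lipschitz estimate by the hands-on difference computation leading to (\ref{Lipschitz-continuity}) and the bound via (\ref{bound-36}), with $C^1$-smoothness obtained from the implicit function theorem. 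Your factorization is cleaner and makes the uniformity bookkeeping more transparent; the paper's route avoids introducing the Gram operator but at the cost of longer identity-chasing. One small correction: to bound the linear term $\langle(v_0,v_1+av_0),\varphi^\bot[0]\rangle$ you need the backward energy estimate in negative norms, i.e.\ Proposition \ref{Energy-estimate}(2), inequality (\ref{energy-2}), not (\ref{energy-1}), since the adjoint solution is propagated in $\mathcal H^{\scriptscriptstyle-6/5}$.
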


\begin{proof}[{\bf Proof}]
We begin with verifying assertion (1). It is easy to check that for any given $v^0\in \mathcal H^{\scriptscriptstyle1/5}$, the functional $J$ is bounded below on $\mathbf H_m$, i.e.,
$$
r_0:=\inf_{q\in \mathbf H_m}J(q)>-\infty,
$$
which enables us to assure the existence of a global minimizer $\hat q=(\hat q_1,\hat q_2)$.
To verify the uniqueness, let $\tilde q=(\tilde q_1,\tilde q_2)\in \mathbf H_m$ be another minimizer, i.e., $J(\tilde q)=r_0$. Then, one has
\begin{equation*}
\begin{aligned}
&\int_0^T\left\|
\mathscr P^{\scriptscriptstyle T}_{\scriptscriptstyle N}\left[\chi\left(
\frac{\hat\varphi-\tilde\varphi}{2}
\right)\right]
\right\|^2_{_{H^{\scriptscriptstyle-1/5}}}dt+\int_0^T\left\|
\mathscr P^{\scriptscriptstyle T}_{\scriptscriptstyle N}\left[\chi\left(
\frac{\hat\varphi+\tilde\varphi}{2}
\right)\right]
\right\|^2_{_{H^{\scriptscriptstyle-1/5}}}dt\\
&=\frac{1}{2}\int_0^T\|\mathscr P^{\scriptscriptstyle T}_{\scriptscriptstyle N}(\chi\hat\varphi)(t)\|^2_{_{H^{\scriptscriptstyle-1/5}}}dt+\frac{1}{2}\int_0^T\|\mathscr P^{\scriptscriptstyle T}_{\scriptscriptstyle N}(\chi\tilde\varphi)(t)\|^2_{_{H^{\scriptscriptstyle-1/5}}}dt,
\end{aligned}
\end{equation*}
in view of the parallelogram law, where $\hat\varphi=\mathcal W_{\hat u}^T(\hat\varphi^T)$ and $\tilde\varphi=\mathcal W_{\hat u}^T(\tilde\varphi^T)$ with $\hat \varphi^T=(\hat q_2,-\hat q_1+a\hat q_2)$ and $\tilde \varphi^T=(\tilde q_2,-\tilde q_1+a\tilde q_2)$.
Accordingly, 
\begin{equation}\label{bound-32}
\begin{aligned}
&\int_0^T\left\|
\mathscr P^{\scriptscriptstyle T}_{\scriptscriptstyle N}\left[\chi\left(
\frac{\hat\varphi-\tilde\varphi}{2}
\right)\right]
\right\|^2_{_{H^{\scriptscriptstyle-1/5}}}dt+2J(\tfrac{1}{2}(\hat q+\tilde q))\\
&=J(\hat q_1,\hat q_2)+J(\tilde q_1,\tilde q_2).
\end{aligned}
\end{equation}
The RHS of (\ref{bound-32}) equals to $2r_0$, while
$$
{\rm LHS\ of\ }(\ref{bound-32})\geq \int_0^T\left\|
\mathscr P^{\scriptscriptstyle T}_{\scriptscriptstyle N}\left[\chi\left(
\frac{\hat\varphi-\tilde\varphi}{2}
\right)\right]
\right\|^2_{_{H^{\scriptscriptstyle-1/5}}}dt+2r_0.
$$
This implies that 
$$
\int_0^T\left\|
\mathscr P^{\scriptscriptstyle T}_{\scriptscriptstyle N}\left[\chi\left(
\frac{\hat\varphi-\tilde\varphi}{2}
\right)\right]
\right\|^2_{_{H^{\scriptscriptstyle-1/5}}}dt=0,
$$
which combined with the observability (\ref{truncated-observability}) leads to $\hat q=\tilde q$. Therefore, we conclude assertion (1).
	
To prove assertion (2), we first notice the dual identity 
\begin{equation}\label{dual-identity-3}
\begin{aligned}
&\langle \hat v[T], (q_1-aq_2,q_2)\rangle_{_{\mathcal H^{\scriptscriptstyle1/5},\mathcal H_*^{\scriptscriptstyle1/5}}}-\langle (v_0,v_1+av_0), \varphi^\bot[0]\rangle_{_{\mathcal H^{\scriptscriptstyle1/5},\mathcal H_*^{1/5}}}\\
&=-\langle 
\hat v(T),aq_2
\rangle_{_{H^{\scriptscriptstyle6/5},H^{\scriptscriptstyle-6/5}}}+\int_0^T\langle 
\hat\zeta,\mathscr P^{\scriptscriptstyle T}_{\scriptscriptstyle N}(\chi\varphi)
\rangle_{_{H^{\scriptscriptstyle1/5},H^{\scriptscriptstyle-1/5}}} dt,
\end{aligned}
\end{equation}
for any $q=(q_1,q_2)\in \mathbf H_m$, where $
\varphi=\mathcal W_{\hat u}^T(\varphi^T)$ with $\varphi^T=(q_2,- q_1+a q_2)$.
At the same time, since $\hat q$ is the minimizer of $J$, the G\^{a}teaux derivative at $\hat q$ equals to zero. Therefore, it follows that 
\begin{equation}\label{zero-derivation}
\int_0^T\langle \hat\zeta, \mathscr P^{\scriptscriptstyle T}_{\scriptscriptstyle N}(\chi\varphi) \rangle_{_{H^{\scriptscriptstyle1/5},H^{\scriptscriptstyle-1/5}}}dt+\langle (v_0,v_1+av_0),\varphi^\bot[0] \rangle_{_{\mathcal H^{\scriptscriptstyle1/5},\mathcal H_*^{\scriptscriptstyle1/5}}}=0.
\end{equation}
This together with (\ref{dual-identity-3}) gives rise to 
$
\langle \hat v[T], q\rangle_{_{\mathcal H^{\scriptscriptstyle1/5},\mathcal H_*^{\scriptscriptstyle1/5}}}=0.
$
By the arbitrariness of $q\in\mathbf H_m$, it can be derived that 
\begin{equation}\label{Null-controllability-9}
\mathbf P_m\hat v[T]=0.
\end{equation}
	
On the other hand, it can be derived from (\ref{zero-derivation}) with  $q=\hat q$ that
$$
\int_0^T\|\mathscr P^{\scriptscriptstyle T}_{\scriptscriptstyle N}(\chi\hat\varphi)(t) \|_{_{H^{\scriptscriptstyle-1/5}}}^2dt+\langle (v_0,v_1+av_0),\hat\varphi^\bot[0] \rangle_{_{\mathcal H^{\scriptscriptstyle1/5},\mathcal H_*^{\scriptscriptstyle1/5}}}=0.
$$
Moreover, notice that 
\begin{equation}\label{bound-33}
\int_0^T\|\mathscr P^{\scriptscriptstyle T}_{\scriptscriptstyle N}(\chi\hat\varphi)(t) \|_{_{H^{\scriptscriptstyle-1/5}}}^2dt=\int_0^T\|\hat\zeta(t) \|_{_{H^{\scriptscriptstyle1/5}}}^2dt,
\end{equation}
while, taking (\ref{energy-2}),(\ref{truncated-observability}) into account,
\begin{equation*}
\begin{array}{ll}
\displaystyle\left|
\langle (v_0,v_1+av_0),\hat\varphi^\bot[0] \rangle_{_{\mathcal H^{\scriptscriptstyle1/5},\mathcal H_*^{\scriptscriptstyle1/5}}}
\right| \leq  C \|v^0\|_{_{\mathcal H^{\scriptscriptstyle1/5}}}^2+\frac{1}{2}\int_0^T\|\mathscr P^{\scriptscriptstyle T}_{\scriptscriptstyle N}(\chi\hat\varphi)(t) \|_{_{H^{\scriptscriptstyle-1/5}}}^2dt.
\end{array}
\end{equation*}
In summary, we conclude that
\begin{equation}\label{bound-34}
\int_0^T\|\hat\zeta(t) \|_{_{H^{\scriptscriptstyle1/5}}}^2dt\leq C\|v^0\|_{_{\mathcal H^{\scriptscriptstyle1/5}}}^2,
\end{equation}
which together with (\ref{Null-controllability-9}) completes the proof of assertion (2).
	
We proceed to establish the linearity described in assertion (3). For $v^0,w^0\in \mathcal H^{\scriptscriptstyle1/5}$ and $\alpha,\beta\in\R$, 
let us denote 
$$
\hat q=\Upsilon_{\hat u}(v^0),\quad
\hat q'=\Upsilon_{\hat u}(w^0),\quad
\hat q''=\Upsilon_{\hat u}(\alpha v^0+\beta w^0)
$$
and define $\hat\varphi,\hat\varphi',\hat\varphi''$ to be $\mathcal W^T_{\hat u}(q_2,- q_1+a q_2)$ with
$
(q_1,q_2)=\hat q,\,\hat q'$ and $\hat q'',
$
respectively. Then, we repeat the deduction that gave (\ref{zero-derivation}) for the solutions $\alpha\hat\varphi,\beta\hat\varphi',-\hat\varphi''$ and add the resulting identity. It thus follows that
\begin{equation}\label{identity-3}
\int_0^T(\mathscr P^{\scriptscriptstyle T}_{\scriptscriptstyle N}[\chi(\alpha\hat\varphi+\beta\hat\varphi'-\hat\varphi'')],\mathscr P^{\scriptscriptstyle T}_{\scriptscriptstyle N}(\chi\varphi))_{_{H^{\scriptscriptstyle-1/5}}}dt
=0
\end{equation}
for any $q=(q_1,q_2)\in \mathbf H_m$, where $
\varphi=\mathcal W_{\hat u}^T(q_2,- q_1+a q_2)$. Letting 
$
q=\alpha\hat q+\beta\hat q'-\hat q''
$
in (\ref{identity-3}), one derives that
$$
\int_0^T\|\mathscr P^{\scriptscriptstyle T}_{\scriptscriptstyle N}[\chi(\alpha\hat\varphi+\beta\hat\varphi'-\hat\varphi'')]\|^2_{_{H^{\scriptscriptstyle-1/5}}}dt
=0,
$$
which together with (\ref{truncated-observability})  implies that
$
\alpha\hat q+\beta\hat q'-\hat q''=0.
$
That is, $$\Upsilon_{\hat u}(\alpha v^0+\beta w^0)=\alpha\Upsilon_{\hat u}(v^0)+\beta\Upsilon_{\hat u}(w^0),$$ as desired.
In addition, recalling (\ref{bound-33}),(\ref{bound-34}) and using the observability (\ref{truncated-observability}) again, one sees readily that
\begin{equation}\label{bound-36}
\|\hat q\|_{_{\mathcal H^{\scriptscriptstyle1/5}_*}}^2\leq C\|v^0\|_{_{\mathcal H^{\scriptscriptstyle1/5}}}^2,
\end{equation}
where $C>0$ depends only on $T,R$.
This combined with the linearity of $\Upsilon$ as just verified leads to assertion (3).

It remains to demonstrate assertion (4), which will be done by adapting the argument involved in \cite[Proposition 5.5]{Shi-15}. For convenience we denote  by $\|\cdot\|_{\rm sup}$ the supremum norm on $C([0,T];H^{\scriptscriptstyle11/7})$, and write
$$
\Psi_{\hat u}(q_1,q_2)=\mathcal W_{\hat u}^T(q_2,-q_1+a q_2)
$$
for $\hat u\in B_R$ and $(q_1,q_2)\in \mathbf H_m$. Then, from Proposition \ref{Energy-estimate}(3) it follows that
\begin{equation}\label{Lipschitz-continuity}
\|\Psi_{\hat u_1}(q_1,q_2)[t]-\Psi_{\hat u_2}(q_1,q_2)[t]\|_{_{\mathcal H^{\scriptscriptstyle-6/5}}}\leq C\|\hat u_1-\hat u_2\|_{\rm sup}\|(q_1,q_2)\|_{_{\mathcal H_*^{\scriptscriptstyle1/5}}}
\end{equation}
for any $\hat u_1,\hat u_2\in B_R,(q_1,q_2)\in\mathbf H_m$ and $t\in[0,T]$, where the constant $C>0$ depends on $T,R$. To continue, letting 
$$
\Upsilon_i=\Upsilon_{\hat u_i}(v_0,v_1),\quad i=1,2
$$
with $(v_0,v_1)\in \mathcal H^{\scriptscriptstyle1/5}$, it follows from (\ref{zero-derivation}) that
\begin{equation*}
\begin{aligned}
&\int_0^T(\mathscr P^{\scriptscriptstyle T}_{\scriptscriptstyle N}[\chi\Psi_{\hat u_1}(\Upsilon_1)],\mathscr P^{\scriptscriptstyle T}_{\scriptscriptstyle N}[\chi\Psi_{\hat u_1}(q_1,q_2)])_{_{H^{\scriptscriptstyle-1/5}}}dt\\
&-\int_0^T(\mathscr P^{\scriptscriptstyle T}_{\scriptscriptstyle N}[\chi\Psi_{\hat u_2}(\Upsilon_2)],\mathscr P^{\scriptscriptstyle T}_{\scriptscriptstyle N}[\chi\Psi_{\hat u_2}(q_1,q_2)])_{_{H^{\scriptscriptstyle-1/5}}}dt\\ 
&+\langle (v_0,v_1+av_0),[\Psi_{\hat u_1}(q_1,q_2)]^\bot[0]-[\Psi_{\hat u_2}(q_1,q_2)]^\bot[0] \rangle_{_{\mathcal H^{\scriptscriptstyle1/5},\mathcal H_*^{\scriptscriptstyle1/5}}}=0
\end{aligned}
\end{equation*}
for any $(q_1,q_2)\in \mathbf H_m$.
Accordingly, by taking $(q_1,q_2)=\Upsilon_1-\Upsilon_2$, one derives that
\begin{align*}
& \int_0^T
\|\mathscr P^{\scriptscriptstyle T}_{\scriptscriptstyle N}[\chi\Psi_{\hat u_1}(\Upsilon_1-\Upsilon_2)]\|^2_{_{H^{\scriptscriptstyle-1/5}}}dt \\
& +\int_0^T(\mathscr P^{\scriptscriptstyle T}_{\scriptscriptstyle N}[\chi(\Psi_{\hat u_1}-\Psi_{\hat u_2})(\Upsilon_2)],\mathscr P^{\scriptscriptstyle T}_{\scriptscriptstyle N}[\chi\Psi_{\hat u_1}(\Upsilon_1-\Upsilon_2)])_{_{H^{\scriptscriptstyle-1/5}}}dt \\
&
+\int_0^T(\mathscr P^{\scriptscriptstyle T}_{\scriptscriptstyle N}[\chi\Psi_{\hat u_2}(\Upsilon_2)],\mathscr P^{\scriptscriptstyle T}_{\scriptscriptstyle N}[\chi(\Psi_{\hat u_1}-\Psi_{\hat u_2})(\Upsilon_1-\Upsilon_2)])_{_{H^{\scriptscriptstyle-1/5}}}dt\\
& +\langle (v_0,v_1+av_0),[\Psi_{\hat u_1}(\Upsilon_1-\Upsilon_2)]^\bot[0]-[\Psi_{\hat u_2}(\Upsilon_1-\Upsilon_2)]^\bot[0] \rangle_{_{\mathcal H^{\scriptscriptstyle1/5},\mathcal H_*^{\scriptscriptstyle1/5}}}=0.
\end{align*}
Making use of (\ref{truncated-observability}), we have
\begin{equation*}
\int_0^T\|\mathscr P^{\scriptscriptstyle T}_{\scriptscriptstyle N}[\chi\Psi_{\hat u_1}(\Upsilon_1-\Upsilon_2)]\|^2_{_{H^{\scriptscriptstyle-1/5}}}dt\geq C\|\Upsilon_1-\Upsilon_2\|_{_{\mathcal H_*^{1/5}}}^2.
\end{equation*}
At the same time, one can deduce, in view of  (\ref{bound-36}),(\ref{Lipschitz-continuity}), that 
\begin{equation*}
\begin{aligned}
&\left|\int_0^T(\mathscr P^{\scriptscriptstyle T}_{\scriptscriptstyle N}[\chi(\Psi_{\hat u_1}-\Psi_{\hat u_2})(\Upsilon_2)],\mathscr P^{\scriptscriptstyle T}_{\scriptscriptstyle N}[\chi\Psi_{\hat u_1}(\Upsilon_1-\Upsilon_2)])_{_{H^{\scriptscriptstyle-1/5}}}dt \right|\\
&+\left|\int_0^T(\mathscr P^{\scriptscriptstyle T}_{\scriptscriptstyle N}[\chi\Psi_{\hat u_2}(\Upsilon_2)],\mathscr P^{\scriptscriptstyle T}_{\scriptscriptstyle N}[\chi(\Psi_{\hat u_1}-\Psi_{\hat u_2})(\Upsilon_1-\Upsilon_2)])_{_{H^{\scriptscriptstyle-1/5}}}dt\right|\\
&+\left|\langle (v_0,v_1+av_0),[\Psi_{\hat u_1}(\Upsilon_1-\Upsilon_2)]^\bot[0]-[\Psi_{\hat u_2}(\Upsilon_1-\Upsilon_2)]^\bot[0] \rangle_{_{\mathcal H^{\scriptscriptstyle1/5},\mathcal H_*^{\scriptscriptstyle1/5}}}\right|
\\
&\leq C\|\Upsilon_1-\Upsilon_2\|_{_{\mathcal H_*^{\scriptscriptstyle1/5}}}\|\hat u_1-\hat u_2\|_{\rm sup}\|(v_0,v_1)\|_{_{\mathcal H^{\scriptscriptstyle1/5}}}.
\end{aligned}
\end{equation*}
In summary, we conclude that
$$
\|\Upsilon_1-\Upsilon_2\|_{_{\mathcal H_*^{\scriptscriptstyle1/5}}} \leq C\|\hat u_1-\hat u_2\|_{\rm sup}\|(v_0,v_1)\|_{_{\mathcal H^{\scriptscriptstyle1/5}}},
$$
which means the Lipschitz continuity of the mapping $\hat u\mapsto\Upsilon_{\hat u}$.
Finally, the $C^1$-smoothness of $\hat u\mapsto\Upsilon_{\hat u}$ can be directly verified by putting the identity (\ref{zero-derivation}), Proposition \ref{Energy-estimate}(3) (with $s=1/5$) and the implicit
function theorem.
The proof is then complete.
\end{proof}

We conclude this section with a proof of Proposition \ref{Prop-linear}(2).

\begin{proof}[{\bf Proof of Proposition \ref{Prop-linear}(2)}]
We first claim that for arbitrarily given $\hat u\in B_R$ and $v^0=(v_0,v_1)\in\mathcal H^{\scriptscriptstyle1/5}$, the control $\zeta$, constructed by the implication $(2)\Rightarrow (1)$ in Proposition \ref{theorem-duality}, coincides with $\hat\zeta$ constructed by Proposition \ref{solvability-minimize}(1)(2); statement (2) of Proposition \ref{theorem-duality} is by now verified by Proposition \ref{prop-obs}(2). To see this, let $\mathcal Z$ be a subspace of $L^2_tH^{\scriptscriptstyle1/5}_x$, consisting of the functions in the form 
$$
(-\Delta)^{-1/5}\mathscr P^{\scriptscriptstyle T}_{\scriptscriptstyle N}(\chi\varphi),\quad \varphi=\mathcal W_{\hat u}^T(q_2,- q_1+aq_2)
$$
with any $(q_1,q_2)\in\mathbf H_m$. Due to (\ref{truncated-observability}), it is not difficult to check that $\mathcal Z$ is closed in $L^2_tH^{\scriptscriptstyle1/5}_x$. In addition, following the argument that gave (\ref{dual-identity-3}), one finds that 
\begin{equation*}
\begin{aligned}
&\langle v[T], (q_1-aq_2,q_2)\rangle_{_{\mathcal H^{\scriptscriptstyle1/5},\mathcal H_*^{\scriptscriptstyle1/5}}}-\langle (v_0,v_1+av_0), \varphi^\bot[0]\rangle_{_{\mathcal H^{\scriptscriptstyle1/5},\mathcal H_*^{\scriptscriptstyle1/5}}}\\
&=-\langle 
v(T),aq_2
\rangle_{_{H^{\scriptscriptstyle6/5},H^{\scriptscriptstyle-6/5}}}+\int_0^T\langle 
\zeta,\mathscr P^{\scriptscriptstyle T}_{\scriptscriptstyle N}(\chi\varphi)
\rangle_{_{H^{\scriptscriptstyle1/5},H^{\scriptscriptstyle-1/5}}} dt,
\end{aligned}
\end{equation*}
where $v=\mathcal V_{\hat u}(v^0,\chi\mathscr P^{\scriptscriptstyle T}_{\scriptscriptstyle N}\zeta)$.
This together with (\ref{dual-identity-3}) implies that 
$$
\int_0^T(
\zeta,\tilde\zeta
)_{_{H^{\scriptscriptstyle1/5}}} dt=\int_0^T(
\hat\zeta,\tilde\zeta
)_{_{H^{\scriptscriptstyle1/5}}} dt
$$
for any $\tilde\zeta\in\mathcal Z$. Accordingly, $\hat\zeta$ is the orthogonal projection of $\zeta$ on the space $\mathcal Z$. This implies that 
$$
\int_0^T\|\hat\zeta(t)\|^2_{_{H^{\scriptscriptstyle1/5}}} dt\leq \int_0^T\|\zeta(t)\|^2_{_{H^{\scriptscriptstyle1/5}}} dt.
$$
At the same time, one can recall (\ref{minimizer-2}) to deduce that 
$
\int_0^T\|\zeta(t)\|^2_{_{H^{\scriptscriptstyle1/5}}} dt\leq \int_0^T\|\hat\zeta(t)\|^2_{_{H^{\scriptscriptstyle1/5}}} dt,
$
which gives rise to $\hat\zeta=\zeta$ immediately. In what follows, we shall identify $\zeta$ with $\hat\zeta$.

Thanks to Proposition \ref{solvability-minimize}(3), the mapping
$
\mathcal H^{\scriptscriptstyle1/5}\ni v^0\mapsto \hat\zeta\in L^2_tH^{\scriptscriptstyle1/5}_x
$
is a bounded linear operator for every $\hat u\in B_R$. We denote by $\Phi_0(\hat u)$ this operator. Moreover, Proposition \ref{solvability-minimize}(4) implies that the mapping
$$
\Phi_0\colon
B_R\rightarrow \mathcal L(\mathcal H^{\scriptscriptstyle1/5};L^2_tH^{\scriptscriptstyle1/5}_x)
$$
is Lipschitz and continuously differentiable.
Furthermore,  recalling the proof of Proposition \ref{Prop-linear}(1), one can notice that the  control $\zeta$ verifying (\ref{binding-property}) can be expressed as 
\begin{equation*}
\zeta=\Phi(\hat u)v^0:=\Phi_0(\hat u)(-\mathcal V^T_{\hat u}(0,0,-3\hat u^2z)[0])\quad {\rm with\ } z[\cdot]=U(\cdot)v^0,
\end{equation*} 
for every $\hat u\in B_R$ and $v^0\in\mathcal H$. Finally, the second assertion of Proposition \ref{Prop-linear}, i.e. the Lipschitz property and $C^1$-smoothness of the mapping
$
\Phi\colon B_R\rightarrow \mathcal L(\mathcal H;L^2_tH^{\scriptscriptstyle1/5}_x),
$
is a direct consequence of those properties of $\Phi_0$. The proof is then complete.
\end{proof}

Eventually, our main result, i.e. Theorem \ref{Th1}, can be derived from the conclusions of Proposition \ref{Prop-linear} in a very direct way. See Appendix \ref{Appendix-squeezing} for the details.

\begin{remark}
Inspired by {\rm\cite{ADS-16}}, the similar conclusions as in Theorem {\rm\ref{Th1}} and Proposition {\rm\ref{Prop-linear}} can also be established in a more general setting. In particular, when the potential term $3\hat u^2v$ in system {\rm(\ref{Problem-linearized})} is replaced by a general one $p(t,x)v$ with
$$
p\in L^\infty(D_T)\cap L^\infty_tH^r_x\quad {\rm for\ some\  }r>0,
$$ 
the contractibility presented in Proposition {\rm\ref{Prop-linear}} remains true. The proof in this situation follows the same idea, except that the space which we work with for improving the regularity of the control is taken to be $H^{\sigma}$ for some $\sigma=\sigma(r)>0$, instead of $H^{\scriptscriptstyle1/5}$. As a consequence, it is possible to verify the squeezing property for system {\rm(\ref{controlproblem-0})} in the case where the source term $u^3$ replaced with a general one $f(u)$. 
In the present paper, the emphasis is not to seek for the ``sharp'' conditions on $f$ which could guarantee the squeezing property.
\end{remark}

\begin{remark}\label{remark-stabilization}
Our result of contractibility {\rm(}i.e.  Proposition {\rm\ref{Prop-linear})} takes account of controlled solutions on $[0,T]$. Nevertheless, under suitable conditions, applying the contractibility properties on the intervals $[nT,(n+1)T]\ ( n\in\N)$ could enable one to deduce the exponential stabilization to the origin for system {\rm(\ref{Problem-linearized})}. That is, for every $v^0\in\mathcal H$, there exists a control $\zeta \in L^2_{loc}(\R^+;H)$ such that 
$$
v[t]\rightarrow 0\quad\text{in }\mathcal H
$$
at an exponential rate.
A similar situation could arise in
the squeezing property {\rm(}i.e. Theorem {\rm\ref{Th1})},
which could also indicate the exponential stabilization to an uncontrolled {\rm(}global{\rm)} solution $\hat u$ for system {\rm(\ref{controlproblem-0})}. This is roughly illustrated as 
$$
u[t]-\hat u[t]\rightarrow 0\quad\text{in }\mathcal H
$$
at an exponential rate.
\end{remark}

\section{Exponential mixing for random nonlinear wave equations}\label{Section-conclusion}

With the preparations from Sections \ref{Section-Probalistic part}-\ref{Section-control}, we are now able to establish exponential mixing for the random wave equation (\ref{Problem-0}), i.e. Theorem~\ref{Thm-informal-model}. The verification of abstract hypotheses, i.e. $(\mathbf{AC})$, $(\mathbf{I})$ and $(\mathbf{C})$ in Theorem \ref{Thm A}, contributes to the main content of the proof. More precisely, this will be done by  the 
following technical route
\vspace{0.3em}
\begin{itemize}[leftmargin=2em]
\item ``$(\mathcal H,\mathcal H^{\scriptscriptstyle4/7})$-asymptotic compactness in Theorem \ref{Thm-dynamicalsystem}'' implies  hypothesis~$(\mathbf{AC})$ (see Section~\ref{Section-6-1});

\item ``Global stability of the unforced problem in Proposition \ref{prop-global-stability}''  implies  hypothesis~$(\mathbf{I})$ (see Section \ref{Section-6-2});

\item ``Squeezing property in Theorem \ref{Th1}'' implies hypothesis~$(\mathbf{C})$ (see Section \ref{Section-6-3}).
\end{itemize}

\vspace{0.3em}
We mention that the parameters $R_0$ in Theorem~\ref{Thm-dynamicalsystem} and $R$ in Theorem~\ref{Th1} will be involved in the proof. Both of them  are directly determined by $T$ and $B_0$ below. In addition, some basic facts from the measure theory are useful in the verification of  hypothesis~$(\mathbf{C})$. For the reader's convenience, these necessary results are collected in Appendix~\ref{Section-6-5}.

\vspace{0.6em}

Below is to summarize the structure of  $\eta(t,x)$ involved in Theorem \ref{Thm-informal-model}. Under the setting $(\mathbf{S1})$ on  $a(x),\,\chi(x)$, we specify the quantity $\mathbf{T}$ as $\mathbf{T}=T_{\varepsilon}$, by means of  Theorem~\ref{Th1} with  $\varepsilon= 1/4$.  Letting $T>\mathbf{T}$ and $B_0>0$ be arbitrarily given, the random noise $\eta(t,x)$ in {\rm(\ref{Problem-0})} is of the form
\begin{equation*}
\begin{aligned}
&\eta(t,x)= \eta_n(t-nT,x),\quad t\in [nT,(n+1)T),\,n\in\N,\\ 
&\displaystyle\eta_n(t,x)=\chi(x)\sum_{j,k\in\N^+}b_{jk}\theta^{n}_{jk}\alpha^{\scriptscriptstyle T}_k(t)e_j(x),\quad t\in[0,T).
\end{aligned}
\end{equation*}
Here, the sequence $\{b_{jk};j,k\in\N^+\}$ of nonnegative numbers verifies
\begin{equation}\label{bounded-noise-0}
\sum_{j,k\in\N^+}b_{jk}\lambda_j^{2/7}\|\alpha_k\|_{_{L^\infty(0,1)}} \leq {B}_0T^{1/2},
\end{equation}
while $\{\theta_{jk}^{n};n\in\N \}$ is a sequence of i.i.d.~random variables with density $\rho_{jk}$ satisfying $(\mathbf{S2})$. We emphasize here that an integer $N$ will be appropriately chosen in Step 2 of Section {\rm\ref{Section-6-3} (}depending on $T,B_0${\rm)}, so that the conclusion of exponential mixing in Theorem {\rm\ref{Thm-informal-model}} is assured, provided that 
\begin{equation}\label{degenerate-noise}
b_{jk}\neq0,\quad\forall\, 1\leq j,k\leq N.
\end{equation}

\medskip

Recalling that $\alpha_k^{\scriptscriptstyle T}(t)=\frac{1}{\sqrt{T}}\alpha_k(\frac{t}{T})$, it follows from (\ref{bounded-noise-0}) that there exists a constant $B_1=B_1(\chi,B_0)>0$ such that
\begin{equation}\label{B1-bound1}
\sum_{j,k\in\N^+}b_{jk}\|\chi e_{j}\alpha_{k}^{\scriptscriptstyle T}\|_{_{L^\infty(0,T;H^{4/7})}}\leq B_1.
\end{equation}
Noticing that $\{\eta_n;n\in \N\}$ are i.i.d.~$L^2(D_T)$-valued random variables, we denote its common law by $\ell$, and the support by $\mathcal{E}$. In view of (\ref{B1-bound1}),  $\mathcal{E}$ is compact in $L^2(D_T)$ and bounded in $L^\infty(0,T;H^{\scriptscriptstyle4/7})$.

Let $\{u^n;n\in\N\}$ be the Markov process defined via \eqref{setting-wave}.
The corresponding Markov transition functions and Markov semigroups are written as $\{P_n(u,A);u\in\mathcal{H},\,A\in\mathcal{B}(\mathcal{H}),\,n\in\N\}$, $P_n$, $P_n^*$ as in Section~\ref{Section-Probalistic part}, respectively. In particular, for any $\mathcal{H}$-valued random initial condition $u^0$ (independent of $\{\eta_n;n\in\N\}$) with law $\nu\in\mathcal{P}(\mathcal{H})$,  one has
\begin{equation*}
    \mathscr{D}(u^n)=P_n^*\nu,\quad\forall\, n\in\N,
\end{equation*}
see, e.g., \cite[Section 1.3]{KS-12}.

\subsection{Asymptotic compactness}\label{Section-6-1}

Taking (\ref{structure-noise}),(\ref{B1-bound1}) into account, 
we observe that the sample paths of $\eta$ are contained in a bounded subset of $L^\infty(\R^+;H^{\scriptscriptstyle4/7})$. That is, 
$$
\eta\in\overline{B}_{L^\infty(\R^+;H^{\scriptscriptstyle4/7})}(R_0)\quad\text{almost surely with }R_0=B_1.
$$
This means that for every $\boldsymbol{\zeta}=\{\zeta_n;n\in\N
\}\in \mathcal{E}^{\N}$, the concatenation $f\colon\R^+\rightarrow H$ of $\boldsymbol{\zeta}$, i.e., 
$$
f(t,x)=\zeta_n(t-nT,x),\quad t\in[nT,(n+1)T),\,n\in\N,
$$ 
belongs to $\overline{B}_{L^\infty(\R^+;H^{\scriptscriptstyle4/7})}(R_0)$. This together with Theorem \ref{Thm-dynamicalsystem} implies that there exists a bounded subset $\mathscr B_{\scriptscriptstyle4/7}$ of $\mathcal H^{\scriptscriptstyle4/7}$ and  constants $C$, $\kappa>0$, all determined by ${B}_1$, such that
\begin{equation*}
\text{dist}_{_\mathcal H}(S_n(u;\boldsymbol{\zeta}),\mathscr B_{\scriptscriptstyle4/7})\leq C(1+E(u))e^{-\kappa Tn}
\end{equation*}
for any $u\in\mathcal{H},\boldsymbol{\zeta}\in \mathcal{E}^{\N}$ and $n\in\N$. Therefore, we conclude that hypothesis $(\mathbf{AC})$ holds with $\mathcal Y=\overline{\mathscr B_{\scriptscriptstyle4/7}}$ and $V(u)=C(1+E(u))$.

\vspace{0.3em}
\subsection{Irreducibility}\label{Section-6-2}
Let $\mathcal Y_\infty$ be the attainable set from $\mathcal Y=\overline{\mathscr B_{\scriptscriptstyle4/7}}$ (see Definition \ref{Def-attainable}). It then follows from  Corollary~\ref{Corollary-S2bound} that there exists $R_1=R_1({B}_1)>0$, such that 
$\mathcal Y_\infty\subset \overline{B}_{\mathcal{H}^{\scriptscriptstyle4/7}}(R_1)$. Making use of Proposition \ref{prop-global-stability}, one can derive that for any $\varepsilon>0$, there exists an integer $m=m(T,{B_1},\varepsilon)$ such that
\begin{equation*}
\|S_m(u;\boldsymbol{0})\|_{_\mathcal{H}}<\frac{\varepsilon}{2}
\end{equation*} 
for any $u\in\overline{B}_{\mathcal{H}^{\scriptscriptstyle4/7}}(R_1)$, where $\boldsymbol{0}$ stands for a sequence of zeros. Combined with
the compactness of $\overline{B}_{\mathcal{H}^{\scriptscriptstyle4/7}}(R_1)\times\mathcal{E}^m$ and the continuity of the map $(u,\boldsymbol{\zeta})\mapsto S_m(u;\boldsymbol{\zeta})$ (see Proposition \ref{prop-wellposedness}(1)), we then obtain that 
there exists a constant $\delta>0$ such that 
\begin{equation*}
\|S_m(u;\boldsymbol{\zeta})\|_{_\mathcal{H}}<\varepsilon
\end{equation*}
for any $u\in\overline{B}_{\mathcal{H}^{\scriptscriptstyle4/7}}(R_1)$ and $\boldsymbol{\zeta}=\{\zeta_n;n\in\N\}$ with $\zeta_n\in \mathcal E\cap B_{L^2(D_T)}(\delta)$. As a consequence, 
\begin{equation*}
\begin{aligned}
P_m(u,B_{\mathcal{H}}(\varepsilon)) & \geq \Pb (\|\eta_n\|_{L^2(D_T)}<\delta,\quad\forall\,0\leq n\leq m-1) \\
& = \ell (B_{L^2(D_T)}(\delta))^{m}\\
& >0
\end{aligned}
\end{equation*}
for any $u\in\overline{B}_{\mathcal{H}^{\scriptscriptstyle4/7}}(R_1)$.
Here, the last step is due to the fact $0\in \mathcal E$, which is assured by $\rho_{jk}(0)>0$. Hypothesis $(\mathbf{I})$ is then verified.

\subsection{Coupling condition}\label{Section-6-3} 

In order to verify hypothesis $(\mathbf{C})$, we need some preliminaries regarding the optimal coupling (see Appendix~\ref{Section-6-5}).
For $\boldsymbol{\varepsilon}=(\varepsilon_1,\varepsilon_2)$ with $0\leq \varepsilon_2\leq \varepsilon_1<\infty$, we define a functional $\rho_{\boldsymbol{\varepsilon}}\colon\mathcal H\times \mathcal H\rightarrow [0,1]$ by 
$$
\rho_{\boldsymbol{\varepsilon}}(z)=\varphi_{\boldsymbol{\varepsilon}}(\|u-v\|_{_{\mathcal H}}),\quad z=(u,v)\in\mathcal H\times\mathcal H,
$$
where $\varphi_{\boldsymbol{\varepsilon}}\colon\R^+\rightarrow[0,1]$ is given by
\begin{equation}\label{def-varphi}
\varphi_{\boldsymbol{\varepsilon}}(s)=\begin{cases}
1 &\text{ for } s>\varepsilon_1,\\
\frac{s-\varepsilon_2}{\varepsilon_1-\varepsilon_2} &\text{ for } \varepsilon_2<s\leq \varepsilon_1,\\
0 &\text{ for } 0\leq s\leq \varepsilon_2.
\end{cases}
\end{equation}
Let us also set
\begin{equation*}
\|\mu-\nu\|_{\boldsymbol{\varepsilon}}=\inf_{(\xi,\eta)\in\mathscr{C}(\mu,\nu)} \E \rho_{\boldsymbol{\varepsilon}}(\xi,\eta),\quad \mu,\nu\in\mathcal P(\mathcal H),
\end{equation*}
where $\mathscr{C}(\mu,\nu)$ stands for the set of all couplings for $\mu$ and $\nu$ (see Section \ref{Section-Probalistic part}). 
\medskip

We now begin the analysis of coupling condition, which will be divided into four steps.

\noindent {\bf Step 1}. Let us introduce a measurable space
\begin{equation*}
Z=\left\{z=(u,v)\in\boldsymbol{Y}_{\infty};\|u-v\|_{_{\mathcal{H}}}\leq d\right\}
\end{equation*}
with $ \boldsymbol{Y}_{\infty}=\mathcal{Y}_{\infty}\times\mathcal{Y}_{\infty}$ and $d>0$ that will be chosen below, and a nonnegative measurable function on $Z$, i.e., 
\begin{equation*}
\lambda(z)=\frac{1}{2}\|u-v\|_{_{\mathcal{H}}},\quad z=(u,v)\in Z.
\end{equation*}
With the above settings, an application of Proposition \ref{Coupling lemma 1} with $(\theta_1,\theta_2)=(1/2,1)$ yields that there exists a probability space $(\Omega,\mathcal{F},\Pb)$ and  measurable mappings $\mathcal{R},\mathcal{R}'\colon Z\times\Omega\rightarrow \mathcal{H}$ such that 
$(\mathcal{R}(z),\mathcal{R}'(z))\in\mathscr{C}(P_1(u,\cdot),P_1(v,\cdot))$ and 
\begin{equation*}
\E \rho_{(\lambda(z),\lambda(z))}(\mathcal{R}(z),\mathcal{R}'(z))\leq \|P_1(u,\cdot)-P_1(v,\cdot)\|_{(\lambda(z),\lambda(z)/2)}
\end{equation*}
for any $z=(u,v)\in Z$.
Accordingly, using the definitions of $\rho$ and $\lambda$, 
\begin{equation}\label{Control LCC squeeze2}
\Pb(\|\mathcal{R}(z)-\mathcal{R}'(z)\|_{_\mathcal{H}}>\frac{1}{2}\|u-v\|_{_{\mathcal{H}}})\leq \|P_1(u,\cdot)-P_1(v,\cdot)\|_{(\lambda(z),\lambda(z)/2)}.
\end{equation} 

\medskip

\noindent{\bf Step 2}. In view of \eqref{B1-bound1}, $\mathcal E$ is a bounded subset of $L^2_tH^{\scriptscriptstyle4/7}_x$. Recall also that $\mathcal Y_\infty$ is bounded in $\mathcal H^{\scriptscriptstyle4/7}$ and choose $R_2=R_2(T,{B}_1)$  satisfying
$$
\mathcal Y_\infty\subset \overline{B}_{\mathcal H^{\scriptscriptstyle4/7}}(R_1),\quad \mathcal E\subset\overline{B}_{L^2_tH^{\scriptscriptstyle4/7}_x}(R_2).
$$
Then, taking Proposition~\ref{prop-wellposedness} into account, there exists a constant $R=R(T,{B}_1)>0$ such that 
$$
\hat u\in B_R\quad \text{with }\hat u[\cdot]=\mathcal S(\hat u^0,h)
$$
for any $\hat u^0\in \overline{B}_{\mathcal H^{\scriptscriptstyle4/7}}(R_1)$ and $h\in\overline{B}_{L^2_tH^{\scriptscriptstyle4/7}_x}(R_2+1)$, where $B_R$ is defined by (\ref{potential-space}).

Therefore, invoking Theorem \ref{Th1} (with $\varepsilon=1/4$), it allows to fix the constants $d>0$, $N\in\N^+$ depending only on $T,\,B_1$, and a mapping $$
\Phi'\colon \overline{B}_{\mathcal H^{\scriptscriptstyle4/7}}(R_1)\times \overline{B}_{L^2_tH^{\scriptscriptstyle4/7}_x}(R_2+1)\rightarrow \mathcal L(\mathcal H;L^2(D_T))
$$ 
such that 
\begin{equation}\label{Control LCC squeeze1}
\|S(u,\zeta)-S(v,\zeta+\chi\mathscr P^{\scriptscriptstyle T}_{\scriptscriptstyle N}\Phi'(u,\zeta)(u-v))\|_{_{\mathcal{H}}}\leq \frac{1}{4}\|u-v\|_{_{\mathcal{H}}}
\end{equation}
for any $\zeta\in \overline{B}_{L^2_tH^{\scriptscriptstyle4/7}_x}(R_2+1)$ and $u,v\in\overline{B}_{\mathcal H^{\scriptscriptstyle4/7}}(R_1)$ with $\|u-v\|_{_{\mathcal H}}\leq d$. Moreover, the mapping $\Phi'$ is Lipschitz and continuously differentiable. Now we assume (\ref{degenerate-noise}) with $N$ just established.

\medskip

\noindent{\bf Step 3}. Let $z=(u,v)\in Z$ be fixed and define a transformation $\Psi^{z}$ on $L^2_tH^{\scriptscriptstyle4/7}_x$ by
\begin{equation*}
\Psi^{z}(\zeta)=\zeta+
\Phi^{z}(\zeta),\quad 
\Phi^{z}(\zeta)=
\phi\left(\|\zeta\|_{_{L^2_tH^{\scriptscriptstyle4/7}_x}}^2\right)\chi\mathscr P^{\scriptscriptstyle T}_{\scriptscriptstyle N}\Phi'(u,\zeta)(u-v),
\end{equation*}
where $\phi\colon\R^+\rightarrow\R^+$ is a smooth function such that $\phi(s)=1$ for $s\leq R_2^2$ and $\phi(s)=0$ for $s\geq (R_2+1)^2$. Inequality (\ref{Control LCC squeeze1}) then gives rise to 
$$
\|S(u,\zeta)-S(v,\Psi^{z}(\zeta)\|_{_{\mathcal{H}}}\leq \frac{1}{4}\|u-v\|_{_{\mathcal{H}}}
$$
for $\ell$-almost every $\zeta\in L^2(D_T)$; notice that $\ell(\overline{B}_{L^2_tH^{\scriptscriptstyle4/7}_x}(R_2))=1$. Then, thanks to Lemma~\ref{Coupling lemma 2} with $\boldsymbol{\varepsilon}=(\lambda(z),\lambda(z)/2)$, this implies that
\begin{equation}\label{Control LCC squeeze3}
\|P_1(u,\cdot)-P_1(v,\cdot)\|_{(\lambda(z),\lambda(z)/2)}\leq 2\|\ell-\Psi^z_*\ell\|_{\rm TV},
\end{equation}
where $\|\ell-\Psi^z_*\ell\|_{\rm TV}$ denotes the total variation distance between two probability measures $\ell$ and $\Psi^z_*\ell$ (see \cite[Section 1.2.3]{KS-12}). 

To estimate the RHS of (\ref{Control LCC squeeze3}), we observe that the mapping $\Phi^z$ is Lipschitz and continuously differentiable on $L^2_tH^{\scriptscriptstyle4/7}_x$, while its range is contained in  $$\mathcal{Z}_1:={\rm span}\{\chi e_j\alpha^{\scriptscriptstyle T}_k;1\leq j,k\leq N\}.$$  
We further take $\mathcal{Z}_2=\overline{{\rm span}}\{\chi e_j\alpha^{\scriptscriptstyle T}_k;j>N\ \text{or }k>N\}$ and 
$
\mathcal{Z}=\mathcal{Z}_1\oplus\mathcal{Z}_2$. All these spaces are endowed with the $L^2_tH^{\scriptscriptstyle4/7}_x$-norm.
Using the noise structure \eqref{structure-noise} and \eqref{B1-bound1}, the probability measure $\ell$ on $(\mathcal{Z},\mathcal{B}(\mathcal{Z}))$ can be represented as the tensor product of its projections $\ell_1=(\mathsf{P}_{\mathcal{Z}_1})_*\ell$ and $\ell_2=(\mathsf{P}_{\mathcal{Z}_2})_*\ell$ as in Appendix~\ref{Appendix-1-2}. Moreover, by \eqref{degenerate-noise}, the sequence $\{b_{jk};j,k\in\N^+\}$ satisfies 
\begin{equation*}
    b_{jk}\neq 0\quad \text{for }1\leq j,k\leq N.
\end{equation*}
As a consequence, it allows one to employ Lemma~\ref{Coupling lemma 3} with $\varkappa$ being a proportion of $\|u-v\|_{_{\mathcal H}}$. Here, we also have used the fact that $\theta^{n}_{jk}$ admits the $C^1$-density $\rho_{jk}$. Thus there exists a constant $C>0$, depending on $b_{jk}$, such that
\begin{equation}\label{Control LCC squeeze4}
\|\ell-\Psi^z_*\ell\|_{\rm TV}\leq C\|u-v\|_{_{\mathcal H}}.
\end{equation}

Putting (\ref{Control LCC squeeze2}), (\ref{Control LCC squeeze3}) and (\ref{Control LCC squeeze4}) all together, we conclude that
\begin{equation}\label{coupling-binding}
\Pb(\|\mathcal{R}(z)-\mathcal{R}'(z)\|_{_{\mathcal{H}}}>\frac{1}{2}\|u-v\|_{_{\mathcal{H}}})\leq C_1\|u-v\|_{_\mathcal{H}},
\end{equation}
with a constant $C_1>0$. So, conditions (\ref{squeezing}),(\ref{g-def}) are verified for $(x,x')=(u,v)\in Z$, by taking $r=1/2$ and $g(s)=C_1s$. 

\medskip

\noindent{\bf Step 4}. Finally, the case of $z=(u,v)\in \boldsymbol{Y}_{\infty}\setminus Z$ is trivial. Indeed, without loss of generality, we can take  $\zeta,\zeta'$ to be independent random variables on $(\Omega,\mathcal{F},\Pb)$ with law  $\ell$. Then one can reach (\ref{coupling-binding}) by replacing $C_1$ with $d^{-1}$ and taking 
$$
\mathcal R(z)=S(u,\zeta),\quad \mathcal R'(z)=S(v,\zeta').
$$

Combining these analyses in Sections \ref{Section-6-1}-\ref{Section-6-3}, we verify the hypotheses $(\mathbf{AC})$, $(\mathbf{I})$ and $(\mathbf{C})$ laid out in Section \ref{Section-generalresult}. Therefore, an application of Theorem \ref{Thm A} leads to the conclusions of Theorem \ref{Thm-informal-model}.

\begin{appendices}

\section{Supplementary ingredients in probability}\label{Appendix-Probability}

    In this appendix, we summarize some useful supplementary probabilistic materials and the coupling method, as well as the proofs of Proposition~\ref{local exponential mixing} and Proposition~\ref{Limit theorems}.  

    \subsection{Supplementary materials}

    \subsubsection{Criterion for mixing on compact spaces}\label{K-S framework} In this subsection, we recall some results on exponential mixing of discrete-time Markov processes on compact spaces.  Let $(X,d)$ be a compact metric space and $\{x_n;n\in\N\}$ with $x_0=x$ be a Feller family of discrete-time Markov processes in $X$. We denote by $P_n(x,A)$ the corresponding Markov transition function, $P_n$ and  $P^*_n$ the Markov semigroups. 
    
    Let $ \mathbf{X}=X\times X$ and define the natural projections 
    $$\Pi,\Pi'\colon\mathbf{X}\rightarrow X,\quad \Pi(\vec{\boldsymbol{x}})=x,\,\Pi'(\vec{\boldsymbol{x}})=x'$$ 
    for $\vec{\boldsymbol{x}}=(x,x')$. A Markov process $\{\vec{\boldsymbol{x}}_n;n\in\N\}$ with phase space $\mathbf{X}$ is called an \textit{extension} for $\{x_n;n\in\N\}$ if, for every $n\in\N$ and $\vec{\boldsymbol{x}}=(x,x')\in\mathbf{X}$, we have 
    \begin{equation*}
    \Pi_* \boldsymbol{P}_n(\vec{\boldsymbol{x}}, \cdot)=P_n(x, \cdot), \quad \Pi_*^{\prime} \boldsymbol{P}_n(\vec{\boldsymbol{x}}, \cdot)=P_n\left(x^{\prime}, \cdot\right),
    \end{equation*}
    where $\boldsymbol{P}_n(\vec{\boldsymbol{x}},\cdot)$ stands for the transition function of $\{\vec{\boldsymbol{x}}_n;n\in\N\}$, and $\varphi_*\mu$ denotes the push-forward of the measure $\mu$ defined by $\varphi_*\mu(\cdot)=\mu(\varphi^{-1}(\cdot))$.  We also denote by  $\boldsymbol{P}_n,\boldsymbol{P}^*_n$ the corresponding Markov semigroups and by $\mathbf P_{\vec{\boldsymbol{x}}}$ the Markov family. By definition one has $$\boldsymbol{P}_n(\vec{\boldsymbol{x}},\cdot)\in\mathscr C(P_n(x,\cdot),P_n(x',\cdot))$$ for every $\vec{\boldsymbol{x}}=(x,x')\in\mathbf{X}$ and $n\in\N$. For clarity we also write $\vec{\boldsymbol{x}}_n=(x_n,x_n')$.

    We now recall the following theorem involving exponential mixing of discrete-time Markov processes on compact spaces.
       
    \begin{theorem}\label{Theorem-KS} {\rm(}Kuksin--Shirikyan {\rm\cite{KS-12})}
    Assume that the Markov process $\{x_n;n\in\N\}$ has an extension $\{\vec{\boldsymbol{x}}_n;n\in\N\}$ satisfying the following properties for some closed set $\boldsymbol{B}\subset\mathbf{X}:$ 
    \begin{itemize}[leftmargin=1em]
    \item  {\rm (}Recurrence{\rm )} The hitting time of  $\boldsymbol{B}$, defined by $$\boldsymbol{\tau}=\inf\{n\in\N;\vec{\boldsymbol{x}}_n\in \boldsymbol{B}\},$$ 
    is $\mathbf P_{\vec{\boldsymbol{x}}}$-almost surely finite for every $\vec{\boldsymbol{x}}\in\mathbf{X}$. Moreover, there exists a constant $\beta_1>0$ such that
    \begin{equation}\label{KS recurrence}
    \sup_{\vec{\boldsymbol{x}}\in\mathbf{X}}\mathbf E_{\vec{\boldsymbol{x}}}\exp(\beta_1\boldsymbol{\tau})<\infty.
    \end{equation}

    \item {\rm (}Squeezing{\rm )} There exist constants $c,\beta_2,\beta_3>0$ such that the stopping time $$\boldsymbol{\sigma}=\inf\{n\in\N;d(x_n,x_n')>ce^{-\beta_2n}\}$$ satisfies the following inequalities: 
    \begin{align}
    \inf_{\vec{\boldsymbol{x}}\in\boldsymbol{B}} \mathbf P_{\vec{\boldsymbol{x}}}(\boldsymbol{\sigma}=\infty)&>0,\label{KS squeezing1}\\
    \sup_{\vec{\boldsymbol{x}}\in\boldsymbol{B}} \mathbf E_{\vec{\boldsymbol{x}}}(\mathbf{1}_{\{\boldsymbol{\sigma}<\infty\}}\exp(\beta_3\boldsymbol{\sigma}))&<\infty,\label{KS squeezing2}
    \end{align}
    \end{itemize} 
    where $\mathbf{1}_A$ denotes the indicator function on set $A$. Then the Markov process $\{x_n;n\in\N\}$ has a unique invariant measure $\mu_*\in\mathcal{P}(X)$, which is exponentially mixing, i.e., there exist constants $C_0,\beta_0>0$ such that
    \begin{equation*}
    \|P_n^*\nu-\mu_*\|_L^*\leq C_0e^{-\beta_0 n}
    \end{equation*}
    for any $\nu\in\mathcal{P}(X)$ and  $n\in\N$.
    \end{theorem}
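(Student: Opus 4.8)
The plan is to reduce the statement to a uniform contraction estimate for the marginals and then to establish that estimate through a renewal analysis of the extension. Since $X$ is compact and $\{x_n;n\in\N\}$ is Feller, the existence of an invariant measure $\mu_*$ is immediate from the Krylov--Bogolyubov averaging procedure, so the whole difficulty lies in uniqueness together with the exponential rate. For a test function $f\in L_b(X)$ with $\|f\|_L\leq 1$ one has, using the extension together with $\Pi_*\boldsymbol{P}_n(\vec{\boldsymbol{x}},\cdot)=P_n(x,\cdot)$ and $\Pi'_*\boldsymbol{P}_n(\vec{\boldsymbol{x}},\cdot)=P_n(x',\cdot)$,
\[
|P_nf(x)-P_nf(x')|=\big|\mathbf E_{\vec{\boldsymbol{x}}}\big(f(x_n)-f(x_n')\big)\big|\leq \mathbf E_{\vec{\boldsymbol{x}}}\big[d(x_n,x_n')\wedge 2\big].
\]
Hence it suffices to prove that there are constants $C,\beta>0$, independent of $\vec{\boldsymbol{x}}=(x,x')$, with $\mathbf E_{\vec{\boldsymbol{x}}}[d(x_n,x_n')\wedge 2]\leq Ce^{-\beta n}$; once this holds, integrating in $x,x'$ against $\nu$ and $\mu_*$ yields both uniqueness and the advertised exponential mixing.

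Next I would construct, by iterating the strong Markov property, a sequence of stopping times recording successive visits to $\boldsymbol{B}$ at which a squeezing attempt is made. Setting $\boldsymbol{\tau}^{(0)}=\boldsymbol{\tau}$ and, on the event that the squeezing started at $\boldsymbol{\tau}^{(k)}$ fails (i.e. $\boldsymbol{\sigma}\circ\theta_{\boldsymbol{\tau}^{(k)}}<\infty$), letting $\boldsymbol{\tau}^{(k+1)}$ be the first return to $\boldsymbol{B}$ afterwards, I would define $\boldsymbol{\rho}$ to be the first $\boldsymbol{\tau}^{(k)}$ for which the subsequent squeezing succeeds. By \eqref{KS squeezing1} each attempt succeeds with probability at least $p_0=\inf_{\vec{\boldsymbol{x}}\in\boldsymbol{B}}\mathbf P_{\vec{\boldsymbol{x}}}(\boldsymbol{\sigma}=\infty)>0$, so the number of attempts before success is dominated by a geometric law; on the success event the estimate in the definition of $\boldsymbol{\sigma}$ gives $d(x_{\boldsymbol{\rho}+m},x_{\boldsymbol{\rho}+m}')\leq c\,e^{-\beta_2 m}$ for all $m\geq 0$.

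The core step is to show that $\boldsymbol{\rho}$ has a uniform exponential moment, $\sup_{\vec{\boldsymbol{x}}}\mathbf E_{\vec{\boldsymbol{x}}}\exp(\beta_4\boldsymbol{\rho})<\infty$ for some $\beta_4>0$. I would write $\boldsymbol{\rho}$ as a geometric sum of increments, each being the duration of a failed squeezing attempt (controlled by \eqref{KS squeezing2}) plus a return time to $\boldsymbol{B}$ (controlled by \eqref{KS recurrence}); the strong Markov property guarantees that these increments carry uniformly bounded conditional exponential moments, and a standard lemma on exponential moments of geometric sums of exponentially integrable increments then yields the claim for $\beta_4$ small enough. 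This bookkeeping — ensuring that the attempts are genuinely reset and that the exponential moments combine uniformly in the starting point — is the main obstacle.

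Finally, for fixed $n$ I would split according to $\{\boldsymbol{\rho}\leq n/2\}$ and $\{\boldsymbol{\rho}>n/2\}$. On the first event the squeezing estimate gives $d(x_n,x_n')\leq c\,e^{-\beta_2(n-\boldsymbol{\rho})}\leq c\,e^{-\beta_2 n/2}$, while on the second $d\wedge 2\leq 2$ and Chebyshev's inequality together with the exponential moment give $\mathbf P_{\vec{\boldsymbol{x}}}(\boldsymbol{\rho}>n/2)\leq e^{-\beta_4 n/2}\sup_{\vec{\boldsymbol{x}}}\mathbf E_{\vec{\boldsymbol{x}}}\exp(\beta_4\boldsymbol{\rho})$. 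Adding the two contributions yields $\mathbf E_{\vec{\boldsymbol{x}}}[d(x_n,x_n')\wedge 2]\leq Ce^{-\beta n}$ with $\beta=\tfrac12\min(\beta_2,\beta_4)$, which closes the argument and, by the reduction of the first paragraph, delivers uniqueness and exponential mixing of $\mu_*$.
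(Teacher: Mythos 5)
The paper itself gives no proof of this statement: it is quoted as a known result of Kuksin--Shirikyan and cited to \cite{KS-12}, so there is no in-paper argument to compare against. Your proposal essentially reconstructs the standard proof from that reference: the reduction to a uniform bound on $\mathbf E_{\vec{\boldsymbol{x}}}\left[d(x_n,x_n')\wedge 2\right]$ via the extension, the renewal sequence of squeezing attempts launched at successive visits to $\boldsymbol{B}$, the exponential moment of the time $\boldsymbol{\rho}$ of the first successful attempt, and the final split at $n/2$ are exactly the ingredients of the Kuksin--Shirikyan argument, and each step you write is correct. The one step you compress into a ``standard lemma'' deserves a word of caution: a geometric number of attempts combined with uniformly bounded exponential moments of the increments does \emph{not} by itself give $\sup_{\vec{\boldsymbol{x}}}\mathbf E_{\vec{\boldsymbol{x}}}e^{\beta_4\boldsymbol{\rho}}<\infty$, since the naive bound produces a factor per attempt that may exceed $1$. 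What closes it is the H\"older interpolation
\begin{equation*}
\mathbf E_{\vec{\boldsymbol{y}}}\left(\mathbf{1}_{\{\boldsymbol{\sigma}<\infty\}}e^{\beta_4\boldsymbol{\sigma}}\right)\leq \left(1-p_0\right)^{1-\beta_4/\beta_3}\left(\sup_{\vec{\boldsymbol{z}}\in\boldsymbol{B}}\mathbf E_{\vec{\boldsymbol{z}}}\mathbf{1}_{\{\boldsymbol{\sigma}<\infty\}}e^{\beta_3\boldsymbol{\sigma}}\right)^{\beta_4/\beta_3},
\end{equation*}
together with the analogous Jensen bound for the return times from \eqref{KS recurrence}, which shows that as $\beta_4\downarrow 0$ the per-attempt factor (failure probability times the two exponential moments) tends to $1-p_0<1$, uniformly in the starting point; only then does the geometric series converge. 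You correctly identify this bookkeeping as the main obstacle, and with that interpolation supplied your argument is complete and coincides with the canonical one.
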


    \subsubsection{Transformations of measures under regular mappings}\label{Appendix-1-2}
        Let  $({\mathcal{Z}},\|\cdot\|_{\mathcal{Z}})$ be a separable Banach space that can be represented as the direct sum of two closed subspaces 
        \begin{equation*}
            {\mathcal{Z}}={\mathcal{Z}}_1\oplus {\mathcal{Z}}_2,
        \end{equation*}
        where ${\mathcal{Z}}_1$ is finite-dimensional, and we denote by $\mathsf{P}_{{\mathcal{Z}}_1}$ and $\mathsf{P}_{{\mathcal{Z}}_2}$ the corresponding projections. Assume further that $({\mathcal{Z}},\mathcal{B}({\mathcal{Z}}),\ell)$ is a probability space, where the probability measure 
        $\ell$  has a bounded support, and  can be written as the tensor product of its projections $\ell_{1}=(\mathsf{P}_{{\mathcal{Z}}_1})_*\ell$ and $\ell_{2}=(\mathsf{P}_{{\mathcal{Z}}_2})_*\ell$.  We assume that $\ell_{1}$ has a $C^1$-smooth density with respect to the Lebesgue measure on ${\mathcal{Z}}_1$.  The following result is due to\cite[Proposition 5.6]{Shi-15}. 
        
    \begin{lemma}{\rm(}Shirikyan {\rm\cite{Shi-15})} \label{Coupling lemma 3}
    In addition to the above settings, assume that $\Psi\colon {\mathcal{Z}}\rightarrow {\mathcal{Z}}$ is a mapping of the form $\Psi(\zeta)=\zeta+\Phi(\zeta)$, where $\Phi$ is a $C^1$-smooth mapping and the image of $\Phi$ is  contained in ${\mathcal{Z}}_1$. Suppose further that there is a constant $\varkappa>0$ such that 
    \begin{equation*}
    \|\Phi(\zeta_1)\|_{_{\mathcal{Z}}}\leq \varkappa,\quad \|\Phi(\zeta_1)-\Phi(\zeta_2)\|_{_{\mathcal{Z}}}\leq \varkappa\|\zeta_1-\zeta_2\|_{_{\mathcal{Z}}}
    \end{equation*}
    for any $\zeta_1,\zeta_2\in {\mathcal{Z}}$.
    Then there exists a constant $C>0$, not depending on $\varkappa$, such that
    \begin{equation*}
    \|\ell-\Psi_*\ell\|_{\rm TV}\leq C \varkappa.
    \end{equation*}
    \end{lemma}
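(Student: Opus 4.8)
The plan is to exploit the product structure $\ell=\ell_1\otimes\ell_2$ together with the fact that $\Psi$ perturbs only the finite-dimensional component $\mathcal{Z}_1$, thereby reducing the infinite-dimensional estimate to a family of finite-dimensional ones. First I would dispose of the case of large $\varkappa$: since the total variation distance is always bounded by $1$, the asserted inequality holds trivially (with a large constant) once $\varkappa\ge\varkappa_0$ for any fixed $\varkappa_0>0$, so I may assume $\varkappa\le 1/2$. Writing $\zeta=(x,y)$ with $x=\mathsf{P}_{\mathcal{Z}_1}\zeta$ and $y=\mathsf{P}_{\mathcal{Z}_2}\zeta$, and using that the image of $\Phi$ lies in $\mathcal{Z}_1$, the map $\Psi$ takes the triangular form $\Psi(x,y)=(\psi_y(x),y)$ with $\psi_y:=\mathrm{id}_{\mathcal{Z}_1}+\Phi(\cdot,y)$; in particular $\Psi$ leaves the $\mathcal{Z}_2$-component unchanged.

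Next I would disintegrate both measures along $\mathcal{Z}_2$. For $A\in\mathcal{B}(\mathcal{Z})$ denote its $y$-slice by $A_y=\{z\in\mathcal{Z}_1;(z,y)\in A\}$. The tensor structure gives $\ell(A)=\int_{\mathcal{Z}_2}\ell_1(A_y)\,\ell_2(dy)$, while the triangular form of $\Psi$ together with $\psi_y^{-1}(A_y)=\{x;\psi_y(x)\in A_y\}$ yields $\Psi_*\ell(A)=\int_{\mathcal{Z}_2}(\psi_y)_*\ell_1(A_y)\,\ell_2(dy)$. Subtracting and bounding the integrand uniformly in $A$ by $\|\ell_1-(\psi_y)_*\ell_1\|_{\rm TV}$, I obtain
\begin{equation*}
\|\ell-\Psi_*\ell\|_{\rm TV}\le\int_{\mathcal{Z}_2}\|\ell_1-(\psi_y)_*\ell_1\|_{\rm TV}\,\ell_2(dy).
\end{equation*}
It then suffices to bound each fiberwise term by $C\varkappa$ with a constant independent of $y$; note that no regularity on the infinite-dimensional marginal $\ell_2$ is needed.

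For fixed $y$, the map $\psi_y$ is a Lipschitz perturbation of the identity on the finite-dimensional space $\mathcal{Z}_1\cong\R^d$, with $\mathrm{Lip}(\Phi(\cdot,y))\le\varkappa\le 1/2$; hence $\psi_y$ is a $C^1$-diffeomorphism of $\R^d$ satisfying $\|\psi_y^{-1}(z)-z\|\le 2\varkappa$ and $\|D\psi_y-I\|\le\varkappa$. Writing $\rho$ for the compactly supported $C^1$ density of $\ell_1$, the pushforward $(\psi_y)_*\ell_1$ has density $\tilde\rho(z)=\rho(\psi_y^{-1}(z))\,|\det D\psi_y^{-1}(z)|$, so $\|\ell_1-(\psi_y)_*\ell_1\|_{\rm TV}=\tfrac12\int_{\R^d}|\rho-\tilde\rho|$. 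I would split $\rho(z)-\tilde\rho(z)=[\rho(z)-\rho(\psi_y^{-1}(z))]+\rho(\psi_y^{-1}(z))\,[1-|\det D\psi_y^{-1}(z)|]$. The first bracket is $\le 2\varkappa\,\|\nabla\rho\|_\infty$ by the mean value inequality, while $|1-|\det D\psi_y^{-1}(z)||\le C\varkappa$ because $D\psi_y^{-1}$ is within $2\varkappa$ of the identity in $\R^{d\times d}$. Integrating over the fixed compact set containing the supports of both $\rho$ and $\rho\circ\psi_y^{-1}$ (bounded since $\mathrm{supp}\,\ell_1$ is bounded and $\varkappa\le1/2$) gives the fiberwise bound $C\varkappa$, with $C$ depending only on $\|\rho\|_{C^1}$ and the support. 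Combined with the displayed inequality this completes the proof.

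The main obstacle is the finite-dimensional $L^1$ estimate in the last step: this is exactly where the hypotheses that $\ell_1$ has a $C^1$ density and that $\ell$ has bounded support are used essentially — the $C^1$ regularity controls the increment $\rho-\rho\circ\psi_y^{-1}$, and the bounded support confines all integrals to a fixed compact set, so that the two error terms integrate to $O(\varkappa)$ rather than merely $O(1)$. The uniformity of the constant in $y$ is automatic, since $\|\Phi(\cdot,y)\|\le\varkappa$ and $\mathrm{Lip}(\Phi(\cdot,y))\le\varkappa$ hold uniformly by assumption; this uniformity is precisely what permits the fiberwise bounds to be integrated against $\ell_2$.
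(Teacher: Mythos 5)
Your proposal is correct. Note that the paper itself gives no proof of this lemma: it is imported verbatim from Shirikyan \cite[Proposition 5.6]{Shi-15}, so there is no "paper proof" to compare against line by line; your argument is, however, essentially a faithful reconstruction of the standard (and of Shirikyan's original) argument. The three pillars are exactly right: (i) the tensor structure $\ell=\ell_1\otimes\ell_2$ together with ${\rm Im}\,\Phi\subset\mathcal{Z}_1$ gives the triangular form $\Psi(x,y)=(\psi_y(x),y)$ and hence, by Fubini, the fiberwise bound $\|\ell-\Psi_*\ell\|_{\rm TV}\leq\int_{\mathcal{Z}_2}\|\ell_1-(\psi_y)_*\ell_1\|_{\rm TV}\,\ell_2(dy)$; (ii) for $\varkappa\leq 1/2$ each $\psi_y$ is a $C^1$-diffeomorphism of $\mathcal{Z}_1\cong\R^d$ with $\|\psi_y^{-1}-\mathrm{id}\|_\infty\leq\varkappa$ and $\|D\psi_y^{-1}-I\|\leq 2\varkappa$, so the change-of-variables density $\tilde\rho=\rho\circ\psi_y^{-1}\,|\det D\psi_y^{-1}|$ differs from $\rho$ by $O(\varkappa)$ pointwise, using $\|\nabla\rho\|_\infty<\infty$ (which follows from $C^1$ plus compact support, the latter inherited from the bounded support of $\ell$) and the Lipschitz continuity of $\det$ near $I$; (iii) all constants are uniform in $y$, so integration against $\ell_2$ costs nothing, and the large-$\varkappa$ case is absorbed by ${\rm TV}\leq 1$. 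Two points you treat implicitly but which are harmless: the measurability of $y\mapsto\|\ell_1-(\psi_y)_*\ell_1\|_{\rm TV}$ (clear from continuity of $\Phi$), and the positivity of $\det D\psi_y^{-1}$ (which you do not actually need, since $\bigl||a|-1\bigr|\leq|a-1|$). Neither affects the validity of the proof.
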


    \subsubsection{Criterion for central limit theorems of stationary processes} In this appendix, we recall a central limit theorem criterion \cite[Corollary 1]{MW-00} for additive functionals of ergodic stationary Markov processes. For the reader's convenience, their key statements are summarized as follows. 

   \begin{theorem}{\rm(}Maxwell--Woodroofe {\rm\cite{MW-00})}
   Let $\{x_n;n\in\N\}$ be an ergodic stationary Markov process in a Polish space $\mathcal{X}$ with unique invariant measure $\mu_*$. Let $f\in B_b(\mathcal{X})$ be a function for which there exist constants $\beta<1$ and $C>0$ satisfying
   \begin{equation}\label{condition MW}
      \langle |\sum_{k=0}^{n-1}(P_kf-\langle f,\mu_*\rangle)|^2,\mu_*\rangle\leq Cn^{\beta}
   \end{equation}
   for any $n\in\N^+$. Then $\{f(x_n);n\in\N\}$ satisfies the central limit theorems in the following sense: 
    \begin{equation*}
        \frac{1}{\sqrt{n}}\sum_{k=0}^{n-1}(f(x_k)-\langle f,\mu_*\rangle)\rightarrow \mathcal{N}(0,\sigma_f^2)\quad\text{ as }n\rightarrow\infty,
    \end{equation*}
    where $\sigma_f^2\geq 0$ is given by     $\sigma_f^2=\lim\limits_{n\rightarrow\infty}\E\left(\frac{1}{\sqrt{n}}\sum_{k=0}^{n-1}(f(x_k)-\langle f,\mu_*\rangle)\right)^2$. 
  \end{theorem}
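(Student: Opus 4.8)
The plan is to deduce the statement from the classical martingale central limit theorem by approximating the partial sums with a martingale built from the resolvent of the transition operator. Write $P:=P_1$ for the one-step operator, so that $P_k=P^k$, let $\|\cdot\|_2$ denote the norm of $L^2(\mathcal X,\mu_*)$, and let $\mathcal F_k=\sigma(x_0,\dots,x_k)$. First I would replace $f$ by $f-\langle f,\mu_*\rangle$; as $P$ fixes constants and $\mu_*$ is invariant this changes neither the centred sums $S_n:=\sum_{k=1}^n\big(f(x_k)-\langle f,\mu_*\rangle\big)$ nor hypothesis \eqref{condition MW}, so I may assume $\langle f,\mu_*\rangle=0$. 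Putting $V_n:=\sum_{k=0}^{n-1}P_k f$ and $\theta_n:=\|V_n\|_2$, hypothesis \eqref{condition MW} becomes $\theta_n\le \sqrt{C}\,n^{\beta/2}$, and hence
\[
\sum_{n\ge1}\frac{\theta_n}{n^{3/2}}\le \sqrt{C}\sum_{n\ge1}n^{(\beta-3)/2}<\infty,
\]
the series converging exactly because $\beta<1$. This Maxwell--Woodroofe summability is the only quantitative input I will use.

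Next I would set up the resolvent decomposition. For $\varepsilon\in(0,1)$ the series $h_\varepsilon:=\sum_{k\ge0}(1-\varepsilon)^k P_k f$ converges in $L^2(\mu_*)$, since $P$ is an $L^2$-contraction on mean-zero functions, and solves $h_\varepsilon=f+(1-\varepsilon)Ph_\varepsilon$. Setting $\xi_k^\varepsilon:=h_\varepsilon(x_k)-(Ph_\varepsilon)(x_{k-1})$, the Markov property gives $\E(\xi_k^\varepsilon\mid\mathcal F_{k-1})=0$, so $M_n^\varepsilon:=\sum_{k=1}^n\xi_k^\varepsilon$ is a stationary $\mathcal F_k$-martingale. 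Rewriting the resolvent identity as $f=(h_\varepsilon-Ph_\varepsilon)+\varepsilon\,Ph_\varepsilon$ and telescoping over $k=1,\dots,n$ produces
\[
S_n=M_n^\varepsilon+\big[(Ph_\varepsilon)(x_0)-(Ph_\varepsilon)(x_n)\big]+\varepsilon\sum_{k=1}^n(Ph_\varepsilon)(x_k).
\]
An Abel summation identifies $h_\varepsilon=\varepsilon\sum_{j\ge1}(1-\varepsilon)^{j-1}V_j$, whence $\|h_\varepsilon\|_2\le\varepsilon\sum_{j\ge1}(1-\varepsilon)^{j-1}\theta_j\lesssim\varepsilon^{-\beta/2}$. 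Choosing $\varepsilon=\varepsilon_n:=1/n$ gives $\|h_{\varepsilon_n}\|_2\lesssim n^{\beta/2}=o(\sqrt n)$, so the coboundary term is $o(\sqrt n)$ in $L^2$; and, by stationarity, the drift term has $L^2$-norm at most $\varepsilon_n\,n\,\|Ph_{\varepsilon_n}\|_2\le\|h_{\varepsilon_n}\|_2=o(\sqrt n)$ as well. Therefore $n^{-1/2}(S_n-M_n^{\varepsilon_n})\to0$ in $L^2(\mu_*)$, hence in probability; consequently $n^{-1/2}S_n$ and $n^{-1/2}M_n^{\varepsilon_n}$ have the same weak limit, and the problem reduces to a central limit theorem for the martingales $M_n^{\varepsilon_n}$.

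Finally I would establish the Gaussian limit for the martingale part. \textbf{This is the step I expect to be the main obstacle:} because $\varepsilon_n=1/n$ varies with $n$, the array $\{\xi_k^{\varepsilon_n}:1\le k\le n\}$ is a triangular array of martingale differences rather than the partial sums of one fixed stationary sequence, so one cannot simply quote the Billingsley--Ibragimov stationary martingale CLT. The route is to invoke a martingale CLT for triangular arrays (e.g.\ the Hall--Heyde/McLeish form), whose two hypotheses must be verified: a conditional Lindeberg condition, which follows readily from $\E(\xi_1^{\varepsilon_n})^2\le\|h_{\varepsilon_n}\|_2^2$ together with an $L^2$-uniform-integrability argument; and, crucially, the convergence of the conditional quadratic variations
\[
\frac1n\sum_{k=1}^n\E\big((\xi_k^{\varepsilon_n})^2\mid\mathcal F_{k-1}\big)\longrightarrow\sigma_f^2\quad\text{in probability}.
\]
The conditional expectation equals $g_{\varepsilon_n}(x_{k-1})$ for an explicit function $g_\varepsilon$, so the left-hand side is an ergodic average of $g_{\varepsilon_n}$ along the stationary chain; the difficulty is that the integrand itself drifts as $\varepsilon_n\to0$, so a plain Birkhoff theorem does not suffice and one needs a law of large numbers that is uniform in the parameter, controlled quantitatively by $\theta_n\lesssim n^{\beta/2}$ with $\beta<1$. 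Once this convergence is secured the martingale CLT yields $n^{-1/2}M_n^{\varepsilon_n}\Rightarrow\mathcal N(0,\sigma_f^2)$ with $\sigma_f^2=\lim_n\E(\xi_1^{\varepsilon_n})^2=\lim_n n^{-1}\E_{\mu_*}S_n^2$; combined with the $L^2$-approximation of the previous step this gives $n^{-1/2}\sum_{k=0}^{n-1}(f(x_k)-\langle f,\mu_*\rangle)\Rightarrow\mathcal N(0,\sigma_f^2)$, as asserted.
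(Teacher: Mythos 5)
First, a point of reference: the paper does not prove this statement at all --- it is quoted as Corollary 1 of Maxwell--Woodroofe \cite{MW-00} and used as a black box in Appendix A --- so the only meaningful comparison is against the original Maxwell--Woodroofe argument. Your first half is correct and is indeed the standard opening of that argument: the resolvent identity $h_\varepsilon=f+(1-\varepsilon)Ph_\varepsilon$, the Abel-summation bound $\|h_\varepsilon\|_2\lesssim\varepsilon^{-\beta/2}$, the choice $\varepsilon_n=1/n$, and the estimates showing that the coboundary term and the drift term $\varepsilon_n\sum_{k\le n}(Ph_{\varepsilon_n})(x_k)$ are $o(\sqrt n)$ in $L^2(\mu_*)$ all check out, and they correctly reduce the problem to a CLT for the triangular array $\{\xi_k^{\varepsilon_n};1\le k\le n\}$.

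The genuine gap is that this remaining CLT is exactly where the theorem lives, and you leave it unproven. Two concrete problems. (i) Your claim that the conditional Lindeberg condition ``follows readily from $\E(\xi_1^{\varepsilon_n})^2\le\|h_{\varepsilon_n}\|_2^2$'' cannot work: that bound is of order $n^{\beta}$, which diverges, so it gives no control whatsoever on $\E\bigl[(\xi_1^{\varepsilon_n})^2\mathbf{1}_{\{|\xi_1^{\varepsilon_n}|>\delta\sqrt n\}}\bigr]$; what is needed is uniform integrability of the family $\{(\xi_1^{\varepsilon_n})^2;n\in\N\}$, which follows from no estimate stated in the proposal. (ii) The convergence of the normalized conditional quadratic variations to $\sigma_f^2$ --- and even the existence of the limit $\sigma_f^2=\lim_n n^{-1}\E_{\mu_*}S_n^2$, which is itself part of the statement --- is postponed behind ``once this convergence is secured.'' The way Maxwell and Woodroofe close both gaps at once is their key lemma: under $\sum_n n^{-3/2}\|V_n\|_2<\infty$ the martingale differences $d_\varepsilon:=h_\varepsilon(x_1)-(Ph_\varepsilon)(x_0)$ form a Cauchy family in $L^2(\mu_*)$ as $\varepsilon\to 0^+$. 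Note that $\E[(d_\varepsilon-d_\delta)^2]=\|h_\varepsilon-h_\delta\|_2^2-\|P(h_\varepsilon-h_\delta)\|_2^2$, which can be small even though $h_\varepsilon$ itself does not converge; proving this smallness is a nontrivial dyadic/telescoping argument that genuinely uses the hypothesis, and nothing in your outline substitutes for it. Once $d_{\varepsilon_n}\to d$ in $L^2$, the triangular array is replaced by the single stationary ergodic martingale difference sequence $\{d(x_{k-1},x_k)\}$, the classical Billingsley--Ibragimov CLT applies, $\sigma_f^2=\E d^2$ exists, and the ``uniform-in-parameter law of large numbers'' you ask for reduces to Birkhoff's theorem applied to $d^2$. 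As written, your proposal establishes the routine reduction and assumes the theorem's core.
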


    \subsection{Optimal couplings}\label{Section-6-5}

    In this appendix, we summarize some basic notions and results surrounding the coupling approach. Let $(\mathcal{X},\|\cdot\|$) be a separable  Banach space and define a functional $\rho_{\boldsymbol{\varepsilon}}\colon\mathcal{X}\times\mathcal{X}\rightarrow[0,1]$, with $\boldsymbol{\varepsilon}=(\varepsilon_1,\varepsilon_2)$ and $0\leq \varepsilon_2\leq\varepsilon_1<\infty$, by the relation 
\begin{equation*}
\rho_{\boldsymbol{\varepsilon}}(x,x'):=  \varphi_{\boldsymbol{\varepsilon}}(\|x-x'\|),
\end{equation*}
where  $\varphi_{\boldsymbol{\varepsilon}}$ is defined as in (\ref{def-varphi}).
We further set
\begin{equation*}
\|\mu-\nu\|_{\boldsymbol{\varepsilon}}=\inf_{(\xi,\eta)\in\mathscr{C}(\mu,\nu)} \E \rho_{\boldsymbol{\varepsilon}}(\xi,\eta),\quad \mu,\nu\in\mathcal P(\mathcal X).
\end{equation*}
Kantorovich's theorem states that the infimum above can be always reached (see \cite[Theorem 5.10]{Villani-08}). That is, there exists a \textit{$\rho_{\boldsymbol{\varepsilon}}$-optimal coupling}  $(\xi_*,\eta_*)\in\mathscr{C}(\mu,\nu)$ such that
\begin{equation*}
\|\mu-\nu\|_{\boldsymbol{\varepsilon}}=\E \rho_{\boldsymbol{\varepsilon}}(\xi_*,\eta_*).
\end{equation*}

\begin{remark}
We list below some particular cases of $\rho_{\boldsymbol{\varepsilon}}$-optimal couplings.
\begin{itemize}[leftmargin=2em]
\item[\rm (1)] If $\varepsilon_1=\varepsilon_2=0$, then $\rho_{\boldsymbol{\varepsilon}}(x,x')=\mathbf{1}_{(0,\infty)}(\|x-x'\|)$. The $\rho_{\boldsymbol{\varepsilon}}$-optimal coupling is the usual maximal coupling of measures {\rm\cite{Thorisson-00}}.

\item[\rm (2)] If $\varepsilon_1=\varepsilon_2>0$, then $\rho_{\boldsymbol{\varepsilon}}(x,x')=\mathbf{1}_{(\varepsilon_1,\infty)}(\|x-x'\|)$. The $\rho_{\boldsymbol{\varepsilon}}$-optimal coupling is the concept of the $\varepsilon_1$-optimal coupling of measures {\rm\cite{Shi-15}}.

\item[\rm (3)] If $\varepsilon_1>\varepsilon_2=0$, then $\rho_{\boldsymbol{\varepsilon}}(x,x')=\min\{1,\|x-x'\|/\varepsilon_1\}$ is a continuous metric on $\mathcal{X}$. In this case, $\|\mu-\nu\|_{\boldsymbol{\varepsilon}}$ is the Wasserstein-1 distance between $\mu$ and $\nu$  associated with $\rho_{\boldsymbol{\varepsilon}}$ {\rm\cite{HM-06}}. In particular, it is equivalent to the dual-Lipschitz distance in the following sense
\begin{equation*}
\frac{\varepsilon_1}{1+\varepsilon_1} \|\mu-\nu\|_{\boldsymbol{\varepsilon}}\leq \|\mu-\nu\|_{L}^*\leq 2\|\mu-\nu\|_{\boldsymbol{\varepsilon}}.
\end{equation*}
\end{itemize}
\end{remark}
	
We now study the measurability of $\rho_{\boldsymbol{\varepsilon}}$-optimal couplings. Let $Z$ be a measurable space, and $\{\mu_i^z;z\in Z\},i=1,2$ be two families of probability measures on $\mathcal{X}$ such that the mappings $z\mapsto\mu_i^z$ are measurable from $Z$ to $\mathcal{P}(\mathcal{X})$. In addition, let $\lambda$ be a nonnegative measurable function on $Z$. 
\begin{proposition}\label{Coupling lemma 1}
Under the above settings, for every $0\leq\theta_1<\theta_2\leq 1$ there exists a probability space $(\Omega,\mathcal{F},\Pb)$ and measurable mappings $\mathcal{R},\mathcal{R}'\colon Z\times\Omega\rightarrow \mathcal{X}$ such that $(\mathcal{R}(z),\mathcal{R}'(z))\in\mathscr{C}(\mu_1^z,\mu_2^z)$ and 
\begin{equation}\label{optimal coupling measurablility}
\E \rho_{(\lambda(z),\theta_2\lambda(z))}(\mathcal{R}(z),\mathcal{R}'(z))\leq \|\mu_1^z-\mu_2^z\|_{(\lambda(z),\theta_1\lambda(z))}.
\end{equation}
\end{proposition}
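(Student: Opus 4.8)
The plan is to reduce the assertion to a measurable selection of \emph{exactly optimal} couplings for the cost associated with the pair $(\lambda(z),\theta_1\lambda(z))$, and then to close the inequality by a pointwise domination of the two cost functions. First I would record the elementary monotonicity: for fixed $\varepsilon_1$ and $s$, the map $\varepsilon_2\mapsto\varphi_{(\varepsilon_1,\varepsilon_2)}(s)$ is nonincreasing (enlarging the zero region and starting the ramp later only lowers the value). Since $\theta_1\le\theta_2$, this gives
$$\rho_{(\lambda(z),\theta_2\lambda(z))}(x,x')\le\rho_{(\lambda(z),\theta_1\lambda(z))}(x,x'),\qquad (x,x')\in\mathcal X\times\mathcal X,$$
for every $z\in Z$ (the degenerate case $\lambda(z)=0$ being trivial, as both sides reduce to $\mathbf 1_{(0,\infty)}(\|x-x'\|)$). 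Consequently it suffices to construct a measurable family $z\mapsto\nu^z\in\mathcal P(\mathcal X\times\mathcal X)$ with $\nu^z\in\mathscr C(\mu_1^z,\mu_2^z)$ and such that $\nu^z$ minimizes $\nu\mapsto\int\rho_{(\lambda(z),\theta_1\lambda(z))}\,d\nu$ over $\mathscr C(\mu_1^z,\mu_2^z)$; integrating the displayed inequality against such a minimizer will yield precisely \eqref{optimal coupling measurablility}.

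For each fixed $z$ the existence of a minimizer is Kantorovich's theorem (already invoked in the text), using that $\rho_{(\lambda(z),\theta_1\lambda(z))}$ is lower semicontinuous, indeed continuous whenever $\lambda(z)>0$ since $\theta_1<1$. To obtain measurability in $z$ I would set up the optimization as a measurable selection problem: the map $z\mapsto(\mu_1^z,\mu_2^z,\lambda(z))$ is measurable by hypothesis; the coupling set $\mathscr C(\mu_1^z,\mu_2^z)$ is a nonempty, convex, weakly compact subset of the Polish space $\mathcal P(\mathcal X\times\mathcal X)$ (compactness by Prokhorov, the two marginals being tight), and $z\rightrightarrows\mathscr C(\mu_1^z,\mu_2^z)$ is a measurable multifunction. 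The objective $(z,\nu)\mapsto\int\rho_{(\lambda(z),\theta_1\lambda(z))}\,d\nu$ is measurable in $z$ and lower semicontinuous in $\nu$. Standard measurable selection results (the measurable maximum theorem, or Filippov's implicit-function lemma) then provide a measurable selection $z\mapsto\nu^z$ valued in the nonempty, measurable argmin correspondence.

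It remains to realize the family $\{\nu^z\}$ on a single probability space. Since $\mathcal X\times\mathcal X$ is Polish, there is a jointly measurable map $\mathsf F\colon\mathcal P(\mathcal X\times\mathcal X)\times[0,1]\to\mathcal X\times\mathcal X$ with $\mathsf F(\nu,\cdot)_*\mathrm{Leb}=\nu$ for every $\nu$ (a Borel randomization of measures on a standard Borel space, built from the Borel isomorphism with $[0,1]$ and quantile transforms). Taking $(\Omega,\mathcal F,\Pb)=([0,1],\mathcal B([0,1]),\mathrm{Leb})$ and setting $(\mathcal R(z,\omega),\mathcal R'(z,\omega))=\mathsf F(\nu^z,\omega)$ yields jointly measurable maps $\mathcal R,\mathcal R'\colon Z\times\Omega\to\mathcal X$ with $(\mathcal R(z),\mathcal R'(z))\in\mathscr C(\mu_1^z,\mu_2^z)$ and $\mathscr D(\mathcal R(z),\mathcal R'(z))=\nu^z$. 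Then
$$\E\,\rho_{(\lambda(z),\theta_2\lambda(z))}(\mathcal R(z),\mathcal R'(z))=\int\rho_{(\lambda(z),\theta_2\lambda(z))}\,d\nu^z\le\int\rho_{(\lambda(z),\theta_1\lambda(z))}\,d\nu^z=\|\mu_1^z-\mu_2^z\|_{(\lambda(z),\theta_1\lambda(z))},$$
which is the required bound.

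The genuinely delicate points, which I would treat with care rather than as routine, are the measurability of the multifunction $z\rightrightarrows\mathscr C(\mu_1^z,\mu_2^z)$ together with the semicontinuity/measurability hypotheses feeding the selection theorem, and the construction of the Borel randomization $\mathsf F$. The strict inequality $\theta_1<\theta_2$ enters only through the pointwise domination of the costs and, via $\theta_1<1$, through the continuity of the cost used for selection; the admissible value $\theta_2=1$ would make the left-hand cost discontinuous and resist direct optimization, which is exactly why we optimize the $\theta_1$-cost and dominate.
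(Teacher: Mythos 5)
Your proposal is correct in substance and shares the paper's core mechanism---construct a coupling that is optimal for the $(\lambda(z),\theta_1\lambda(z))$-cost, then pass to $\theta_2$ via the pointwise monotonicity of $\varepsilon_2\mapsto\varphi_{(\varepsilon_1,\varepsilon_2)}(s)$---but the technical implementation is genuinely different. The paper never runs a parameter-dependent optimization: it partitions $Z$ into the sets $\{(n+1)^{-1}<\lambda(z)\le n^{-1}\}$, $\{n<\lambda(z)\le n+1\}$ and $\{\lambda(z)=0\}$, and on each nondegenerate piece it rescales, replacing $\mu_i^z$ by the stretched measures $\tilde\mu_i^z(A)=\mu_i^z(\lambda(z)A)$. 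After this normalization the cost is the \emph{fixed} continuous function $\rho_{(1,\theta_1)}$, so measurable selection of optimal couplings is exactly Villani's Corollary 5.22 cited in the text, and the scaling identities $\rho_{(1,\theta)}(\tilde\xi,\tilde\eta)=\rho_{(\lambda,\theta\lambda)}(\lambda\tilde\xi,\lambda\tilde\eta)$ together with $\|\mu_1^z-\mu_2^z\|_{(\lambda(z),\theta_1\lambda(z))}\ge\|\tilde\mu_1^z-\tilde\mu_2^z\|_{(1,\theta_1)}$ transport the bound back; the degenerate set $\{\lambda(z)=0\}$ is treated separately by a measurable family of maximal couplings, and the pieces are glued on a product probability space. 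Your route keeps the $z$-dependent cost and appeals to an abstract measurable-argmin theorem plus an explicit Borel randomization, which avoids both the partition and the rescaling and is conceptually cleaner, at the price of heavier selection machinery. One concrete caveat: on $\{\lambda(z)=0\}$ your cost degenerates to $\mathbf{1}_{x\neq x'}$, which is only lower semicontinuous in the coupling variable, so the measurable maximum theorem you invoke (which requires a Carath\'eodory objective, i.e.\ continuity in $\nu$) does not apply there as stated; you would need either an argmin-selection theorem for normal integrands (Rockafellar--Wets type) or, as the paper does, to split off $\{\lambda(z)=0\}$ and use measurable maximal couplings on that set. Since you explicitly flagged this as a delicate point, this is a gap in the cited tool rather than in the idea. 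Note finally that your randomization step is not an extra burden relative to the paper: realizing the measurable family of optimal plans as random variables $(\tilde\xi_*^z,\tilde\eta_*^z)$ on a common probability space, which the paper folds into its citation, rests on the same device.
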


\begin{proof}[{\bf Proof}] The proof of this proposition is analogous to that of \cite[Proposition 5.3]{Shi-15}. We split the measurable space $Z$ by $Z=\bigcup_{n\in\Z}Z_n$ with
\begin{align*}
    Z_n&=\left\{(n+1)^{-1}<\lambda(z)\leq n^{-1}\right\},\quad Z_{-n}=\left\{ n<\lambda(z)\leq n+1\right\}\quad\text{for }n\in\N^+,\\
    Z_0&=\{\lambda(z)=0\}.
\end{align*}

It suffices to construct the desired measurable couplings $\mathcal R,\mathcal R'$ on these disjoint sets, while the conclusion of this proposition will be obtained by a standard gluing procedure. 

For $z\in Z_0$, we can take $(\mathcal{R}_0(z),\mathcal{R}'_0(z))$ to be the usual maximal couplings on $(\Omega_0,\mathcal{F}_0,\Pb_0)$ for $\mu_1^z$ and $\mu_2^z$ for which 
(\ref{optimal coupling measurablility}) is satisfied; e.g., one can employ similar arguments as in \cite[Lemma 1]{KS-15}. For $z\in Z_n$ with $n\neq 0$, let us define the stretched measures $\tilde{\mu}_i^z$ by setting
$$
\tilde{\mu}_i^z(A)=\mu_i^z(\lambda(z)A),\quad A\in \mathcal{B}(\mathcal{X}).
$$
Then, an application of \cite[Corollary 5.22]{Villani-08} yields that there exists a 
$\rho_{(1,\theta_1)}$-optimal coupling $(\tilde{\xi}_*^z,\tilde{\eta}_*^z)$ for $\tilde{\mu}_1^z$ and $\tilde{\mu}_2^z$, defined on a common probability space $({\Omega}_n,\mathcal{F}_n,\Pb_n)$, such that the mapping  $z\mapsto (\tilde{\xi}_*^z,\tilde{\eta}_*^z)$  is measurable. In particular, it follows that
$$
\E \rho_{(1,\theta_2)}(\tilde{\xi}_*^z,\tilde{\eta}_*^z)\leq \E \rho_{(1,\theta_1)}(\tilde{\xi}_*^z,\tilde{\eta}_*^z)=
\|\tilde{\mu}_1^z-\tilde{\mu}_2^z\|_{(1,\theta_1)}.
$$
Thus, letting
$$
(\mathcal{R}_n(z)(\omega_n),\mathcal{R}'_n(z)(\omega_n))=\lambda(z)(\tilde{\xi}_*^z(\omega_n),\tilde{\eta}_*^z(\omega_n)),\quad z\in Z_n,\,\omega_n\in{\Omega}_n,
$$
it can be derived that $(\mathcal{R}_n(z),\mathcal{R}'_n(z))\in\mathscr{C}(\mu_1^z,\mu_2^z)$. Moreover, let us note that
\begin{align*}
    \|\mu_1^z-\mu_2^z\|_{(\lambda(z),\theta_1\lambda(z))}&=\inf_{(\xi,\eta)\in\mathscr C(\mu_1^z,\mu_2^z)}\E\rho_{(\lambda(z),\theta_1\lambda(z))}(\xi,\eta)\\
    &= \inf_{(\xi,\eta)\in\mathscr C(\mu_1^z,\mu_2^z)}\E\rho_{(1,\theta_1)}(\lambda(z)^{-1}\xi,\lambda(z)^{-1}\eta)\\
    &\geq \|\tilde{\mu}_1^z-\tilde{\mu}_2^z\|_{(1,\theta_1)}.
\end{align*}

Finally, let $(\Omega,\mathcal{F},\Pb)$ be the product space of $\{({\Omega}_n,\mathcal{F}_n,\Pb_n);n\in\Z\}$, and set 
$$
(\mathcal{R}(z)(\omega),\mathcal{R}'(z)(\omega))=(\mathcal{R}_n(z)(\omega_n),\mathcal{R}'_n(z)(\omega_n))\quad\text{ for } z\in Z_n,\,\omega=\{\omega_n;n\in\Z\}.
$$
By this construction, $(\mathcal{R}(z),\mathcal{R}'(z))\in\mathscr C(\mu_1^z,\mu_2^z)$, $\mathcal{R},\mathcal{R}'$ are measurable, and inequality (\ref{optimal coupling measurablility}) holds.
The proof is then complete.
\end{proof}

Next, we recall a lemma that could translate the issue of coupling hypothesis ({\bf C}) to a squeezing problem for controlled system.   Let $U_1, U_2$ be two $\mathcal{X}$-valued random variables defined on a probability space $(\mathcal{Z},\mathcal{B},\ell)$. Their laws are denoted by $\mu_1,\mu_2\in \mathcal{P}(\mathcal{X})$, respectively.  

\begin{lemma}\label{Coupling lemma 2}
Let $\boldsymbol{\varepsilon}=(\varepsilon_1,\varepsilon_2)$ with  $\varepsilon_1\geq \varepsilon_2\geq 0$. Assume that there exists a measurable mapping $\Psi\colon \mathcal{Z}\rightarrow \mathcal{Z}$ such that
\begin{equation*}
\|U_1(\zeta)-U_2(\Psi(\zeta))\|\leq \varepsilon_2
\end{equation*}
for almost every $\zeta\in \mathcal{Z}$. Then it follows that
\begin{equation*}
\|\mu_1-\mu_2\|_{\boldsymbol{\varepsilon}}\leq 2 \|\ell-\Psi_*\ell\|_{\rm TV}.
\end{equation*}
\end{lemma}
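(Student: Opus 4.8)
The plan is to exhibit an explicit coupling $(\xi,\eta)\in\mathscr{C}(\mu_1,\mu_2)$ whose $\rho_{\boldsymbol{\varepsilon}}$-cost is dominated by $\|\ell-\Psi_*\ell\|_{\rm TV}$, and then to bound $\|\mu_1-\mu_2\|_{\boldsymbol{\varepsilon}}$ by that cost via the infimum in its definition. First I would record the two elementary facts that drive the argument. Since $U_i\colon\mathcal{Z}\to\mathcal{X}$ has law $\mu_i=(U_i)_*\ell$, the auxiliary measure $\tilde{\mu}:=(U_2\circ\Psi)_*\ell=(U_2)_*(\Psi_*\ell)$ satisfies, by the contraction of total variation under measurable maps,
\begin{equation*}
\|\tilde{\mu}-\mu_2\|_{\rm TV}=\|(U_2)_*(\Psi_*\ell)-(U_2)_*\ell\|_{\rm TV}\leq\|\Psi_*\ell-\ell\|_{\rm TV}.
\end{equation*}
Moreover, the two $\mathcal{X}$-valued maps $\xi:=U_1$ and $\tilde{\eta}:=U_2\circ\Psi$ on $(\mathcal{Z},\ell)$ obey $\|\xi(\zeta)-\tilde{\eta}(\zeta)\|\leq\varepsilon_2$ for $\ell$-a.e.\ $\zeta$ by hypothesis, so that $\rho_{\boldsymbol{\varepsilon}}(\xi,\tilde{\eta})=\varphi_{\boldsymbol{\varepsilon}}(\|\xi-\tilde{\eta}\|)=0$ almost surely, because $\varphi_{\boldsymbol{\varepsilon}}$ vanishes on $[0,\varepsilon_2]$.

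The key step is to correct the marginal of $\tilde{\eta}$ from $\tilde{\mu}$ to $\mu_2$ while keeping it equal to $\tilde{\eta}$ with high probability. To this end I would invoke a maximal coupling of $(\tilde{\mu},\mu_2)$ on the Polish space $\mathcal{X}$ (case (1) of the Remark above, cf.\ \cite{Thorisson-00}): there is a probability measure $\Gamma$ on $\mathcal{X}\times\mathcal{X}$ with marginals $\tilde{\mu}$ and $\mu_2$ and $\Gamma(\{(y,y'):y=y'\})=1-\|\tilde{\mu}-\mu_2\|_{\rm TV}$. Disintegrating $\Gamma$ with respect to its first coordinate yields a measurable kernel $K(y,\mathrm{d}y')$ with $\int_{\mathcal{X}} K(y,\cdot)\,\tilde{\mu}(\mathrm{d}y)=\mu_2$. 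Enlarging the probability space, I would sample $\zeta\sim\ell$, set $\xi=U_1(\zeta)$ and $\tilde{\eta}=U_2(\Psi(\zeta))$, and then draw $\eta$ conditionally on $\tilde{\eta}$ from $K(\tilde{\eta},\cdot)$. By construction $\xi\sim\mu_1$ and $\eta\sim\mu_2$, so $(\xi,\eta)\in\mathscr{C}(\mu_1,\mu_2)$; moreover $(\tilde{\eta},\eta)\sim\Gamma$, whence $\Pb(\eta\neq\tilde{\eta})=\|\tilde{\mu}-\mu_2\|_{\rm TV}$.

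It then remains only to estimate the cost. On the event $\{\eta=\tilde{\eta}\}$ one has $\rho_{\boldsymbol{\varepsilon}}(\xi,\eta)=\rho_{\boldsymbol{\varepsilon}}(\xi,\tilde{\eta})=0$, while on its complement the trivial bound $\rho_{\boldsymbol{\varepsilon}}\leq1$ applies, so that
\begin{equation*}
\|\mu_1-\mu_2\|_{\boldsymbol{\varepsilon}}\leq\E\rho_{\boldsymbol{\varepsilon}}(\xi,\eta)\leq\Pb(\eta\neq\tilde{\eta})=\|\tilde{\mu}-\mu_2\|_{\rm TV}\leq\|\ell-\Psi_*\ell\|_{\rm TV}\leq 2\|\ell-\Psi_*\ell\|_{\rm TV},
\end{equation*}
which is the claim (in fact with the sharper constant $1$). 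I do not anticipate a genuine obstacle: the only delicate points are measure-theoretic, namely the existence of the maximal coupling and of the disintegration kernel $K$ on the Polish space $\mathcal{X}$, together with the routine verification that the enlarged construction produces the prescribed marginals $\mu_1,\mu_2$. The factor $2$ in the statement is thus slack; should one wish to avoid disintegrating $\Gamma$, an entirely parallel argument that instead disintegrates $\ell$ along $\Psi$ (sampling a $\Psi$-preimage of the maximally coupled second variable) yields the same bound.
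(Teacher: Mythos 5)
Your proof is correct, and it is worth noting that the paper itself gives no argument here: it simply defers to \cite[Proposition 5.2]{Shi-15}. Shirikyan's argument is also a maximal-coupling argument, but it is run at the level of the noise space: one takes a maximal coupling of $(\ell,\Psi_*\ell)$ on $\mathcal{Z}$ and then must sample a $\Psi$-preimage of the second coordinate (i.e.\ disintegrate $\ell$ along $\Psi$) before applying $U_1$ and $U_2$. You instead push forward first, maximally coupling $\tilde{\mu}=(U_2\circ\Psi)_*\ell$ with $\mu_2=(U_2)_*\ell$ on $\mathcal{X}$, and pay for this with the (equally routine) disintegration of the maximal coupling $\Gamma$ with respect to its first marginal; the total-variation contraction under measurable push-forwards then replaces the preimage-sampling step. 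The two routes are morally the same — on the ``diagonal'' event of the maximal coupling the hypothesis forces $\rho_{\boldsymbol{\varepsilon}}=0$, off it one uses $\rho_{\boldsymbol{\varepsilon}}\leq 1$ — but yours is self-contained and yields the sharper constant $1$. One small caveat you should make explicit: the constant you obtain depends on the normalization of $\|\cdot\|_{\rm TV}$. Under the convention of \cite[Section 1.2.3]{KS-12} adopted in the paper, $\|\mu-\nu\|_{\rm TV}=\sup_{A}|\mu(A)-\nu(A)|$, the maximal coupling satisfies $\Pb(\eta\neq\tilde{\eta})=\|\tilde{\mu}-\mu_2\|_{\rm TV}$ exactly as you claim; under the doubled convention the probability is half that, so in either case your chain of inequalities, and a fortiori the stated bound with the factor $2$, goes through.
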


This lemma could be proved by following a
similar argument as in \cite[Proposition 5.2]{Shi-15}. So, we skip it.

\subsection{Proof of Proposition \ref{local exponential mixing}}\label{Proof of Prop mixing}
Below we present a detailed proof of Proposition~\ref{local exponential mixing}. The proof is based on an application of Theorem \ref{Theorem-KS}, which includes the verification of recurrence and squeezing properties for an appropriately constructed extension, consisting of three steps.

\medskip

\noindent {\bf Step 1 (Extension construction).}  Letting $\delta\in(0,1]$ be a small constant to be specified later, we introduce the diagonal set in $\boldsymbol{Y}_{\infty}$ by 
\begin{equation*}
\boldsymbol{\mathcal{D}}_\delta:=\{(x,x')\in\boldsymbol{Y}_{\infty};d(x,x')\leq \delta\}.
\end{equation*}
Then, let us define a coupling operator on $\boldsymbol{Y}_{\infty}$ by the relation
\begin{equation}\label{extension kernel}
\boldsymbol{{R}}(x,x'):=\begin{cases}
(\mathcal{R}(x,x'),\mathcal{R}'(x,x'))\quad&\text{for }(x,x')\in \boldsymbol{\mathcal{D}}_\delta,\\
(S(x,\xi),S(x',\xi'))&\text{otherwise},
\end{cases}
\end{equation}
where $\xi$ and $\xi'$ are independent copies of $\xi_0$. Without loss of generality, we may assume that $\xi,\xi',\mathcal{R},\mathcal{R}'$ are all defined on the same probability space. To emphasize the dependence on $\omega$, we will sometimes write $\boldsymbol{{R}}(x,x')$ as $\boldsymbol{{R}}(x,x',\omega)$.
		
Let $\{({\Omega}_n,{\mathcal{F}}_n,{\Pb}_n);n\in\N\}$ be a sequence of copies of the probability space on which $\boldsymbol{{R}}$ is defined. Let $(\boldsymbol{\Omega},\boldsymbol{\mathcal{F}},\mathbf{P})$ be the product of $\{({\Omega}_n,{\mathcal{F}}_n,{\Pb}_n);n\in\N\}$. For every $\vec{\boldsymbol{x}}=(x,x')\in \boldsymbol{Y}_\infty$ and $\boldsymbol{\omega}=\{\omega_n;n\in\N\}\in\boldsymbol{\Omega}$, we recursively define $\{\vec{\boldsymbol{x}}_n=(x_n,x_n');n\in\N\}$ by 
\begin{align*}
(x_{n+1}(\boldsymbol{\omega}),x_{n+1}'(\boldsymbol{\omega}))&=\boldsymbol{{R}}(x_{n},x_{n}',\omega_{n}),   \end{align*}
where $\vec{\boldsymbol{x}}_0=\vec{\boldsymbol{x}}=(x,x')$. By construction it follows that the laws of $x_n$ and $x'_n$ coincide with $P_n(x,\cdot)$ and  $P_n(x',\cdot)$, respectively. Thus, $\{\vec{\boldsymbol{x}}_n;n\in\N\}$ is an extension of $\{x_n;n\in\N\}$ with $x_0=x$.

\medskip

\noindent {\bf Step 2 (Verification of squeezing).} Without loss of generality, let us assume $g(\delta)<1$.
We proceed to show that the squeezing property (\ref{KS squeezing1}),(\ref{KS squeezing2}) holds for  $\boldsymbol{B}=\boldsymbol{\mathcal{D}}_\delta$ and $\boldsymbol{\sigma}=\boldsymbol{\sigma}_\delta$, where
\begin{equation*}
\boldsymbol{\sigma}_\delta:=\inf\{n\in\N;d(x_n,x_n')> r^n \delta\}.
\end{equation*}
Here, the constant $r\in[0,1)$ is given by (\ref{squeezing}). 

Let us fix any $\vec{\boldsymbol{x}}=(x,x')\in\boldsymbol{\mathcal{D}}_\delta$. In view of (\ref{extension kernel}), it follows that
\begin{equation}\label{one step squeezing}
\mathbf P_{\vec{\boldsymbol{x}}}(d(x_1,x_1')\leq rd(x,x'))\geq 1-g(d(x,x')).
\end{equation}
Then, let us define a sequence of decreasing sets 
\begin{equation*}
\boldsymbol{\Omega}_n=\left\{\boldsymbol{\omega}\in\boldsymbol{\Omega};d(x_{k+1},x_{k+1}')\leq rd(x_{k},x_{k}')\text{ for } 0\leq k\leq n\right\},\quad n\in\N.
\end{equation*}
Using inequality (\ref{one step squeezing}) and the Markov property, we obtain 
\begin{equation*}
\begin{aligned}
\mathbf P_{\vec{\boldsymbol{x}}}(\boldsymbol{\Omega}_{n+1})&=\mathbf E_{\vec{\boldsymbol{x}}}[\mathbf{1}_{\boldsymbol{\Omega}_{n}}(\mathbf P_{\vec{\boldsymbol{x}}}(d(x_{n+1},x_{n+1}')\leq rd(x_{n},x_{n}'))|\boldsymbol{\mathcal{F}}_{n})]\\
&=\mathbf E_{\vec{\boldsymbol{x}}}[\mathbf{1}_{\boldsymbol{\Omega}_{n}}\mathbf P_{\vec{\boldsymbol{x}}_{n}}(d(x_1,x_1')\leq rd(x_0,x_0'))]\\
&\geq \mathbf E_{\vec{\boldsymbol{x}}}[\mathbf{1}_{\boldsymbol{\Omega}_{n}}(1-g(d(x_{n},x_{n}')))]\\
&\geq (1-g(r^{n}d(x,x'))) \mathbf P_{\vec{\boldsymbol{x}}}(\boldsymbol{\Omega}_{n}),
\end{aligned}
\end{equation*}
where the last inequality is due to $d(x_{n},x_{n}')\leq r^{n}d(x,x')$ on $\boldsymbol{\Omega}_{n}$, as well as the increasing property of  $g$. Here,  $\boldsymbol{\mathcal{F}}_n$ denotes the natural filtration of the sequence $\{\vec{\boldsymbol{x}}_n;n\in\N\}$.  By iteration, we get that
\begin{equation*}
\mathbf P_{\vec{\boldsymbol{x}}}(\boldsymbol{\Omega}_{n})\geq \prod_{k=0}^{n}(1- g(r^kd(x,x'))) \geq \prod_{k\in\N}(1- g(r^kd(x,x'))) := G(d(x,x')).
\end{equation*}
Clearly, the function $G$ is decreasing and continuous on $[0,\delta]$ with $G(0)=1$. Moreover, one has
\begin{equation*}
\{\boldsymbol{\sigma}_\delta=\infty\}\supset \left\{d(x_{n+1},x_{n+1}')\leq rd(x_{n},x_{n}')\text{ for all } n\in\N\right\} =\bigcap_{n\in\N}\boldsymbol{\Omega}_n.
\end{equation*}
In conclusion, taking $0<\delta\leq 1$ sufficiently small so that $G(\delta)\geq 1/2$, there holds 
\begin{equation}\label{sigma=infty}
\mathbf P_{\vec{\boldsymbol{x}}}(\boldsymbol{\sigma}_\delta=\infty)\geq 1/2.
\end{equation}
Therefore, \eqref{KS squeezing1} is obtained.

At the same time, let us note that $\{\boldsymbol{\sigma}_\delta=n\}=\{\boldsymbol{\sigma}_\delta>n-1\}\cap\{d(x_n,x_n')>r^n\delta\}$, and $d(x_n,x_n')\leq r^n\delta$ on the set $\{\boldsymbol{\sigma}_\delta>n\}$.
Combined with the Markov property and  (\ref{one step squeezing}), these observations imply that for any $n\in\N^+$,
\begin{equation*}
\begin{aligned}
\mathbf P_{\vec{\boldsymbol{x}}}(\boldsymbol{\sigma}_\delta=n)&=\mathbf E_{\vec{\boldsymbol{x}}}[\mathbf{1}_{\{\boldsymbol{\sigma}_\delta>n-1\}} (\mathbf P_{\vec{\boldsymbol{x}}}(d(x_n,x_n')>r^n\delta)|\boldsymbol{\mathcal{F}}_{n-1})]\\
&=\mathbf E_{\vec{\boldsymbol{x}}}[\mathbf{1}_{\{\boldsymbol{\sigma}_\delta>n-1\}} \mathbf P_{\vec{\boldsymbol{x}}_{n-1}}(d(x_1,x_1')>r^n\delta)]\\
&\leq \mathbf E_{\vec{\boldsymbol{x}}}[\mathbf{1}_{\{\boldsymbol{\sigma}_\delta>n-1\}} \mathbf P_{\vec{\boldsymbol{x}}_{n-1}}(d(x_1,x_1')>rd(x_0,x_0'))]\\
&\leq g(r^{n-1}).
\end{aligned}
\end{equation*}

Then, taking (\ref{g-def}) into account, it follows that
\begin{align*}
\mathbf E_{\vec{\boldsymbol{x}}}(\mathbf{1}_{\{\boldsymbol{\sigma}_\delta<\infty\}}\exp(\beta_3\boldsymbol{\sigma}_\delta))&=\sum_{n\in\N}\exp(\beta_3 n)\mathbf P_{\vec{\boldsymbol{x}}}(\boldsymbol{\sigma}_\delta=n) \leq 1+\sum_{n\in\N^+}e^{\beta_3 n}g(r^{n-1})<\infty,
\end{align*}
where we take $\beta_3\in (0,-\limsup\limits_{n\rightarrow\infty}\tfrac{1}{n}\ln g(r^n))$. Inequality \eqref{KS squeezing2} thus follows.

\medskip

\noindent {\bf Step 3 (Verification of recurrence).}
It remains to verify the recurrence property (\ref{KS recurrence}) for the Markov process $\{\vec{\boldsymbol{x}}_n;n\in\N\}$, where the hitting time $\boldsymbol{\tau}$ is taken as
$$
\boldsymbol{\tau}_\delta=\inf\{n\in\N;\vec{\boldsymbol{x}}_n\in\boldsymbol{\mathcal{D}}_\delta\}.
$$ 
To this end, it suffices to show that there exists $m\in\N^+$ satisfying
\begin{equation}\label{p def}
p:=\inf_{\vec{\boldsymbol{x}}\in\boldsymbol{Y}_\infty}\mathbf P_{\vec{\boldsymbol{x}}}(\vec{\boldsymbol{x}}_m\in\boldsymbol{\mathcal{D}}_\delta)>0.
\end{equation}
Indeed, if (\ref{p def}) is true, the Markov property implies that 
\begin{equation*}
\begin{aligned}
\mathbf P_{\vec{\boldsymbol{x}}}(\boldsymbol{\tau}_\delta> km)&=\mathbf E_{\vec{\boldsymbol{x}}}[\mathbf E_{\vec{\boldsymbol{x}}}\mathbf{1}_{\{\boldsymbol{\tau}_\delta>km\}}|\boldsymbol{\mathcal{F}}_{(k-1)m}]\\
&=\mathbf E_{\vec{\boldsymbol{x}}}[\mathbf{1}_{\{\boldsymbol{\tau}_\delta>(k-1)m\}}\mathbf P_{\vec{\boldsymbol{x}}_{(k-1)m}}(\boldsymbol{\tau}_\delta>m)]\\
&\leq (1-p) \mathbf P_{\vec{\boldsymbol{x}}}(\boldsymbol{\tau}_\delta> (k-1)m)
\end{aligned}
\end{equation*}
for any $k\in\N^+$.
By iteration, it follows that 
$$        \sup_{\vec{\boldsymbol{x}}\in\boldsymbol{Y}_\infty}\mathbf P_{\vec{\boldsymbol{x}}}(\boldsymbol{\tau}_\delta> km)\leq (1-p)^k.
$$
This immediately implies that $\boldsymbol{\tau}_\delta<\infty$ almost surely by using the Borel--Cantelli lemma, and leads to (\ref{KS recurrence}) by taking $0<\beta_1<m^{-1}\ln(1-p)^{-1}$.  
		
To prove (\ref{p def}), denoting $\Delta_n=\{\boldsymbol{\omega}\in\boldsymbol{\Omega};\boldsymbol{\tau}_\delta\geq n\}$ for $n\in\N$, we have
\begin{equation}\label{delta_n partition}
\begin{aligned}
\mathbf P_{\vec{\boldsymbol{x}}}(\vec{\boldsymbol{x}}_n\in\boldsymbol{\mathcal{D}}_\delta)&= \mathbf P_{\vec{\boldsymbol{x}}}(\{\vec{\boldsymbol{x}}_n\in\boldsymbol{\mathcal{D}}_\delta\}\cap \Delta_n)+\mathbf P_{\vec{\boldsymbol{x}}}(\{\vec{\boldsymbol{x}}_n\in\boldsymbol{\mathcal{D}}_\delta\}\cap \Delta_n^c).
\end{aligned}
\end{equation}
For $\varepsilon>0$ and $n\in\N$, let us define 
$$\boldsymbol{A}^{n,\varepsilon}=\{\vec{\boldsymbol{x}}\in\boldsymbol{Y}_{\infty};\mathbf P_{\vec{\boldsymbol{x}}}(\Delta_n^c)>\varepsilon\}.$$
We consider first the case where $\vec{\boldsymbol{x}}\in\boldsymbol{A}^{n,\varepsilon}$. Recall (\ref{sigma=infty}) and observe that $\vec{\boldsymbol{x}}_{\boldsymbol{\tau}_\delta}\in\boldsymbol{\mathcal{D}}_{\delta}$ and
$$
\bigcap_{k\in\N}\{\vec{\boldsymbol{x}}_k\in\boldsymbol{\mathcal{D}}_{\delta}\}\supset\{\boldsymbol{\sigma}_\delta=\infty\}\quad\text{for }\vec{\boldsymbol{x}}_0=\vec{\boldsymbol{x}}\in\boldsymbol{\mathcal{D}}_{\delta}.
$$
Then,
one can employ the strong Markov property to infer that
\begin{equation}\label{delta_n^c}
\begin{aligned}
\mathbf P_{\vec{\boldsymbol{x}}}(\{\vec{\boldsymbol{x}}_n\in\boldsymbol{\mathcal{D}}_\delta\}\cap \Delta_n^c)&=\mathbf E_{\vec{\boldsymbol{x}}}[\mathbf E_{\vec{\boldsymbol{x}}}(\mathbf{1}_{\{\vec{\boldsymbol{x}}_n\in\boldsymbol{\mathcal{D}}_\delta\}}\mathbf{1}_{\{\boldsymbol{\tau}_\delta<n\}}|\boldsymbol{\mathcal{F}}_{\boldsymbol{\tau}_\delta})]\\
&=\mathbf E_{\vec{\boldsymbol{x}}}[\mathbf{1}_{\{\boldsymbol{\tau}_\delta<n\}}\mathbf P_{\vec{\boldsymbol{x}}_{\boldsymbol{\tau}_\delta}}(\vec{\boldsymbol{x}}_{k}\in\boldsymbol{\mathcal{D}}_\delta)|_{k=n-\boldsymbol{\tau}_\delta}]\\
&\geq \mathbf P_{\vec{\boldsymbol{x}}}(\boldsymbol{\tau}_{\delta}<n)\cdot \inf_{\boldsymbol{y}\in\boldsymbol{\mathcal{D}}_{\delta}}\mathbf P_{\boldsymbol{y}}(\boldsymbol{\sigma}_{\delta}=\infty)\\
&\geq \frac{1}{2} \mathbf P_{\vec{\boldsymbol{x}}}(\Delta_n^c).
\end{aligned}
\end{equation}
Thus plugging (\ref{delta_n^c}) into (\ref{delta_n partition}), it can be seen that 
\begin{equation}\label{S^n,kappa}
\mathbf P_{\vec{\boldsymbol{x}}}(\vec{\boldsymbol{x}}_n\in\boldsymbol{\mathcal{D}}_\delta)\geq \frac{\varepsilon}{2}.
\end{equation}

For the other case, i.e.,  $\vec{\boldsymbol{x}}\in\boldsymbol{Y}_\infty\setminus \boldsymbol{A}^{n,\varepsilon}$, we derive that 
\begin{equation}\label{x_n|delta_n1}
\begin{aligned}
\mathbf P_{\vec{\boldsymbol{x}}}(\{\vec{\boldsymbol{x}}_n\in\boldsymbol{\mathcal{D}}_\delta\}\cap\Delta_n)=\mathbf P_{\vec{\boldsymbol{x}}}(\vec{\boldsymbol{x}}_n\in\boldsymbol{\mathcal{D}}_\delta|\Delta_n)\mathbf P_{\vec{\boldsymbol{x}}}(\Delta_n)\geq (1-\varepsilon)\mathbf P_{\vec{\boldsymbol{x}}}(\vec{\boldsymbol{x}}_n\in\boldsymbol{\mathcal{D}}_\delta|\Delta_n).
\end{aligned}
\end{equation}
Below is to estimate $\mathbf P_{\vec{\boldsymbol{x}}}(\vec{\boldsymbol{x}}_n\in\boldsymbol{\mathcal{D}}_\delta|\Delta_n)$ for appropriately chosen $n$ and $\varepsilon$. In view of the construction of $\{\vec{\boldsymbol{x}}_n;n\in\N\}$, one can check that $x_n$ and $x'_n$ are independent on $\Delta_n$. This enables us to see that
\begin{equation}\label{x_n|delta_n2}
\begin{aligned}
\mathbf P_{\vec{\boldsymbol{x}}}(\vec{\boldsymbol{x}}_n\in\boldsymbol{\mathcal{D}}_\delta|\Delta_n)&\geq \mathbf P_{\vec{\boldsymbol{x}}}(\vec{\boldsymbol{x}}_n\in B(z,\delta/2)\times B(z,\delta/2)|\Delta_n)\\
&=\mathbf P_{\vec{\boldsymbol{x}}}(x_n\in B(z,\delta/2)|\Delta_n)\cdot\mathbf P_{\vec{\boldsymbol{x}}}({x}'_n\in B(z,\delta/2)|\Delta_n),
\end{aligned}
\end{equation}
where the point $z\in\mathcal X$ is given by hypothesis ({\bf I}). Making use of
(\ref{irreducibility}), there exists $m\in\N^+$ and $p'>0$ such that
\begin{equation*}
\mathbf P_{\vec{\boldsymbol{x}}}(x_m\in B(z,\delta/2))\geq p'
\end{equation*}
for any $x\in\mathcal{Y}_\infty$. As a consequence,
\begin{equation*}
p'\leq \mathbf P_{\vec{\boldsymbol{x}}}(x_m\in B(z,\delta/2))\leq \mathbf P_{\vec{\boldsymbol{x}}}(x_m\in B(z,\delta/2)|\Delta_m) + \mathbf P_{\vec{\boldsymbol{x}}}(\Delta_m^c). 
\end{equation*}
It then follows that
\begin{equation*}
\begin{aligned}
\mathbf P_{\vec{\boldsymbol{x}}}(x_m\in B(z,\delta/2)|\Delta_m)\geq \frac{p'}{2},
\end{aligned}
\end{equation*}
and similarly,
\begin{equation*}
\begin{aligned}
\mathbf P_{\vec{\boldsymbol{x}}}(x_m'\in B(z,\delta/2)|\Delta_m)\geq \frac{p'}{2}
\end{aligned}
\end{equation*}
for any $\vec{\boldsymbol{x}}\in\boldsymbol{Y}_\infty\setminus \boldsymbol{A}^{m,p'/2}$. Therefore, taking $n=m$ and $\varepsilon=p'/2$ in (\ref{x_n|delta_n1}),(\ref{x_n|delta_n2}), we conclude that
\begin{equation}\label{S^n,kappa-1}
\mathbf P_{\vec{\boldsymbol{x}}}(\vec{\boldsymbol{x}}_m\in\boldsymbol{\mathcal{D}}_\delta)\geq \left(1-\frac{p'}{2}\right)\frac{(p')^2}{4}
\end{equation}
for any $\vec{\boldsymbol{x}}\in\boldsymbol{Y}_\infty\setminus \boldsymbol{A}^{m,p'/2}$.

Finally, the claim (\ref{p def}) follows from the combination of (\ref{S^n,kappa-1}) and (\ref{S^n,kappa}) (with $n=m$ and $\varepsilon=p'/2$).
The proof is then complete.

\subsection{Proof of Proposition \ref{Limit theorems}} \label{Proof of Thm limits}  The proof consists of two parts, separately.

\medskip

\noindent {\bf Part 1 (Strong law of large numbers).} We use a martingale decomposition procedure developed in \cite{Shi-06,KW-12} to derive the strong law of large numbers. Let $f\in L_b(\mathcal{X})$ and $x\in\mathcal{X}$ be fixed. With no loss of generality, assume that $\langle f,\mu_*\rangle=0$. Let us define the corrector that will be used in the martingale approximation procedure by
            \begin{equation*}
                \phi(x)=\sum_{k\in\N}P_kf(x),
            \end{equation*}
            where the convergence of the series is ensured by (\ref{Exponential mixing}). Indeed, it follows that 
            \begin{equation*}
                |\phi(x)|\leq C\|f\|_{L}(1+V(x)).
            \end{equation*}
            for some constant $C>0$, not depending on $f$ and $x$.  In view of (\ref{xn asymptotic compactness}), $\{\phi(x_n);n\in\N\}$ is almost surely uniformly bounded.   We are now in a position to give the martingale approximation. For $n\in\N^+$, let
         \begin{equation*}
             \sum_{k=0}^{n-1}f(x_k)=M_n+N_n
         \end{equation*}
        with 
        \begin{equation*}
            M_n:=\phi(x_n)-\phi(x)+\sum_{k=0}^{n-1}f(x_k)\quad\text{and}\quad N_n:=\phi(x)-\phi(x_n).
        \end{equation*}
        
       Clearly, the uniform boundedness of $\phi(x_n)$ allows us to conclude that
        \begin{equation*}
           \lim\limits_{n\rightarrow\infty}{n^{-1}}N_n=0\quad\text{ almost surely}.
        \end{equation*}
        
        Thus, it remains to handle the martingale part. Indeed, one can easily check that $\{M_n;n\in\N^+\}$ is a zero-mean square-integrable martingale, and thus the standard strong law of large numbers for discrete-time martingales, see, e.g., \cite[Theorem A.12.1]{KS-12}, implies the desired results.
             
    \medskip
    
    \noindent {\bf Part 2 (Central limit theorems).}  The proof of the central limit theorems consists of two steps. We shall first prove it for the ergodic stationary Markov process $\{x_n^*;n\in\N\}$, where $x_n^*$ is defined by 
    $$
    x_{n+1}^*=S(x_{n}^*,\xi_{n}),\;n\in\N\quad\text{and}\quad x_0^*=x^*.
    $$
    Here $x^*$ is an $\mathcal{X}$-valued random variable with law $\mu_*$, and is independent of $(\xi_n;n\in\N)$. Then, in the next step, we extend to the general case.  Let  $f\in L_b(\mathcal{X})$ be arbitrarily fixed. 

    \medskip
     
    \noindent {\bf Step 2.1  (The stationary case).} Invoking exponential mixing \eqref{Exponential mixing}, one can calculate that for there exists constant $C>0$ such that
    \begin{align*}
        |\sum_{k=0}^{n-1}(P_kf(x)-\langle f,\mu_*)\rangle|\leq C(1+V(x))\|f\|_{L}
    \end{align*}   
    for any $x\in\mathcal{X}$. 
   In view of the fact that $\text{supp }\mu_*\subset\mathcal{Y}_{\infty}$, one gets
    \begin{align*}
        \langle |\sum_{k=0}^{n-1}(P_kf-\langle f,\mu_*\rangle)|^2,\mu_*\rangle\leq (C\sup_{x\in\mathcal{Y}_{\infty}}(1+V(x))\|f\|_L)^2
    \end{align*}
    for any $n\in\N^+$. As the above estimation is independent of $n$, condition (\ref{condition MW}) is satisfied with $\beta=0$. Thus, the central limit theorems for $\{f(x^*_n);n\in\N\}$ follows.

    \medskip

    \noindent {\bf Step 2.2 (The general case).} It remains to handle the general case with $\{x_n;n\in\N\}$ defined by \eqref{RDS},\eqref{initial condition}. For any $x\in\mathcal{X}$, to indicate the initial condition,  let us write 
    $$s_n^x(f)=\frac{1}{\sqrt{n}}\sum_{k=0}^{n-1}(f(S_k(x;\boldsymbol{\xi}))-\langle f,\mu_*\rangle).$$
    We also use the corresponding notation $ s_n^{*}(f)$ for the stationary process $\{x_n^*;n\in\N\}$. Form the previous step, we have known that
    \begin{equation*}
        s_n^{*}(f)\rightarrow\mathcal{N}(0,\sigma_f^2)\quad\text{ as }n\rightarrow\infty,
    \end{equation*}
    with
    \begin{equation*}
      \sigma_f^2=\lim\limits_{n\rightarrow\infty}\E_{\mu_*}\left(\frac{1}{\sqrt{n}}\sum_{k=0}^{n-1}(f(x_k)-\langle f,\mu_*\rangle)\right)^2.
    \end{equation*}      
    Here the notation $\E_{\mu_*}$ stands for the expectation corresponding to the invariant measure:
    $$
    \E_{\mu_*}(\cdot)=\int_{\mathcal{X}} \E_{x}(\cdot)\mu_*(dx).
    $$    
    
    Equivalently, it means that for any $F\in L_b(\R)$ with $\|F\|_{L}\leq 1$,
    \begin{equation}\label{Snmu-N}
       \lim\limits_{n\rightarrow\infty} \langle F,\mathscr{D}(s_n^{*}(f))\rangle=\lim\limits_{n\rightarrow\infty} \E_{\mu_*} F(s_n^{*}(f))= \langle F,\mathscr{D}(\mathcal{N}(0,\sigma_f^2))\rangle.
    \end{equation}\par
    On the other hand, again using exponential mixing \eqref{Exponential mixing}, one gets
        \begin{align*}
            |\langle F, \mathscr{D}(s_n^x(f))\rangle -\langle F, \mathscr{D}(s_n^{x'}(f))\rangle|\leq Cn^{-1/2}(1+V(x))\|f\|_{L}
        \end{align*}
    for any $x\in\mathcal{X}$ and $x'\in\mathcal{Y}_\infty$ with a universal constant $C>0$. Thus, it further yields that
     \begin{equation}\label{Snx-Snmu}
            |\langle F, \mathscr{D}(s_n^x(f))\rangle -\langle F, \mathscr{D}(s_n^{*}(f))\rangle|\leq Cn^{-1/2}(1+V(x))\|f\|_{L}.
    \end{equation}
    
   Consequently, collecting (\ref{Snmu-N}),(\ref{Snx-Snmu}), the proof is completed by 
    \begin{equation*}
        \lim\limits_{n\rightarrow\infty}\langle F, \mathscr{D}(s_n^x(f))\rangle =\langle F, \mathscr{D}(\mathcal{N}(0,\sigma_f^2)\rangle.
    \end{equation*}

\section{Auxiliary demonstrations for control problems}\label{Appendix-control}

In this appendix, we shall supplement the proofs of the intermediate result, i.e. Proposition \ref{prop-obs}, which has been taken for granted in establishing Proposition \ref{Prop-linear}. In addition, the deduction of squeezing property via contractibility will be presented in detail, so we complete rigorously the proof of Theorem \ref{Th1}.

\subsection{Proof of Proposition \ref{prop-obs}(1)}\label{Appendix-obs}

We argue by contradiction. 
Assume that for every $n\in\N^+$, there exists $\hat u^n\in B_R$ and $\varphi^T_n\in \mathcal H^{\scriptscriptstyle-6/5}$ such that
\begin{align}
	&  \|\varphi^T_n\|_{_{\mathcal  H^{\scriptscriptstyle-6/5}}}=1,\label{Hypothesis-2} \\
	& \int_0^T\|\chi\varphi^n(t)\|^2_{_{H^{\scriptscriptstyle-1/5}}}dt\leq \frac{1}{n}\quad {\rm with\ }\varphi^n=\mathcal W^T_{\hat u^n}(\varphi^T_n) \label{Hypothesis-3}.
\end{align}
In view of (\ref{Hypothesis-2}), one can use (\ref{energy-2}) to deduce that there exists a constant $C=C(T,R)>0$ such that
\begin{equation*}
\|\varphi^n[t]\|_{_{\mathcal H^{\scriptscriptstyle-6/5}}}\leq C
\end{equation*}
for all $n\in\N^+$ and $t\in[0,T]$. Accordingly, it follows that the sequence $\{\varphi^n;n\in\N^+\}$ is bounded in $L^\infty_tH^{\scriptscriptstyle-1/5}_x$, while $\{\partial_t\varphi^n;n\in\N^+\}$ is bounded in $L^\infty_tH_x^{\scriptscriptstyle-6/5}$. This together with the Aubin--Lions lemma implies that $\{\varphi^n;n\in\N^+\}$ is relatively compact in $C([0,T];H^{\scriptscriptstyle-6/5})$. Therefore, we conclude that up to a subsequence,
\begin{align}
	& \varphi^n\overset{\star}{\rightharpoonup}  \varphi^0\quad {\rm in\ }L^\infty_tH_x^{\scriptscriptstyle-1/5},\notag\\
	& \partial_t\varphi^n\overset{\star}{\rightharpoonup} \partial_t\varphi^0\quad {\rm in\ }L^\infty_tH^{\scriptscriptstyle-6/5}_x,\notag\\
	& \varphi^n\rightarrow \varphi^0\quad {\rm in\ }C([0,T];H^{\scriptscriptstyle-6/5}),\label{convergence-4}\\
	& \varphi^n[T]\rightharpoonup \psi=(\psi_0^T,\psi_1^T)\quad {\rm in\ }\mathcal H^{\scriptscriptstyle-6/5},\notag\\
	& 3(\hat u^n)^2\overset{\star}{\rightharpoonup} p \quad {\rm in\ }L^\infty(D_T)\cap L^\infty_tH_x^{\scriptscriptstyle11/7}\notag
\end{align}
as $n\rightarrow \infty$. The limiting function $\varphi^0$ is the solution of 
\begin{equation*}
\boxempty \varphi^0-a(x)\partial_t\varphi^0+p(t,x)\varphi^0=0,\quad
\varphi^0[T]=\psi.
\end{equation*}

Due to (\ref{convergence-4}), it follows that
$$
\chi\varphi^n\rightarrow \chi\varphi^0 \quad {\rm in\ }L^2_tH^{\scriptscriptstyle-6/5}_x,
$$
which together with (\ref{Hypothesis-3}) leads to $\chi\varphi^0\equiv 0$. What follows is to show that 
\begin{equation}\label{contradiction-1}
\varphi^0\equiv 0.
\end{equation}
For this purpose,
let $\vartheta\in C_0^\infty(\R)$ such that
$$
\vartheta(x)=1\quad {\rm for\ }|x|\leq 1,\quad \vartheta(x)=0\quad {\rm for\ }|x|\geq 2.
$$
We then introduce the cut-off operator 
$$
\vartheta(-\Delta)\phi=\sum_{j\in\N^+}\vartheta(\lambda_j)(\phi,e_j)e_j,\quad \phi\in H.
$$
It is not difficult to verify that the operator $\vartheta(-\Delta)$ is adjoint on each $H^s$. 

\begin{lemma}\label{Communicators-estimate}
Let $\vartheta\in C_0^\infty(\R)$. Then the following assertions hold.
\begin{enumerate}[leftmargin=2em]
\item[$(1)$] For every $f\in C^\infty(\overline{D})$\footnote{Given a $L^\infty$-function $f$, we use the same notation to denote the corresponding multiplication operator $\phi\mapsto f\phi$.}, there exists a constant $C_1=C_1(f)>0$ such that
\begin{equation}\label{communicator-1}
\|[\vartheta(-\varepsilon^2\Delta),f]\|_{_{\mathcal L(H^{\scriptscriptstyle-1/5};H)}}+ \|[\vartheta(-\varepsilon^2\Delta),f]\|_{_{\mathcal L(H^{\scriptscriptstyle-6/5};H^{-1})}}\leq C_1\varepsilon^{4/5}
\end{equation}
for any $\varepsilon\in(0,1)$.
	
\item[$(2)$] There exists a constant $C_2>0$ such that
\begin{equation}\label{communicator-2}
\|[\vartheta(-\varepsilon^2\Delta),f]\|_{_{\mathcal L(H^{\scriptscriptstyle-1/5};H^{-1})}}\leq C_2\varepsilon^{8/35}\|f\|_{_{H^{\scriptscriptstyle11/7}}}
\end{equation}
for any $f\in H^{\scriptscriptstyle11/7}$ and $\varepsilon\in(0,1)$.
\end{enumerate}
\end{lemma}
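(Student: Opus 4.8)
The statement to prove is Lemma \ref{Communicators-estimate}, concerning commutator estimates for the smooth spectral cut-off operator $\vartheta(-\varepsilon^2\Delta)$ against multiplication operators. These are the classical commutator bounds that make the compactness-uniqueness argument of Proposition \ref{prop-obs}(1) work, so the goal is to show that localizing in frequency nearly commutes with multiplication, with an explicit gain of a positive power of $\varepsilon$.

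The plan is to work in the eigenbasis $\{e_j\}$ of $-\Delta$, writing $\vartheta(-\varepsilon^2\Delta)\phi=\sum_j\vartheta(\varepsilon^2\lambda_j)(\phi,e_j)e_j$, so that the commutator $[\vartheta(-\varepsilon^2\Delta),f]$ has matrix entries $(\vartheta(\varepsilon^2\lambda_j)-\vartheta(\varepsilon^2\lambda_k))(fe_k,e_j)$. The key analytic input is the mean-value estimate $|\vartheta(\varepsilon^2\lambda_j)-\vartheta(\varepsilon^2\lambda_k)|\lesssim \varepsilon^2\|\vartheta'\|_\infty|\lambda_j-\lambda_k|$ on the support region, which converts the frequency separation into a gain; combined with the fact that $\vartheta$ is compactly supported (so only $\lambda_j,\lambda_k\lesssim \varepsilon^{-2}$ contribute), one interpolates between the trivial bound and the derivative bound to extract a fractional power of $\varepsilon$. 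For assertion (1), where $f\in C^\infty(\overline D)$, I would exploit that $f$ maps Sobolev spaces boundedly and that $[\Delta,f]$ is a first-order operator, using the pseudodifferential/functional-calculus identity that the symbol difference is controlled; concretely, writing $\vartheta$ via the Fourier transform as $\vartheta(-\varepsilon^2\Delta)=c\int\hat\vartheta(\tau)e^{i\tau\varepsilon^2\Delta}d\tau$ and using $[e^{i\tau\varepsilon^2\Delta},f]=i\tau\varepsilon^2\int_0^1 e^{is\tau\varepsilon^2\Delta}[\Delta,f]e^{i(1-s)\tau\varepsilon^2\Delta}ds$, so that the commutator carries an explicit factor $\varepsilon^2$ from the semigroup parameter, and the order-one operator $[\Delta,f]=-(\Delta f)-2\nabla f\cdot\nabla$ absorbs one derivative. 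This heuristic $\varepsilon^2$ gain must then be reconciled with the claimed $\varepsilon^{4/5}$ (resp. $\varepsilon^{8/35}$) by tracking the loss incurred in the target Sobolev indices $H^{-1/5}\to H$, $H^{-6/5}\to H^{-1}$ via interpolation between the crude estimate and the regularized one.

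For assertion (2), the difference is that $f$ now lies only in the Banach algebra $H^{\scriptscriptstyle 11/7}$ rather than being smooth, so the commutator must be estimated in terms of $\|f\|_{H^{\scriptscriptstyle 11/7}}$ with the weaker exponent $\varepsilon^{8/35}$. Here I would use the product/algebra structure of $H^{\scriptscriptstyle 11/7}$ together with paraproduct-type decomposition of $fe_k$, and again the frequency-support of $\vartheta$, interpolating the high-low frequency interactions; the fractional exponent $8/35$ arises from balancing the regularity $11/7$ of $f$ against the Sobolev gap between $H^{-1/5}$ and $H^{-1}$ (a gap of $4/5$), giving a gain of the form $\varepsilon^{(4/5)\cdot(2/7)}=\varepsilon^{8/35}$ once the $H^{\scriptscriptstyle 11/7}$-regularity is distributed across the interpolation. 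The main obstacle I anticipate is pinning down these exact exponents: the functional-calculus argument naturally produces $\varepsilon^2$ or $\varepsilon^1$, and recovering the precise fractional powers $4/5$ and $8/35$ requires a careful interpolation bookkeeping that tracks how much regularity of $f$ and how much frequency localization each target estimate can afford. Since the paper invokes this lemma only qualitatively (to conclude $\chi\varphi^0\equiv 0$ and ultimately $\varphi^0\equiv0$ via unique continuation in the contradiction argument), any positive power of $\varepsilon$ suffices in spirit, and I would structure the proof to first establish the clean $\varepsilon^2$-type gain and then degrade it to the stated exponents by interpolation, presenting the interpolation step as the crux.
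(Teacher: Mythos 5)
Your treatment of assertion (2) is essentially the paper's own proof: the paper writes $[\vartheta(-\varepsilon^2\Delta),f]\phi=\tfrac{1}{2\pi}\int_\R w(\varepsilon^2 s)\hat\vartheta(s)\,ds$ with $w(s)=[e^{-is\Delta},f]\phi$, estimates $\|w(s)\|_{_{H^{\scriptscriptstyle-11/7}}}\lesssim |s|\,\|f\|_{_{H^{\scriptscriptstyle11/7}}}\|\phi\|_{_{H^{\scriptscriptstyle3/7}}}$ by differentiating in $s$ (your Duhamel identity is the same device, merely keeping the structure $[\Delta,f]$ in the integrand), and then interpolates against the trivial bound $\|[\vartheta(-\varepsilon^2\Delta),f]\|_{_{\mathcal L(H^{\scriptscriptstyle3/7})}}\lesssim\|f\|_{_{H^{\scriptscriptstyle11/7}}}$ to get $\mathcal L(H^{\scriptscriptstyle3/7};H^{\scriptscriptstyle3/7-\beta})$ bounds of order $\varepsilon^\beta$; taking $\beta=8/35$, embedding $H^1\hookrightarrow H^{\scriptscriptstyle3/7}$ and dualizing (both $\vartheta(-\varepsilon^2\Delta)$ and multiplication by real $f$ are self-adjoint) yields (\ref{communicator-2}). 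Your exponent bookkeeping $(4/5)\cdot(2/7)$ is numerology rather than the actual mechanism — in the paper $8/35$ is precisely the drop $3/7-1/5$, where the anchor $3/7=2-11/7$ is forced by the algebra threshold of $H^{\scriptscriptstyle11/7}$ — but since the target in (2) is less regular than the source, your interpolation plan is sound there.

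The genuine gap is assertion (1). Both estimates in (\ref{communicator-1}) require a target space that is \emph{more} regular than the source, by $1/5$ (namely $H^{\scriptscriptstyle-1/5}\to H$ and $H^{\scriptscriptstyle-6/5}\to H^{-1}$). Your plan to obtain them ``by interpolation between the crude estimate and the regularized one'' cannot succeed: the crude bound $\mathcal L(H^{\alpha};H^{\alpha})$ preserves regularity and the Duhamel bound $\mathcal L(H^{\sigma+1};H^{\sigma})$ loses a derivative, so every interpolant of the two loses regularity; no interpolation of such estimates can produce a regularity gain. The missing ingredient is a quantitative use of the spectral localization: choose $\tilde\vartheta\in C_0^\infty(\R)$ with $\tilde\vartheta=1$ on ${\rm supp}\,\vartheta$, split
\[
[\vartheta(-\varepsilon^2\Delta),f]=\tilde\vartheta(-\varepsilon^2\Delta)\,[\vartheta(-\varepsilon^2\Delta),f]+[\tilde\vartheta(-\varepsilon^2\Delta),f]\,\vartheta(-\varepsilon^2\Delta),
\]
and use the Bernstein-type fact that on the range of a cutoff at frequencies $\lambda_j\lesssim\varepsilon^{-2}$ each derivative costs $\varepsilon^{-1}$; then each term gains $\varepsilon^{2}$ from the Duhamel bound and pays $\varepsilon^{-6/5}$ to upgrade $6/5$ derivatives, giving exactly the claimed $\varepsilon^{4/5}$. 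Relatedly, your eigenbasis remark that ``only $\lambda_j,\lambda_k\lesssim\varepsilon^{-2}$ contribute'' is false: the matrix entry $(\vartheta(\varepsilon^2\lambda_j)-\vartheta(\varepsilon^2\lambda_k))(fe_k,e_j)$ survives whenever just \emph{one} of the two indices is localized, and it is precisely this one-sided localization that the splitting above exploits. For what it is worth, the paper does not prove assertion (1) at all — it cites \cite[Section 2.3]{ADS-16} — but your blind route for (1), as written, would fail at this point.
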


Taking this lemma for granted, we continue to prove (\ref{contradiction-1}). For $\varepsilon\in(0,1)$ we define $\varphi^{0,\varepsilon}$ to be the solution of 
\begin{equation*}
\boxempty \varphi^{0,\varepsilon}-a(x)\partial_t\varphi^{0,\varepsilon}+p(t,x)\varphi^{0,\varepsilon}=0,\quad 
\varphi^{0,\varepsilon}[T]=(\vartheta(-\varepsilon^2\Delta)\psi_0^T,\vartheta(-\varepsilon^2\Delta)\psi_1^T).
\end{equation*}
Making use of Lemma \ref{basic-observability}(1) (see also Remark \ref{remark-general potential}), it can be derived that
\begin{equation}\label{bound-25}
	\begin{aligned}
	\|\varphi^{0,\varepsilon}[T]\|_{_{\mathcal H^{-1}}}^2&\leq C\int_0^T\|\chi\varphi^{0,\varepsilon}(t)\|^2dt\\
	&\leq C\int_0^T\|\chi z^{0,\varepsilon}(t)\|^2dt+C\int_0^T\|\chi \vartheta(-\varepsilon^2\Delta)\varphi^0(t)\|^2dt,
	\end{aligned}
\end{equation}
where $z^{0,\varepsilon}=\varphi^{0,\varepsilon}-\vartheta(-\varepsilon^2\Delta)\varphi^0$. To deal with the first term in RHS of (\ref{bound-25}), let us note that
\begin{equation*}
\boxempty z^{0,\varepsilon}-a(x)\partial_tz^{0,\varepsilon}+p(t,x)z^{0,\varepsilon}=-[\vartheta,a]\partial_t\varphi^0+[\vartheta,p]\varphi^0,\quad z^{0,\varepsilon}[T]=(0,0).
\end{equation*}
This together with 
(\ref{communicator-1}),(\ref{communicator-2}) means that
\begin{align*}
	\|z^{0,\varepsilon}[t]\|_{_{\mathcal H^{-1}}}\leq & \ C\int_0^T\left[
	\|[\vartheta,a]\partial_t\varphi^0\|_{_{H^{-1}}}+\|[\vartheta,p]\varphi^0\|_{_{H^{-1}}}
	\right]dt \\
	\leq & \ C\int_0^T\left[
	\varepsilon^{4/5}\|\partial_t\varphi^0\|_{_{H^{\scriptscriptstyle-6/5}}}+\varepsilon^{8/35}\|\varphi^0\|_{_{H^{\scriptscriptstyle-1/5}}}
	\right]dt \\
	\leq & \ C\left(\varepsilon^{4/5}+\varepsilon^{8/35}\right)\|\psi\|_{_{\mathcal H^{\scriptscriptstyle-6/5}}}.
\end{align*}
Accordingly, 
\begin{equation}\label{bound-27}
	\int_0^T\|\chi z^{0,\varepsilon}(t)\|^2dt\leq C\left(\varepsilon^{4/5}+\varepsilon^{8/35}\right)^2\|\psi\|_{_{\mathcal H^{\scriptscriptstyle-6/5}}}^2.
\end{equation}		
At the same time, it follows from the fact $\chi\varphi^0=0$ that
$$
\chi \vartheta(-\varepsilon^2\Delta)\varphi^0=[\chi,\vartheta(-\varepsilon^2\Delta)]\varphi^0.
$$
Using (\ref{communicator-1}) and (\ref{energy-2}), we obtain that
\begin{align*}
\int_0^T\|\chi \vartheta(-\varepsilon^2\Delta)\varphi^0(t)\|^2dt\leq C\varepsilon^{8/5} \int_0^T\|\varphi^0(t)\|^2_{_{H^{\scriptscriptstyle-1/5}}}dt\leq C\varepsilon^{8/5}\|\psi\|_{_{\mathcal H^{\scriptscriptstyle-6/5}}}^2.
\end{align*}
Combined with (\ref{bound-25}) and (\ref{bound-27}), this yields that
\begin{equation*}
	\|\varphi^{0,\varepsilon}[T]\|_{_{\mathcal H^{-1}}}\leq C\left(\varepsilon^{4/5}+\varepsilon^{8/35}\right)\|\psi\|_{_{\mathcal H^{\scriptscriptstyle-6/5}}}.
\end{equation*}
As a consequence,
$$
\|(\vartheta(-\varepsilon^2\Delta)\psi_0^T,\vartheta(-\varepsilon^2\Delta)\psi_1^T)\|_{_{\mathcal H^{-1}}}\rightarrow 0
$$
as $\varepsilon\rightarrow 0^+$. In conclusion, $\psi_0^T=\psi_1^T=0$ which leads to (\ref{contradiction-1}).

In the sequel, we proceed to show that
\begin{equation}\label{contradiction-2}
	\int_0^T\|\varphi^0(t)\|^2dt>0,
\end{equation}
which contradicts (\ref{contradiction-1}).
To this end, let us mention that inequality (\ref{basic-case-2}) can be expressed via the adjoint group $U^*(t)$. In fact, when $\hat u\equiv 0$, any solution $\varphi$ of the adjoint system (\ref{adjoint-system-1}) satisfies $\varphi[t]=U^*(T-t)\varphi^T$. Therefore, we rewrite (\ref{basic-case-2}) as
$$
\int_0^T\|\chi (U^*_1(t)\varphi^T)\|^2_{_{H^{\scriptscriptstyle-1/5}}}dt\geq C\|\varphi^T\|^2_{_{\mathcal  H^{\scriptscriptstyle-6/5}}},
$$
where $(U^*_1(t),U^*_2(t))=U^*(t)$.
This together with the reversibility of $U^*(t)$ implies that
\begin{equation}\label{basic-case-3}
\|U^*(t)\varphi^T\|^2_{_{\mathcal  H^{\scriptscriptstyle-6/5}}}\leq C\int_0^T\|\chi (U^*_1(s)\varphi^T)\|^2_{_{H^{\scriptscriptstyle-1/5}}}ds,
\end{equation}
for any $t\in[0,T]$.

At the same time, notice by (\ref{variation-formula-2}) that
\begin{equation*}
U^*(T-t)\varphi^T_n=\varphi^n[t]+\int_t^TU^*(s-t)\left(
\begin{matrix}
0\\
-3(\hat u^n)^2(s)\varphi^n(s)
\end{matrix}
\right)ds.
\end{equation*}
Then, one can apply (\ref{basic-case-3}) to deduce that 
\begin{equation}\label{bound-28}
	\begin{array}{ll}
		\displaystyle\|U^*(T-t)\varphi^T_n\|^2_{_{\mathcal H^{\scriptscriptstyle-6/5}}}\leq C\int_0^T\|\chi
		\varphi^n(s)\|^2_{_{H^{\scriptscriptstyle-1/5}}}ds+C\int_0^T\|\varphi^n(s)\|^2_{_{H^{\scriptscriptstyle-6/5}}}ds.
	\end{array}
\end{equation}
Moreover, it can be seen that
$$
{\rm LHS\ of\ (\ref{bound-28})}\geq C\|\varphi^T_n\|^2_{_{\mathcal H^{\scriptscriptstyle-6/5}}}
$$ 
This together with (\ref{Hypothesis-2}) implies that
\begin{equation*}
\begin{array}{ll}
\displaystyle 1\leq  C\int_0^T\|\chi
\varphi^n(t)\|^2_{_{H^{\scriptscriptstyle-1/5}}}dt+C\int_0^T\|\varphi^n(t)\|^2_{_{H^{\scriptscriptstyle-6/5}}}dt.
\end{array}
\end{equation*}
Letting $n\rightarrow \infty$ and taking (\ref{Hypothesis-3}),(\ref{convergence-4}) into account, we conclude that
$$
1\leq C\int_0^T\|\varphi^0(t)\|^2_{_{H^{\scriptscriptstyle-6/5}}}dt,
$$
which gives rise to (\ref{contradiction-2}). The proof of (\ref{full-observability}) is therefore complete.

\medskip

\begin{proof}[{\bf Proof of Lemma \ref{Communicators-estimate}}]
We only provide a proof of the second assertion, as the first can be derived by following the same arguments as in \cite[Section 2.3]{ADS-16}.

Notice that when $f\in H^{\scriptscriptstyle11/7}$ the multiplication operator $\phi\mapsto f\phi$ is bounded from $H^\alpha$ into itself for every $\alpha\in[0,11/7]$. This implies that
\begin{equation}\label{commute-1}
\|[\vartheta(-\varepsilon^2\Delta),f]\|_{_{\mathcal L(H^{\alpha};H^{\alpha})}}\leq C\|f\|_{_{H^{\scriptscriptstyle11/7}}}
\end{equation}
for any $\varepsilon\in(0,1)$. To continue, we obtain that
\begin{equation}\label{commute-2}
[\vartheta(-\varepsilon^2\Delta),f]\phi=\frac{1}{2\pi}\int_{\R}w(\varepsilon^2s)\hat\vartheta(s)ds
\end{equation}
for any $\phi\in H$,
where $\hat\vartheta$ is  the Fourier transform of $\vartheta$ and $w(s)=[e^{-is\Delta},f]\phi$. It then follows that
$$
\partial_sw=-i\Delta e^{-is\Delta}(f\phi)+if\cdot(\Delta e^{-is\Delta}\phi).
$$
Accordingly,
\begin{equation*}
\begin{aligned}
\|\partial_sw\|_{_{H^{\scriptscriptstyle-11/7}}}\leq C\|f\|_{_{H^{\scriptscriptstyle11/7}}}\|\phi\|_{_{H^{\scriptscriptstyle3/7}}},
\end{aligned}
\end{equation*}
provided that $\phi\in H^{\scriptscriptstyle3/7}$.
One thus sees that 
$$
\|w(s)\|_{_{H^{\scriptscriptstyle-11/7}}}\leq C|s|\|f\|_{_{H^{\scriptscriptstyle11/7}}}\|\phi\|_{_{H^{\scriptscriptstyle3/7}}}.
$$
Inserted into (\ref{commute-2}), this implies 
\begin{equation*}
\|[\vartheta(-\varepsilon^2\Delta),f]\|_{_{\mathcal L(H^{\scriptscriptstyle3/7};H^{\scriptscriptstyle-11/7})}}\leq C\varepsilon^2\|f\|_{_{H^{\scriptscriptstyle11/7}}}.
\end{equation*}
Interpolating it and (\ref{commute-1}) (with $\alpha=3/7$), we infer that
$$
\|[\vartheta(-\varepsilon^2\Delta),f]\|_{_{\mathcal L(H^{\scriptscriptstyle3/7};H^{\scriptscriptstyle3/7-\beta})}}\leq C\varepsilon^{\beta}\|f\|_{_{H^{\scriptscriptstyle11/7}}}
$$
for any $\beta\in[0,2]$. Taking $\beta=8/35$ and using the embedding $H^{1}\hookrightarrow H^{\scriptscriptstyle3/7}$, it follows that
$$
\|[\vartheta(-\varepsilon^2\Delta),f]\|_{_{\mathcal L(H^{1};H^{\scriptscriptstyle1/5})}}\leq C\varepsilon^{8/35}\|f\|_{_{H^{\scriptscriptstyle11/7}}}.
$$
Finally, the desired result is obtained by duality.
\end{proof}

\subsection{Proof of Theorem \ref{Th1}}\label{Appendix-squeezing}

For arbitrarily given  $\varepsilon\in(0,1)$ and $R>0$, we assume that $T=T_\varepsilon>0$ and $N=N(\varepsilon,T,R)\in\N^+$ are established in Proposition \ref{Prop-linear}.

Let $\hat u^0\in \mathcal H^{\scriptscriptstyle4/7}$ and $h\in L^2_tH_x^{\scriptscriptstyle4/7}$ such that $\hat u\in B_R$ with $\hat u[\cdot]=\mathcal S(\hat u^0,h)$. Next, we introduce the difference 
$
w=u-\hat u,
$
where $u[\cdot]=\mathcal S(u^0,h+\chi\mathscr P^{\scriptscriptstyle T}_{\scriptscriptstyle N}\zeta)$ with the initial state $u^0\in \mathcal H$ satisfying 
\begin{equation}\label{initial-value}
\|u^0-\hat u^0\|_{_{\mathcal H}}\leq 1,
\end{equation}
and the control $\zeta$ to be specified within the range of 
\begin{equation}\label{force-1}
\zeta\in \overline{B}_{L^2(D_T)}(1).
\end{equation}
Obviously, the controlled system for $w$ reads
\begin{equation}\label{equation-w}
    \begin{cases}
        \boxempty w+a(x)\partial_t w+(\hat u+w)^3-\hat u^3=\chi\mathscr P^{\scriptscriptstyle T}_{\scriptscriptstyle N}\zeta,\quad x\in D,\\
        w[0]=v^0:=u^0-\hat u^0.
    \end{cases}
\end{equation}
In addition, noticing (\ref{initial-value}),(\ref{force-1}), it can derived that there exists a constant $C=C(T,R)>0$ such that $\|u(t)\|_{_{H^1}}\leq C$ for any $t\in[0,T]$. This leads to
$$
\|(\hat u+w)^3(t)-\hat u^3(t)\|\leq C\|w(t)\|_{_{H^1}}.
$$
Therefore, one can multiply (\ref{equation-w}) by $\partial_t w$ and integrate over $D$ to deduce that
\begin{equation}\label{bound-w}
\|w[t]\|^2_{_{\mathcal H}}\leq C\left[
\|v^0\|_{_{\mathcal H}}^2+\int_0^T\|\zeta(t)\|^2dt
\right]
\end{equation}
for any $t\in[0,T]$.

On the other hand, an application of Proposition \ref{Prop-linear} yields that there exists a control $\zeta\in L^2(D_T)$ having the structure (\ref{structure-control}) and satisfying 
\begin{equation}\label{decay-0}
	\|v[T]\|_{_{\mathcal H}}\leq \frac{\varepsilon}{2}\|v^0\|_{_{\mathcal H}},\quad \int_0^T\|\zeta(t)\|^2dt\leq C\|v^0\|_{_{\mathcal H}}^2,
\end{equation}
where the constant $C$ depends on $T,R$, and $v=\mathcal{V}_{\hat u}(v^0,\chi\mathscr P^{\scriptscriptstyle T}_{\scriptscriptstyle N}\zeta)\in \mathcal X_T$ stands for the solution of (\ref{Problem-linearized}) with $v^0=u^0-\hat u^0$. In particular, due to the second inequality in (\ref{decay-0}), there exists a sufficiently small $d_0=d_0(T,R)>0$ such that if $\|v^0\|_{_{\mathcal H}}\leq d$ with $d\in(0,d_0)$,  the conditions
(\ref{initial-value}),(\ref{force-1}) are satisfied. It also follows that
the difference $z:=w-v$ satisfies
\begin{equation}\label{Problem-1}
\begin{cases}
\boxempty z+a(x)\partial_t z+w^3+3\hat u w^2=-3\hat u^2z,\quad x\in D,\\
z[0]=(0,0).
\end{cases}
\end{equation}
Using (\ref{bound-w}) and the second inequality in (\ref{decay-0}), one gets
$$
\begin{array}{ll}
\displaystyle\|w^3+3\hat u w^2\|^2\leq  C \left[
\|v^0\|_{_{\mathcal H}}^6+\|v^0\|_{_{\mathcal H}}^4
\right].
\end{array}
$$
Therefore,
by multiplying (\ref{Problem-1}) by $\partial_t z$ and integrating over $D$, we obtain
$$
\frac{d}{dt}
\left[
\|z\|^2_{_{H^1}}+\|\partial_tz\|^2
\right]\leq C\left[
\|z\|^2+\|\partial_tz\|^2+\|v^0\|_{_{\mathcal H}}^6+\|v^0\|_{_{\mathcal H}}^4
\right].
$$
This together with the Gronwall inequality implies that  
$$
\|z[T]\|_{_{\mathcal H}}\leq C(d^2+d)\|v^0\|_{_{\mathcal H}},
$$
which means
\begin{equation}\label{decay-1}
\|z[T]\|_{_{\mathcal H}}\leq \frac{\varepsilon}{2}\|v^0\|_{_{\mathcal H}},
\end{equation}
provided that $d=d(\varepsilon,T,R)\in(0,d_0)$ is sufficiently small. Finally, the combination of the first inequality in (\ref{decay-0}) and (\ref{decay-1}) gives rise to 
$$
\|w[T]\|_{_{\mathcal H}}\leq \varepsilon\|v^0\|_{_{\mathcal H}}.
$$
Theorem \ref{Th1} is then proved.

\section{Symbolic index}\label{Appendix-index}

In this appendix, we collect the most used symbols of the article, together with their meaning.

\vspace{0.3em}

{\footnotesize

\begin{longtable}{l|l}
                 
             \hline
             Functional analysis  &  Meaning \\
			 \hline 
            
            $D$, $\partial D$ & bounded domain in $\R^3$ with smooth boundary $\partial D$ \\  
            $\|\cdot\|$, $(\cdot,\cdot)$ & $(u,v)=\int_D uv$, $\|u\|=(u,u)^{1/2}$ for $u,v\in L^2(D)$\\  
            $H^s$, $H$ & domain of  $(-\Delta)^{s/2}$ with dual space $H^{-s}$ for $s\geq 0$; $H=L^2(D)$\\ 
            $\mathcal{H}^s$, $\mathcal{H}$ &  $\mathcal{H}^s=H^{1+s}\times H^s$, $s\in \R$; $\mathcal{H}=\mathcal{H}^0$ \\    
            $\mathcal{X}_T^s$, $\mathcal{X}_T$ & $\mathcal{X}_T^s=C([0,T];H^{1+s})\cap C^1([0,T];H^{s})$ with $T>0$, $s\in\R$; $\mathcal{X}_T=\mathcal{X}_T^0$\\ 
            $D_T$ & space-time domain, $D_T=(0,T)\times D$ with $T>0$\\                
            $\{e_j;j\in\N^+\}$, $\{\lambda_j;j\in\N^+\}$ & eigenvectors of $-\Delta$ with eigenvalues $\lambda_j$, forming an  orthonormal basis of $H$ \\  
            $\{\alpha^{\scriptscriptstyle T}_k;k\in\N^+\}$, $\{\alpha_k;k\in\N^+\}$ & smooth orthonormal basis of $L^2(0,T)$/$L^2(0,1)$; $\alpha^{\scriptscriptstyle T}_k(t)=T^{-1/2}\alpha_k(t/T)$ \\  
            $L^q_tL_x^r$, $L^q_tH_x^s$ & $L^q_tL_x^r=L^q(\tau,\tau+T;L^r(D))$, $L^q_tH_x^s=L^q(\tau,\tau+T;H^s)$ with $\tau\geq0$, $T>0$ \\    
            $\Gamma(x_0)$ &  portion of $\partial D$ satisfying $\Gamma(x_0)=\{x\in\partial D;(x-x_0)\cdot n(x)>0\}$ \\  
             $N_\delta(x_0)$ & $\delta$-neighborhood of boundary $\Gamma(x_0)$, $\{x\in D;|x-y|<\delta\ {\rm for\ some\ }y\in\Gamma(x_0)\}$ \\ 
            $a(x)$ &  nonnegative  $C^\infty(\overline{D})$ function supported by a $\Gamma$-type domain \\                    
            $\chi(x)$ & $C^\infty(\overline{D})$ cut-off function supported by a $\Gamma$-type domain \\                
            $u[t]$ & $u[t]=(u,\partial_tu)(t)$,  $t\geq 0$\\  
            $E(\psi)$,  $E_u(t)$ & $E(\psi_0,\psi_1)=\tfrac{1}{2}\| \psi_0\|^2_{{H^1}}+\tfrac{1}{2}\|\psi_1\|_{L^2}^2+\tfrac{1}{4} \|\psi_0\|_{L^4}^4$; $E_u(t)=E(u[t])$ \\  
            $C$ &  generic constant that may change from line to line \\
            $R$, $R_0$, $R_1$, $R_2$ & positive numbers; $R$/$R_0$ used in Theorem~\ref{Th1}/\ref{Thm-dynamicalsystem}, $R_1$,$R_2$ defined in Section~\ref{Section-conclusion}\\
            \hline
             Random wave equation &  \\
			 \hline  
          $\Box= \partial_{tt}^2- \Delta$ &  d'Alembert operator \\
            $\mathbf{T}$ & $\mathbf T= T_{1/4}$ with $T_{1/4}$ determined by \eqref{asymptotic-1},\eqref{largetime-2}; see also Section~\ref{Section-conclusion} \\
            $b_{jk}$ & nonnegative real numbers \\  
            $\theta^{n}_{jk}$, $\rho_{jk}$ & independent random variables $|\theta^{n}_{jk}|\leq 1$, $\theta^{n}_{jk}$ with $C^1$-density $\rho_{jk}$, $\rho_{jk}(0)>0$ \\ 
            $\eta_n(t,x)$ & i.i.d.~$L^2(D_T)$-valued random variables, $\eta_n(t,x)=\chi(x)\sum_{j\in\N^+}b_{jk}\theta^{n}_{jk}\alpha^{\scriptscriptstyle T}_k(t)e_j(x)$\\ 
            $\eta(t,x)$ & colored random noise $\eta(t,x)=\eta_n(t-nT,x)$ for $t\in[nT,(n+1)T)$, $n\in\N$ \\ 

            \hline
                Random dynamical system &  \\
			 \hline          
		   $(\mathcal{X},d),$ $\mathcal{Z}$ &  Polish spaces, i.e. complete separable metric spaces \\  
            
            $\boldsymbol{\xi}=(\xi_n;n\in\N)$, $\ell$, $\mathcal E$  & $\mathcal{Z}$-valued i.i.d.~random variables with common law $\ell$ and compact support $\mathcal{E}$\\   
             $S\colon\mathcal{X}\times\mathcal{Z}\rightarrow\mathcal{X}$ & continuous mapping   \\  $S_n(x;\boldsymbol{\xi})$ & $n$-th iteration of $S$ with $x\in\mathcal{X}$, $\boldsymbol{\xi}=(\xi_n;n\in\N)\in \mathcal{Z}^\N$ \\  
             $\mathcal{Y}_n$, $\mathcal{Y}_\infty$ & attainable sets of $\mathcal{Y}$, $\mathcal{Y}_n=\{S_n(x,\boldsymbol{\zeta});x\in\mathcal{S},\boldsymbol{\zeta}\in\mathcal{E}^{\N}\}$, $\mathcal{Y}_\infty=\overline{\cup_{n\in\N}\mathcal{Y}_n}$ \\  
            $B_{\mathcal{X}}(x,r)/B(x,r)$, $B_{\mathcal{X}}(r)$ &  open ball in $\mathcal{X}$ centered at $x$ with radius $r$; $B_{\mathcal{X}}(r)=B_{\mathcal{X}}(0,r)$ \\  
            $\overline{B}_{\mathcal X}(r)$ & closed ball centered at $0$ in $\mathcal X$, i.e. $\overline{B}_{\mathcal X}(r)=\overline{B_{\mathcal X}(r)}$ \\
            $\text{dist}_{\mathcal{X}}(x,A)$ & distance between $x\in\mathcal{X}$ and $A\subset \mathcal{X}$ \\  
             $\mathcal{B}(\mathcal{X})$ & Borel $\sigma$-algebra of $\mathcal{X}$  \\  
             $\mathcal{P}(\mathcal{X})$ & probability measures on $\mathcal{X}$, endowed with dual-Lipschitz norm $\|\cdot\|_{L}^*$  \\  
             $\text{supp }\mu$ & support of $\mu\in\mathcal{P}(\mathcal{X})$, $\text{supp }\mu=\{x\in\mathcal{X};\mu(B(x,r))>0 \text{ for any } r>0\}$\\  
             $\mathscr{D}(\xi)$ & law of random variable $\xi$\\  
            $\mathscr{C}(\mu,\nu)$ & couplings between $\mu,\nu\in \mathcal{P}(\mathcal{X})$\\  
			$B_b(\mathcal{X})$, $C_b(\mathcal{X})$, $L_b(\mathcal{X})$ & bounded Borel/continuous/Lipschitz functions on $\mathcal{X}$  \\  
             $\|f\|_{\infty}$  & supremum norm of $f\in B_b(\mathcal{X})$  \\  
            $\|f\|_{L}$  & Lipschitz norm of $f\in L_b(\mathcal{X})$, $\|f\|_{L}=\|f\|_{\infty} + \sup_{x\neq y}\frac{|f(x)-f(y)|}{d(x,y)}$ \\  
            $\langle f,\mu\rangle$& $\langle f,\mu\rangle=\int_{\mathcal{X}}f(x)\mu(dx)$ for $f\in B_b(\mathcal{X})$, $\mu\in \mathcal{P}(\mathcal{X})$  \\  
            $\|\cdot\|_L^*$ & $\|\mu-\nu\|_L^*=\sup\{|\langle f,\mu\rangle-\langle f,\nu\rangle|;f\in L_b(\mathcal{X}),\|f\|_L\leq 1\}$\\  
            $\Pb_x, \E_x$ & Markov family with $x\in\mathcal{X}$ and the corresponding expected value \\  
            $P_n(x,A)$ &  Markov transition functions with $x\in\mathcal{X}$, $A\in\mathcal{B}(\mathcal{X})$, $n\in\N$\\  
            $P_n$, $P_n^*$ & Markov semigroups on $B_b(\mathcal{X})$, $\mathcal{P}(\mathcal{X})$, respectively \\

            \hline
    	    Dynamical system &   \\
			 \hline
                 
             $U(t)$ & $C_0$-group generated by $\boxempty  v+a(x)\partial_t v=0$ \\   
             $B_R$ & $B_R=B_{C([0,T];H^{\scriptscriptstyle11/7})}(R)$ with $R>0$ \\  
            $F$ & $F=W^{1,\infty}(\mathbb R^+;H)\cap L^\infty(\mathbb R^+;H^{\scriptscriptstyle1/3})$ \\               
            $\mathcal U^f(t,\tau)(u_0,u_1)$ & solution of  (\ref{semilinear-problem-tau}) with $u[\tau]=(u_0,u_1)\in \mathcal{H}$, $t\geq \tau$ \\  
             $\mathscr{B}_0$, $\mathscr{B}_{\scriptscriptstyle4/7}$,  $\mathscr{B}_1$ & bounded sets of $\mathcal{H}$, $\mathcal{H}^{\scriptscriptstyle4/7}$, $\mathcal{H}^1$, respectively  \\
             \hline
             
	       Control theory &   \\
			 \hline
             $\mathcal L(\mathcal X;\mathcal Y)$, $\mathcal L(\mathcal X)$& bounded linear operators from $\mathcal X$ into $\mathcal Y$/$\mathcal X$ for Banach spaces $\mathcal{X},\mathcal{Y}$ \\   $\langle\cdot,\cdot\rangle_{\mathcal X,\mathcal X^*}$, $(\cdot,\cdot)_{_{\mathcal X}}$ & scalar product between $\mathcal X$, $\mathcal X^*$; inner product when $\mathcal X$ is a Hilbert space\\  
            $\mathcal H^s_*$, $\mathcal H_*$ &  $\mathcal H^s_*=H^{-1-s}\times H^{-s}$, $s\geq 0$; $\mathcal H_*=\mathcal H^0_*$ \\  
            $\mathscr P^{\scriptscriptstyle T}_{\scriptscriptstyle N}$ & projection of $L^2(D_T)$ onto  ${\rm span}\{ e_j\alpha^{\scriptscriptstyle T}_k;1\leq j,k\leq N\}$ \\              
            $\mathbf H_m$ & $\mathbf H_m=H_m\times H_m$ with $H_m={\rm span}\{e_j;1\leq j\leq m\}$ \\            
            ${\bf P}_m$ & projection of $\mathcal{H}$ onto  $\mathbf H_m$ \\ 
            $u^\bot[t]$ & $u^\bot[t]=(-\partial_t u,u)(t)$ with $u\in C^1([0,T];H^s)$, $t\geq 0$ \\       
            $\mathcal S(u_0,u_1,f)$ &   $\mathcal S(u_0,u_1,f)=u[\cdot]$ with $u\in\mathcal X_T$ being solution of (\ref{semilinear-problem})  \\       
            $\mathcal V_{\hat u}(v^0,f)$ & solution of (\ref{linear-problem}) with $b,p$ replaced by $a,3\hat u^2$, $\hat u\in B_R$ \\
            $\mathcal V^T_{\hat u}(v^T,f)$ & solution of (\ref{linear-problem}) with $b,p$ replaced by $a,3\hat u^2$ and terminal condition $v[T]=v^T$  \\
             $\mathcal W^T_{\hat u}(\varphi^T)$  &  solution of the adjoint system (\ref{adjoint-system-1})    \\          
            
            \hline

	\end{longtable}

    }

\end{appendices}

\noindent\textbf{Acknowledgments} \;
The authors would like to thank Yuxuan Chen for valuable discussions and suggestions during the preparation of the paper. 
Shengquan Xiang is partially  supported by  NSFC 12301562. Zhifei Zhang is  partially supported by  NSFC 12288101. Jia-Cheng Zhao is supported by China Postdoctoral Science Foundation 2024M750044.

	\normalem
      \bibliographystyle{abbrv}
      \bibliography{WaveMixing}

\medskip	
	
\end{document}